\newcommand{\hB}{\hspace*{\fill}}
\title{Homotopy theory with $*$-categories}
\author{Ulrich Bunke}
\address{Fakult{\"a}t f{\"u}r Mathematik,
Universit{\"a}t Regensburg,
93040 Regensburg,
GERMANY}
 \thanks{Acknowledgement: The author was supported by the SFB 1085 \emph{Higher Invariants} funded by the Deutsche Forschungsgemeinschaft DFG. He thanks Markus Land    for valuable discussions and  pointing out the reference \cite{DellAmbrogio:2010aa}.  He profited from discussions with Denis-Charles Cisinski about injective model category structures. He furthermore
    thanks Dmitri Pavlov for convincing him that Proposition \ref{efuifzuew9fewfewfwf} should be true, and   Daniel Kasprowski and Christoph Winges for discussing applications of the results of the present paper to coarse homology theories.
    }
 \keywords{$*$-categories, model categories, $\infty$-categories, limits and colimits}
\newtheorem{theorem}{Theorem}[section] 
\newtheorem{prop}[theorem]{Proposition}
\newtheorem{lem}[theorem]{Lemma}
\newtheorem{ddd}[theorem]{Definition}
\newtheorem{kor}[theorem]{Corollary}
\theoremstyle{remark}
\theoremstyle{definition}
\newtheorem{conv}[theorem]{Convention}
\newcommand{\pre}{\mathrm{pre}}
\newcommand{\Fun}{{\mathbf{Fun}}}
\newcommand{\cC}{{\mathcal{C}}}
\newcommand{\bD}{{\mathbf{D}}}
\newcommand{\Nerve}{{\mathrm{N}}}
\newcommand{\Map}{{\mathrm{Map}}}
\newcommand{\beins}{{\mathbf{1}}}
\newcommand{\bS}{{\mathbf{S}}}
\newcommand{\bC}{\mathbf{C}}
\newcommand{\End}{\mathrm{End}}
\newcommand{\cV}{{\mathcal{V}}}
\newcommand{\Set}{{\mathbf{Set}}}
\newcommand{\cF}{{\mathcal{F}}}
\newcommand{\id}{{\mathrm{id}}}
\newcommand{\cG}{{\mathcal{G}}}
\newcommand{\nat}{{\mathbb{N}}}
\newcommand{\Q}{{\mathbb{Q}}}
\newcommand{\R}{{\mathbb{R}}}
\newcommand{\C}{{\mathbb{C}}}
\newcommand{\Hom}{{\mathrm{Hom}}}
\newcommand{\pr}{{\mathrm{pr}}}
\renewcommand{\lim}{\operatorname*{\mathrm{lim}}}
\newcommand{\colim}{\operatorname*{\mathrm{colim}}}
\newcommand{\Sp}{\mathbf{Sp}}
\newcommand{\sSet}{{\mathbf{sSet}}}
\newcommand{\ma}{\mathrm{ma}}
\newcommand{\mi}{\mathrm{mi}}
\newcommand{\Mor}{\mathrm{Mor}}
\newcommand{\Orb}{\mathbf{Orb}}
\newcommand{\Ob}{\mathrm{Ob}}
\newcommand{\bB}{{\mathbf{B}}}
\newcommand{\bF}{{\mathbf{F}}}
\newcommand{\bnull}{\mathbf{0}}
\newcommand{\bG}{{\mathbf{G}}}
\newcommand{\cW}{{\mathcal{W}}}
\newcommand{\bA}{{\mathbf{A}}}
\newcommand{\bK}{{\mathbf{K}}}
\newcommand{\const}{{\mathrm{const}}}
\newcommand{\Alg}{{\mathbf{Alg}}}
\newcommand{\cU}{{\mathcal{U}}}
\newcommand{\cD}{{\mathcal{D}}}
\newcommand{\Cat}{{\mathbf{Cat}}}
\newcommand{\Spc}{\mathbf{Spc}}
\newcommand{\Ccat}{{ C^{\ast}\mathbf{Cat}}}
\newcommand{\Add}{\mathbf{Add}}
\newcommand{\CEq}{\mathrm{Coeq}}
\newcommand{\Coeq}{\CEq}
\newcommand{\scat}{\mathbf{{}^{*}Cat}}
\newcommand{\preCcat}{C_{\pre}^{\ast}\mathbf{Cat} }
\newcommand{\ClinCat}{ \mathbf{{}^{*}{}_{\C} Cat }}
\newcommand{\compl}{\mathrm{Compl}}
\newcommand{\Lin}{\mathrm{Lin_{\C}}}
\newcommand{\cFun}{\mathcal{F}un}
\newcommand{\Groupoids}{\mathbf{Grpd}}
\newcommand{\Bd}{\mathrm{Bd}}
\newcommand{\bbI}{\mathbb{I}}
\newcommand{\Free}{\mathrm{Free}}
\begin{document}
 
	\maketitle
	\begin{abstract}
We construct model category structures on various types of (marked) $*$-categories.
These structures are used  to present the infinity categories of (marked)
$*$-categories obtained by inverting (marked) unitary equivalences.
We use this presentation    to 	explicitly
 calculate the $\infty$-categorical $G$-fixed points and  G-orbits  for $G$-equivariant (marked) $*$-categories.
 \end{abstract}

\tableofcontents

\section{Introduction}

\subsection{Model categories}

If $\cC$ is a category and $W$ is a set of morphisms in $\cC$, then one can consider the $\infty$-category $ \cC[W^{-1}]$. If the relative category $(\cC,W)$ extends to a simplicial model category
in which all
objects are cofibrant, then we have   an equivalence between $\cC[W^{-1}]$ and the nerve $ \Nerve(\cC^{\mathrm{cf}})$ of the simplicial category of cofibrant/fibrant objects of $\cC$.   This explicit description of $\cC[W^{-1}]$   is sometimes very helpful in order to calculate mapping spaces in $\cC[W^{-1}]$, or in order to identify limits or colimits of diagrams in $\cC[W^{-1}]$.

In this note $\cC$ is a category of $*$-categories\footnote{In order to fix size issues we use three Grothendieck universes $\cU\subseteq \cV\subseteq \cW$.  The objects of $\cC$ will be categories  in $\cV$ which are locally $\cU$-small.   The category $\cC$  itself belongs to $\cW$ and is locally $\cV$-small.}. A $*$-category is a category with an involution $*$ fixing the objects. In such a category one can talk about unitary morphisms. Furthermore, one can talk about unitary transformations between functors between $*$-categories and therefore about unitary equivalences between $*$-categories. One natural choice for $W$  is the set the unitary equivalences.

There are cases where one is interested in $*$-categories with a distinguished subset of the unitary morphisms called marked morphisms. We call such a $*$-category a marked $*$-category. 
We can then consider the category $\cC^{+}$ of such marked $*$-categories with functors preserving the marked morphisms. Moreover we can talk about marked isomorphisms between functors between marked $*$-categories. In this case we let $W$ be the subset of morphisms which are invertible up to marked isomorphism.

In the present paper we consider the following categories $\cC$ of $*$-categories 
and their marked versions $\cC^{+}$.

 \begin{enumerate}
 \item $*$-categories $\scat_{1}$:  categories $\bA$ with an involution $*:\bA\to \bA^{\mathrm{op}}$.
 \item $\C$-linear $*$-categories $\ClinCat_{1}$:   $*$-categories enriched over $\C$-vector spaces with an anti-linear involution.
 \item pre-$C^{*}$-categories $\preCcat_{1}$:    $\C$-linear $*$-categories which admit a maximal $C^{*}$-completion. 
 \item $C^{*}$-categories $\Ccat_{1}$: pre-$C^{*}$-categories whose $\Hom$-vector spaces are complete in the maximal norm.
 \end{enumerate}

 If $\bA$ belongs to one of these examples, then a unitary morphism in $\bA$ is a morphism $u$ whose inverse is given by $u^{*}$. A
  marking on $\bA$ is a choice of a subset of unitary morphisms containing all identities
  which is closed under composition and the $*$-operation. A morphism between marked categories must send marked morphisms to marked morphisms. We write
  $\scat_{1}^{+}$, $\ClinCat_{1}^{+}$, $\preCcat_{1}^{+}$ and $\Ccat_{1}^{+}$ for the categories of marked objects in these examples. The subscript $1$ indicates that we consider them as $1$-categories.

The case of $C^{*}$-categories has been considered previously in \cite{DellAmbrogio:2010aa}. Many arguments in the present paper are modifications of    the arguments given in  \cite{DellAmbrogio:2010aa} in order to be applicable in   the other cases.

 \begin{remark}
We consider $\C$-linear $*$-categories since this case fits with the $C^{*}$-examples.
The assertions about the model category on $\ClinCat_{1}$ and the version of Theorem \ref{fewoijowieffwefwef} extends to the case where $\C$ is replaced by an arbitrary ring with involution. 

An analoguous theory for marked preadditive and additive categories  appears in   \cite{addcats}.

\hB
\end{remark}

 We now state the main result in detail.  Let $\cC$  belong to the list of categories \begin{equation}\label{fwefuihiu2r3t4}
 \{\scat_{1},\ClinCat_{1},\preCcat_{1},\Ccat_{1},\scat_{1}^{+},\ClinCat_{1}^{+},\preCcat_{1}^{+},\Ccat_{1}^{+} \}
\end{equation}

\begin{ddd}\label{fkjhifhiueiwhuiwhfiuewefewfe11} \mbox{}
\begin{enumerate}
\item\label{ewoiwoirwerwr} A weak equivalence in $\cC $ is a (marked) unitary equivalence.
\item A cofibration is a  morphism  in $\cC $ which is injective on objects.
\item A fibration is a morphism  in $\cC $ which has the right-lifting property with respect to trivial cofibrations.
\end{enumerate}
\end{ddd}

In condition \ref{ewoiwoirwerwr} the word \emph{marked} only applies to the four marked versions.
For the simplicial structure we refer to Definition   \ref{riooejrgegerreg}  below since its introduction needs more notation.
For the definition of the notion of a cofibrantly generated model category   we refer to \cite[Def. 2.1.17]{hovey}.  

\begin{theorem}\label{fioweofefewfwf} The structures described in Definition \ref{fkjhifhiueiwhuiwhfiuewefewfe11} and Definition \ref{riooejrgegerreg}  equip $\cC$ with a simplicial model category structure. 

If $\cC$ belongs to the list $$ \{\scat_{1},\ClinCat_{1},\Ccat_{1},\scat_{1}^{+},\ClinCat_{1}^{+},\Ccat_{1}^{+}\}\ ,$$ then the model category structure is cofibrantly generated and the underlying category is locally presentable.
\end{theorem}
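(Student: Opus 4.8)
The plan is to verify the model category axioms (in the form of \cite[Def.~2.1.17]{hovey} for the cofibrantly generated part, and the classical axioms MC1--MC5 otherwise) directly, exploiting the very concrete nature of weak equivalences (marked unitary equivalences) and cofibrations (injective-on-objects functors). First I would record the structural facts about the eight categories in \eqref{fwefuihiu2r3t4}: each is complete and cocomplete, and each of the six in the shorter list is locally presentable — for $\scat_1$ and $\ClinCat_1$ (and their marked versions) this is a routine application of the fact that they are categories of models of an essentially algebraic theory, or equivalently accessible equalizer-type subcategories of presheaf categories; for $\Ccat_1$ and $\Ccat_1^+$ one invokes the known local presentability of $C^*$-categories (this is where the restriction to the shorter list appears, since $\preCcat_1$ is a non-reflective, non-locally-presentable subcategory defined by a completion-existence condition that is not limit-closed). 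Establishing this in all eight cases, and producing the proposed generating (trivial) cofibrations in the six locally presentable cases, is the main piece of setup.

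Next I would identify explicit generating sets $I$ and $J$. For cofibrations: the inclusion $\emptyset \to *$ of the empty category into the one-object category with only the identity (in the appropriate enriched/marked sense), together with the map $* \sqcup * \to \{0 \xrightarrow{f} 1\}$ freely adjoining a non-identity morphism, and — in the $\C$-linear and (pre-)$C^*$ cases — maps that impose linearity/norm relations on $\Hom$-spaces; this is directly modeled on \cite{DellAmbrogio:2010aa}. For trivial cofibrations: the inclusion $* \to \{0 \xrightarrow{\ \cong\ } 1\}$ of one object into the "walking unitary isomorphism" (resp. walking marked isomorphism), plus the cofibration version of the maps witnessing fullness/faithfulness on isomorphisms. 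One then checks the small-object argument applies (using local presentability to guarantee smallness of domains), so $I$-cell complexes are cofibrations and $J$-cell complexes are trivial cofibrations, and that a map has the RLP against $J$ iff it is a fibration (this is essentially the definition) and RLP against $I$ iff it is a trivial fibration — the latter amounting to the statement that a functor which is surjective-on-objects, full, and faithful (and appropriately surjective on $\Hom$-spaces/norm-decreasing) is a trivial fibration.

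The functorial factorizations then come from the small-object argument, giving MC5. The retract and 2-out-of-3 axioms for weak equivalences are immediate from the characterization of (marked) unitary equivalences as functors that are essentially surjective-up-to-unitary and unitarily fully faithful. The lifting axiom MC4 has two halves: trivial-cofibration-against-fibration is the definition of fibration; cofibration-against-trivial-fibration requires showing every trivial fibration has the RLP against all injective-on-objects functors, which reduces to a hands-on construction — given the data on objects one extends over morphisms using fullness and over the relations/norms using the surjectivity built into $I$. The one genuinely non-formal point, which I expect to be \emph{the main obstacle}, is verifying that every acyclic cofibration (a cofibration that is also a weak equivalence) is a retract of a $J$-cell complex, equivalently that cofibration $\cap$ weak equivalence $=$ (maps with LLP against fibrations); the standard route is to factor such a map as a $J$-cofibration followed by a fibration, observe the fibration is then a weak equivalence (by 2-out-of-3) and a fibration hence — one must show — a trivial fibration, and conclude by the retract argument. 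Showing that a weak equivalence which is a fibration is a trivial fibration is exactly where the concrete analysis of \cite{DellAmbrogio:2010aa} must be adapted: one uses that a (marked) unitary equivalence which has the RLP against trivial cofibrations can be shown to be surjective on objects and full by lifting against the generators in $J$, and faithful/isometric by a further lift, the subtlety being to produce these lifts compatibly with the $*$-structure and, in the $C^*$-case, with completeness of the $\Hom$-spaces. Finally, the simplicial model category axiom SM7 is checked against the simplicial structure of Definition \ref{riooejrgegerreg} by the usual adjunction juggling, reducing the pushout-product condition to the already-established generating (trivial) cofibrations.
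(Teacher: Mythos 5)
There is a genuine gap: your architecture covers only six of the eight categories, while the first assertion of Theorem \ref{fioweofefewfwf} is claimed for the whole list \eqref{fwefuihiu2r3t4}, including $\preCcat_{1}$ and $\preCcat_{1}^{+}$. You obtain the factorization axiom (and implicitly SM7, which you reduce to generating maps) from the small object argument, but in the pre-$C^{*}$ cases no generating sets are available --- the paper explicitly does not know a bounded morphism classifier or any replacement there (Remark \ref{geieojeoijreoreirferferf}), and it leaves open whether these categories are cofibrantly generated or locally presentable. Your diagnosis that $\preCcat_{1}$ is ``non-reflective, non-locally-presentable'' is not established (it is in fact coreflective in $\ClinCat_{1}$ via $\cF_{pre}\dashv\Bd^{\infty}$, and its local presentability is simply unknown), and in any case dropping these two cases does not prove the stated theorem. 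The paper avoids this by never using the small object argument for the model axioms: it characterizes fibrations as explicit ``good'' morphisms (unitary/marked isofibrations), proves both lifting axioms by hand, and builds functorial factorizations from an explicit mapping cylinder $Z(a)$ and path object $P(a)=\bA\times_{\bB}\cFun^{?}(\bbI,\bB)$, all of which work uniformly in the eight cases; SM7 is likewise verified directly via the pushout-product with $\sharp$ over groupoids, not via generators.

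A second gap concerns the last sentence of the theorem. You ``invoke the known local presentability of $C^{*}$-categories,'' but this is not a citable known fact: \cite{DellAmbrogio:2010aa} proves cofibrant generation for $\Ccat_{1}$ but not local presentability, and the paper must prove it itself (Proposition \ref{efuifzuew9fewfewfwf}), via a nontrivial argument exhibiting the $C^{*}$-categories $\bA(Q,\|-\|)$ generated by finite directed $*$-graphs with chosen $C^{*}$-norms as an $\aleph_{2}$-compact strong generator; the subtle points (the set of norms on $\Lin(\Free_{{}^{*}Cat}(Q))$ has size at most $\aleph_{1}$, forcing $\aleph_{2}$-filtered colimits) have no counterpart in your sketch. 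Your essentially-algebraic argument is plausible for $\scat_{1}^{(+)}$ and $\ClinCat_{1}^{(+)}$ (the paper instead gives a strong-generator argument via free $*$-categories on directed $*$-graphs), but for $\Ccat_{1}^{(+)}$ the completeness-in-norm condition is not algebraic and this step genuinely has to be proved. By contrast, your choice of generating (trivial) cofibrations and the verification that $I$-injectives are the trivial fibrations and $J$-injectives the fibrations is essentially the paper's Section on cofibrant generation (including the need for the bounded morphism classifier $\Delta^{1,bd}$ in the $C^{*}$ case), so that part of the proposal is sound in outline.
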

 \begin{remark}
In the case of $\cC=\Ccat_{1}$ a proof of this theorem (except the local presentability) was given in \cite{DellAmbrogio:2010aa}. \hB
\end{remark}

\begin{remark}\label{dgiowefwefewfewf}
A  cofibrantly generated simplicial model  category which is in addition   locally presentable is called combinatorial  \cite{Dugger:aa}, \cite[A.2.6.1]{htt}. Hence $\scat_{1}$, $\scat_{1}^{+}$, $\ClinCat_{1}$, $\ClinCat_{1}^{+}$,  $\Ccat_{1} $ and $\Ccat_{1}^{+}$ have combinatorial simplicial model  category structures.

At the moment we do not know whether    $\preCcat_{1}$  or $\preCcat_{1}^{+}$  are cofibrantly generated or  locally presentable.
\hB
\end{remark}

\begin{remark}
The existence and combinatoriality of this model category structure on $\scat_{1}$ has been previously  asserted by Joyal in \cite{jnlab}.\footnote{I thank Philip Hackney for pointing this out.} \hB
\end{remark}

 All categories $\cC$ in the list \eqref{fwefuihiu2r3t4}  
have a notion of  (marked) unitary equivalences. Inverting the (marked) unitary equivalences $W_{\cC }$ in the realm of $(\infty,1)$ (short $\infty$)-categories we obtain  the list  $$   \{\scat,\ClinCat,\preCcat,\Ccat,\scat^{+},\ClinCat^{+},\preCcat^{+},\Ccat^{+} \}$$
 of $\infty$-categories   $\cC_{\infty}:=\cC [W_{\cC }^{-1}]$.

More precisely, we model $\infty$-categories as quasi-categories.  Our basic references are \cite{htt} and \cite{cisin}. We identify categories with $\infty$-categories using the nerve functor. In this case we will omit the nerve from the notation. If $(\cC,W)$ is a relative category, then  there exists a 
 localization functor \footnote{In \cite{MR3460765} this localization is called the Dwyer-Kan localization, since it has been first considered  by \cite{dwka} in the context of simplicial categories, and in order to distinguish it from the localizations considered in the book \cite{htt} wich are versions of Bousfield localizations.} 
 \begin{equation}\label{fwefoiu39r32r32r}  \ell:\cC  \to\cC_{\infty}:=\cC[W^{-1}]\ , \footnote{a more precise notation would be $\Nerve(\cC)\to \Nerve(\cC)[W^{-1}]$}\end{equation} see  \cite[Def. 1.3.4.1]{HA}, \cite[7.1.2]{cisin}.   It is
   characterized  essentially uniquely by the universal property that
$$\ell^{*}:\Fun(\cC_{\infty},\cD)\to \Fun_{W}(\cC ,\cD)$$
is an equivalence for every $\infty$-category $\cD$, where $\Fun_{W}$ denotes the full subcategory of functors sending the morphisms in $W$ to equivalences.

\begin{remark}\label{roigegergerg}
The model category structure on $\cC$ asserted in Theorem \ref{fioweofefewfwf} provides a model for $\cC_{\infty}$. 

In general, let $\cC$ be a simplicial model category with   weak equivalences $W$ and set $\cC_{\infty}:=\cC[W^{-1}]$. We consider the full subcategory $\cC^{\mathrm{cf}}$ of $\cC$ of cofibrant/fibrant objects which is enriched in Kan complexes.  If either all objects of $\cC$ are cofibrant, or $\cC$ admits functorial factorizations (e.g., if $\cC$ is combinatorial), then by  \cite{MR584566}, or \cite[Def. 1.3.4.15, Thm. 1.3.4.20]{HA} (and in addition  \cite[Rem. 1.3.4.16]{HA} in the second case) we have an equivalence $$\cC_{\infty}\simeq \Nerve(\cC^{\mathrm{cf}})\ ,$$ where $\Nerve(\cC^{\mathrm{cf}})$ is the   nerve \cite[Def. 1.1.5.5]{htt} of the fibrant simplicial category $ \cC^{\mathrm{cf}}$. In particular, for $A,B$ in $\cC^{\mathrm{cf}}$ we have an equivalence of spaces
 \begin{equation}\label{f34kij3lk4grgr1111}
\Map_{\cC_{\infty}}(\ell(A),\ell(B))\simeq \ell_{\sSet}(\Map_{\cC}(A,B))\ ,
\end{equation}
where $\Map_{\cC}(A,B)$ is the simplicial mapping set and
 $\ell_{\sSet}:\sSet\to \sSet[W^{-1}]\simeq  \Spc$ is the usual localization of the category of simplicial sets at the weak homotopy equivalences.  In order to see this we could use \cite[1.1.(iv)]{MR584566} in order relate  $\ell_{\sSet}(\Map_{\cC}(A,B))$ with $\ell_{\sSet}(\Map_{L^{H}(\cC,W)}(A,B))$, where $L^{H}$ denotes the hammock localization, and \cite[Prop. 1.2.1]{MR3460765}
 in order to relate $L^{H}(\cC,W)$ with the $\infty$-categorical localization $\cC_{\infty}$.
 
 Note that for the equivalence \eqref{f34kij3lk4grgr1111} it actually suffices to assume that $A$ is cofibrant and $B$ is fibrant.

\hB
    \end{remark}

 By \cite[A.3.7.6]{htt} the $\infty$-category, associated as described in Remark \ref{roigegergerg},  to a  simplicial and combinatorial model category is presentable. Consequently Theorem \ref{fioweofefewfwf} implies:
 
 \begin{kor}\label{wfeoifoweifjwfewfewfewfewf}
The $\infty$-categories
$\scat$, $\scat^{+}$, $\ClinCat$, $\ClinCat^{+}$, $\Ccat$ and $\Ccat^{+}$ are presentable.
\end{kor}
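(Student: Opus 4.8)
\emph{Proof proposal.} The plan is to deduce the corollary formally from Theorem \ref{fioweofefewfwf} together with the general comparison between combinatorial simplicial model categories and presentable $\infty$-categories recalled in Remark \ref{roigegergerg} and in \cite[A.3.7.6]{htt}. Fix $\cC$ in the list $\{\scat_{1},\scat_{1}^{+},\ClinCat_{1},\ClinCat_{1}^{+},\Ccat_{1},\Ccat_{1}^{+}\}$. By the second assertion of Theorem \ref{fioweofefewfwf} the model structure on $\cC$ provided by Definitions \ref{fkjhifhiueiwhuiwhfiuewefewfe11} and \ref{riooejrgegerreg} is simplicial, cofibrantly generated, and has locally presentable underlying category; by Remark \ref{dgiowefwefewfewf} this means it is combinatorial.

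Next I would invoke Remark \ref{roigegergerg}. Since $\cC$ is combinatorial it admits functorial factorizations, so $\cC_{\infty}=\cC[W_{\cC}^{-1}]\simeq \Nerve(\cC^{cf})$, where $\cC^{cf}$ is the full simplicial subcategory on the cofibrant/fibrant objects. This exhibits $\cC_{\infty}$ as (equivalent to) the $\infty$-category underlying the combinatorial simplicial model category $\cC$ in the precise sense used in \cite[A.3.7.6]{htt}. Applying that reference, which asserts that the $\infty$-category associated to a combinatorial simplicial model category is presentable, to each of the six cases yields the claim.

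I do not expect a genuine obstacle here, since the substantive work is already contained in Theorem \ref{fioweofefewfwf}; the only point requiring a little care is the bookkeeping that the localization $\cC[W_{\cC}^{-1}]$ entering the definition of $\cC_{\infty}$ agrees, up to equivalence, with the simplicial nerve $\Nerve(\cC^{cf})$ to which \cite[A.3.7.6]{htt} applies, and this identification is exactly the content of Remark \ref{roigegergerg}. (The categories $\preCcat_{1}$ and $\preCcat_{1}^{+}$ are deliberately excluded, as we do not know them to be combinatorial.)
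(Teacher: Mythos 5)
Your argument is correct and is essentially the paper's own proof: the paper likewise combines Theorem \ref{fioweofefewfwf} (combinatoriality, via Remark \ref{dgiowefwefewfewf}) with the identification of $\cC_{\infty}$ from Remark \ref{roigegergerg} and then cites \cite[A.3.7.6]{htt}. Your extra remark on matching the localization $\cC[W_{\cC}^{-1}]$ with $\Nerve(\cC^{cf})$ is exactly the bookkeeping the paper delegates to Remark \ref{roigegergerg}, so there is nothing to add.
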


\subsection{Homotopy fixed points and orbits}\label{gtio34fervervevervrv}

 Let $G$ be a group. The category  of  $G$-objects in   a category $\cC$  is defined as the functor category 
$\Fun(BG,\cC)$. Here $BG$ is the category   with one object $pt$ and $\Hom_{BG}(pt,pt)=G$ such that the composition is given by the multiplication in $G$. 

We now assume that the category $\cC$ belongs to the list $$\{\scat_{1},\ClinCat_{1},\preCcat_{1},\Ccat_{1},\scat_{1}^{+},\ClinCat_{1}^{+},\preCcat_{1}^{+},\Ccat_{1}^{+}\}\ .$$ By 
  $\ell:\cC\to \cC_{\infty}$ we denote the localization   \eqref{fwefoiu39r32r32r} which inverts the (marked) unitary equivalences. Furthermore, we let \begin{equation}\label{vr4iuhiuhrfvrui3f3rfvc}
\ell_{BG}:\Fun(BG,\cC)\to \Fun(BG,\cC_{\infty})
\end{equation}  denote the functor given by post-composition with $\ell$.
 We consider an object $\bA$ of $\cC$ with an action of $G$, i.e, an object of $\Fun(BG,\cC)$. 
 One of the purposes of the present paper is to calculate the object $$\lim_{BG}\ell_{BG}(\bA)\ .$$
 Calculation of this limit amounts more precisely to provide an object $\bB$ of $\cC$ and an equivalence $$\lim_{BG}\ell_{BG}(\bA)\simeq \ell(\bB)\ .$$ 
 Such an object $\bB$ will be defined in Definition \ref{ioweoffewfewf} where it is denoted by  $\hat \bA^{G}$.
   The construction of $\hat \bA^{G} $   as such
is not very surprising and reflects the construction of a two-categorical limit.
 In  Theorem \ref{fewoijowieffwefwef} we verify that 
it indeed represents the $\infty$-categorical  limit, i.e., that  we have an equivalence
$$ \lim_{BG}\ell_{BG}(\bA)\simeq  \ell(\hat \bA^{G})\ .$$
In order to approach the   task  of  the calculation of the $\infty$-categorical limit of the $G$-object $\ell_{BG}(\bA)$, using Theorem \ref{fioweofefewfwf}, we present the $\infty$-category   $\Fun(BG,\cC_{\infty})$ in terms of an injective model category structure on $\Fun(BG,\cC)$, see Remark \ref{roigegergerg}. We then observe that $$\lim_{BG} R(\bA) \cong \hat \bA^{G}  \ ,$$ where $R:\Fun(BG,\cC)\to \Fun(BG,\cC)$ is an explicitly given fibrant replacement functor.  
In model categorical language one would say that $\hat \bA^{G}$ represents the homotopy $G$-invariants in $\bA$.
We then use general results from $\infty$-category theory in order to justify that these homotopy invariants indeed represent the limit in the $\infty$-categorical sense.

We now turn to $G$-orbits.
We  assume that $\cC$ belongs to the list
 $$\{\scat_{1},\ClinCat_{1}, \Ccat_{1},\scat_{1}^{+},\ClinCat_{1}^{+},\Ccat_{1}^{+}\}\ .$$
  If $G$ is a group and $\bA$ is an object of $\cC$, then by $\underline{\bA}$ we denote the object of $\Fun(BG,\cC)$ given by $\bA$ with the trivial action of $G$. 
We are interested in the calculation of the colimit
$$\colim_{BG}\ell_{BG}(\underline{\bA})$$ in $\cC_{\infty}$. This again amounts to provide an object $\bB$ of $\cC$ and an equivalence
$$ \colim_{BG}\ell_{BG}(\underline{\bA})\simeq \ell(\bB)\ .$$
In Section \ref{rgiojogerggerg} we construct a bifunctor
$$\cC\times \Groupoids_{1}\to \cC\ , \quad (\bA,\cG)\mapsto \bA \sharp \cG\ .$$
Our main result is  Theorem \ref{weoijoijvu9bewewfewfwef}  which asserts that  $$\colim_{BG} \ell_{BG}(\underline{\bA})\simeq \ell(\bA\sharp BG)\ .$$
The main point here is again that we calculate a colimit in the infinity-categorical sense.
To this end,  using Theorem \ref{fioweofefewfwf}, we present the $\infty$-category $\Fun(BG,\cC_{\infty})$ in terms of a projective model category structure on $\Fun(BG,\cC)$. Then we show that $$ \colim_{BG} L(\underline{\bA})\cong  \bA\sharp BG \ ,$$
where $L$ is an explicit cofibrant replacement functor. In model categorical language one would say that $
 \bA\sharp BG$ represents the homotopy $G$-orbits of $\underline{\bA}$. We then again use general results from $\infty$-category theory in order to justify that these homotopy orbits   represent the colimit in the $\infty$-categorical sense.

The calculation of $G$-orbits will be applied in order to identify 
the values of an induction functor (Definition  \ref{fgwo9gfwegwwfwefewfwf}) 
$$J^{G}:\bC\stackrel{\underline{(-)}}{\to}\Fun(BG,\cC)\stackrel{\ell_{BG}}{\to} \Fun(BG,\cC_{\infty})
\stackrel{{\mathrm{LKan}}}{\to} \Fun(\Orb(G),\cC_{\infty})\ ,$$ where
${\mathrm{LKan}}$ is the left-Kan extension functor associated  to the 
canonical inclusion $BG\to \Orb(G)$.
By Proposition \ref{rguihiufhwrufwfwefwefewf}, for a subgroup $H$ of $G$,
we get an equivalence
$$J^{G}(\bC)(H\backslash G)\simeq \ell(\bC\sharp BH)\ .$$
This result will be applied in order to  identify the coefficients of certain equivariant homology theories.

\section{(Marked) $*$-categories and linear versions}\label{ifjwofjwefwef}
In this section we introduce the notion of a $*$-category and various $\C$-linear versions.
\begin{ddd}
A $*$-category $(\bA,*)$  is a small category $A$  with an  involution $*:\bA\to \bA^{\mathrm{op}}$  which fixes the objects.
A  morphism between $*$-categories is a functor  between the underlying categories which preserves the involutions. 
\end{ddd}
 
We let $\scat_{1}$ denote the category of   $*$-categories and morphisms  between $*$-categories. Usually we will just use the notation $\bA$ instead of $(\bA,*)$.

 If $f$ is a morphism in a $*$-category, then we will write $f^{*}$ for the image of $f$ under the involution $*$.
 
\begin{example}\label{fioefjewoifewfewfwfw}
Let $G$ be a group. Then the category   $BG$ with morphisms $\Hom_{BG}(pt,pt)=G$ can be  turned  into a $*$-category by setting
$$g^{*}:=g^{-1}\ .$$  
More generally, if $\bG$ is any groupoid, then we can consider $\bG$ as  a $*$-category with the $*$-operation  given by  $g^{*}:=g^{-1}$.
\hB
\end{example}

\begin{ddd}
A  $\C$-linear $*$-category is a $*$-category $\bA$  which  is in addition enriched over $\C$-vector spaces  such that  and for   all objects $a,a^{\prime}$ of $\bA$ the map
$$*:\Hom_{\bA}(a,a^{\prime})\to \Hom_{\bA}(a^{\prime},a)$$ is anti-linear. A morphism between $\C$-linear $*$-categories is a morphism between   $*$-categories which is also a functor between $\C$-vector space enriched categories. 
 \end{ddd}

We let $\ClinCat_{1}$ denote  the category of $\C$-linear $*$-categories and morphisms between $\C$-linear $*$-categories.

\begin{remark}
Note that in a $\C$-linear $*$-category $\bA$ for any two objects $a,a^{\prime}$ in $\bA$
the morphism space $\Hom_{\bA}(a,a^{\prime})$ is not empty. But it may be the zero vector space.
This in particular applies to the endomorphisms $\End_{\bA}(a)$. If this is the zero vector space, then we have $\id_{a}=0$. In this case the object $a$  will be called a zero object. The morphism spaces from and to zero objects are zero vector spaces. \hB

\end{remark}

\begin{example}\label{gerigoergregrege}
An algebra $A$ over $\C$ with an anti-linear involution can be  considered as a $*$-category  $\bA$ with one object $pt$ and the $\C$-vector space   of endomorphisms $\Hom_{\bA}(pt,pt):=A$. The composition is given by the algebra multiplication, and the involution on $A$ induces the $*$-functor. 

An important  example is the underlying algebra  with involution of a $C^{*}$-algebra.

We will also encounter the $\C$-linear $*$-category $\Delta^{0}_{\ClinCat_{1}}$ associated to the algebra $\C$. 
The notation indicates that this $\C$-linear $*$-category is the object classifier (see Definition \ref{fiojoiwfefwefewfew}) in $\ClinCat_{1}$.

A particular example is the zero algebra $0$ and the associated $\C$-linear $*$-category $\bnull$.  The unique object in this $\C$-linear $*$-category is a zero object.
\hB\end{example}

In the following we consider the zero algebra as a $C^{*}$-algebra.
Let $\bA$ be  a $\C$-linear $*$-category.  
\begin{ddd}\label{ffjweowefew}A representation of $\bA$ in a $C^{*}$-algebra $B$ is a map
$\rho:\Mor(\bA)\to B$  with the following properties:
\begin{enumerate}
\item For every two objects $a,a^{\prime}$ of $\bA$ the restriction of $\rho$ to the subset   $\Hom_{\bA}(a,a^{\prime})$  of      $\Mor(\bA)$ is $\C$-linear.
\item $\rho(f\circ g)=\rho(f)\rho(g)$ for all pairs of composeable morphisms $f,g$ in $ \Mor(\bA)$
\item $\rho(f^{*})=\rho(f)^{*}$ for all $f$ in  $\Mor(\bA)$
\end{enumerate}
\end{ddd}

\begin{remark}
One could say that a representation  $\rho$ as in Definition \ref{ffjweowefew}
 is a possibly non-unital morphism of $\C$-linear $*$-categories
$ \bA\to  \bB $, where $\bB$ is the (possibly non-unital) $\C$-linear $*$-category associated to the $C^{*}$-algebra  $B$ as in Example \ref{gerigoergregrege}.
Since we do not want to talk about non-unital morphisms we avoid to use this interpretation. 
\hB
\end{remark}

\begin{example}\label{fifjehwofewfewfwf}
We consider the commutative $\C$-algebra $$A:=\C[x](((x-r)^{-1})_{r\in \R})$$ with $*$-operation given by $x^{*}=x$ as a $\C$-linear $*$-category $\bA$.
Assume that $\rho:\bA\to B$ is a  representation in a non-zero $C^{*}$-algebra. Then $\rho(1)$  is a  non-zero selfadjoint  idempotent which commutes with $\rho(a)$ for all elements $a$ of $A$. We can form the non-zero  $C^{*}$-algebra
$B^{\prime}:=\rho(1)B\rho(1)$ with unit $1^{\prime}:=\rho(1)$ and obtain a unital representation $\rho^{\prime}:\bA\to B^{\prime}$ given by $\rho^{\prime}(a):=\rho(1)\rho(a)\rho(1)$. Then 
$\rho^{\prime}(x)$ would be a selfadjoint element in $B^{\prime}$ with empty spectrum. This is impossible. Therefore
$\bA$ does not admit any   representation in a non-zero $C^{*}$-algebra. \hB
\end{example}

Let $\bA$ be  a $\C$-linear $*$-category and $f$ be a morphism in $\bA$.
\begin{ddd}We define the maximal norm of the morphism  $f$ by 
 $$\|f\|_{\max}:=\sup_{\rho} \|\rho(f)\|_{B} \ ,$$
where the supremum  is taken over all representations of $\bA$ in $C^{*}$-algebras $B$  and $\|-\|_{B}$ denotes the norm of $B$. 
 \end{ddd}
 A priory we have
$\|f\|_{\max}\in [-\infty,\infty]$. 
We note the following facts about the maximal norm on a $\C$-linear $*$-category $\bA$:
\begin{enumerate} \item  For every morphism $f$ of $\bA$ we have  $\|f\|_{\max}\ge 0$ since we always have the  representation into the zero algebra. 
 \item For every two composeable morphisms $f$ and $g$  of $\bA$ we have the inequality
$\|g\circ f\|_{\max}\le \|g\|_{\max}\|f\|_{\max}$. 
\item For every morphism $f$ of $\bA$ we have the $C^{*}$-equality $\|f\|_{\max}^{2}=\|f^{*}f\|_{\max}$.   \item \label{fiofiowefwfwefwef}
For every pair $f,g$ of parallel morphisms  in $\bA$ we have the $C^{*}$-inequality 
$\|f\|_{\max}^{2}\le \|f^{*}\circ f+g^{*}\circ g\|_{\max}$. 
  \end{enumerate}
The last three properties hold true since they are satisfied in every representation of $\bA$.

 The maximal norm is preserved by unitary equivalences  between  $\C$-linear $*$-categories, see Lemma \ref{vijroiregrgregreg} below.

\begin{ddd}
A pre-$C^{*}$-category is a $\C$-linear $*$-category in which   every morphism has a finite norm.
\end{ddd}
We let $\preCcat_{1}$ denote the full subcategory of $\ClinCat_{1}$ of pre-$C^{*}$-categories.

\begin{example}
If $\bA$ comes from a $C^{*}$-algebra $A$ with $\|-\|_{A}$ as in Example \ref{gerigoergregrege}, then the representations of $\bA$ in $C^{*}$-algebras $B$    correspond to $C^{*}$-algebra homomorphisms $A\to B$. Since morphisms between $C^{*}$-algebras are norm-bounded by $1$ we have the equality
$$\|-\|_{\max}= \|-\|_{A}\ .$$
Therefore
$\bA$ is a pre-$C^{*}$-category. \hB
\end{example}

 \begin{example}\label{eoihewioewfwfewfw}
Let $A:=\C[x]$ be the polynomial ring  with the $*$-operation given by complex conjugation. We consider $A$ as a $\C$-linear $*$-category $\bA$. Every real number $\mu$ provides a $*$-homomorphism $\rho_{\mu}:
A\to \C$ given by evaluation of the polynomials at $\mu$.  We have $\|\rho_{\mu}(x)\|=|\mu|$. Hence
$\|x\|_{\max}=\infty$. Consequently, the $\C$-linear $*$-category $\bA$ is not a  pre-$C^{*}$-category. 

If $K$ is a compact subset of $   \R$, then we define for $f$ in $\C[x]$
$$\|f\|_{K}:=\sup_{k\in K}\rho_{k}(f)\ . $$ If
we take the closure of $\C[x]$ with respect to this norm, then we get the $C^{*}$-algebra of continuous functions $C(K)$ on $K$.  This shows that a $\C$-linear $*$-category  which is not a pre-$C^{*}$-category
still may have many non-trivial $C^{*}$-closures. 
\hB
\end{example}

\begin{example}
We consider the $\C$-linear $*$-category $\bA$ from Example \ref{fifjehwofewfewfwf}. Since it has no non-trivial $*$-representations the maximal norm on it is trivial. Consequently it is a pre-$C^{*}$-category. \hB
 \end{example}

\begin{ddd}
A $C^{*}$-category is a pre-$C^{*}$-category   in which for every pair of objects  
the morphism vector space  is a Banach space with respect to the norm $\|-\|_{\max}$.
\end{ddd}

We let $\Ccat_{1}$ denote the full subcategory of $\preCcat_{1}$ of $C^{*}$-categories.

\begin{remark}\label{wreoiwoifwefewf}
According to the  usual definition   a $C^{*}$-category $\bC$  is a $\C$-linear $*$-category in which the morphism spaces are equipped with norms such that:
\begin{enumerate}
\item The morphism spaces are complete.
\item For composable morphisms we have $\|f\circ g\|\le \|g\|\|f\|$.
\item 
 The $C^{*}$-identity $\|f^{*}\circ f\|=\|f\|^{2}$ holds true for every morphism $f$. 
 \item For every pair $f,g$ of parallel morphisms the $C^{*}$-inequality 
 $\|f\|^{2}\le \|f^{*}f+g^{*}g\|$ holds true.
\end{enumerate}
We claim that the   maximal norm on such a category considered as a $\C$-linear $*$-category  coincides with the given norm. 
Since every  representation of   $C^{*}$-categories (in the usual definition)  is norm-decreasing
we conclude that  the maximal norm on $\bC$ is smaller than the given norm.

In order to show equality we form the $C^{*}$-algebra $$A(\bC):=\bigoplus_{c,c^{\prime}\in \bC} \Hom_{\bC}(c,c')$$
 (the sum is taken in the category of Banach spaces and involves completion) with the composition given by matrix multiplication and the obvious $*$-operation.
 Let $\rho:\bC\to A(\bC)$  denote the canonical representation. Then $\|f\|=\|\rho(f)\|_{A(\bC)}$.

This implies that our definition of a $C^{*}$-category coincides with the classical one.
Furthermore, being a $C^{*}$-category is just a property of a $\C$-linear $*$-category and not an additional structure.
\hB

\end{remark}
We have a chain of functors   \begin{equation}\label{whchiwehciuwchuiwecwecwc}
\Ccat_{1} \subseteq \preCcat_{1}\subseteq \ClinCat_{1} \to \scat_{1}\to \Cat_{1}\ ,
\end{equation}
where the first two are fully faithful.

\begin{remark}\label{eiuuifwefewfewfw}
The categories  $\Ccat_{1}$,  $\ClinCat_{1}$,    $ \scat_{1}$ and $\Cat_{1}$ are closed under taking full subcategories.
For $\Ccat_{1}$ this follows from Remark \ref{wreoiwoifwefewf},   and for the other three examples this is clear.

For $\preCcat_{1}$,  going to a full subcategory may increase  the maximal norm since there might be functors from the subcategory to $C^{*}$-algebras     which do not extend to the whole category. We can not exclude that it becomes infinite. \hB
\end{remark}

Let $\cC$ be a member of the list \begin{equation}\label{wvwevewvwvv} \{\scat_{1},\Ccat_{1} ,  \ClinCat_{1} ,          \Ccat_{1} \}
\end{equation} 
and  $\bA$ be an object of $\cC$.

Let $a,a^{\prime}$   be objects  of $\bA$,  and $u$ be a morphism in $ \Hom_{\bA}(a,a^{\prime})$.
\begin{ddd}\label{fowpfewfefw}
The morphism $u$ is called unitary if
$u^{*}u=\id_{a}$ and $uu^{*}=\id_{a^{\prime}}$.

\end{ddd}
 Note that the zero morphism between two zero objects is  unitary.

\begin{ddd} 
A marking on $\bA$ is a choice of a subset of the unitary morphisms containing all identities  which is preserved by the involution $*$ and closed under  composition.
\end{ddd}

We can talk about marked objects in $\cC$.  A marked object in $\cC$  has an underlying object in $\cC$ obtained by forgetting the marking. 

\begin{ddd}A morphism between two marked objects in $\cC$ is a morphism between the underlying objects in $\cC$  which sends marked morphisms to marked morphisms.
\end{ddd}

In this way we obtain categories
$\Ccat_{1}^{+}$,  $\ClinCat_{1}^{+}$,    $ \scat_{1}^{+}$,  or $\Cat_{1}^{+}$
of marked objects and morphisms between marked objects.

\begin{remark}\label{wfiuwehifewfwefwe} Let $\cC$ be in the list $\{\Ccat_{1}^{+}$,  $\ClinCat_{1}^{+}$,    $ \scat_{1}^{+}$,  $\Cat_{1}^{+}\}$ and $\bA$ be an object of $\cC$.
  Then we can consider the subcategory
 $\bA^{+}$ of $\bA$ with the same objects as $\bA$ and marked morphisms.  Note that $\bA^{+}$ is a groupoid.  A morphism $f:\bA\to \bB$ in $\cC$ induces a morphism of    categories
 $f^{+}:\bA^{+}\to \bB^{+}$.
   \end{remark}

\begin{example}\label{34toihoi34t34t34t34tf} Let $\cC$ be in the list  $\{\scat_{1},\Ccat_{1} ,  \ClinCat_{1} ,          \Ccat_{1} \}
$ and 
 $\bA$ be an object of $\cC$. Then we can consider the marked
category $\mi(\bA)$ in $\cC^{+}$ whose marked morphisms are exactly the identities.
We can also consider the marked category $\ma(\bA)$ in $\cC^{+}$ whose marked morphisms are all unitary morphisms.
\hB
\end{example}

 \section{Adjunctions}
 
 The inclusions in the chain \eqref{whchiwehciuwchuiwecwecwc} are right or left adjoints of adjunctions  which we will now describe. The presence of these adjunctions turns out to be useful at various places.
 They will be lifted to infinity-categorical versions in Section \ref{ufhewiuhewfewf}.
 
 As a convention we will denote forgetful functors by the symbol $\cF$ with a subscript    indicating which structure or property is forgotten. Most of the functors below come in two versions, one for the unmarked, and one for the marked case.  We will use the same notation for both.   
 
 Given a (marked) category $\bA$ we can form the free (marked) $*$-category $\Free_{*}(\bA) $ on $\bA$.
  We have   adjunctions \begin{equation}\label{ferferfeoijiorojr34r34r34r}
\Free_{*}:\Cat_{1}\leftrightarrows \scat_{1}:\cF_{*}\ , \quad \Free_{*}:\Cat_{1}^{+}\leftrightarrows \scat_{1}^{+}:\cF_{*}\ ,
\end{equation}
 where $\cF_{*}$ denotes the  functor which forgets the $*$-operation.

 \begin{remark}
If $\bA$ is a category, then $\Free_{*}(\bA)$ is obtained from $\bA$ by  adding a morphism $f^{*}:a^{\prime}\to a$ for every morphism $ f:a\to a^{\prime}$ in $\bA$ with the only relation that $(f\circ g)^{*}=g^{*}\circ f^{*}$. 

A marked category (i.e., an object of $\Cat_{1}^{+}$) is a category with a distinguished set of isomorphisms.  
For a   marked category $\bA$, in the definition of $\Free_{*}(\bA)$,  we adopt the additional  relation
$f^{*}=f^{-1}$ for all marked   marked morphisms $f$ of $\bA$ (which are isomorphisms by definition).
 \hB
 \end{remark}

We furthermore have adjunctions \begin{equation}\label{fwefweiufhui23r}
\Lin: \scat_{1}\leftrightarrows  \ClinCat_{1}:\cF_{\C}\ , \quad \Lin: \scat^{+}_{1}\leftrightarrows  \ClinCat_{1}^{+}:\cF_{\C}\ ,
\end{equation}
where $\Lin$ is the linearization functor, and $\cF_{\C}$ denotes the functor which forgets the $\C$-linear structure.

 \begin{remark} Here is the explicit description of $\Lin$. If $\bA$ is a $*$-category, then
$\Lin(\bA)$ has the same objects as $\bA$, but its $\C$-vector space of morphisms is given by
$$\Hom_{\Lin(\bA)}(a,a^{\prime}):=\C[\Hom_{\bA}(a,a^{\prime})]\ ,$$
and the composition is defined in the canonical way.
 
The  functor  $*:\Lin(\bA)\to \Lin(\bA)^{\mathrm{op}}$ is defined as the anti-linear extension of $*$ on $\bA$.

For a set $X$ and an element $x$ of $X$ we consider $x$ as an element of the complex vector space $\C[X]$ generated by $X$ in the canonical way. This gives a canonical map  of sets $X\to \C[X]$.

In the marked case 
the set of  marked morphisms in $\Hom_{\Lin(\bA)}(a,a^{\prime})$ is defined to be the image of the set of marked morphisms in $\bA$ under the  
canonical map 
$$\Hom_{\bA}(a,a^{\prime})\to  \C[\Hom_{\bA}(a,a^{\prime})]=\Hom_{\Lin(\bA)}(a,a^{\prime})\ .$$

 For a $\C$-linear $*$-category $\bB$ we check  the natural bijection \begin{equation}\label{vervevlkjl3jlkferfeferfe}
\Hom_{\ClinCat_{1}}(\Lin(\bA),\bB)\cong \Hom_{\scat_{1}}(\bA,\cF_{\C}(\bB))\ .
\end{equation} 
It identifies a morphism $\Phi:\bA\to \cF_{\C}(\bB)$ with a morphism $\Psi:\Lin(\bA)\to \bB$.
The functors coincide on objects.
Given $\Phi$ and a morphism $f$ in $\Hom_{\Lin(\bA)}(a,a^{\prime})$ we define $$\Psi(f):=\sum_{\phi\in \Hom_{\bA}(a,a^{\prime})}\lambda_{\phi}\Phi(\phi)$$ in $\Hom_{\bB}(\Phi(a),\Phi(a^{\prime}))$, where the equality  $f=\sum_{\phi\in \Hom_{\bA}(a,a^{\prime})}\lambda_{\phi}\phi$ uniquely determines the collection of complex numbers $(\lambda_{\phi})_{\phi\in \Hom_{\bA}(a,a^{\prime}) }$.
Vice versa, if
$\Psi$ is given, then for $f$ in $\Hom_{\bA}(a,a^{\prime})$ we define
$\Phi(f):=\Psi(f)$ in $\Hom_{\bB}(\Psi(a),\Psi(a^{\prime}))$.

In the marked case, by an inspection of these explicit formulas, one checks that the bijection \eqref{vervevlkjl3jlkferfeferfe} restricts to a bijection between the sets of 
 morphisms between marked objects.
\hB
\end{remark}

\begin{remark}
If $\bB$ contains zero objects, then
$\Hom_{\ClinCat_{1}}(\bB, \Lin(\bA))=\emptyset$ for every $*$-category $\bA$. \hB
\end{remark}

  We  have  adjunctions \begin{equation}\label{ewfwoijioiffewfwefw}
 \compl:\preCcat_{1} \leftrightarrows \Ccat_{1}:\cF_{-}\ , \quad  \compl:\preCcat_{1}^{+} \leftrightarrows \Ccat_{1}^{+}:\cF_{-}
\end{equation}
where $\cF_{-}$ forgets the completeness condition and  $\compl$ is the  completion functor.

\begin{remark}
In the following we give an explicit description of the completion functor.

Let $\bA$ be a pre-$C^{*}$-category.
 The completion functor is the identity on objects. Furthermore, it completes 
 the morphism spaces in the norm $\|-\|_{\max}$.  Note that this completion involves factoring out the subspace of elements of zero norm.
In this completion process the objects $a$ of $\bA$ with $\|\id_{a}\|=0$ become zero objects.

In the marked case the set of  marked morphisms in $\Hom_{\compl(\bA)}(a,a^{\prime})$  
is defined to be  the image of the set of marked morphisms in $\Hom_{ \bA }(a,a^{\prime})$
under the natural map
$
\Hom_{ \bA }(a,a^{\prime})\to \Hom_{ \compl(\bA)}(a,a^{\prime})$.
This is well-defined since this map  preserves unitaries.

Let us check   the bijection  \begin{equation}\label{ed23d2d23d32d32d3}
\Hom_{\Ccat_{1}}(\compl(\bA),\bB)\cong \Hom_{\preCcat_{1}}(\bA,\cF_{-}(\bB))
\end{equation} 
for a $C^{*}$-category $\bB$. 
This bijection sends a  morphism  $\Phi:\bA\to \cF_{-}(\bB)$  to  a morphism $\Psi :\compl(\bA)\to \bB$ and vice versa.
These functors coincide on objects.
Given $\Phi$ we can define $\Psi$ by extension by continuity (using that the morphism spaces in $\bB$ are complete).   Note that $\Phi$ necessarily annihilates all morphisms of zero norm and therefore factorizes over the quotients taken in the process of completion.
 
Vice versa, given $\Psi$ we define $\Phi$ by restriction along the natural maps
$$\Hom_{\bA}(a,a^{\prime})\to \Hom_{\compl(\bA)}(a,a^{\prime})$$ for all
pairs of objects $a,a^{\prime}$ of $\bA$.

One easily checks that these processes are inverse to each other.

In the marked case we observe by an inspection that the bijection \eqref{ed23d2d23d32d32d3} identifies morphisms between marked objects.
\hB
\end{remark}

\begin{example}
We continue with Example \ref{fioefjewoifewfewfwfw}. The linearization $$\Lin(BG) \cong \C[G]$$ of $G$ with its usual involution is  a $\C$-linear $*$-category. It is actually a pre-$C^{*}$-category.
In order to see this note that the elements of the group go to partial isometries in every representation.
Furthermore, $$\End_{\compl(\Lin(BG))}(pt)=:C^{*}_{\max}(G)$$ is the maximal group-$C^{*}$-algebra. \hB
\end{example}

\begin{example}
We consider the $\C$-linear $*$-category $\bA$ from Example \ref{fifjehwofewfewfwf}.  We get
$\compl(\bA)\cong \bnull$.  \hB
 \end{example}

The relation between $\C$-linear $*$-categories and pre-$C^{*}$-categories is more complicated.
Let $\bA$ be a $\C$-linear $*$-category. Then we can define a subcategory
$\Bd(\bA)$ of $  \bA$ which has the same objects as $\bA$, and whose morphisms are those morphisms of $\bA$ with finite maximal norm. Note that $\Bd(\bA)$ is closed under composition and contains the identities of the objects since they are sent to selfadjoint projections in any representation of $\bA$ in a $C^{*}$-algebra and therefore have maximal norm bounded  above by $1$. Hence $\Bd(\bA)$ is indeed a category. It is furthermore clear that $\Bd(\bA)$ is a $\C$-linear $*$-category with enrichment and $*$-operation induced from $\bA$.

Since  $\Bd(\bA)$ may have representations to $C^{*}$-algebras  which do not extend to $\bA$ we can not  expect that $\Bd(\bA)$ is a pre-$C^{*}$-category. But we can iterate the construction and consider
$$\Bd^{\infty}(\bA):=\bigcap_{n\in \nat} \Bd^{n}(\bA)\ .$$
This is a $\C$-linear $*$-subcategory of $\bA$.

Assume now that $\bA$ is a marked  $\C$-linear $*$-category. Then all marked morphisms of $\bA$ also belong to $\Bd(\bA)$ since unitary morphisms in $\bA$ have norm bounded by one. So $\Bd(\bA)$ and hence $\Bd^{\infty}(\bA)$ has a canonical marking consisting of all marked morphisms in $\bA$.

\begin{example} We continue with Example
\ref{eoihewioewfwfewfw}. The bounded elements in $\C[x]$ are the constant polynomials. Hence $\Bd(\C[x])\cong \Delta^{0}_{\ClinCat_{1}}$.\hB
\end{example}

Let $\bA$ be a (marked)  $\C$-linear $*$-category.
\begin{lem}\label{uhfiuhfiwefewfewfewfewf}\mbox{}
\begin{enumerate}
\item \label{fuhweuifewfwfw} $\Bd^{\infty}(\bA)$ is a (marked) pre-$C^{*}$-category.
\item \label{frefrkjhgiekgergerg} Any functor $\bC\to \bA$ of (marked) $\C$-linear $*$-categories where $\bC$ is a (marked) pre-$C^{*}$-category factorizes uniquely over
$\Bd^{\infty}(\bA)$.
\item\label{frefrkjhgiekgergerg1} There are  adjunctions \begin{equation}\label{fiioioefjeofiewfewfwf}
\cF_{\pre}:\preCcat_{1}\leftrightarrows \ClinCat_{1}:\Bd^{\infty}\ , \quad  \cF_{\pre}:\preCcat_{1}^{+}\leftrightarrows \ClinCat_{1}^{+}:\Bd^{\infty}\ ,
\end{equation}
where $\cF_{\pre}$ denotes the inclusion.
\end{enumerate}
\end{lem}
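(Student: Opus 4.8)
The three parts are logically interlocked: part \ref{frefrkjhgiekgergerg1} is formally a consequence of parts \ref{fuhweuifewfwfw} and \ref{frefrkjhgiekgergerg} together with the fact that $\cF_{pre}$ is fully faithful, so the bulk of the work is in the first two parts. I would prove part \ref{fuhweuifewfwfw} first, then part \ref{frefrkjhgiekgergerg}, and deduce part \ref{frefrkjhgiekgergerg1}.

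\textbf{Part \ref{fuhweuifewfwfw}: $\Bd^{\infty}(\bA)$ is a pre-$C^{*}$-category.} First I would record that each $\Bd^{n}(\bA)$ is a $\C$-linear $*$-subcategory of $\bA$ containing all the identities (the argument for $\Bd$ given just above the lemma applies verbatim to iterates, since the identities go to selfadjoint projections in any representation), hence $\Bd^{\infty}(\bA)=\bigcap_n \Bd^n(\bA)$ is again a $\C$-linear $*$-subcategory, and in the marked case it carries the induced marking (all marked morphisms of $\bA$ are unitary, hence have norm $\le 1$, hence lie in every $\Bd^n(\bA)$). The content is that \emph{every} morphism of $\Bd^{\infty}(\bA)$ has finite maximal norm \emph{computed in $\Bd^{\infty}(\bA)$}. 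The key observation is a monotonicity/stabilization statement: for a morphism $f$ of $\bA$, write $N_k(f):=\|f\|_{max}$ computed in $\Bd^k(\bA)$ (allowing the value $\infty$). Since every representation of $\Bd^{k+1}(\bA)$ restricts along the inclusion $\Bd^{k+1}(\bA)\hookrightarrow \Bd^k(\bA)$ — wait, the inclusion goes the other way; a representation of the subcategory $\Bd^{k+1}(\bA)$ need not extend — I need to be careful. The correct direction: $\Bd^{k+1}(\bA)$ is a full... no, it is a (not necessarily full) subcategory of $\Bd^k(\bA)$, and a representation of $\Bd^k(\bA)$ restricts to one of $\Bd^{k+1}(\bA)$, which gives $N_{k+1}(f)\le N_k(f)$ for morphisms $f$ of $\Bd^{k+1}(\bA)$. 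Thus $\Bd^{k+1}(\bA)=\Bd(\Bd^k(\bA))$ consists of the morphisms of $\Bd^k(\bA)$ with $N_k(f)<\infty$. The point is then: a morphism $f$ of $\Bd^{\infty}(\bA)$ lies in every $\Bd^k(\bA)$, and I must show $\|f\|$ computed \emph{in $\Bd^{\infty}(\bA)$} is finite. The clean way is to show $\Bd(\Bd^{\infty}(\bA))=\Bd^{\infty}(\bA)$, i.e. $\Bd^{\infty}$ is a fixed point of $\Bd$; finiteness of the norm of $f$ in $\Bd^{\infty}(\bA)$ is exactly membership of $f$ in $\Bd(\Bd^\infty(\bA))$. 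Since a representation of $\Bd^{\infty}(\bA)$ restricts from nothing bigger automatically, I instead argue: the norm of $f$ in $\Bd^{\infty}(\bA)$ is $\le$ the norm of $f$ in $\Bd^{k}(\bA)$ for every $k$ (because representations of $\Bd^k(\bA)$ restrict to $\Bd^{\infty}(\bA)$, giving $\sup$ over a larger family on the right), i.e. $\|f\|_{\Bd^\infty}\le \inf_k N_k(f) = N_{\infty}(f)$, say; and $f\in\Bd^{k+1}(\bA)$ for all $k$ means $N_k(f)<\infty$ for all $k$, but that does not a priori bound the infimum away from... it does: $N_0(f)<\infty$ already, since $f\in\Bd^1(\bA)=\Bd(\bA)$ means $\|f\|_{max}^{\bA}<\infty$, and $\|f\|_{\Bd^\infty}\le \|f\|_{max}^{\bA}<\infty$. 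So finiteness is immediate once $f\in\Bd(\bA)$, which holds for all $f\in\Bd^\infty(\bA)$. Hence $\Bd^{\infty}(\bA)$ is a pre-$C^{*}$-category. (The iteration is genuinely needed not for finiteness but for part \ref{frefrkjhgiekgergerg}: a single $\Bd$ need not absorb all pre-$C^{*}$-subcategories.)

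\textbf{Part \ref{frefrkjhgiekgergerg}: universal factorization.} Let $j:\bC\to\bA$ be a morphism with $\bC$ a pre-$C^{*}$-category. I claim $j$ factors through $\Bd^{\infty}(\bA)$. By induction on $n$ it suffices to show: if $j$ factors through $\Bd^{n}(\bA)$, then it factors through $\Bd^{n+1}(\bA)=\Bd(\Bd^{n}(\bA))$; the base case $n=0$ is $\Bd^0(\bA)=\bA$. So assume $j$ factors as $\bC\xrightarrow{j_n}\Bd^n(\bA)\hookrightarrow\bA$. I must show every morphism $f$ of $\bC$ has finite maximal norm in $\Bd^n(\bA)$, i.e. $j_n(f)\in\Bd(\Bd^n(\bA))$. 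Given a representation $\rho:\Bd^n(\bA)\to B$, the composite $\rho\circ j_n:\bC\to B$ is a representation of $\bC$, so $\|\rho(j_n(f))\|_B\le \|f\|_{max}^{\bC}<\infty$ since $\bC$ is a pre-$C^{*}$-category; taking the supremum over $\rho$ gives $\|j_n(f)\|_{max}^{\Bd^n(\bA)}\le\|f\|_{max}^{\bC}<\infty$. Hence $j_n$ factors through $\Bd^{n+1}(\bA)$, completing the induction, so $j$ factors through $\bigcap_n\Bd^n(\bA)=\Bd^{\infty}(\bA)$. Uniqueness is clear because $\Bd^{\infty}(\bA)\hookrightarrow\bA$ is a (faithful, injective-on-objects) monomorphism in $\ClinCat_1$. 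In the marked case the same induction applies, noting that marked morphisms of $\bC$ are unitary hence sent by $j$ into the marking of $\bA$, which is contained in $\Bd^{\infty}(\bA)$'s marking; and a morphism of marked $\C$-linear $*$-categories $\bC\to\bA$ landing in $\Bd^{\infty}(\bA)$ automatically preserves markings as a morphism into $\Bd^{\infty}(\bA)$.

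\textbf{Part \ref{frefrkjhgiekgergerg1}: the adjunction.} Given part \ref{fuhweuifewfwfw}, $\Bd^{\infty}$ is a well-defined functor $\ClinCat_1\to\preCcat_1$ (functoriality: a morphism $\bA\to\bA'$ sends morphisms of finite maximal norm to morphisms of finite maximal norm by the restriction-of-representations argument, hence restricts to $\Bd^n$ for all $n$, hence to $\Bd^\infty$; similarly in the marked case). Part \ref{frefrkjhgiekgergerg} gives, for $\bC$ a pre-$C^{*}$-category and $\bA$ a $\C$-linear $*$-category, a natural bijection
\[
\Hom_{\ClinCat_1}(\cF_{pre}(\bC),\bA)\;\cong\;\Hom_{\preCcat_1}(\bC,\Bd^{\infty}(\bA))
\]
(every $j$ on the left factors uniquely through $\Bd^{\infty}(\bA)$, giving the right side; composing with the inclusion gives the inverse), and naturality in both variables is routine from uniqueness of the factorization. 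The same argument with markings gives the marked adjunction.

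\textbf{Main obstacle.} The only genuinely delicate point is bookkeeping the direction of the norm inequalities under the (non-full) subcategory inclusions $\Bd^{k+1}(\bA)\subseteq\Bd^k(\bA)$ — ensuring that passing to a subcategory can only \emph{decrease} the maximal norm (because representations restrict) while what one needs for the pre-$C^{*}$ property is an \emph{upper} bound, which is supplied already at the first stage $\Bd(\bA)\subseteq\bA$ — and correspondingly being precise that the iteration $\Bd^{\infty}$ is needed for the universal property (part \ref{frefrkjhgiekgergerg}), not for finiteness (part \ref{fuhweuifewfwfw}). Everything else is a straightforward induction plus the standard ``restrict a representation along a functor'' manoeuvre.
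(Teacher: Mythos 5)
Your parts \ref{frefrkjhgiekgergerg} and \ref{frefrkjhgiekgergerg1} are correct and essentially the paper's own argument (the paper runs your induction as a minimal-$n$ contradiction, and likewise deduces the adjunction formally). The genuine gap is in part \ref{fuhweuifewfwfw}: you have the monotonicity of the maximal norm under passage to subcategories backwards. If $\bD'\subseteq \bD$ is a $*$-subcategory, then every representation of $\bD$ restricts to one of $\bD'$, but $\bD'$ may admit further representations that do not extend; so the supremum defining $\|f\|_{max}$ in $\bD'$ runs over a \emph{larger} family, and the maximal norm can only \emph{increase}: $\|f\|_{max}^{\bD}\le \|f\|_{max}^{\bD'}$. (Applying your own "restrict along a functor" manoeuvre to the inclusion $\Bd^{\infty}(\bA)\hookrightarrow \bA$ gives exactly this direction, and it is what Remark \ref{eiuuifwefewfewfw} of the paper records; it is the very reason the paper warns that $\Bd(\bA)$ need not be a pre-$C^{*}$-category.) Consequently your chain $\|f\|_{\Bd^{\infty}(\bA)}\le \inf_k N_k(f)\le \|f\|_{max}^{\bA}<\infty$ is reversed, and "finiteness is immediate once $f\in \Bd(\bA)$" does not follow: membership in $\Bd(\bA)$ bounds $f$ only over representations of $\bA$, whereas the pre-$C^{*}$ condition for $\Bd^{\infty}(\bA)$ demands a bound over representations of $\Bd^{\infty}(\bA)$ itself, which may be strictly more. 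Note also the internal inconsistency: if your inequality were valid, then $\Bd(\bA)$ would already be a pre-$C^{*}$-category and, by the first step of your own induction in part \ref{frefrkjhgiekgergerg}, would already enjoy the universal property, making the iteration superfluous — the opposite of your closing claim that the iteration is needed for part \ref{frefrkjhgiekgergerg} but not for finiteness.

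The paper's route for part \ref{fuhweuifewfwfw} is different in exactly this respect: it invokes the stabilization $\Bd^{\infty}(\bA)\cong \Bd(\Bd^{\infty}(\bA))$, so that a morphism $f$ of $\Bd^{\infty}(\bA)$ is a morphism of $\Bd(\Bd^{\infty}(\bA))$ and hence, by the definition of $\Bd$ applied to the category $\Bd^{\infty}(\bA)$, has finite maximal norm \emph{computed in} $\Bd^{\infty}(\bA)$ — which is the quantity the pre-$C^{*}$ condition refers to. In other words, finiteness has to be produced for representations of $\Bd^{\infty}(\bA)$ itself and cannot be inherited from the ambient category along the inclusion. To repair your write-up you would need to establish this fixed-point identity $\Bd(\Bd^{\infty}(\bA))=\Bd^{\infty}(\bA)$ (the nontrivial inclusion being that every morphism of $\Bd^{\infty}(\bA)$ has finite maximal norm there), rather than appeal to $f\in\Bd(\bA)$; the rest of your argument can stand as is.
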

 \begin{proof}
 We can identify $\Bd^{\infty}(\bA)\cong \Bd(\Bd^{\infty}(\bA))$. 
 Let   $f$ be a morphism in $\Bd^{\infty}(\bA)$. Then we can consider $f$ as a morphism in  $ \Bd(\Bd^{\infty}(\bA))$ which implies that $f$ has a finite
 maximal norm. This proves \ref{fuhweuifewfwfw}.
 
Let $\phi:\bC\to \bA$ be a morphism of $\C$-linear $*$-categories where $\bC$ is a pre-$C^{*}$-category. We show \ref{frefrkjhgiekgergerg} by contradiction. Assume that there exists a natural number $n $ such that
$\phi(\bC)\subseteq \Bd^{n}(\bA)$, but $\phi(\bC)\not\subseteq \Bd^{n+1}(\bA)$.
Then there exists a morphism $f$ in  $\bC$ and  a family of   representations $(\rho_{k})_{k\in \nat}$ of $\Bd^{n}(\bA)$ in a family of  $C^{*}$-algebras $(B_{k})_{k\in \nat}$ such that $\|\rho_{k}(\phi(f))\|_{B_{k}}\ge k$ for every natural number $k$. Since  the composition $\rho_{k}\circ \phi$ is a    representation  of $\bC$ we see that  $k\le \|f\|_{\max}$ for  every natural number $k$.   This contradicts the assumption that $\bC$ is a pre-$C^{*}$-category.

In the marked case we observe that $\phi$ sends marked morphisms in $\bC$ to $\Bd^{\infty}(\bA)$ since marked morphisms are unitary and $\phi$ preserves unitaries.

The Assertion \ref{frefrkjhgiekgergerg1} now follows from Assertion \ref{frefrkjhgiekgergerg}. \end{proof}

Let $\cC$ be a member of the list $$\{\Cat_{1},\scat_{1},\ClinCat_{1},\preCcat_{1},\Ccat_{1}\}$$  and $\cC^{+}$ denote the corresponding marked version.
We have a canonical functor $\cF_{+}:\cC^{+}\to \cC$ which forgets the marking. This functor fits into adjunctions
\begin{equation}\label{wecoihorgrtb}
\mi :\cC\leftrightarrows \cC^{+}:\cF_{+}\ , \quad \cF_{+}:\cC^{+}\leftrightarrows \cC :\ma 
\end{equation}

The left-adjoint $\mi$ of $\cF_{+}$ marks the identities, and the right-adjoint $\ma$ marks all unitaries (or invertibles in the case of $\Cat_{1}$, respectively).

\section{Classifier categories}\label{fiowufoefewfewfewfwf}

In this section we discuss the representability of the functors which take  the sets
objects, (bounded) morphisms, unitary morphisms (or marked morphisms) of a  (marked) $*$-category in the respective cases. The role of this section is   an illustration. We use the opportunity to explain the categorical meaning of the examples which will be used later.

  Let $\cC$ be in the list
 $$\{\Cat_{1},\scat_{1},\ClinCat_{1},\preCcat_{1},\Ccat_{1},\Cat_{1}^{+},\scat_{1}^{+},\ClinCat_{1}^{+},\preCcat_{1}^{+},\Ccat_{1}^{+}\}\ .$$
 \begin{lem} 
The functor $\cC\to \Set$ which sends a category   in $\cC$ to its set of   objects is representable.
 \end{lem}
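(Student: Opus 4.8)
The plan is to exhibit a specific object in each category $\cC$ that corepresents the ``set of objects'' functor, by the standard argument that a one-point set's worth of data is precisely one object. Concretely, I would define the \emph{object classifier} $\Delta^{0}_{\cC}$ to be the terminal-on-one-object category: it has a single object $\ast$, endomorphism set/space the smallest admissible one ($\{\id_{\ast}\}$ in the $\Cat_{1}$ and $\scat_{1}$ cases, the $\C$-linear span $\C\cdot\id_{\ast}\cong\C$ in the $\C$-linear, pre-$C^{*}$- and $C^{*}$- cases), the trivial involution, and — in the marked versions — the minimal marking $\mi$, i.e.\ only $\id_{\ast}$ is marked. One must check this is a legitimate object of $\cC$ in each case: for $\ClinCat_{1}$ it is the $\C$-linear $*$-category attached to the algebra $\C$ as in Example~\ref{gerigoergregrege}; for $\preCcat_{1}$ and $\Ccat_{1}$ one notes $\C$ with its standard norm is already a $C^{*}$-algebra, so $\|\id_{\ast}\|_{max}=1<\infty$ and the $\Hom$-space is complete, hence $\Delta^{0}_{\cC}\in\Ccat_{1}\subseteq\preCcat_{1}$.

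Next I would produce the natural bijection
$$
\Hom_{\cC}(\Delta^{0}_{\cC},\bA)\;\xrightarrow{\ \sim\ }\;\Ob(\bA)\ ,\qquad F\mapsto F(\ast)\ .
$$
Injectivity and surjectivity both follow because a morphism in $\cC$ out of $\Delta^{0}_{\cC}$ is forced: on objects it is determined by the image $a:=F(\ast)$, and on morphisms it must send $\id_{\ast}\mapsto\id_{a}$, extended $\C$-linearly in the linear cases (so $\lambda\id_{\ast}\mapsto\lambda\id_{a}$), which manifestly respects composition, the $\C$-structure, the involution, the norm (it is isometric onto $\C\cdot\id_a$, using the $C^{*}$-identity $\|\id_a\|=\|\id_a\|^2$ so $\|\id_a\|\in\{0,1\}$ — and in the pre-$C^*$/$C^*$ cases $\|\id_a\|\le 1$ automatically while a possible $0$ only occurs at zero objects, in which case $\lambda\id_\ast\mapsto 0$ still works), and, in the marked case, the marking condition (the only marked morphism $\id_{\ast}$ goes to $\id_{a}$, which is marked in every marking). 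Conversely any $a\in\Ob(\bA)$ defines such an $F$. Naturality in $\bA$ is immediate: post-composing $F$ with a functor $\bA\to\bB$ sends $F(\ast)$ to the image object.

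There is essentially no hard step here; the statement is a bookkeeping exercise. The one point that deserves care is the pre-$C^{*}$- and $C^{*}$-cases, where one must confirm both that $\Delta^{0}_{\cC}$ genuinely lies in the subcategory (finiteness of the maximal norm, completeness) and that an arbitrary $*$-functor out of it is automatically bounded and norm-nonincreasing — this is where one invokes that $\id_{\ast}$ is selfadjoint and idempotent so its image is a selfadjoint projection of norm $\le 1$, together with the $C^{*}$-identity listed after the definition of the maximal norm. The marked cases add only the trivial observation that the minimal marking imposes no constraint beyond sending $\id$ to $\id$. I would remark that $\Delta^{0}_{\cC}=\mi(\Delta^{0})$ relative to the unmarked classifier, consistent with the adjunction $\mi\dashv\cF_{+}$ of \eqref{wecoihorgrtb}, which also gives the marked bijection formally from the unmarked one since $\cF_{+}$ preserves underlying objects.
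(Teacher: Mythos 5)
Your proposal is correct and lands on exactly the classifier objects the paper uses (the one-object category, its $\C$-linearization $\C$, and the minimally marked versions $\mi(\Delta^{0})$), with the same case-by-case structure. The only stylistic difference is that the paper obtains the universal property formally by pushing the $\Cat_{1}$ classifier through the left adjoints $\Free_{*}$, $\Lin$, $\mi$ and using that $\Ccat_{1}\subseteq\preCcat_{1}\subseteq\ClinCat_{1}$ are full subcategories, whereas you verify the bijection $F\mapsto F(\ast)$ by hand — in particular your worry about boundedness of functors in the pre-$C^{*}$ and $C^{*}$ cases is vacuous, since morphisms there are just $\C$-linear $*$-functors by fullness.
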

\begin{ddd}\label{fiojoiwfefwefewfew}The object $\Delta_{\cC}^{0}$ which represents this functor will be called the object classifier.\end{ddd}
 \begin{proof}
 This is a case-by-case discussion.
In $\Cat_{1}$ we can set
$$\Delta_{\Cat_{1}}^{0}:=pt\ ,$$
the category with one object  $pt$ and one morphism $\id_{pt}$.
Then in view of the adjunction \eqref{ferferfeoijiorojr34r34r34r} we have   $$\Delta_{\scat_{1}}^{0}\cong \Free_{*}(\Delta_{\Cat_{1}}^{0})\ .$$
Its underlying category is again $\Delta^{0}_{\Cat_{1}}$.

In view of the adjunction  \eqref{fwefweiufhui23r} we have
$$\Delta^{0}_{\ClinCat_{1}}\cong \Lin(\Delta_{\scat_{1}}^{0})\ .$$
This is the $\C$-linear $*$-category associated to the $C^{*}$-algebra $\C$ and hence is a $C^{*}$-category. We conclude that 
$$\Delta^{0}_{\Ccat_{1}}\cong \Delta^{0}_{\preCcat_{1}}\cong \Delta^{0}_{\ClinCat_{1}}$$
(as $\C$-linear $*$-categories).
For $\cC$ in the list $$\{\Cat_{1},\scat_{1},\ClinCat_{1},\preCcat_{1},\Ccat_{1}\}$$
  the marked version of the object classifier is characterized  by
$$\Delta^{0}_{\cC^{+}}\cong \mi (\Delta^{0}_{\cC})\ ,$$ see \eqref{wecoihorgrtb} for notation.
\end{proof}

Usually we will omit the subscript $\cC$ when the context is clear and just write $\Delta^{0}$ for the object classifier. A similar conventions applies to the other classifier objects below.

\begin{lem}\label{gigjoer2tergtertet} For $\cC$ in $\{\Cat_{1},\scat_{1},\ClinCat_{1}, \Cat_{1}^{+},\scat_{1}^{+},\ClinCat_{1}^{+}\}$
the functor $\cC\to \Set$ which sends  a category  in $\cC$  to its set of morphisms   is representable.

For $\cC$ in $\{\preCcat_{1}, \Ccat_{1},\preCcat_{1}^{+}, \Ccat_{1}^{+}\}$ this functor is not representable.
\end{lem}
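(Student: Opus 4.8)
The plan is to treat the two assertions separately. For the representability statement, the strategy is to build the "morphism classifier" $\Delta^{1}_{\cC}$ explicitly in each case, in a fashion parallel to the construction of $\Delta^{0}_{\cC}$ in Definition \ref{fiojoiwfefwefewfew}. In $\Cat_{1}$ one takes $\Delta^{1}_{\Cat_{1}}$ to be the category with two objects $0,1$, one non-identity morphism $0\to 1$, and no others; a functor out of it into a category $\bA$ is exactly a morphism of $\bA$. For $\scat_{1}$ one sets $\Delta^{1}_{\scat_{1}}:=\Free_{*}(\Delta^{1}_{\Cat_{1}})$ and uses the adjunction \eqref{ferferfeoijiorojr34r34r34r}: a $*$-functor $\Free_{*}(\Delta^{1}_{\Cat_{1}})\to \bA$ is the same as a plain functor $\Delta^{1}_{\Cat_{1}}\to \cF_{*}(\bA)$, i.e. a morphism of $\bA$. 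For $\ClinCat_{1}$ one takes $\Delta^{1}_{\ClinCat_{1}}:=\Lin(\Delta^{1}_{\scat_{1}})$ and invokes \eqref{fwefweiufhui23r}. For the marked versions one applies $\mi$ as in \eqref{wecoihorgrtb}, so that $\Delta^{1}_{\cC^{+}}\cong\mi(\Delta^{1}_{\cC})$; since $\mi$ is the left adjoint of $\cF_{+}$ and the forgetful functor $\cF_{+}$ commutes with the "set of morphisms" functor, representability is inherited. In each case one checks the Yoneda-type bijection $\Hom_{\cC}(\Delta^{1}_{\cC},\bA)\cong\Mor(\bA)$ directly from the explicit description of the relevant adjunction, exactly as in the proof of the previous lemma.

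For the non-representability statement in the cases $\preCcat_{1},\Ccat_{1},\preCcat_{1}^{+},\Ccat_{1}^{+}$, the idea is that a representing object would have to carry a "universal morphism" of arbitrarily large norm, which is incompatible with the finite-norm condition. Suppose for contradiction that the functor $\bA\mapsto\Mor(\bA)$ on $\Ccat_{1}$ (or $\preCcat_{1}$) were represented by an object $D$ with universal morphism $u\in\Mor(D)$. Consider, for each $t\in(0,\infty)$, the $C^{*}$-algebra $M_{2}(\C)$ and the morphism $f_{t}:=t\,e_{12}$, which has norm $t$; viewing $M_{2}(\C)$ as a $C^{*}$-category with one object via Example \ref{gerigoergregrege}, $f_{t}$ determines a unique functor $\phi_{t}:D\to M_{2}(\C)$ with $\phi_{t}(u)=f_{t}$. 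But $\phi_{t}$ is a morphism of $\C$-linear $*$-categories, hence norm-decreasing for the maximal norms, so $t=\|f_{t}\|=\|\phi_{t}(u)\|_{max}\le\|u\|_{max}$ for all $t$, forcing $\|u\|_{max}=\infty$, contradicting that $D$ is a pre-$C^{*}$-category. (One should be slightly careful: $D$ has two objects on which $u$ is supported, but the same argument runs by taking morphisms between two objects of $M_{2}(\C)$ regarded as a $C^{*}$-category with two objects; alternatively one replaces $M_{2}(\C)$ by $\mathcal{B}(H)$ with $u$ a scalar multiple of an isometry onto a proper subspace.) In the marked cases one uses $\mi$: a representing object for $\Mor$ on $\preCcat_{1}^{+}$ would push down to one on $\preCcat_{1}$ since $\cF_{+}$ preserves the morphism functor and has the left adjoint $\mi$, so non-representability descends.

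The main obstacle I anticipate is the non-representability direction, specifically making the norm-blow-up argument airtight: one must ensure that the target $C^{*}$-categories used to detect a morphism of unbounded norm genuinely lie in the relevant category and that the universal property of the hypothetical classifier is applied correctly (the functor $\phi_{t}$ must send the universal morphism of $D$ to $f_{t}$ and nothing obstructs this, since by hypothesis $\Hom_{\cC}(D,-)\cong\Mor(-)$ with $u$ corresponding to $\id_{D}$). A clean way to package this is to observe that any representing object $D$ would give, for every object $\bA$ and every $f\in\Mor(\bA)$, a factorization of $f$ through a fixed $u$, whence $\|f\|_{max}\le\|u\|_{max}$ uniformly in $\bA$ and $f$ — and there exist pre-$C^{*}$-categories (indeed $C^{*}$-categories) with morphisms of arbitrarily large norm. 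The representability direction is routine once the adjunctions are in hand.
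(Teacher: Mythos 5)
Your proposal is correct and takes essentially the same route as the paper's proof: the classifiers are constructed exactly as $\Delta^{1}_{\Cat_{1}}$, $\Free_{*}(\Delta^{1}_{\Cat_{1}})$, $\Lin(\Delta^{1}_{\scat_{1}})$ and $\mi(\Delta^{1}_{\cC})$ via the stated adjunctions, and non-representability follows because the universal morphism would have to dominate morphisms of arbitrarily large maximal norm, forcing $\|u\|_{max}=\infty$. One cosmetic point: the reduction of the marked case to the unmarked one uses the adjunction $\cF_{+}\dashv \ma$ rather than $\mi\dashv\cF_{+}$, but your direct uniform-bound argument handles the marked cases anyway, just as the paper's does.
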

\begin{ddd}\label{gergjeiogjogergergregergergergee}The object $\Delta_{\cC}^{1}$ which represents this functor will be called the morphism classifier.\end{ddd}
 \begin{proof}
 This is again   a case-by-case discussion.
 The morphism classifiers   have two objects $0$ and $1$ corresponding to the source and target of the morphism.
 
 In $\Cat_{1}$ we let $\Delta_{\Cat_{1}}^{1}$ be the category with one non-trivial morphism $a:0\to 1$.
 Then in view of the adjunction \eqref{ferferfeoijiorojr34r34r34r} we have   $$\Delta_{\scat_{1}}^{1}\cong \Free_{*}(\Delta_{\Cat_{1}}^{1})\ .$$
 For example, a morphism $0\to 1$ in this category is a word $aa^{*}aa^{*}\dots a^{*}a$. 
 In view of the adjunction  \eqref{fwefweiufhui23r} we have
$$\Delta^{1}_{\ClinCat_{1}}\cong \Lin(\Delta_{\scat_{1}}^{1})\ .$$

In the marked cases, for $\cC$ in the list $ \{\Cat_{1},\scat_{1},\ClinCat_{1}\}$, 
 the morphism classifiers are characterized by  $$\Delta^{1}_{\cC^{+}}\cong \mi_{\cC}(\Delta^{1}_{\cC})\ .$$

We now come to the non-existence assertion.
Assume that the pre-$C^{*}$-category $\bC$ represents the morphism-set functor in $\preCcat_{1}$.  
Let $a:0\to 1$ be the universal morphism. As in any non-trivial  pre-$C^{*}$-category 
there exists morphisms of arbitrary large maximal norm (just scale) we have $\|a\|_{\max}=\infty$ contradicting the assumption that $\bC$ is a  pre-$C^{*}$-category. The same reasoning applies to $\Ccat_{1}$ and the marked versions.
 \end{proof}

The following replaces the morphisms classifier in  the $C^{*}$-category cases. Let $\cC$ be a member of the list $\{\Ccat_{1},\Ccat^{+}_{1}\}$. The following result is \cite[Ex. 3.8]{DellAmbrogio:2010aa}.
\begin{lem}

The functor $\cC \to \Set$ which sends a category in $\cC$ to its set of morphisms with maximal norm bounded by $1$ is representable.
\end{lem}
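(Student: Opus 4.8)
The statement to prove is that for $\cC \in \{\Ccat_1, \Ccat_1^+\}$ the functor sending a $C^*$-category to its set of morphisms of maximal norm at most $1$ is corepresentable. The plan is to exhibit an explicit corepresenting object, which I will call $\bar\Delta^1_\cC$ (the ``unit-ball morphism classifier''), built from the universal $C^*$-algebra generated by a single element of norm $\le 1$. First I would recall that the universal unital $C^*$-algebra $\mathcal{U}$ generated by one contraction $z$ exists: it is the maximal $C^*$-completion of the free $*$-algebra $\C\langle z, z^*\rangle$ subjected to the relation $\|z\|\le 1$, equivalently $1 - z^*z \ge 0$. (Concretely one can take $\mathcal{U}$ to be the unitization of the universal $C^*$-algebra on a contraction, or describe it as $C^*$-algebra functions; the key point we need is only its universal property: unital $*$-homomorphisms $\mathcal{U}\to B$ correspond bijectively to elements of the closed unit ball of $B$.)

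The object $\bar\Delta^1_\cC$ will be the $C^*$-category with two objects $0$ and $1$; I build it as follows. Start from the free $*$-category on the graph with one edge $a\colon 0\to 1$, linearize, and impose on the morphism space $\Hom(0,1)$ the relations coming from the universal contraction — i.e.\ form the $\C$-linear $*$-category whose $2\times 2$ ``matrix algebra'' of all $\Hom$-spaces is the universal $C^*$-algebra generated by a contraction $a$ sitting in the $(1,0)$ corner (a nonunital generator), then take the maximal $C^*$-completion $\compl$ of the resulting pre-$C^*$-category. Equivalently, $\bar\Delta^1_\cC$ can be described via the canonical correspondence of Remark~\ref{wreoiwoifwefewf} between $C^*$-categories with two objects and $C^*$-algebras equipped with two orthogonal projections summing to $1$: take the $C^*$-algebra $A(\bar\Delta^1)$ to be the universal such algebra containing a contraction $a$ with $a = p_1 a p_0$. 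In the marked case $\Ccat_1^+$ one takes the canonical minimal marking $\mi(\bar\Delta^1_{\Ccat_1})$, so that a marking-preserving functor out of it imposes no extra condition beyond being a functor (the universal morphism $a$ is not marked unless it happens to be unitary, which it is not).

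The verification then has two halves. For corepresentability one must produce, for every $C^*$-category $\bB$, a natural bijection between $\Hom_{\Ccat_1}(\bar\Delta^1_\cC, \bB)$ and the set of morphisms $f$ of $\bB$ with $\|f\|_{max}\le 1$. Given such an $f\colon b_0\to b_1$ in $\bB$, by Remark~\ref{wreoiwoifwefewf} the norm $\|f\|_{max}$ equals the norm of $f$ in the $C^*$-algebra $A(\bB) = \bigoplus_{c,c'}\Hom_\bB(c,c')$; since $\|f\|\le 1$, there is a unique unital $*$-homomorphism $A(\bar\Delta^1)\to p\, A(\bB)\, p$ (where $p = p_0 + p_1$ corresponds to the two chosen objects) sending $a\mapsto f$, and this corresponds to a $*$-functor $\bar\Delta^1_\cC\to \bB$ sending $0\mapsto b_0$, $1\mapsto b_1$, $a\mapsto f$. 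Conversely any $*$-functor $\bar\Delta^1_\cC\to \bB$ sends the universal contraction $a$ to a morphism of norm $\le 1$ because $*$-functors between $C^*$-categories are norm-nonincreasing (again Remark~\ref{wreoiwoifwefewf}: they induce $*$-homomorphisms on the associated $C^*$-algebras, which are contractions). Naturality in $\bB$ is immediate from the construction. The marked case adds only the remark that the minimal marking imposes the empty condition on the target.

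\textbf{Main obstacle.} The routine part is the two-object/matrix-algebra bookkeeping via Remark~\ref{wreoiwoifwefewf}; the genuinely substantive point — and the step I expect to require the most care — is establishing existence of the universal $C^*$-algebra on a contraction with the correct universal property, in particular that the maximal norm on the pre-$C^*$-category built from $\C\langle a, a^*\rangle/(1-a^*a\ge 0)$ is actually \emph{finite} on every morphism (so that $\compl$ is defined and $\bar\Delta^1_\cC$ really lands in $\Ccat_1$). This is where one invokes that $z$ a contraction forces $\|z^*z\| \le 1$, hence every word in $a, a^*$ has norm $\le 1$ in every representation, so the supremum defining $\|\cdot\|_{max}$ is finite; thus the pre-$C^*$-category is genuine and its completion is a bona fide $C^*$-category. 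Since the excerpt attributes the result to \cite[Ex.~3.8]{DellAmbrogio:2010aa}, I would follow that construction and simply note that the same argument — replacing the role played there by the $C^*$-algebra machinery specific to $C^*$-categories with the $A(\bB)$-construction of Remark~\ref{wreoiwoifwefewf} — goes through verbatim, and that adding a trivial marking handles $\Ccat_1^+$.
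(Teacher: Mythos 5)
Your route differs from the paper's: the paper stays inside its own machinery by starting from $\Delta^{1}_{\ClinCat_{1}}\cong \Lin(\Free_{*}(\Delta^{1}_{\Cat_{1}}))$, adjoining formal inverses of $\lambda\id_{0}-a^{*}a$ and $\lambda\id_{1}-aa^{*}$ for all $|\lambda|>1$ (so that spectral theory forces $\|a\|_{max}\le 1$ in \emph{every} representation, making the result a pre-$C^{*}$-category), and then applying $\compl$; the universal property is then immediate because a contraction $f$ in $\bB$ automatically makes $\lambda\id -f^{*}f$ and $\lambda\id-ff^{*}$ invertible for $|\lambda|>1$, so the map $a\mapsto f$ extends over the adjoined inverses and the completion. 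You instead invoke the universal $C^{*}$-algebra on a contraction. That object does exist and your remark that the substantive point is finiteness of the sup-norm is correct, but note that "the maximal $C^{*}$-completion of $\C\langle z,z^{*}\rangle$ subject to $\|z\|\le 1$" is not an operation the paper has (a norm bound is not an algebraic relation, and the paper's $\compl$ takes the supremum over \emph{all} representations, which is infinite on the free $*$-algebra); you are implicitly restricting to representations sending $z$ to a contraction, which is the standard Blackadar-type construction but a genuinely different mechanism from the paper's resolvent trick.

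The genuine gap is in your verification of the universal property. You identify functors $\bar\Delta^{1}_{\cC}\to \bB$ with unital $*$-homomorphisms $A(\bar\Delta^{1})\to p\,A(\bB)\,p$, where $A(\bar\Delta^{1})$ is generated by orthogonal projections $p_{0},p_{1}$ with $p_{0}+p_{1}=1$ and the corner contraction $a$. This correspondence fails exactly when the two objects are sent to the \emph{same} object $b$ of $\bB$, i.e. when the classified morphism is an endomorphism: then $\id_{0}$ and $\id_{1}$ both go to $\id_{b}$, which are not orthogonal and do not sum to a projection, so no unital $*$-homomorphism respecting $p_{0}+p_{1}=1$, $p_{0}p_{1}=0$ exists, even though the functor (and the contraction $f\colon b\to b$) certainly does. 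Concretely, for $\bB=\Delta^{0}_{\Ccat_{1}}$ (one object with $\End=\C$) the classified set is the closed unit disc, while unital $*$-homomorphisms to $\C$ force $\{p_{0},p_{1}\}\mapsto\{0,1\}$ and hence $a\mapsto p_{1}ap_{0}\mapsto 0$, so your bijection produces essentially nothing. Relatedly, the claimed uniqueness of "the unital $*$-homomorphism sending $a\mapsto f$" is false as stated, since the images of $p_{0},p_{1}$ are additional data (take $f=0$). The statement itself is unharmed and the defect is repairable — either define the functor hom-space-wise from the universal property of the algebra applied to a $2\times 2$ amplification $M_{2}(\End_{\bB}(b))$, or simply follow the paper's presentation by resolvents, where the endomorphism case causes no trouble — but as written the corepresentability is not established for endomorphisms, which the morphism-set functor definitely contains. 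The remaining points (functors between $C^{*}$-categories are norm-nonincreasing via Remark \ref{wreoiwoifwefewf}, the category is generated by $a$, minimal marking in the case $\Ccat_{1}^{+}$) are fine and agree with the paper.
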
 
\begin{ddd}\label{rgieroge34t34t34t34t4}The object $\Delta_{\cC}^{1,\mathrm{\mathrm{bd}}}$ which represents this functor will be called the bounded morphism classifier.\end{ddd}
\begin{proof} In order to construct the bounded morphism classifier for $\Ccat_{1}$ we
start with the $\C$-linear $*$-category $$\Delta^{1}_{\ClinCat_{1}}\cong \Lin(\Free_{*}(\Delta^{1}_{\Cat_{1}}))\ .$$
The universal morphism $a$ in $\Delta^{1}_{\Cat_{1}}$ can be considered as a morphism of $\Delta^{1}_{\ClinCat_{1}}$ in the natural way.
 We add formal inverses of
$\lambda \id_{0}-a^{*}a$ in $\End_{\Delta^{1}_{\ClinCat_{1}}}(0)$ and $\lambda \id_{1}-aa^{*} $ in $\End_{\Delta^{1}_{\ClinCat_{1}}}(1)$  for all $\lambda$ in $\C$ with $|\lambda|>1$ and obtain  a new $\C$-linear $*$-category $\widetilde{\Delta^{1}_{\ClinCat_{1}}}$.   Spectral theory  implies that $\|a\|_{\max}=\|a^{*}\|_{\max}\le 1$. Hence 
$\widetilde{\Delta^{1}_{\ClinCat_{1}}}$ is a pre-$C^{*}$-category.
One easily checks that
$$ 
\Delta_{\Ccat_{1}}^{1,\mathrm{bd}}:=\compl(\widetilde{\Delta^{1}_{\ClinCat_{1}}})$$
has the required universal properties. 
In the marked case we have 
$$ 
\Delta_{\Ccat_{1}^{+}}^{1,\mathrm{\mathrm{bd}}}\cong \mi (\Delta_{\Ccat_{1}^{+}}^{1,\mathrm{\mathrm{bd}}})\ .$$ \end{proof}

\begin{remark}\label{geieojeoijreoreirferferf}
We do not know whether $\preCcat_{1}^{(+)}$ has a bounded morphism classifier or an appropriate replacement. This is the reason that we can not show that the model category structure on   $\preCcat_{1}^{(+)}$ is cofibrantly generated.
\hB
\end{remark}

We now discuss  unitaries.

\begin{remark} \label{gregoi34jf34f34f} If $u$ is a  unitary morphism in a $\C$-linear $*$-category,  then   any representation sends $u$ to a partial isometry. Hence $\|u\|_{\max}\le 1$. 
But it may happen that $\|u\|=0$. This is e.g. the case if $u$ is the identity of the unique object of the pre-$C^{*}$-category 
considered in Example \ref{fifjehwofewfewfwf}.

 For a $*$-category $\bA$ the counit $\bA\to \Lin(\bA)$ preserves unitaries.
 Similarly, for a pre-$C^{*}$-category $\bA$  the natural morphism $\bA\to \compl(\bA)$   preserves unitaries.

For a $\C$-linear $*$-category $\bB$  the counit  $\Bd^{\infty}(\bB)\to \bB$  is bijective on unitaries.
\hB
\end{remark}

Let $\cC$ be in the list
 $$\{\scat_{1},\ClinCat_{1},\preCcat_{1},\Ccat_{1},\scat_{1}^{+},\ClinCat_{1}^{+},\preCcat_{1}^{+},\Ccat_{1}^{+}\}\ .$$
 \begin{lem} 
 The functor $\cC\to \Set$ which sends a category in $ \cC$ to  its set of unitary morphisms  is representable.
 \end{lem}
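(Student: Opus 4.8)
The plan is to argue case by case, following the template used for the object and morphism classifiers: construct the representing object first for $\scat_{1}$, and then transport it along the adjunctions of Section 3.

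First I would treat $\cC=\scat_{1}$. Let $\Delta^{1,u}_{\scat_{1}}$ be the groupoid with two objects $0,1$ and a unique isomorphism $\iota\colon 0\to 1$, regarded as a $*$-category with $\iota^{*}:=\iota^{-1}$ as in Example \ref{fioefjewoifewfewfwf}. A $*$-functor $F\colon\Delta^{1,u}_{\scat_{1}}\to\bA$ is determined by $F(0)$, $F(1)$ and the morphism $u:=F(\iota)$, and preservation of the involution together with functoriality force $u^{*}u=F(\iota^{*}\iota)=\id_{F(0)}$ and $uu^{*}=\id_{F(1)}$, i.e.\ $u$ is unitary; conversely every unitary $u\colon a\to a'$ in $\bA$ arises from a unique such $F$. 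This bijection is clearly natural in $\bA$, so $\Delta^{1,u}_{\scat_{1}}$ represents the unitary morphism functor on $\scat_{1}$.

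Next I would propagate through the adjunctions. Using $\Lin\dashv\cF_{\C}$ of \eqref{fwefweiufhui23r}, and that $\cF_{\C}$ does not change the underlying set of morphisms, hence of unitaries, the object $\Delta^{1,u}_{\ClinCat_{1}}:=\Lin(\Delta^{1,u}_{\scat_{1}})$ represents the functor on $\ClinCat_{1}$, since $\Hom_{\ClinCat_{1}}(\Lin(\Delta^{1,u}_{\scat_{1}}),\bB)\cong\Hom_{\scat_{1}}(\Delta^{1,u}_{\scat_{1}},\cF_{\C}(\bB))$ is the set of unitaries of $\bB$. Unlike the morphism classifier, $\Lin(\Delta^{1,u}_{\scat_{1}})$ is already a $C^{*}$-category: it has two objects, each $\Hom$-space is one-dimensional and spanned by a scalar multiple of a unitary or of an identity, so every morphism has finite maximal norm (the universal morphism has maximal norm $1$), and being finite-dimensional it is complete; its total $*$-algebra in the sense of Remark \ref{wreoiwoifwefewf} is $M_{2}(\C)$. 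Hence $\Lin(\Delta^{1,u}_{\scat_{1}})$ lies in $\preCcat_{1}$ and in $\Ccat_{1}$, and since these are full subcategories of $\ClinCat_{1}$ the same object represents the unitary morphism functor there as well, so one puts $\Delta^{1,u}_{\preCcat_{1}}\cong\Delta^{1,u}_{\Ccat_{1}}\cong\Delta^{1,u}_{\ClinCat_{1}}$ as $\C$-linear $*$-categories. For the four marked versions I would use $\mi\dashv\cF_{+}$ of \eqref{wecoihorgrtb}: since identities are always marked, $\Hom_{\cC^{+}}(\mi(\Delta^{1,u}_{\cC}),\bA)\cong\Hom_{\cC}(\Delta^{1,u}_{\cC},\cF_{+}(\bA))$ is the set of unitaries of $\bA$, so $\Delta^{1,u}_{\cC^{+}}:=\mi(\Delta^{1,u}_{\cC})$ does the job.

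I do not expect a real obstacle here; the construction is essentially forced by the adjunctions. The only points requiring attention are that the universal morphism genuinely stays unitary after applying $\Lin$ and after forgetting a marking, which is exactly what Remark \ref{gregoi34jf34f34f} records, and — the sole genuine difference from Lemma \ref{gigjoer2tergtertet} and Definition \ref{rgieroge34t34t34t34t4} — that, because unitaries automatically have maximal norm at most $1$, no adjunction of formal inverses or completion is needed in the $C^{*}$-cases: the linearized object is already a finite-dimensional $C^{*}$-category, and the potential increase of the maximal norm on a full subcategory noted in Remark \ref{eiuuifwefewfewfw} is irrelevant for it, so a single finite object serves $\ClinCat_{1}$, $\preCcat_{1}$ and $\Ccat_{1}$ simultaneously.
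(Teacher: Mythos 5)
Your proposal is correct and follows essentially the same route as the paper: it defines the classifier in $\scat_{1}$ as the two-object groupoid with $u^{*}=u^{-1}$, transports it to $\ClinCat_{1}$ via the adjunction \eqref{fwefweiufhui23r}, observes that the linearization is already a (pre-)$C^{*}$-category since $\|u\|_{max}=1$ (so the same object serves $\preCcat_{1}$ and $\Ccat_{1}$), and handles the marked cases by applying $\mi$. The only differences are cosmetic: you spell out the representability check in $\scat_{1}$ and the finite-dimensionality/completeness argument that the paper leaves implicit.
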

\begin{ddd}\label{fewiojewofwefewfewf}The object $\beins_{\cC}$ which represents this functor will be called the unitary morphism  classifier.\end{ddd}
 \begin{proof}
 We perform a case-by-case discussion.
In $\scat_{1}$  we define $\beins_{\scat_{1}}$ to be the category with objects $0$ and $1$ and non-trivial morphisms
$u:0\to 1$ and $u^{*}=u^{-1}:1\to 0$.
 In view of the adjunction  \eqref{fwefweiufhui23r}   we have
$$\beins_{\ClinCat_{1}}\cong \Lin(\beins_{\scat_{1}})\ .$$
Since the generator $u$ is sent to a unitary in any representation
it is clear that $\beins_{\ClinCat_{1}}$ is a pre-$C^{*}$-category. 
Since  $\|u\|_{\max}=1$  it is actually a $C^{*}$-category.

Hence we have isomorphisms
$$\beins_{\preCcat_{1}}\cong \beins_{\ClinCat_{1}}\cong \beins_{\Ccat_{1}}$$ 
(as $\C$-linear $*$-categories).
In the marked cases, for $\cC$ in the list $\{\scat_{1},\ClinCat_{1},\preCcat_{1},\Ccat_{1}\}$ we have $\beins_{\cC^{+}}\cong \mi_{\cC}(\beins_{\cC})$, i.e., the universal unitary in $\beins_{\cC^{+}}$  is not marked.
\end{proof}

Let $\cC$ be a member of the list $$\{\scat_{1},\ClinCat_{1},\preCcat_{1},\Ccat_{1}\}$$

\begin{lem}
The functor
$\cC^{+}\to \Set$ which sends a category in $\cC^{+}$ to its set of marked morphisms is representable.
\end{lem}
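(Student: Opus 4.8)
The plan is to construct the marked-morphism classifier $\beins_{\cC^{+},m}$ explicitly in each case by taking the unitary morphism classifier $\beins_{\cC}$ from Definition \ref{fewiojewofwefewfewf} and declaring its universal unitary $u$ to be marked. That is, $\beins_{\cC^{+},m} := \ma(\beins_{\cC})$ or, more precisely, the marked version of $\beins_{\cC}$ in which the marking consists of $\{\id_0,\id_1,u,u^*\}$ (this is closed under composition and $*$ since $u^2 = u\circ u$ does not exist as $u$ goes $0\to 1$, so the only composites are the identities and $u,u^*$ themselves). One checks that for any $\bA$ in $\cC^{+}$, a morphism $\beins_{\cC^{+},m}\to \bA$ is the same as a morphism of underlying unmarked objects $\beins_{\cC}\to \cF_{+}(\bA)$ sending $u$ to a marked morphism, which by the universal property of $\beins_{\cC}$ is exactly the datum of a marked morphism of $\bA$. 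Hence $\Hom_{\cC^{+}}(\beins_{\cC^{+},m},\bA)$ is naturally bijective with the set of marked morphisms of $\bA$.

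**First I would** dispose of the $\scat_1^{+}$ case directly: set $\beins_{\scat_1^{+},m}$ to be the $*$-category with objects $0,1$ and non-trivial morphisms $u:0\to 1$, $u^*=u^{-1}:1\to 0$, with marking $\{\id_0,\id_1,u,u^*\}$. A marked functor out of it into $\bA$ picks a marked unitary; conversely any marked unitary $f:a\to a'$ of $\bA$ determines such a functor by $0\mapsto a$, $1\mapsto a'$, $u\mapsto f$. This is the same verification as for $\beins_{\scat_1}$ with the extra (vacuous, since $f$ is automatically unitary) requirement that $f$ be marked. **Then** for $\ClinCat_1^{+}$ I would apply $\Lin$: since $\Lin(\beins_{\scat_1}) \cong \beins_{\ClinCat_1}$ and the linearization functor sends the marking on a $*$-category to the image of the marked morphisms under the canonical map $\Hom_{\bA}(a,a')\to \C[\Hom_{\bA}(a,a')]$, the marked object $\Lin(\beins_{\scat_1^{+},m})$ has $u$ (viewed as a basis vector) marked, and the adjunction \eqref{fwefweiufhui23r} in its marked form gives the required representability, using that marked morphisms of a $\C$-linear $*$-category are in particular unitary so the linearity is automatic.

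**For the $C^{*}$ and pre-$C^{*}$ cases** I would use that $\beins_{\preCcat_1}\cong \beins_{\ClinCat_1}\cong \beins_{\Ccat_1}$ as $\C$-linear $*$-categories (from the proof of the previous lemma), all with $\|u\|_{max}=1$, so the completion and pre-$C^{*}$ passages do not collapse $u$, $u^*$; endowing this common object with the marking $\{\id_0,\id_1,u,u^*\}$ gives $\beins_{\cC^{+},m}$ for $\cC \in \{\preCcat_1,\Ccat_1\}$. The natural map $\bA \to \compl(\bA)$ preserves unitaries (Remark \ref{gregoi34jf34f34f}) and by definition of the marking on $\compl(\bA)$ and on $\Bd^\infty(\bA)$ the marked-morphism sets transport correctly, so the universal property is inherited from the $\ClinCat_1^{+}$ case via the adjunctions \eqref{ewfwoijioiffewfwefw} and \eqref{fiioioefjeofiewfewfwf}. **The main obstacle** I anticipate is purely bookkeeping: confirming in each case that the marking $\{\id_0,\id_1,u,u^*\}$ really is admissible (closed under $*$ and composition — easy since no nontrivial composites of $u$ with itself exist) and that the set of marked morphisms of an arbitrary $\bA$ biject \emph{naturally} with marked functors out of $\beins_{\cC^{+},m}$, i.e. that naturality squares commute; this is the same routine check as for the unmarked classifiers, with the single added clause "$f$ is marked", so no genuinely new difficulty arises.
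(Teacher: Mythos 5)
Your construction is the same as the paper's: its entire proof is the identification $\beins^{+}_{\cC}\cong \ma(\beins_{\cC})$, ``the universal unitary is now marked'', i.e.\ the unitary morphism classifier equipped with a marking containing the universal unitary $u$, and your case-by-case verification through the marked adjunctions with $\Lin$, $\compl$ and $\Bd^{\infty}$ merely spells out why this object represents the marked-morphism functor. If anything, your insistence on taking the marking to be exactly $\{\id_{0},\id_{1},u,u^{*}\}$ is the sharper formulation: in the $\C$-linear, pre-$C^{*}$ and $C^{*}$ cases $\ma$ literally marks all unitaries, including $\lambda\id_{0}$ for $|\lambda|=1$, and with that larger marking the universal property would fail against minimally marked targets, so the marking generated by $u$ is the one that actually does the job.
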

\begin{ddd}\label{groiegegergegererer}
The object $\beins^{+}_{\cC}$ which represents this functor will be called the marked  morphism  classifier.
\end{ddd}
\begin{proof}
We have
$\beins^{+}_{\cC}\cong \ma(\beins_{\cC})$, i.e., the universal unitary is now marked.
\end{proof}

We now consider just categories.
\begin{lem}
 The functor which sends a category to its set of invertible morphisms
is representable. \end{lem}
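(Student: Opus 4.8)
The plan is to exhibit the representing object explicitly. I would define $\mathbb{I}$ to be the groupoid with two objects $0$ and $1$ and exactly one non-identity morphism in each direction: an isomorphism $u\colon 0\to 1$ together with its inverse $u^{-1}\colon 1\to 0$, subject to the relations $u^{-1}\circ u=\id_{0}$ and $u\circ u^{-1}=\id_{1}$. These relations determine all composites, so $\mathbb{I}$ is a well-defined category (the \emph{walking isomorphism}). Note that $\mathbb{I}$ has the same underlying category as $\beins_{\scat_{1}}$ from Definition~\ref{fewiojewofwefewfewf}, which is natural since in $\Cat_{1}$ an invertible morphism plays the role that a unitary plays in the other cases.

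First I would construct, for a category $\bC$, the map $\Hom_{\Cat_{1}}(\mathbb{I},\bC)\to \mathrm{Inv}(\bC)$, $F\mapsto F(u)$, where $\mathrm{Inv}(\bC)$ denotes the set of invertible morphisms of $\bC$; since $F$ is a functor, $F(u)$ is invertible with inverse $F(u^{-1})$, so the map is well-defined, and it is visibly natural in $\bC$. Next I would check that it is a bijection. For injectivity: a functor $F\colon\mathbb{I}\to\bC$ is pinned down by $F(u)$, because $F(0)$ and $F(1)$ are forced to be the source and target of $F(u)$, the morphism $F(u^{-1})$ is forced to be $F(u)^{-1}$, and the two remaining morphisms of $\mathbb{I}$ are identities. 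For surjectivity: given an invertible morphism $f\colon c\to c'$ of $\bC$, the assignment $0\mapsto c$, $1\mapsto c'$, $u\mapsto f$, $u^{-1}\mapsto f^{-1}$, $\id_{0}\mapsto\id_{c}$, $\id_{1}\mapsto\id_{c'}$ respects all composites of $\mathbb{I}$ (which are exactly the forced ones above) and hence defines a functor mapping to $f$.

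There is no real obstacle here; the only points requiring (routine) care are that $\mathbb{I}$ is genuinely a category — i.e.\ that imposing $u^{-1}\circ u=\id_{0}$ and $u\circ u^{-1}=\id_{1}$ leaves a consistent, associative composition with no further identification of $0$ with $1$ — and that a functor out of $\mathbb{I}$ is uniquely determined by the image of $u$. Both are immediate from the fact that $\mathbb{I}$ has only four morphisms. One would then record the representing object under a name such as $\beins_{\Cat_{1}}$, in parallel with Definition~\ref{fewiojewofwefewfewf}, calling it the invertible morphism classifier.
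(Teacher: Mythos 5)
Your proof is correct and takes essentially the same route as the paper, which simply exhibits the walking-isomorphism groupoid $\bbI$ with objects $0,1$ and the morphism $0\to 1$ as the universal invertible morphism; you merely spell out the bijection $\Hom_{\Cat_{1}}(\bbI,\bC)\cong\mathrm{Inv}(\bC)$ that the paper leaves implicit.
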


\begin{ddd}\label{wffiweoffewfewfewf}
We call a category $\bbI$ which represents this functor the classifier of invertible morphisms.
\end{ddd}
\begin{proof}  
   The   groupoid $\bbI$  of the shape
$$\xymatrix{0 \ar@/_0.5cm/[r]&1\ar@/_0.5cm/[l]}$$
has the desired properties. The morphism $0\to 1$ is the universal invertible morphism.
  \end{proof}

\begin{remark}The groupoid $\bbI$ is also the 
  morphism classifier in $\Groupoids_{1}$. \hB
\end{remark}

\section{Unitary equivalences and  $\infty$-categories of $*$-categories}\label{ufhewiuhewfewf}

In this section we introduce the $\infty$-categories
of $*$-categories, $\C$-linear $*$-categories, pre-$C^{*}$-categories,   $C^{*}$-categories and their marked versions by inverting unitary (or marked, respectively) equivalences.

Let $\cC$ belong to the list  $$\{ \scat_{1},\ClinCat_{1},\preCcat_{1},\Ccat_{1},\Cat_{1}^{+},\scat_{1}^{+},\ClinCat_{1}^{+},\preCcat_{1}^{+},\Ccat_{1}^{+}\}\ .$$
Furthermore, let  $f,g:\bA\to \bB$ be a parallel pair of morphisms   in $\cC$.
\begin{ddd}
We say that $f$ and $g$ are (marked) unitary   equivalent, if there exists a natural  isomorphism of functors $u:f\to g$ such that
$u(a)$    is a (marked) unitary   morphism in $ \Hom_{\bB}(f(a),g(a))$ for every object  $a$ of $\bA$.  
\end{ddd}
Here the word \emph{marked}   applies in the marked cases. In these cases the word unitary can be omitted since marked morphisms are unitary by definition.

Let $f:\bA\to \bB$ be a morphism    in $\cC$.
\begin{ddd}\label{gihriugh3i4g3rgegegergege}
 The morphism $f$  is a (marked) unitary   equivalence    if there exists a morphism $g:\bB\to \bA$  in $\cC$  such that
$f\circ g$ is unitary (marked) isomorphic to  $\id_{\bB}$ and $g\circ f$ is  (marked) unitary   isomorphic  to $\id_{\bA}$. 
\end{ddd}
  
 The following characterization of unitary or marked equivalences will be useful later. 
 We let \begin{equation}\label{gvt4kjn4kjrtbrgbrgbrbrb}
\cF_{\mathrm{all}}:\cC\to \Cat
\end{equation}  be the functor which  takes the underlying category (i.e., forgets all additional structures and properties). Furthermore,  in the marked cases,  we consider the functor
 $$(-)^{+}:\cC\to \Cat$$
 which takes the subcategory of marked morphisms, see Remark \ref{wfiuwehifewfwefwe}.
 Finally recall the functor $\ma$   from the unmarked to the marked versions which marks all unitaries, see \eqref{wecoihorgrtb}.

 Let $f:\bA\to \bB$ be a morphism in $\cC$.
\begin{lem}
	\label{lem:markedequivs} \mbox{} \begin{enumerate}\item \label{friofj35t9u5gerkjg34t} in the marked cases: The morphism
	$f$  is a  marked  equivalence if and only if  $\cF_{\mathrm{all}}(f)$ and
	$f^{+}$ are equivalences of categories.
	\item  \label{friofj35t9u5gerkjg34t1} in the unmarked cases:  The morphism
	$f$  is a unitary    equivalence if and only if  $\cF_{\mathrm{all}}(f)$ and
	$\ma(f)^{+}$ are equivalences of categories.\end{enumerate}
 \end{lem}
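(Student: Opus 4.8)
The plan is to prove both directions of each equivalence by carefully unwinding the definitions, using the observation that unitary/marked equivalences are detected simultaneously on underlying categories and on the relevant groupoids of special morphisms.

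First I would establish the "only if" directions, which are essentially formal. If $f:\bA\to\bB$ is a marked equivalence with inverse $g:\bB\to\bA$ and marked natural isomorphisms $u:\id_\bB\to f\circ g$ and $v:\id_\bA\to g\circ f$, then applying the functor $\cF_{all}$ (respectively $(-)^+$) sends $g$, $u$, $v$ to data exhibiting $\cF_{all}(f)$ (respectively $f^+$) as an equivalence of categories: one only needs that $\cF_{all}$ and $(-)^+$ are functors and that a marked natural isomorphism has underlying natural isomorphism, and that its components, being marked, lie in $\bB^+$ and $\bA^+$ so assemble into a natural isomorphism between the restricted functors. The unmarked case is the same argument after applying $\ma$, which turns the unitary equivalence $f$ into a marked equivalence $\ma(f)$ (the unitary transformations $u,v$ become marked since $\ma$ marks all unitaries), and then invoking part \ref{friofj35t9u5gerkjg34t}.

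The "if" direction is the substantive part. Assume $\cF_{all}(f)$ and $f^+$ are both equivalences of categories (marked case). Since $\cF_{all}(f)$ is an equivalence, it is fully faithful and essentially surjective; I would first promote essential surjectivity to the statement that every object of $\bB$ is marked-isomorphic to one in the image of $f$ — this is exactly what essential surjectivity of $f^+$ supplies, since $f^+$ and $f$ agree on objects. Using the axiom of choice, pick for each object $b$ of $\bB$ an object $a_b$ of $\bA$ and a marked isomorphism $u_b:b\to f(a_b)$ in $\bB^+$ (with $a_{f(a)}=a$ and $u_{f(a)}=\id$ when $b$ is already in the image). Then define $g:\bB\to\bA$ on objects by $b\mapsto a_b$ and on a morphism $\phi:b\to b'$ by transporting $u_{b'}\circ\phi\circ u_b^{-1}:f(a_b)\to f(a_{b'})$ back along the fully faithful $\cF_{all}(f)$ to the unique morphism $g(\phi):a_b\to a_{b'}$ with $f(g(\phi)) = u_{b'}\circ\phi\circ u_b^{-1}$. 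One checks functoriality from uniqueness in full faithfulness, that $g$ preserves the $*$-operation (because $f$ does and full faithfulness forces $f(g(\phi)^*) = f(g(\phi))^* = u_b \circ \phi^* \circ u_{b'}^{-1} = f(g(\phi^*))$, using that the $u_b$ are unitary so $u_b^{-1}=u_b^*$), that $g$ preserves linearity/norms in the respective cases (again from the corresponding property of $f$ and uniqueness), and that $g$ sends marked morphisms to marked morphisms (here one uses that $f^+$ is fully faithful, so a marked $\phi$ gives $u_{b'}\circ\phi\circ u_b^{-1}$ marked, hence $g(\phi)\in\bA^+$). Finally the collection $(u_b)$ is by construction a marked natural isomorphism $\id_\bB\to f\circ g$, and a symmetric argument — or a direct computation using full faithfulness — produces a marked natural isomorphism $g\circ f\to\id_\bA$ (its component at $a$ is the unique preimage under $f$ of $u_{f(a)}^{-1}=\id$, which works out to $\id_a$ when we arranged $a_{f(a)}=a$). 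Thus $f$ is a marked equivalence. For the unmarked case \ref{friofj35t9u5gerkjg34t1}, I would simply note that $\ma(f)^+$ being an equivalence together with $\cF_{all}(\ma(f))=\cF_{all}(f)$ being an equivalence gives, by part \ref{friofj35t9u5gerkjg34t}, that $\ma(f)$ is a marked equivalence; unwinding, the marked isomorphisms witnessing this are unitary isomorphisms, so $f$ is a unitary equivalence — and conversely a unitary equivalence $f$ clearly has $\cF_{all}(f)$ an equivalence and, as the $u,v$ have unitary components, $\ma(f)^+$ an equivalence.

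The main obstacle I expect is the bookkeeping in constructing the quasi-inverse $g$ in the correct category $\cC$: one must verify in a single uniform argument that $g$ inherits all the structure ($*$, $\C$-linearity, finiteness of maximal norm, completeness, preservation of marking) that is present in the ambient case, and the subtle point is that these all follow from uniqueness in the full faithfulness of $\cF_{all}(f)$ combined with $f$ itself having the structure, rather than requiring any separate construction. A secondary care point is the treatment of zero objects: a unitary (the zero morphism between zero objects) need not be "visible" in the usual sense, so one should double-check that conjugating by the $u_b$ and transporting back is well-defined even when some $u_b$ is a zero morphism between zero objects — but since the transport is governed entirely by full faithfulness of $\cF_{all}(f)$ on morphism sets, this causes no actual difficulty.
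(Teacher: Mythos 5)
Your proposal is correct and follows essentially the same route as the paper: define the quasi-inverse on objects using $f^{+}$, on morphisms by conjugating with the chosen marked isomorphisms $u_{b}$ and transporting back along the fully faithful $\cF_{all}(f)$, deduce preservation of $*$, enrichment and markings from uniqueness of preimages, and reduce the unmarked case to the marked one via $\ma$ (the paper merely starts from a quasi-inverse $g^{+}$ of $f^{+}$ together with its unit and counit, which hands it both marked natural isomorphisms at once). One small caution: your normalization $a_{f(a)}=a$, $u_{f(a)}=\id$ is ill-defined when $f$ is not injective on objects, but this is harmless, since your alternative direct computation via full faithfulness produces the marked natural isomorphism $g\circ f\to\id_{\bA}$ without it.
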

\begin{proof} We start with \ref{friofj35t9u5gerkjg34t}.
If $f$ is a marked equivalence, then by Definition \ref{gihriugh3i4g3rgegegergege} there is an inverse morphism $g:\bB\to \bA$  up to marked isomorphism. 
Then $ \cF_{\mathrm{all}}(g)$ and $g^{+}$ are inverse equivalences of $\cF_{\mathrm{all}}(f)$ and $f^{+}$, respectively.

We now assume that $\cF_{\mathrm{all}}(f)$ and $f^{+}$ are equivalences of  categories. Then there exists a functor   $g^{+}:\bB^{+}\to \bA^{+}$ and   isomorphisms of functors
  $$u: \id_{\bB^+}\to  f^+\circ g^+\ , \quad  v: \id_{\bA^{+}}\to  g^{+}\circ f^{+}\ .$$  We   define a morphism  $g : \bB \to \bA$ in $\cC$ as follows:
  \begin{enumerate} \item
  on objects: For an object $b$ of $\bB$ we define 
  $g(b) := g^+(b)$. \item  on morphisms:
  For objects $b,b^{\prime}$ of $B$ we define  $g:\Hom_{\bB}(b,b^{\prime})\to \Hom_{\bA}(g(b),g(b^{\prime}))$ as the composition
	\[ \Hom_\bB(b,b') \xrightarrow{\cong ,!} \Hom_\bB(f(g(b)),f(g(b')) \xleftarrow{\cong,\cF_{\mathrm{all}}(f)} \Hom_\bA(g(b),g(b'))\ ,\]
	where the isomorphism marked by $!$ is given by
	$$\phi \mapsto u_{b^{\prime}}\circ  \phi\circ u_{b}^{-1} $$
	and we use that $\cF_{\mathrm{all}}(f)$ is an equivalence of categories for the second isomorphism.\end{enumerate}
	Note that $g$ preserves marked morphisms since $u_{b}$ and $u_{b^{\prime}}$ are marked and $\cF_{\mathrm{all}}(f)$ induces a  bijection between the subsets of marked morphisms (since $f^{+}$ is assumed to be an equivalence). Furthermore, since $u_{b}$ and $u_{b^{\prime}}$ are unitary (since marked morphisms must be unitary), $g$ is a morphism of $*$-categories. Finally, in the $\C$-enriched cases, $g$ is compatible with the enrichments.

The morphism $g$   is the required  inverse to $f$ up to marked isomorphism. The transformations  $u$ and $v$ can be interpreted as marked isomorphisms
$$u: \id_{\bB}\to  f\circ g\ , \quad  v: \id_{\bA}\to  g\circ f\ .$$ 

We now show  \ref{friofj35t9u5gerkjg34t1}. 
If $f$ is a unitary equivalence,  then there is an inverse morphism $g:\bB\to \bA$  up to unitary  isomorphism. 
Then $ \cF_{\mathrm{all}}(g)$ and $\ma(g)^{+}$ are inverse equivalences of $\cF_{\mathrm{all}}(f)$ and $\ma(f)^{+}$, respectively.

We now assume that $\cF_{\mathrm{all}}(f)$ and $\ma(f)^{+}$ are equivalences of  categories. 
Then by the first case \ref{friofj35t9u5gerkjg34t} we know that $\ma(f):\ma(\bA)\to \ma(\bB)$ is a marked equivalence. Let $g: \ma(\bB)\to  \ma(\bA)$ be an inverse of $\ma(f)$ up to marked isomorphism.
Then $\cF_{+}(g):\bB\to \bA$ ($\cF_{+}$ forgets the marking, see \eqref{wecoihorgrtb}) is an inverse of $f$ up to unitary isomorphism.
\end{proof}

\begin{remark}
In the case $\cC=\Ccat_{1}$ it was shown
 in \cite[Lemma 4.6]{DellAmbrogio:2010aa} that a morphism $f:\bA\to \bB$ is a unitary equivalence if and only if $\cF_{\mathrm{all}}(f)$ is an equivalence of categories, i.e., that the second condition in Lemma \ref{lem:markedequivs}.\ref{friofj35t9u5gerkjg34t1} involving $\ma(f)^{+}$ is redundant. The argument uses a special property of $C^{*}$-categories, namely the existence of polar decompositions of morphisms \cite[Prop. 2.6]{DellAmbrogio:2010aa}.\hB
\end{remark}

The following lemma about maximal norms morally belongs to Section \ref{ifjwofjwefwef} but can only be stated at this place since it involves the notion of unitary equivalences.

\begin{lem}\label{vijroiregrgregreg}
If $\Phi:\bA\to \bB$ is a unitary equivalence between $\C$-linear $*$-categories, then for every morphism $f$ in $\bA$ we have
$\|\Phi(f)\|_{\max}=\|f\|_{\max}$.
\end{lem}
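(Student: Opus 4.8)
The plan is to prove the easy inequality $\|\Phi(f)\|_{max}\le\|f\|_{max}$ for an \emph{arbitrary} morphism of $\C$-linear $*$-categories, and then to promote it to an equality by feeding in a quasi-inverse $\Psi$ of $\Phi$ together with the observation that conjugation by unitaries leaves the maximal norm unchanged. First I would note: if $\Phi:\bA\to\bB$ is any morphism of $\C$-linear $*$-categories and $\rho:\bB\to B$ is a representation of $\bB$ in a $C^{*}$-algebra, then $\rho\circ\Phi$ is a representation of $\bA$, so $\|\rho(\Phi(f))\|_{B}=\|(\rho\circ\Phi)(f)\|_{B}\le\|f\|_{max}$; taking the supremum over all $\rho$ yields $\|\Phi(f)\|_{max}\le\|f\|_{max}$. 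In particular this applies both to $\Phi$ and to a morphism $\Psi:\bB\to\bA$ with $\Psi\circ\Phi$ unitarily isomorphic to $\id_{\bA}$, which exists by Definition \ref{gihriugh3i4g3rgegegergege}.

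Next I would establish conjugation invariance: if $f\in\Hom_{\bA}(a,a')$ and $u\in\Hom_{\bA}(a,\tilde a)$, $u'\in\Hom_{\bA}(a',\tilde a')$ are unitary, then $\|u'\circ f\circ u^{*}\|_{max}=\|f\|_{max}$. For any representation $\rho$ of $\bA$ one has $\rho(u'\circ f\circ u^{*})=\rho(u')\rho(f)\rho(u)^{*}$, and since $\rho(u),\rho(u')$ are partial isometries of norm $\le1$ (Remark \ref{gregoi34jf34f34f}) this gives $\|\rho(u'\circ f\circ u^{*})\|\le\|\rho(f)\|$. Conversely,
$$\rho(u')^{*}\rho(u'\circ f\circ u^{*})\rho(u)=\rho(\id_{a'})\rho(f)\rho(\id_{a})=\rho(\id_{a'}\circ f\circ\id_{a})=\rho(f),$$
so $\|\rho(f)\|\le\|\rho(u'\circ f\circ u^{*})\|$. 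Taking suprema over $\rho$ gives the claimed equality.

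Finally I would combine the two steps. Let $v:\id_{\bA}\to\Psi\circ\Phi$ be a unitary natural isomorphism, so each $v_{a}$ is unitary and, by naturality, $\Psi(\Phi(f))=v_{a'}\circ f\circ v_{a}^{-1}=v_{a'}\circ f\circ v_{a}^{*}$ for $f:a\to a'$. Then
$$\|f\|_{max}=\|\Psi(\Phi(f))\|_{max}\le\|\Phi(f)\|_{max}\le\|f\|_{max},$$
where the first equality is conjugation invariance, and the two inequalities are the non-increase of the maximal norm under $\Psi$ and under $\Phi$, respectively. Hence all terms coincide and in particular $\|\Phi(f)\|_{max}=\|f\|_{max}$.

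I expect the second step to be the delicate point: unitaries in a $\C$-linear $*$-category need not be invertible in a representation — they may even be sent to the zero operator (cf. Remark \ref{gregoi34jf34f34f}) — so the reverse estimate cannot invert $\rho(u)$, but must instead exploit the identity $\id_{a'}\circ f\circ\id_{a}=f$ inside $\bA$ and functoriality of $\rho$ to recover $\rho(f)$ exactly.
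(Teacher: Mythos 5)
Your proposal is correct and takes essentially the same route as the paper: the easy inequality $\|\Phi(f)\|_{max}\le\|f\|_{max}$ by precomposing representations with $\Phi$, and then the reverse inequality by inserting a quasi-inverse $\Psi$ and controlling the unitary conjugation coming from the natural isomorphism $\id_{\bA}\to\Psi\circ\Phi$. The only (harmless) difference is that you establish exact conjugation invariance representation-by-representation, carefully avoiding any inversion of $\rho(u)$, whereas the paper only needs one direction and gets it directly from submultiplicativity of $\|-\|_{max}$ together with $\|u\|_{max}\le 1$.
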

\begin{proof}
By precomposition with $\Phi$ every representation of $\bB$ in a $C^{*}$-algebra yields a representation of $\bA$ in the same $C^{*}$-algebra. This immediately implies the inequality
$$\|\Phi(f)\|_{\max}\le \|f\|\ .$$
Let now $\Psi:\bB\to \bA$ be an inverse equivalence.
Then there exists a unitary morphism
$u$ in $\bA$ such that $u\circ \Psi(\Phi(f))\circ u^{*}=f$.
This gives (using $\|u\|_{\max}\le 1$, see Remark \ref{gregoi34jf34f34f})
$$\|f\|_{\max}=\|u\circ \Psi(\Phi(f))\circ u^{*}\|_{\max}\le \|  \Psi(\Phi(f))\|_{\max}\le \|\Phi(f)\|_{\max}\ .$$
\end{proof}

\begin{remark}\label{gigjeroigjoergergergregreger}
{We will   use the following fact about adjunctions. We consider two relative categories $(\cC,W_{\cC})$ and $(\cD,W_{\cD})$ and
 a pair of  adjoint functors \begin{equation}\label{gerg4r34r34r4}
L:\cC\leftrightarrows \cD:R \ .
\end{equation}  By $$\ell_{\cC}:\cC\to \cC[W_{\cC}^{-1}]\ , \quad \ell_{\cD}: \cD \to \cD[W_{\cD}^{-1}]$$ we denote the corresponding localization functors, see   \eqref{fwefoiu39r32r32r}.}
 
 {We now assume that $L$ and $R$ are compatible with the sets $W_{\cC}$ and $W_{\cD}$ in the sense 
   that $\ell_{\cD}\circ L$ sends the morphisms in $W_{\cC}$ to equivalences in $\cD[W_{\cD}^{-1}]$, and that $\ell_{\cC}\circ R$ sends the morphisms in $W_{\cD}  $ to equivalences in $\cC[W_{\cC}^{-1}]$.  Then the functors $L$ and $R$  descend essentially uniquely to functors $$\bar L:\cC[W^{-1}_{\cC}]\leftrightarrows  \cD[W_{\cD}^{-1}]:\bar R\ .$$  In this case the adjunction $ L\dashv R$ naturally induces an adjunction
 $\bar L \dashv \bar R$.
 A reference for these facts is \cite[Prop. 7.1.14]{cisin}.
 \footnote{
 Alternatively,
 one can use \cite[Prop. 5.2.2.8]{htt} as follows.   Let  $u:\id_{\cC}\to R\circ L$  be the unit and   $v:L\circ R\to \id_{\cD}$ be the counit of the adjunction \eqref{gerg4r34r34r4}. Then   $u$ induces a transformation
 $$\bar u:\id_{\cC[W_{\cC}^{-1}]}\to \bar R\circ \bar L\ .$$ 
We must   show that $\bar u$ is a unit transformation in the sense of \cite[Def. 5.2.2.7]{htt}, i.e., that
for any two objects $C$ in $\cC[W_{\cC}^{-1}]$ 
and $D$ in  $\cD[W_{\cD}^{-1}]$ the induced morphism
$$\Map_{\cD[W_{\cD}^{-1}]}(\bar L(C),D)\stackrel{\bar R}{\to}
\Map_{\cC[W_{\cC}^{-1}]}(\bar R(\bar L(C)),\bar R(D))\stackrel{\bar u(C)}{\to} \Map_{\cC[W_{\cC}^{-1}]}( C,\bar R(D))$$
is an equivalence of spaces. Using the fact that $\bar u$ and $\bar v:\bar L\circ \bar R\to \id_{\cD[W_{\cD}^{-1}]}$ induced by $v$ satisfy the triangle identities up to equivalence we see that the desired inverse equivalence is given by 
$$\Map_{\cC[W_{\cC}^{-1}]}( C,\bar R(D))\stackrel{\bar L}{\to} \Map_{\cD[W_{\cD}^{-1}]}( \bar L(C),\bar L(\bar R(D)))\stackrel{\bar v(D)}{\to}  \Map_{\cD[W_{\cD}^{-1}]}( \bar L(C), D)\ .$$} 
}
   \hB
   
   If $L$ comes from a left Quillen functor between combinatorial model categories, then we could also proceed as sketched in \cite[Rem. 1.3.4.27]{HA}. \hB
\end{remark}

Let $\cC$ be in the list $$\{\scat_{1},\ClinCat_{1},\preCcat_{1},\Ccat_{1},\scat_{1}^{+},\ClinCat_{1}^{+},\preCcat_{1}^{+},\Ccat_{1}^{+}\}\ ,$$  and let $W_{\cC}$  denote the (marked) unitary equivalences in $\cC$ as defined in Definition \ref{gihriugh3i4g3rgegegergege}.
\begin{ddd}
We define the $\infty$-categories $$ \scat:=\scat_{1}[W^{-1}_{\scat_{1}} ] \ , \quad \scat^{+
}:=\scat_{1}^{+}[W_{\scat_{1}^{+}}^{-1} ]\ .$$
$$\ClinCat:=\ClinCat_{1}[W_{\ClinCat_{1}}^{-1}]\ , \quad \ClinCat^{+}:=\ClinCat_{1}^{+}[W_{\ClinCat^{+}_{1}}^{-1}] \ ,$$
 $$\preCcat:=\preCcat_{1}[W_{ \preCcat_{1}}^{-1}]\ , \quad \preCcat^{+}:=\preCcat^{+}_{1}[W_{ \preCcat^{+}_{1}}^{-1}]  \ ,$$
$$\Ccat:=\Ccat_{1}[W_{\Ccat_{1}}^{-1} ] \ , \quad \Ccat^{+}:=\Ccat^{+}_{1}[W_{ \Ccat^{+}_{1}}^{-1} ]\ .$$
\end{ddd}

\begin{lem}\label{wefoijweofewfwf45}
The adjunctions \eqref{fwefweiufhui23r} induce   adjunctions  \begin{equation}\label{fwefweifrfufhui23r}
\Lin: \scat\leftrightarrows  \ClinCat:\cF_{\C}\ , \quad \Lin: \scat^{+}\leftrightarrows  \ClinCat^{+}:\cF_{\C}
\end{equation}
\end{lem}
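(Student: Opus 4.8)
The plan is to invoke the general machinery about localizations of adjunctions recorded in Remark \ref{gigjeroigjoergergergregreger}. So I need to check the two hypotheses there: that $\ell_{\ClinCat}\circ \Lin$ sends (marked) unitary equivalences in $\scat_{1}^{(+)}$ to equivalences, and that $\ell_{\scat}\circ \cF_{\C}$ sends (marked) unitary equivalences in $\ClinCat_{1}^{(+)}$ to equivalences. Actually it suffices to show the stronger statements that $\Lin$ and $\cF_{\C}$ each preserve (marked) unitary equivalences, since then composing with the localization functors trivially yields equivalences.

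First I would treat $\cF_{\C}$, which is the easy direction: it is a forgetful functor, and from Definition \ref{gihriugh3i4g3rgegegergege} a (marked) unitary equivalence is defined purely in terms of the underlying $*$-category structure together with the marking (a unitary morphism $u$ is one with $u^{*}u=\id$, $uu^{*}=\id$, and $\cF_{\C}$ preserves $*$ and identities and the marking). Hence if $g$ is an inverse to $f$ up to (marked) unitary isomorphism $u,v$ in $\ClinCat_{1}^{(+)}$, then $\cF_{\C}(g)$, $\cF_{\C}(u)$, $\cF_{\C}(v)$ exhibit $\cF_{\C}(f)$ as a (marked) unitary equivalence in $\scat_{1}^{(+)}$.

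Second I would treat $\Lin$. Here I would use that $\Lin$ is a $2$-functor in the appropriate sense: since $\Lin$ is a functor on (marked) $*$-categories, it sends a natural transformation $u:f\to g$ to a natural transformation $\Lin(u):\Lin(f)\to\Lin(g)$, and because the canonical map $\Hom_{\bA}(a,a')\to\C[\Hom_{\bA}(a,a')]=\Hom_{\Lin(\bA)}(a,a')$ preserves composition, the identity, and the $*$-operation (by construction of $\Lin$), it sends unitary morphisms to unitary morphisms and, in the marked case, marked morphisms to marked morphisms. Therefore if $f$ is a (marked) unitary equivalence with inverse $g$ and (marked) unitary isomorphisms $u:\id_{\bB}\to f\circ g$, $v:\id_{\bA}\to g\circ f$, then $\Lin(g)$, $\Lin(u)$, $\Lin(v)$ witness that $\Lin(f)$ is a (marked) unitary equivalence in $\ClinCat_{1}^{(+)}$. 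Alternatively, one can invoke Lemma \ref{lem:markedequivs}: one checks that $\cF_{all}\circ\Lin$ and $(\Lin(-))^{+}$ (resp. $\ma(\Lin(-))^{+}$) take equivalences of categories to equivalences of categories, which again reduces to the fact that $\Lin$ is functorial and compatible with $*$ and markings.

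Having verified both preservation properties, the conclusion follows: by Remark \ref{gigjeroigjoergergergregreger} the adjunctions \eqref{fwefweiufhui23r} descend to adjunctions $\bar\Lin\dashv\bar\cF_{\C}$ on the localized $\infty$-categories, which are the asserted adjunctions \eqref{fwefweifrfufhui23r}. I do not expect a genuine obstacle here; the only point requiring a little care is the bookkeeping that the natural transformations themselves (not just the functors) are sent to natural transformations with the required unitarity/markedness, which is immediate from the explicit description of $\Lin$ given after \eqref{fwefweiufhui23r}.
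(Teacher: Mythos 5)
Your proposal is correct and follows essentially the same route as the paper: the paper's proof also just observes that $\Lin$ and $\cF_{\C}$ preserve (marked) unitary equivalences, so they descend to the localizations, and then invokes Remark \ref{gigjeroigjoergergergregreger} to obtain the induced adjunction. Your additional detail on why the preservation holds (compatibility of $\Lin$ with $*$, markings, and natural transformations) simply fills in what the paper leaves as an observation.
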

\begin{proof} 
We   observe  that the forgetful functor  $\cF_{\C}$ and the linearization functor  $\Lin$  
preserve (marked) unitary   equivalences. Hence they descend naturally to the $\infty$-categories. 
 {By Remark \ref{gigjeroigjoergergergregreger} we obtain an adjunction between these descended functors.}
\end{proof}

\begin{lem}
The adjunctions \eqref{ewfwoijioiffewfwefw} induce    adjunctions
 \begin{equation}\label{ewfwoijioiffewfwef1w}
 \compl:\preCcat  \leftrightarrows \Ccat :\cF_{-}\ , \quad \compl:\preCcat^{+}  \leftrightarrows \Ccat^{+}:\cF_{-}
\end{equation}
\end{lem}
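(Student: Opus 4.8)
The plan is to repeat the argument of Lemma~\ref{wefoijweofewfwf45} with the completion/forgetful pair in place of $(\Lin,\cF_{\C})$. First I would show that both functors appearing in the adjunctions \eqref{ewfwoijioiffewfwefw} preserve (marked) unitary equivalences; then each descends to a functor between the corresponding $\infty$-categories, and Remark~\ref{gigjeroigjoergergergregreger}, applied with $\cC=\preCcat_{1}$ resp.\ $\preCcat_{1}^{+}$, $\cD=\Ccat_{1}$ resp.\ $\Ccat_{1}^{+}$, $L=\compl$, $R=\cF_{-}$, and $W$ the (marked) unitary equivalences, produces the asserted $\infty$-categorical adjunctions \eqref{ewfwoijioiffewfwef1w}.

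For the forgetful functor $\cF_{-}$ this is immediate: it is the inclusion of the full subcategory $\Ccat_{1}\subseteq\preCcat_{1}$ (resp.\ $\Ccat_{1}^{+}\subseteq\preCcat_{1}^{+}$) and does not alter the underlying ($\C$-linear, marked) $*$-category, so in particular it sends (marked) unitary equivalences to (marked) unitary equivalences on the nose.

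For the completion functor I would proceed as follows. Let $\Phi\colon\bA\to\bB$ be a (marked) unitary equivalence in $\preCcat_{1}$ resp.\ $\preCcat_{1}^{+}$, with inverse $\Psi\colon\bB\to\bA$ and (marked) unitary natural isomorphisms $u\colon\id_{\bB}\to\Phi\circ\Psi$ and $v\colon\id_{\bA}\to\Psi\circ\Phi$. Since $\compl$ is the identity on objects and acts on morphism spaces through the canonical maps $\Hom_{\bA}(a,a')\to\Hom_{\compl(\bA)}(a,a')$, applying it componentwise to $u$ and $v$ gives natural isomorphisms $\compl(u)\colon\id_{\compl(\bB)}\to\compl(\Phi)\circ\compl(\Psi)$ and $\compl(v)\colon\id_{\compl(\bA)}\to\compl(\Psi)\circ\compl(\Phi)$. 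By Remark~\ref{gregoi34jf34f34f} those canonical maps preserve unitaries, and by the definition of the marking on $\compl(\bA)$ they send marked morphisms to marked morphisms; hence $\compl(u)$ and $\compl(v)$ are again (marked) unitary and $\compl(\Phi)$ is a (marked) unitary equivalence. One should note here that $\compl$ may turn objects $a$ with $\|\id_{a}\|=0$ into zero objects, but the zero morphism between zero objects is unitary, so this does not interfere; alternatively, invariance of the maximal norm under unitary equivalences (Lemma~\ref{vijroiregrgregreg}) shows directly that $\compl(\Phi)$ becomes an isometric isomorphism on each completed morphism space.

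The only real obstacle is this last verification that $\compl$ transports (marked) unitary natural isomorphisms to (marked) unitary natural isomorphisms, and it is routine given the explicit description of $\compl$ recorded above. Once both functors are known to preserve the respective weak equivalences, the conclusion is purely formal: Remark~\ref{gigjeroigjoergergergregreger} turns the $1$-categorical adjunction $\compl\dashv\cF_{-}$ into the adjunctions $\compl\colon\preCcat\leftrightarrows\Ccat\colon\cF_{-}$ and $\compl\colon\preCcat^{+}\leftrightarrows\Ccat^{+}\colon\cF_{-}$, which is exactly \eqref{ewfwoijioiffewfwef1w}.
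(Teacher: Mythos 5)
Your proposal is correct and follows the same route as the paper: both rely on observing that $\compl$ and $\cF_{-}$ preserve (marked) unitary equivalences and then invoking Remark~\ref{gigjeroigjoergergergregreger} to descend the adjunction \eqref{ewfwoijioiffewfwefw} to the localizations. The extra detail you supply for why $\compl$ preserves (marked) unitary equivalences is exactly the verification the paper leaves implicit.
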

\begin{proof}
We first observe that the forgetful functor   $\cF_{-}$ and the completion functor  $\compl$ preserve (marked) unitary equivalences. Hence they descend naturally to the $\infty$-categories.  {By Remark \ref{gigjeroigjoergergergregreger} we obtain an adjunction between these descended functors.} \end{proof}

 \begin{lem}\label{fiofuweoifuwoefwefewfw}
 The adjunctions \eqref{fiioioefjeofiewfewfwf} induce  adjunctions
 \begin{equation}\label{fiioioefjeofiewfe22334wfwf}
\cF_{\pre}:\preCcat\leftrightarrows \ClinCat:\Bd^{\infty}\ , \quad \cF_{\pre}:\preCcat^{+}\leftrightarrows \ClinCat^{+}:\Bd^{\infty}\ .
\end{equation}
 \end{lem}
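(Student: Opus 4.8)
The plan is to deduce Lemma \ref{fiofuweoifuwoefwefewfw} from Lemma \ref{uhfiuhfiwefewfewfewfewf}.\ref{frefrkjhgiekgergerg1} (the $1$-categorical adjunction $\cF_{pre}\dashv \Bd^{\infty}$) together with Remark \ref{gigjeroigjoergergergregreger}, exactly as in the two preceding lemmas. Concretely, I would apply Remark \ref{gigjeroigjoergergergregreger} with $(\cC,W_{\cC}) = (\preCcat_{1},W_{\preCcat_{1}})$ and $(\cD,W_{\cD}) = (\ClinCat_{1},W_{\ClinCat_{1}})$ (and their marked analogues), $L = \cF_{pre}$ and $R = \Bd^{\infty}$. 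The only thing that needs checking is the compatibility hypothesis of that remark: that $\cF_{pre}$ sends (marked) unitary equivalences of pre-$C^{*}$-categories to (marked) unitary equivalences of $\C$-linear $*$-categories, and that $\Bd^{\infty}$ sends (marked) unitary equivalences of $\C$-linear $*$-categories to (marked) unitary equivalences of pre-$C^{*}$-categories. Once both are verified, Remark \ref{gigjeroigjoergergergregreger} yields descended functors $\cF_{pre}\dashv \Bd^{\infty}$ on the localized $\infty$-categories, which is precisely the claimed adjunction \eqref{fiioioefjeofiewfe22334wfwf}.

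For $\cF_{pre}$ this is immediate: it is a fully faithful inclusion of a full subcategory, so it takes a (marked) unitary equivalence in $\preCcat_{1}^{(+)}$ to the same functor viewed in $\ClinCat_{1}^{(+)}$, which is still a (marked) unitary equivalence since the defining data — an inverse functor up to (marked) unitary natural isomorphism — is preserved verbatim. For $\Bd^{\infty}$, the point is that the construction $\bA\mapsto \Bd^{\infty}(\bA)$ is functorial and that if $\Phi:\bA\to\bB$ is a (marked) unitary equivalence of $\C$-linear $*$-categories with inverse $\Psi$ up to (marked) unitary isomorphism, then $\Phi$ and $\Psi$ restrict to functors $\Bd^{\infty}(\bA)\rightleftarrows\Bd^{\infty}(\bB)$: indeed by Lemma \ref{vijroiregrgregreg} a unitary equivalence preserves maximal norms, hence preserves finiteness of maximal norms, so it maps $\Bd(\bA)$ into $\Bd(\bB)$ and iterating, $\Bd^{\infty}(\bA)$ into $\Bd^{\infty}(\bB)$. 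The (marked) unitary natural isomorphisms $u:\id_{\bB}\to\Phi\circ\Psi$ and $v:\id_{\bA}\to\Psi\circ\Phi$ have unitary — hence norm $\le 1$, hence bounded — components, so they restrict to (marked) unitary natural isomorphisms between the corresponding functors on $\Bd^{\infty}$. Therefore $\Bd^{\infty}(\Phi)$ is again a (marked) unitary equivalence. In the marked case one uses additionally that $\Phi$ preserves unitaries and that all marked morphisms land in $\Bd^{\infty}$ (as already observed before Lemma \ref{uhfiuhfiwefewfewfewfewf}), so the restricted functors are morphisms of marked categories.

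With both compatibility statements in hand, Remark \ref{gigjeroigjoergergergregreger} applies directly and produces the adjunctions \eqref{fiioioefjeofiewfe22334wfwf} in both the unmarked and marked cases. I do not expect any real obstacle here: the argument is entirely parallel to the proofs of Lemma \ref{wefoijweofewfwf45} and the $\compl\dashv\cF_{-}$ lemma. The only mildly delicate point is the verification that $\Bd^{\infty}$ preserves unitary equivalences, which rests on Lemma \ref{vijroiregrgregreg} (norm-preservation under unitary equivalences) to guarantee that a unitary equivalence restricts compatibly to the $\Bd^{\infty}$-subcategories; this is the step I would write out most carefully.
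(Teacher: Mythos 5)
Your proposal is correct and follows essentially the same route as the paper: both verify that $\cF_{pre}$ and $\Bd^{\infty}$ preserve (marked) unitary equivalences and then invoke Remark \ref{gigjeroigjoergergergregreger} to descend the $1$-categorical adjunction of Lemma \ref{uhfiuhfiwefewfewfewfewf}. The paper's justification for $\Bd^{\infty}$ is just slightly leaner — it only notes that $\Bd^{\infty}(\bA)$ contains all (marked) unitary morphisms of $\bA$, so the equivalence data restricts, with functoriality of $\Bd^{\infty}$ already supplied by the $1$-categorical adjunction — whereas your detour through Lemma \ref{vijroiregrgregreg} is not needed (and its iteration step is anyway covered by the fact that every $*$-functor is contractive for the maximal norm).
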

\begin{proof}
The forgetful functor $\cF_{\pre}$ preserves (marked) unitary   equivalences. The operation $\Bd^{\infty}$ also preserves   (marked) unitary  equivalences since
$\Bd^{\infty}(\bA)$ contains all unitary (marked, resp.) morphisms of $\bA$.  {Hence  both functors descend naturally to the $\infty$-categories.   By Remark \ref{gigjeroigjoergergergregreger} we obtain an adjunction between these descended functors.}  \end{proof}

\begin{conv}\label{fewoifuwe9of}{\em
We use the same notation $\ell$ for all the localization functors:
For $\cC$ in the list $$\{\scat_{1},\ClinCat_{1},\preCcat_{1},\Ccat_{1},\scat_{1}^{+},\ClinCat_{1}^{+},\preCcat_{1}^{+},\Ccat_{1}^{+}\}$$
we write \begin{equation}\label{dewkh23kr23r32r}
\ell:\cC\to \cC_{\infty}
\end{equation}
for the corresponding localization. \hB}
   \end{conv}

  \section{The tensor and power structure over groupoids}\label{rgiojogerggerg}
 
In this section we let   $\bG$ be a  category. Later we will assume that it is a groupoid. 

For a category $\bA$ we consider the functor category
 $\cFun( \bG,\bA)$ whose objects are functors from $\bG$ to $\bA$, and whose morphisms are natural transformations between functors. If 
  $\bA$ is a $*$-category, then we define an involution
 $$*:\cFun(\bG,\bA)\to \cFun(\bG,\bA)$$ such that it sends a morphism $f=(f_{g})_{g\in \bG}:a\to a^{\prime}$   in $\cFun(  \bG,\bA)$ with $f_{g}:a(g)\to a^{\prime}(g)$ to the  morphism $f^{*}:=(f_{g}^{*})_{g\in \bG}:a^{\prime}\to a$.

Assume furthermore that $\bA$ is a $\C$-linear $*$-category. Then the enrichment of
 $\bA$ over complex vector spaces naturally induces an enrichement of $\cFun( \bG,\bA)$. In this case 
  $\cFun( \bG,\bA)$ has the structure of  a $\C$-linear $*$-category.
If $\bA$ is marked, then $\cFun(\bG,\bA)$ is a marked $*$-category or
marked $\C$-linear $*$-category whose marked morphisms are those transformations
$(f_{g})_{g\in G}$ where $f_{g}$ is marked for all $g$ in $G$.

For $\cC$ in the list
$$\{\scat_{1},\scat_{1}^{+},\ClinCat_{1},\ClinCat_{1}^{+}\}$$ we  therefore get a functor $$\cFun( -,-): \Cat_{1}^{\mathrm{op}}\times \cC\to  \cC\ .$$

 Let $\bA$ be a (marked) $*$-category.
\begin{ddd} We call a functor  $a$ in $\cFun( \bG	,\bA)$(marked)  unitary   if $a(\phi) $ is unitary (marked) for all morphisms $\phi$ in $ \Hom_{\bG}(g,h)$. \end{ddd}

We let  $\cFun^{u}( \bG,\bA)$  
denote the full subcategory of $ \cFun(\bG,\bA)$ of unitary   functors. It is a $*$-category by Remark \ref{eiuuifwefewfewfw}. Similarly, if $\bA$ is a  $\C$-linear $*$-category, then so is $\cFun^{u}( \bG,\bA)$.

If $\bA$ is marked, then we let
$\cFun^{u}( \bG,\bA)$ denote the full subcategory of $\cFun( \bG, \bA)$ of marked functors.
It is again a marked $*$-category. If $\bA$ is a marked $\C$-linear $*$-category, then so is $\cFun^{u}( \bG,\bA)$.

 Note that a functor in $\cFun^{u}( \bG,\bA)$ sends all morphisms of $\bG$ to invertible morphisms in $\bA$. This observation will be used e.g. in Remark \ref{ewiogjwoeirgerwgewrgwtwehtrh} below.

For $\cC$ in the list
$$\{\scat_{1} ,\ClinCat_{1},\scat_{1}^{+} ,\ClinCat_{1}^{+}\}$$ we 
   have  a functor
$$\cFun^{u}( -,-): \Cat_{1}^{\mathrm{op}}\times \cC \to \cC\ . $$

\begin{remark} \label{uhweiuvwevwvewdw} In the marked case
the notation $\cFun^{u}( \bG,\bA)$ is actually an abuse of notation since this could also be interpretet as the category of unitary functors between $\bG$ and $\bA$ after forgetting the marking.  But we prefer to use this notation with the interpretation as above in order to state formulas below in a form  which applies to the unmarked as well as to the marked cases.   

{If $\bA$ is a (marked) pre-$C^{*}$-category or a (marked) $C^{*}$-category, then as a convention, in order to   form the (marked) $\C$-linear $*$-category  $\cFun^{u}(\bG,\bA)$ we will consider  $\bA$ as a (marked) $\C$-linear category and interpret $\cFun^{u}(\bG,\bA)$ as a (marked) $\C$-linear $*$-category.}\hB
\end{remark}

 \begin{remark}\label{igjoigregwergwergrgwrg}
 If $\bC$ is a $C^{*}$-category, then we can not expect that $\cFun^{u}(\bG,\bA)$ is a $C^{*}$-category again. For the simplest counter example let $\bG$ be an infinite set and $\bA$ be the category associated to a $C^{*}$-algebra $A$.
 Then $\cFun^{u}(\bG,\bA)$ is the $\C$-linear $*$-category with one object and with morphisms $ \prod_{\bG}A $. But this is not even a pre-$C^{*}$-category. In order to get a $C^{*}$-category, for the morphisms we should take the   uniformly bounded sequences
 $\prod_{g\in \bG}^{\mathrm{\mathrm{bd}}}A$. So we must define a uniformly bounded subfunctor
 $$\cFun^{\mathrm{\mathrm{bd}}}(\bG,\bA)\subseteq \cFun^{u}(\bG,\bA)\ .$$\hB
  \end{remark}
  
 \begin{ddd}
 For a (marked) $\C$-linear $*$-category $\bA$ we define 
the uniformly bounded subfunctor  by $$\cFun^{\mathrm{\mathrm{bd}}}(\bG,\bA):=\Bd^{\infty}(\cFun^{u}(\bG,\bA))\ .$$  
\end{ddd}
 
 By definition $\cFun^{\mathrm{\mathrm{bd}}}(\bG,\bA)$ is a (marked) pre-$C^{*}$-category. 
  {In particular,} for $\cC$ in the list $\{ \preCcat_{1}, \preCcat_{1}^{+}\}$
 we have defined a functor
 $$\cFun^{\mathrm{\mathrm{bd}}}(-,-):\Cat_{1}^{\mathrm{op}}\times \cC\to \cC\ .$$
 
  \begin{example} We have
 $\cFun^{\mathrm{\mathrm{bd}}}(\Delta^{0}_{\Cat_{1}},\bA)\cong \Bd^{\infty}(\bA)$. \hB \end{example}
 
 \begin{example}\label{eiugwgregwergregwerg}
 Assume that $\bG$ is a set and that $\bA$ is associated to a   $C^{*}$-algebra $A$. Then
 $\cFun^{\mathrm{\mathrm{bd}}}(\bG,\bA)$ can be identified with the category with one object  and the morphisms given by the $C^{*}$-algebra $\prod_{g\in \bG}^{\mathrm{bd}}A$.  \hB
  \end{example}

\begin{ddd} \label{egkjwoggrwegwergw} If $\bA$ is a (marked)  $\C$-linear $*$-category, then we define the (marked) $C^{*}$-category  $$\cFun^{C^{*}}( \bG,\bA):=\compl(\cFun^{\mathrm{bd}}(\bG,\bA))\ .$$   \end{ddd}

 {In particular,} for $\cC$ in the list $\{ \Ccat_{1}, \Ccat_{1}^{+}\}$
 we have defined a functor
 $$\cFun^{C^{*}}(-,-):\Cat_{1}^{\mathrm{op}}\times \cC\to \cC\ .$$

 \begin{remark}\label{ewiogjwoeirgerwgewrgwtwehtrh}
If $\bA$ is a (marked) $C^{*}$-category, then in Definition \ref{egkjwoggrwegwergw} one can actually omit the application of the completion functor.
Let $\bG[\bG^{-1}]$ denote the groupoid obtained from $\bG$ by universally inverting all morphisms. 
Then  the natural functor $\bG\to \bG[\bG^{-1}]$ induces an isomorphism
$$\cFun^{u}( \bG[\bG^{-1}],\bA)\stackrel{\cong}{\to} \cFun^{u}( \bG ,\bA)$$ for any (marked)  $*$-category $ \bA$.
If $\bA$ is $\C$-linear, then we get an isomorphism
$$\cFun^{\mathrm{bd}}( \bG[\bG^{-1}],\bA)\stackrel{\cong}{\to} \cFun^{\mathrm{bd}}( \bG ,\bA)\ .$$
If $\bA$ is a (marked) $C^{*}$-category, then in  Corollary \ref{eroijoietert}  we will see that $\cFun^{\mathrm{bd}}( \bG[\bG^{-1}],\bA)$ 
is a (marked)  $C^{*}$-category.  Consequently,  $\cFun^{\mathrm{bd}}( \bG ,\bA)$ is already  a (marked)  $C^{*}$-category.
\hB\end{remark}

 \begin{remark}
 If $\bA$ is a $C^{*}$-category, then by Remark \ref{ewiogjwoeirgerwgewrgwtwehtrh}   
 $\cFun^{\mathrm{bd}}(\bG,\bA)$ is again a $C^{*}$-category. 
 
 We have an obvious alternative candidate $\cFun^{r}(\bG,\bA)$ for the functor $C^{*}$-category
which is also defined as a subcategory
of $\cFun^{u}(\bG,\bA)$ as follows.  The objects of $\cFun^{r}(\bG,\bA)$ are the objects of $\cFun^{u}(\bG,\bA)$.
 Recall that given two functors $a,a^{\prime}$ in $ \cFun^{u}(\bG,\bA)$ we have an inclusion 
  $$\Hom_{ \cFun^{u}( \bG,\bA)}(a,a^{\prime})\subseteq \prod_{g\in \bG}\Hom_{\bA}(a(g),a^{\prime}(g))\ ,$$
    where a family $(f_{g})_{g\in \bG}$  belongs to this subspace if for every morphism $\phi$ in $\Hom_{\bG}(g,h)$
\begin{equation}\label{dqwduigui1eg12ee}
f_{h} \circ a(\phi)=a^{\prime}(\phi)\circ f_{g}\ .
\end{equation} 
We now define the morphisms of $\cFun^{r}(\bG,\bA)$  by 
$$\Hom_{\cFun^{r}(\bG,\bA)}(a,a^{\prime}):= \Hom_{\cFun^{u}(\bG,\bA)}(a,a^{\prime})\cap  \prod^{\mathrm{bd}}_{g\in \bG}\Hom_{\bA}(a(g),a^{\prime}(g))\ .$$
Since the relations  \eqref{dqwduigui1eg12ee} are linear and continuous the morphism space  $\Hom_{\cFun^{r}(\bG,\bA)}(a,a^{\prime})$ is a closed linear subspace of the Banach space $ \prod^{\mathrm{bd}}_{g\in \bG}\Hom_{\bA}(a(g),a^{\prime}(g))$ with the norm
$$\|(f_{g})_{g\in \bG}\|:=\sup_{g\in \bG} \|f_{g}\|_{\max}\ .$$ 
So 
  $\Hom_{ \cFun^{r}( \bG,\bA)}(a,a^{\prime})$ inherits a Banach space structure.  Let now $f\in \Hom_{ \cFun^{r}(\bG,\bA)}(a,a^{\prime})$. Then we have
 $$\|f^{*}\circ f\|=\sup_{g\in \bG}\|f^{*}_{g}\circ f_{g}\|=\sup_{g\in \bG} \|f_{g}\|=\|f\|\ ,$$
 i.e., the $C^{*}$-identity is satisfied. Similarly we conclude the $C^{*}$-inequality \ref{fiofiowefwfwefwef}.
 It follows from the universal property of $\Bd^{\infty}$ that we have a morphism of  $C^{*}$-categories
 $$\cFun^{r}( \bG,\bA)\to \cFun^{\mathrm{bd}}( \bG,\bA)\ .$$
 In general we do not know whether this is an isomorphism.
 But it is an isomorphism if  $\bG$ is a set and $\bA$ is associated to a $C^{*}$-algebra, see Example \ref{eiugwgregwergregwerg}.
  \hB
 \end{remark}

Let $\bG$ be a groupoid and $\bA$ be a $*$-category.
We   consider {$\bG$ as a $*$-category in the canonical manner (see Example \ref{fioefjewoifewfewfwfw}) and form  the $*$-category $\bA\times \bG$ (the existence of the product is ensured by Theorem \ref{gbeioergergregerg}).
Explicitly, the $*$-category  $\bA\times \bG$ is the product category with} 
 the $*$-operation
\begin{equation}\label{kdjhquiiuwdwqdwqd4}
(f,\phi)^{*}:=(f^{*},\phi^{-1})\ .
\end{equation} 
Since this definition involves the inverse of the morphism $\phi$ of $\bG$  it is important to assume that $\bG$ is a groupoid. 

If $f$ is a unitary morphism in $\bA$ and $\phi$ is any morphism in $\bG$, then $(f,\phi)$ is a  unitary morphism in $\bA\times \bG$.
If $\bA$ is a marked $*$-category, then  in  the $*$-category $\bA\times \bG$ we mark all morphisms
of the form $(f,\phi)$ with $f$ marked in $\bA$ and $\phi$ in $\bG$ arbitrary.

For $\cC$ in the list $\{\scat_{1},\scat_{1}^{+}\}$ 
we thus have defined a functor
$$-\times -:\cC\times \Groupoids_{1}\to\cC\ .$$

In the $\C$-linear case we must modify this construction.
For  a $\C$-linear $*$-category $\bA$  and a groupoid $\bG$  we define 
$\bA\otimes \bG$
to be the category with the objects of $\bA\times \bG$,   and whose morphisms are given by the complex
vector spaces
$$\Hom_{\bA\otimes \bG}((a,g),(a^{\prime},g^{\prime})):=\bigoplus_{\phi\in \Hom_{\bG}(g,g^{\prime})}\Hom_{\bA}(a,a^{\prime})$$
with the obvious $*$-operation and composition. Note that the sum over an empty index set is the zero vector space.

We note that
$\bA\times \bG$ is a wide subcategory of $\bA\otimes \bG$ in a natural way. 
If $\bA$ is marked, then in $\bA\otimes \bG$ we again mark all morphisms of the form $(f,\phi)$ for $f$ a marked morphism in $\bA$ and an arbitrary morphism $\phi$ of $\bG$.

For $\cC$ in the list $\{\ClinCat_{1},\ClinCat_{1}^{+}\}$ 
we thus have defined a functor
$$-\times -:\cC\times \Groupoids_{1}\to\cC\ .$$

\begin{example}
If $\bA$ is a (marked) $*$-category, then we have an isomorphism
$$\Lin(\bA)\otimes \bG\cong \Lin(\bA\times \bG)\ .$$
\hB\end{example}

\begin{example}\label{efwoiweiofwefwwefwef}
 If $\bG=BH$ for some group $H$, then  the $\C$-linear $*$-category
$\Delta^{0}_{\ClinCat_{1}}\otimes BH$ is isomorphic to  the $\C$-linear $*$-category associated to the group ring $\C[H]$ with its usual involution.
\hB
\end{example}

Let $\bG$ be a groupoid.
 \begin{lem}\label{roigoergergerger}
If $\bA$ is  a (marked) pre-$C^{*}$-category, then so is $\bA\otimes \bG$.\end{lem}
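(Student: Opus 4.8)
The goal is to show that if $\bA$ is a (marked) pre-$C^{*}$-category and $\bG$ is a groupoid, then $\bA\otimes\bG$ is again a (marked) pre-$C^{*}$-category, i.e.\ that every morphism in $\bA\otimes\bG$ has finite maximal norm. The plan is to bound the maximal norm of a morphism $f=(f_{\phi})_{\phi}\in\Hom_{\bA\otimes\bG}((a,g),(a',g'))$ in terms of the maximal norms of its components $f_{\phi}\in\Hom_{\bA}(a,a')$, which are finite by hypothesis since $\bA$ is a pre-$C^{*}$-category. Concretely I would let $\rho$ be an arbitrary representation of $\bA\otimes\bG$ in a $C^{*}$-algebra $B$ and show directly that $\|\rho(f)\|_{B}\le\sum_{\phi\in\Hom_{\bG}(g,g')}\|f_{\phi}\|_{max}$ (a finite sum, since $\bG$ is a groupoid so this Hom-set is finite exactly when $g,g'$ lie in the same component, and empty otherwise — but even if one only knows local smallness, the sum over the relevant finite or countable index set can be handled; in fact for a groupoid $\Hom_{\bG}(g,g')$ is a torsor over $\mathrm{Aut}_{\bG}(g)$, and what matters is that each summand decomposes through a single generator).

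The key step is to analyze how $\rho$ behaves on the "elementary" morphisms. Writing $f=\sum_{\phi}f_{\phi}$ with $f_{\phi}$ supported in the $\phi$-summand, it suffices by the triangle inequality for the norm in $B$ to bound each $\|\rho(f_{\phi})\|_{B}$. Now $f_{\phi}$ factors in $\bA\otimes\bG$ as $(\id_{a'},\phi)\circ(f_{\phi},\id_{g})$ where I view $(f_{\phi},\id_{g})$ as a morphism in the wide subcategory $\bA\times\bG$ (equivalently in the $\bA$-factor). The morphism $(\id_{a'},\phi)$ is unitary in $\bA\otimes\bG$ — its inverse is $(\id_{a'},\phi^{-1})=(\id_{a'},\phi)^{*}$, using precisely that $\bG$ is a groupoid — so $\rho((\id_{a'},\phi))$ is a partial isometry in $B$ and hence has norm $\le 1$ (see Remark \ref{gregoi34jf34f34f}). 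Meanwhile $\rho$ restricted along the wide embedding $\bA\hookrightarrow\bA\otimes\bG$ (sending $a\mapsto(a,g)$, $a'\mapsto(a',g)$, and a morphism $\psi$ to $(\psi,\id_{g})$) is a representation of a sub-$\C$-linear-$*$-category of $\bA$; more carefully, one uses that the composite $\Hom_{\bA}(a,a')\to\Hom_{\bA\otimes\bG}((a,g),(a',g))\xrightarrow{\rho}B$ is $\C$-linear, multiplicative on composites, and $*$-preserving, so it extends to a representation of $\bA$ itself (this is where one genuinely uses that $f_{\phi}$ ranges over morphisms of the pre-$C^{*}$-category $\bA$). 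Therefore $\|\rho((f_{\phi},\id_{g}))\|_{B}\le\|f_{\phi}\|_{max}<\infty$, and combining the two factors gives $\|\rho(f_{\phi})\|_{B}\le\|f_{\phi}\|_{max}$. Summing over $\phi$ yields $\|f\|_{max}\le\sum_{\phi}\|f_{\phi}\|_{max}<\infty$.

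For the marked case there is essentially nothing extra to check: the underlying $\C$-linear $*$-category of the marked $\bA\otimes\bG$ is the unmarked $\bA\otimes\bG$, being a pre-$C^{*}$-category is a property and not extra structure, and the marking (all $(f,\phi)$ with $f$ marked in $\bA$) consists of unitary morphisms by construction, so nothing in Definition of pre-$C^{*}$-category is affected. One only needs to remark that the marked $\bA\otimes\bG$ has the same morphisms and norm as the unmarked one, so finiteness of all maximal norms transfers automatically.

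The main obstacle I anticipate is the bookkeeping around the index set $\Hom_{\bG}(g,g')$ and making the factorization $f_{\phi}=(\id_{a'},\phi)\circ(f_{\phi},\id_{g})$ and the "restriction of $\rho$ to the $\bA$-direction is a representation of $\bA$" argument fully precise — in particular checking that the elementary morphisms $(f_{\phi},\id_{g})$ really do compose and involute inside $\bA\otimes\bG$ the way the $\bA$-operations do, and that the wide-subcategory embedding $\bA\times\bG\subseteq\bA\otimes\bG$ interacts correctly with the additive structure on the Hom-spaces. None of this is deep, but it is the part where one must be careful not to conflate "$\rho$ restricted to a sub-$*$-category" with "a representation of $\bA$", and where the groupoid hypothesis is used (twice: for unitarity of $(\id_{a'},\phi)$, and implicitly in the $*$-operation \eqref{kdjhquiiuwdwqdwqd4} that makes $\bA\otimes\bG$ a $*$-category at all).
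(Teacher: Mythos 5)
Your proposal follows essentially the same route as the paper's proof: decompose each elementary morphism as $(f,\phi)=(\id_{a'},\phi)\circ(f,\id_{g})$, use that $(\id_{a'},\phi)$ is unitary (so any representation sends it to a partial isometry of norm $\le 1$), restrict the representation along $\bA\cong\bA\times\{g\}\subseteq\bA\otimes\bG$ to get $\|\rho(f,\id_{g})\|_{B}\le\|f\|_{max}$, and conclude for finite linear combinations. One correction to your bookkeeping: the finiteness of the sum $\sum_{\phi}f_{\phi}$ has nothing to do with $\Hom_{\bG}(g,g')$ being finite (it need not be, e.g.\ $\bG=BH$ for $H$ infinite); it comes from the fact that the Hom-spaces of $\bA\otimes\bG$ are defined as direct sums, so every morphism has only finitely many nonzero components --- which is exactly how the paper phrases it (``a finite linear combination of morphisms of the form $(f,\phi)$''), and with that fix your estimate $\|f\|_{max}\le\sum_{\phi}\|f_{\phi}\|_{max}<\infty$ goes through.
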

\begin{proof}
It suffices to show that \begin{equation}\label{iojioffwef243}
\bA\otimes \bG= \Bd^{\infty}(\bA\otimes \bG)\ .
\end{equation}
 
Every morphism in $\bA\otimes \bG$ is a finite linear combination of morphisms of the form  $(f,\phi)$ of  $\bA\times \bG$ with $f:a\to a^{\prime}$ and $\phi:g\to g^{\prime}$.  We can decompose $(f,\phi)=(\id_{a^{\prime}},\phi)\circ (f,\id_{g})$.

Let $\rho: \bA\otimes \bG \to B$ be a representation in a $C^{*}$-algebra $B$. 
Then we can  restrict $\rho$ to a  representation of 
$\bA\cong \bA\times \{g\}\subseteq \bA\otimes \bG$. We conclude that $\|\rho(f,\id_{g})\|_{B}\le \|f\|_{\max}$, where $\|-\|_{\max}$ denotes the maximal norm on $\bA$. Furthermore, because of \eqref{kdjhquiiuwdwqdwqd4} we know that 
$\rho(\id_{a^{\prime}},g)$ is a partial isometry   in $B$. Therefore
$\|\rho(f,\phi)\|_{B} \le \|f\|_{\max}$. 
This shows that $(f,g)\in \Bd^{\infty}(\bA\otimes \bG)$.  Hence also all finite linear combinations of such elements belong to
$\Bd^{\infty}(\bA\otimes \bG)$. This shows  the desired equality \eqref{iojioffwef243}.
\end{proof}

For $\cC$ in the list $\{\preCcat_{1},\preCcat_{1}^{+}\}$ 
we thus have defined a functor
$$-\otimes -:\cC\times \Groupoids_{1}\to\cC\ .$$

For a (marked) $\C$-linear  $*$-category $\bA$  and a groupoid $\bG$ we define
$$\bA\otimes_{\max}\bG:=\compl(\Bd^{\infty}(\bA\otimes \bG))\ .$$
 If $\bA$ was a (marked) pre-$C^{*}$-category, then by Lemma \ref{roigoergergerger}  we can simplify this to 
 $$\bA\otimes_{\max}\bG\cong \compl( \bA\otimes \bG)\ .$$

For $\cC$ in the list $\{\Ccat_{1},\Ccat_{1}^{+}\}$ 
we thus have defined a functor
$$-\otimes_{\max} -:\cC\times \Groupoids_{1}\to\cC\ .$$

\begin{example}\label{fwoijfofewfewfewfewfew}
We consider the $C^{*}$-category $\Delta_{\Ccat_{1}}^{0}$  associated to  the $C^{*}$-algebra $\C$.
For a groupoid $\bG$ the $C^{*}$-category $\Delta_{\Ccat_{1}}^{0}\otimes_{\max}\bG$ is the maximal groupoid $C^{*}$-category.
In particular, if $\bG= BH$ for a group $H$, then
$\Delta^{0}_{\Ccat_{1}}\otimes_{\max} BH$ is isomorphic to the $C^{*}$-category associated to the maximal group $C^{*}$-algebra $C^{*}_{\max}(H)$. \hB
\end{example}

 \begin{conv}\label{fwerifhiwefewfewf}{\em 
  In order to avoid a case-dependent notation we write $\sharp$ for the tensor structures with groupoids   $\times$, $\otimes$, or $\otimes_{\max}$ in the respective cases.  We will furthermore use the notation $\cFun^{?}$, where $?$ is $u$, $\mathrm{bd}$, or    $C^{*}$     in the respective cases. See Table \ref{default}.
  
  \begin{table}[htp]
\caption{}
\begin{center}
\begin{tabular}{|c||c|c|}\hline
case&$\sharp$&?\\\hline $\scat_{1}^{(+)}$&$\times$&$u$\\\hline 
$\ClinCat_{1}^{(+)}$&$\otimes$&$u$\\\hline $\preCcat_{1}^{(+)}$&$\otimes$&$\mathrm{bd}$\\\hline$\Ccat_{1}^{(+)}$&$\otimes_{\max}$&$C^{*}$\\\hline
\end{tabular}
\end{center}
\label{default}
\end{table} \hB}\end{conv}

\begin{example}  \label{griuhuiwfwefwefef} Let  $\cC$ be in the list $$\{\scat_{1},\ClinCat_{1},\preCcat_{1},\Ccat_{1},\scat_{1}^{+},\ClinCat_{1}^{+},\preCcat_{1}^{+},\Ccat_{1}^{+}\}$$ and recall the  morphism classifier object $\bbI$ in $\Groupoids_{1}$ from  Definition \ref{wffiweoffewfewfewf}. 
Let $f_{0},f_{1}:\bC\to \bD$ be two morphisms in $\cC$.
Then we have a bijective correspondence between (marked) unitary isomorphisms
$u:f_{0}\to f_{1}$ and functors $U:\bC\sharp\bbI\to \bD$ with $U\circ \iota_{i}=f_{i}$, where $\iota_{i}:\bC\cong \bC\sharp \Delta^{0}\to \bC\sharp \bbI$ for $i=0,1$ is induced by the objects $0$ and $1$ of $\bbI$.
Given $U$, the  (marked) unitary isomorphism $u$ is  obtained by $u=(U(\id_{c},0\to 1))_{c\in \bC}$.
Vice versa, given $u$, we can define $U$ on morphisms by
$U(\id_{c},0\to 1)=u_{b}$ and $U(a,\id_{0}) := f_{0}(a)$ and compatibility with compositions and $*$.\hB
\end{example}

Let $\bG$ be a groupoid and   $\cC$ be in the list $$\{\scat_{1},\ClinCat_{1},\preCcat_{1},\Ccat_{1},\scat_{1}^{+},\ClinCat_{1}^{+},\preCcat_{1}^{+},\Ccat_{1}^{+}\}\ .$$
\begin{prop}\label{efiheiufhwiufwfwe}
For $\bC$ and $\bA$ in $\cC$
we have  a natural exponential law $$\Hom_{\cC}(\bC\sharp \bG,\bA)\cong \Hom_{\cC}(\bC,\cFun^{?}(\bG,\bA))\ .$$  
\end{prop}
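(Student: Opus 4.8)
The plan is to argue case by case along the chain $\scat_{1}\to\ClinCat_{1}\to\preCcat_{1}\to\Ccat_{1}$ (and the parallel marked chain), proving the $*$-category case by hand and bootstrapping the other three through the adjunctions introduced earlier. For $\cC\in\{\scat_{1},\scat_{1}^{+}\}$ (so $\sharp=\times$, $?=u$) I would start from the ordinary cartesian exponential law $\Hom_{\Cat_{1}}(\cF_{all}(\bC)\times\bG,\cF_{all}(\bA))\cong\Hom_{\Cat_{1}}(\cF_{all}(\bC),\Fun(\bG,\cF_{all}(\bA)))$, which sends $F$ to $\tilde F$ with $\tilde F(c):=F(c,-)$ and $\tilde F(h):=(F(h,\id_{g}))_{g\in\bG}$, and then check that under this bijection the $*$-functors $\bC\times\bG\to\bA$ correspond exactly to the $*$-functors $\bC\to\cFun(\bG,\bA)$ landing in the subcategory $\cFun^{u}(\bG,\bA)$. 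The crucial point is the twisted involution \eqref{kdjhquiiuwdwqdwqd4}: from $(\id_{c},\phi)^{*}=(\id_{c},\phi^{-1})$ one reads off that any $*$-functor $F$ must send $(\id_{c},\phi)$ to a unitary, i.e.\ $\tilde F(c)$ is a unitary functor; conversely, reconstructing $F$ from $G\colon\bC\to\cFun^{u}(\bG,\bA)$ by $F(f,\phi):=G(c')(\phi)\circ(G(f))_{g}$ (for $f\colon c\to c'$, $\phi\colon g\to g'$) and verifying $*$-compatibility uses $G(c')(\phi)^{*}=G(c')(\phi^{-1})$, which is where $\bG$ must be a groupoid. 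In the marked case I would add that the marked morphisms of $\bC\times\bG$ are exactly the $(f,\phi)$ with $f$ marked, that these are generated under composition by the $(f,\id_{g})$ with $f$ marked together with the arbitrary $(\id_{c},\phi)$, and that matching markings on the two sides then reduces to these generators.

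For $\cC\in\{\ClinCat_{1},\ClinCat_{1}^{+}\}$ ($\sharp=\otimes$, $?=u$) I would repeat this matching, using that every morphism of $\bC\otimes\bG$ is a finite $\C$-linear combination of morphisms $(f,\phi)$ of the wide subcategory $\bC\times\bG$, so that a $\C$-linear $*$-functor out of $\bC\otimes\bG$ is the same datum as a $*$-functor on $\bC\times\bG$ which is $\C$-linear in the $\bC$-direction; since $\cFun^{u}(\bG,\bA)$ is $\C$-linear and $\tilde F$ comes out $\C$-linear, the bijection of the first case restricts. For $\cC\in\{\preCcat_{1},\preCcat_{1}^{+}\}$ ($\sharp=\otimes$, $?=bd$) the claim then follows formally from the $\ClinCat$-case:
\[
\Hom_{\preCcat_{1}}(\bC\otimes\bG,\bA)=\Hom_{\ClinCat_{1}}(\bC\otimes\bG,\bA)\cong\Hom_{\ClinCat_{1}}(\bC,\cFun^{u}(\bG,\bA))\cong\Hom_{\preCcat_{1}}(\bC,\cFun^{bd}(\bG,\bA)),
\]
where the first equality is full faithfulness of $\preCcat_{1}\hookrightarrow\ClinCat_{1}$ (using Lemma \ref{roigoergergerger} so that $\bC\otimes\bG$ is already a pre-$C^{*}$-category), the middle step is the $\ClinCat$-case, and the last step is the adjunction $\cF_{pre}\dashv\Bd^{\infty}$ of Lemma \ref{uhfiuhfiwefewfewfewfewf}.\ref{frefrkjhgiekgergerg1} together with $\Bd^{\infty}(\cFun^{u}(\bG,\bA))=\cFun^{bd}(\bG,\bA)$.

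For $\cC\in\{\Ccat_{1},\Ccat_{1}^{+}\}$ ($\sharp=\otimes_{max}$, $?=C^{*}$): since $\bC$ is a $C^{*}$-category, $\bC\otimes\bG$ is already pre-$C^{*}$ by Lemma \ref{roigoergergerger}, so $\bC\otimes_{max}\bG=\compl(\bC\otimes\bG)$; the adjunction $\compl\dashv\cF_{-}$ of \eqref{ewfwoijioiffewfwefw} together with full faithfulness of $\preCcat_{1}\hookrightarrow\ClinCat_{1}$ identifies $\Hom_{\Ccat_{1}}(\bC\otimes_{max}\bG,\bA)$ with $\Hom_{\ClinCat_{1}}(\bC\otimes\bG,\bA)$, and the previous paragraph then gives $\Hom_{\preCcat_{1}}(\bC,\cFun^{bd}(\bG,\bA))$. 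It remains to identify this with $\Hom_{\Ccat_{1}}(\bC,\cFun^{C^{*}}(\bG,\bA))=\Hom_{\Ccat_{1}}(\bC,\compl(\cFun^{bd}(\bG,\bA)))$, i.e.\ to see that for a groupoid $\bG$ the completion defining $\cFun^{C^{*}}$ is vacuous, after which full faithfulness of $\Ccat_{1}\hookrightarrow\preCcat_{1}$ finishes the job. This is the one nonformal input; the reason is that for a groupoid the naturality identity conjugates the components of a transformation by unitaries, so the component norms are constant on each connected component of $\bG$, and a norm-bounded natural-transformation space is a closed, hence complete, subspace of the $\ell^{\infty}$-product $\prod^{bd}_{g}\Hom_{\bA}(a(g),a'(g))$ of Banach spaces (this is the content of Corollary \ref{eroijoietert}).

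I expect the main obstacle to be the $*$-category base case itself: one must write down the two inverse bijections explicitly and check that the twisted involution on $\bC\sharp\bG$ is exactly what forces $\tilde F$ to take values in $\cFun^{u}(\bG,\bA)$, together with the (routine but fiddly) marking bookkeeping. Everything beyond that is assembled from the adjunctions; the only other point requiring genuine content is the completeness of $\cFun^{bd}(\bG,\bA)$ for groupoid $\bG$, needed to close the $\Ccat$-case.
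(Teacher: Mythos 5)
Your treatment of the $*$-category base case, the $\C$-linear case, and the pre-$C^{*}$ case is sound and essentially the paper's argument: the paper writes down the same two explicit assignments and, for $\preCcat_{1}$, checks boundedness via the universal property of $\Bd^{\infty}$ (Lemma \ref{uhfiuhfiwefewfewfewfewf}.\ref{frefrkjhgiekgergerg}), which is just the adjunction you invoke. The problem is your last step, the $C^{*}$-case. You reduce everything to the claim that for a groupoid $\bG$ the completion defining $\cFun^{C^{*}}(\bG,\bA)$ is vacuous, i.e.\ that $\cFun^{bd}(\bG,\bA)$ is already a $C^{*}$-category, and you justify this by citing Corollary \ref{eroijoietert}. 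But in the paper that corollary is \emph{deduced from} Proposition \ref{efiheiufhwiufwfwe} (via a Yoneda argument comparing the two represented functors), so quoting it here is circular. Your independent sketch does not close the gap: showing that the uniformly-bounded natural transformations form a closed subspace of $\prod^{bd}_{g}\Hom_{\bA}(a(g),a'(g))$ only proves that the candidate category $\cFun^{r}(\bG,\bA)$ is complete in the sup-norm. It does not show that the morphism spaces of $\cFun^{bd}(\bG,\bA)=\Bd^{\infty}(\cFun^{u}(\bG,\bA))$ coincide with those of $\cFun^{r}(\bG,\bA)$, nor that the intrinsic maximal norm on $\cFun^{bd}(\bG,\bA)$ agrees with the sup-norm; this is exactly the comparison the paper flags as open in the remark introducing $\cFun^{r}$ (``We do not know whether this is an isomorphism''), and the constancy of component norms on connected components does not address it.

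The paper avoids this issue entirely in the $C^{*}$-case: starting from $\Hom_{\Ccat_{1}}(\bC\otimes_{max}\bG,\bA)\cong\Hom_{\preCcat_{1}}(\bC\otimes\bG,\cF_{-}(\bA))$ and the already-proved pre-$C^{*}$ case, it produces $\Psi$ from $\Phi$ by post-composing with the completion morphism $\cFun^{bd}(\bG,\cF_{-}(\bA))\to\cF_{-}(\cFun^{C^{*}}(\bG,\bA))$, and produces $\Phi$ from $\Psi$ by using that, since $\bA$ is complete, the evaluation functors $e_{g}$ extend to functors $\bar e_{g}:\cFun^{C^{*}}(\bG,\bA)\to\bA$ on the completion; one then checks these assignments are mutually inverse. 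No knowledge of the vacuity of the completion is needed, and that vacuity is obtained afterwards as Corollary \ref{eroijoietert}. So either adopt this construction for the final step, or supply a genuine proof (not a citation of the corollary) that the morphism spaces of $\cFun^{bd}(\bG,\bA)$ carry the sup-norm as their maximal norm and are complete; as written, your proposal has a gap precisely at the point you yourself identify as the one piece of nonformal content.
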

\begin{proof}
We start with giving the natural bijection in the case of $*$-categories.
Let $\Phi$ be  in $\Hom_{\scat_{1}}(\bC\times \bG,\bA)$. The bijection identifies this morphism with  a morphism
$\Psi$ in  $\Hom_{\scat_{1}}(\bC,\cFun^{u}(\bG,\bA))$ given  by
$$\Psi(c)(g):=\Phi(c,g)\ , \quad  \Psi(c)(\phi):=\Phi(\id_{c},\phi)\ , \quad \Psi(f):=(\Phi(f,\id_{g})_{g\in \bG})\ .$$
Here $c$ is an object of $\bC$, $g$ is an object of $\bG$, $\phi$ is a morphism in $\bG$, and $f$ is a morphism in $\bC$.
Note that $\Psi(c)$ takes values in unitary functors since $$\Psi(c)(\phi)^{*}=\Phi(\id_{c},\phi)^{*}=\Phi(\id_{c},\phi^{*})=\Phi(\id_{c},\phi^{-1})=\Psi(c)(\phi)^{-1}\ .$$
Vice versa let the morphism $\Psi$ be given. Then  the bijection sends it to the  morphism $\Phi$ given by
$$\Phi(c,g):=\Psi(c)(g)\ , \quad \Phi(f,\phi) :=\Psi(c^{\prime})(\phi)\circ \Psi(f)(g)\ ,$$
where $f:c\to c^{\prime}$ is a morphism in $\bC$ and $\phi :g\to g^{\prime} $ is  a morphism in $\bG$.

The same formulas work  in the $\C$-linear case for morphisms of the form $(f,\phi)$. The maps are then extended linearly. 
In the case of pre-$C^{*}$-categories we must check that
$\Psi$ takes values in the subfunctor
$\cFun^{\mathrm{bd}}(\bG,\bA)$ of $ \cFun^{u}(\bG,\bA)$. But this is clear since $\Psi$ is a morphism between $\C$-linear categories, $\bC$ is a pre-$C^{*}$-category, and the universal property \ref{frefrkjhgiekgergerg} of $\Bd^{\infty}$.

We finally consider the case of $C^{*}$-categories. In this case we could cite  \cite{DellAmbrogio:2010aa}. Here is the argument.
We first observe that for a $C^{*}$-category $\bA$ and pre-$C^{*}$-category $\bC$ we have a bijection \begin{equation}\label{veroihrioufererf}
\Hom_{\preCcat_{1}}(\bC\otimes \bG,\cF_{-}(\bA))\cong \Hom_{\Ccat_{1}}(\bC\otimes_{\max} \bG,\bA)
\end{equation}
 by the universal property of the completion. We can use this bijection, the already verified bijection
 $$\Hom_{\preCcat_{1}}(\bC\otimes \bG,\cF_{-}(\bA))\cong \Hom_{\preCcat_{1}}(\bC,\cFun^{\mathrm{bd}}(\bG,\cF_{-}(\bA)))\ , $$  and the completion map
 \begin{equation}\label{veroihriofffffufererf}
\Hom_{\preCcat_{1}}(\bC,\cFun^{\mathrm{bd}}(\bG,\cF_{-}(\bA)))\to \Hom_{\preCcat_{1}}(\bC,\cF_{-}(\cFun^{C^{*}}(\bG,\bA)))
\end{equation} 
in order to produce a $\Psi$ from a given $\Phi$.

For the inverse we note
 that since $\bA$ is complete,   the evaluation functors
$$e_{g}:\cFun^{\mathrm{bd}}(\bG,\cF_{-}(\bA))\to \bA$$ for $g$ in $\bG$ extend to the completion and provide functors
$$\bar e_{g}:\cFun^{C^{*}}(\bG,\bA)\to \bA\ .$$
Hence the formula which expresses  $\Phi$  in terms of $\Psi$ makes sense. It defines, by linear extension, an element  in $\Hom_{\preCcat_{1}}(\bC\otimes \bG,\cF_{-}(\bA))$ which gives the desired $\Phi$ in $\Hom_{\Ccat_{1}}(\bC\otimes_{\max} \bG,\bA)$ using the isomorphism  \eqref{veroihrioufererf} above.

In the marked case we just observe  the following. If $\Phi$ is a functor between marked categories, then $\Psi$ takes values in marked functors, and vice versa, if $\Psi$ has this property, then $\Phi$ preserves marked morphisms.
\end{proof}

Let $\bG$ be a groupoid.
 \begin{kor}\label{eroijoietert}
 If $\bA$ is a (marked) $C^{*}$-category, then  the completion morphism 
 $$\cFun^{\mathrm{bd}}(\bG,\cF_{-}(\bA))\to \cF_{-}(\cFun^{C^{*}}(\bG,\bA))$$ is an isomorphism and $\cFun^{\mathrm{bd}}(\bG,\bA)$ is a (marked) $C^{*}$-category. 
 \end{kor}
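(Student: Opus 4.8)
The plan is to reduce both assertions to the single claim that $\cFun^{bd}(\bG,\bA)$ is a (marked) $C^{*}$-category, and to prove this by identifying it with the candidate category $\cFun^{r}(\bG,\bA)$ from the preceding remark, which is already known to be a (marked) $C^{*}$-category. For the reduction: by definition $\cFun^{C^*}(\bG,\bA)=\compl(\cFun^{bd}(\bG,\bA))$, so the completion morphism in the statement is exactly the unit of the adjunction \eqref{ewfwoijioiffewfwefw} evaluated at the pre-$C^{*}$-category $\cFun^{bd}(\bG,\bA)$. If the latter is already a $C^{*}$-category, then by Remark \ref{wreoiwoifwefewf} its maximal norm coincides with its given complete norm, so $\compl$ acts as the identity on it and this unit is an isomorphism. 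Hence it suffices to prove that $\cFun^{bd}(\bG,\bA)$ is a (marked) $C^{*}$-category.

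To this end I would compare $\cFun^{bd}(\bG,\bA)=\Bd^{\infty}(\cFun^{u}(\bG,\bA))$ with $\cFun^{r}(\bG,\bA)$, both viewed as subcategories of $\cFun^{u}(\bG,\bA)$; both have all the objects of $\cFun^{u}(\bG,\bA)$, so it suffices to show they have the same morphisms. The inclusion $\cFun^{r}(\bG,\bA)\subseteq \cFun^{bd}(\bG,\bA)$ is formal: since $\cFun^{r}(\bG,\bA)$ is a (marked) pre-$C^{*}$-category, the inclusion $\cFun^{r}(\bG,\bA)\hookrightarrow \cFun^{u}(\bG,\bA)$ factors (uniquely, and as the identity on objects) through $\Bd^{\infty}(\cFun^{u}(\bG,\bA))=\cFun^{bd}(\bG,\bA)$ by the universal property of $\Bd^{\infty}$, Lemma \ref{uhfiuhfiwefewfewfewfewf}.\ref{frefrkjhgiekgergerg}; so every morphism of $\cFun^{r}(\bG,\bA)$ is a morphism of $\cFun^{bd}(\bG,\bA)$.

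For the reverse inclusion, the key point is that for each object $g$ of $\bG$ the evaluation map $e_{g}\colon \cFun^{u}(\bG,\bA)\to \bA$, $a\mapsto a(g)$, $(f_{h})_{h}\mapsto f_{g}$, is a morphism of (marked) $\C$-linear $*$-categories; precomposing any representation of $\bA$ in a $C^{*}$-algebra with $e_{g}$ then yields a representation of $\cFun^{u}(\bG,\bA)$, and hence $\|f_{g}\|_{max}\le \|f\|_{max}$ for every morphism $f=(f_{g})_{g}$, where the norm on the right is the maximal norm of the $\C$-linear $*$-category $\cFun^{u}(\bG,\bA)$. Now a morphism $f$ of $\cFun^{bd}(\bG,\bA)\subseteq \Bd(\cFun^{u}(\bG,\bA))$ has, by definition, finite maximal norm in $\cFun^{u}(\bG,\bA)$, so $\sup_{g}\|f_{g}\|_{max}<\infty$; thus $f$ lies in $\prod^{bd}_{g\in \bG}\Hom_{\bA}(a(g),a^{\prime}(g))$ and is therefore a morphism of $\cFun^{r}(\bG,\bA)$. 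Combining the two inclusions gives $\cFun^{bd}(\bG,\bA)=\cFun^{r}(\bG,\bA)$ as (marked) $\C$-linear $*$-categories, and since being a $C^{*}$-category is a property of the underlying $\C$-linear $*$-category (Remark \ref{wreoiwoifwefewf}), $\cFun^{bd}(\bG,\bA)$ is a (marked) $C^{*}$-category.

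The main obstacle — and the only place where the groupoid hypothesis on $\bG$ is genuinely needed — is the claim that $e_{g}$ is $*$-compatible, equivalently that the componentwise formula $(f_{h})_{h}\mapsto (f_{h}^{*})_{h}$ defines an involution on $\cFun^{u}(\bG,\bA)$ and on $\cFun^{r}(\bG,\bA)$ at all: for a general category $\bG$ this fails, but for a groupoid it holds on unitary functors because then $a(\phi^{-1})=a(\phi)^{-1}=a(\phi)^{*}$ for every morphism $\phi$ of $\bG$. In the marked case one should additionally check that the marking induced on $\cFun^{bd}(\bG,\bA)$ from $\cFun^{u}(\bG,\bA)$ — namely all transformations whose components are all marked, by the description of $\Bd^{\infty}$ on markings preceding Lemma \ref{uhfiuhfiwefewfewfewfewf} — coincides with the componentwise marking of $\cFun^{r}(\bG,\bA)$, using that marked morphisms are unitary and hence of maximal norm at most $1$; the argument is otherwise unchanged.
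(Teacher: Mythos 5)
Your proof is correct, but it takes a genuinely different route from the paper's. The paper deduces the corollary from the exponential law: in the proof of Proposition \ref{efiheiufhwiufwfwe} it is shown that for every pre-$C^{*}$-category $\bC$ both sets $\Hom_{\preCcat_{1}}(\bC,\cFun^{bd}(\bG,\bA))$ and $\Hom_{\preCcat_{1}}(\bC,\cF_{-}(\cFun^{C^{*}}(\bG,\bA)))$ are naturally identified with $\Hom_{\preCcat_{1}}(\bC\otimes_{max}\bG,\cF_{-}(\bA))$, the resulting natural isomorphism of represented functors is induced by the completion morphism, and the Yoneda lemma then makes that morphism an isomorphism; the $C^{*}$-property of $\cFun^{bd}(\bG,\bA)$ is read off afterwards. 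You argue in the opposite order: you first establish the $C^{*}$-property by identifying $\cFun^{bd}(\bG,\bA)=\Bd^{\infty}(\cFun^{u}(\bG,\bA))$ with the explicit category $\cFun^{r}(\bG,\bA)$ --- the inclusion $\cFun^{r}\subseteq\cFun^{bd}$ from the universal property in Lemma \ref{uhfiuhfiwefewfewfewfewf}, and the reverse inclusion from the estimate $\sup_{g}\|f_{g}\|_{max}\le\|f\|_{max}$, obtained by composing representations of $\bA$ with the evaluation $*$-functors $e_{g}$ --- after which the statement about the completion morphism is immediate, since $\compl$ does nothing to a category that is already complete in its maximal norm (Remark \ref{wreoiwoifwefewf}). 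Your route buys two things the paper's does not: it shows that the comparison morphism $\cFun^{r}(\bG,\bA)\to\cFun^{bd}(\bG,\bA)$, which the remark preceding the corollary explicitly leaves open, is an isomorphism whenever $\bA$ is a $C^{*}$-category, and it nowhere uses that $\bG$ is a groupoid, whereas the paper's proof does (both $\otimes_{max}\bG$ and Proposition \ref{efiheiufhwiufwfwe} require it); in exchange, the paper's argument is soft and simply recycles the bijections already constructed for the exponential law. One small correction to your closing paragraph: the groupoid hypothesis does not enter where you claim. The componentwise involution is well defined on $\cFun^{u}(\bG,\bA)$ for an arbitrary small category $\bG$, because the naturality of $(f_{g}^{*})_{g}$ only uses that a unitary functor satisfies $a(\phi)^{*}=a(\phi)^{-1}$ for every morphism $\phi$, not that $\phi$ itself is invertible in $\bG$; so $e_{g}$ is a $*$-functor in complete generality, and your argument is in fact independent of the groupoid assumption.
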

\begin{proof}In the proof of Proposition \ref{efiheiufhwiufwfwe}
 we have actually shown  that for every pre-$C^{*}$-category
$\bC$ there is  a natural isomorphism
$$\Hom_{\preCcat_{1}}(\bC\otimes_{\max} \bG,\cF_{-}(\bA))\cong \Hom_{\preCcat_{1}}(\bC,\cF_{-}(\cFun^{C^{*}}(\bG,\bA)))\ .$$
Furthermore we have a natural isomorphism
$$\Hom_{\preCcat_{1}}(\bC\otimes_{\max} \bG,\bA)\cong  \Hom_{\preCcat_{1}}(\bC,\cFun^{\mathrm{bd}}(\bG,\bA))\ .$$
By an inspection of the construction of these bijections we see that the resulting isomorphism  $$ \Hom_{\preCcat_{1}}(-,\cFun^{\mathrm{bd}}(\bG,\bA))\cong  \Hom_{\preCcat_{1}}(-,\cF_{-}(\cFun^{C^{*}}(\bG,\bA)))$$  of functors $\preCcat_{1}^{\mathrm{op}}\to \Set$ is induced by the completion morphisms $$ \cFun^{\mathrm{bd}}(\bG,\bA)\to \cF_{-}(\cFun^{C^{*}}(\bG,\bA))\ .$$ By the Yoneda Lemma it 
  is therefore an isomorphism, too.
\end{proof}

  In the following we introduce the  fundamental groupoid functor  $\Pi:\sSet\to \Groupoids_{1}$. 
   \begin{ddd}\label{fewiohfwiof2323r23r}  The  fundamental groupoid functor  is defined as a left-adjoint  of an  adjunction
$$\Pi:\sSet\leftrightarrows \Groupoids_{1}:\Nerve\ ,$$
where $\Nerve$ takes the nerve of a groupoid.\end{ddd}
Explicitly, the fundamental groupoid $\Pi(K)$ of a simplicial set  $K$ is the groupoid freely generated by the path  category $P(K)$ of $K$. The category $P(K)$ in turn is given as follows:
\begin{enumerate}
\item The objects of $P(K)$ are the $0$-simplices.
\item The morphisms of $P(K)$ are generated by the $1$-simplices of $K$ subject to the relation
$g\circ f\sim h$ if there exists a $2$-simplex $\sigma$ in $K$ with
$d_{2} \sigma=f$, $d_{0}\sigma=g$ and $d_{1}\sigma=h$.
\end{enumerate}

 In the following definition we use   the notation introduced in Covention \ref{fwerifhiwefewfewf}. 
  Let $\cC$ be a member of the list  $$ \{\scat_{1},\ClinCat_{1},\preCcat_{1},\Ccat_{1},\scat_{1}^{+},\ClinCat_{1}^{+},\preCcat_{1}^{+},\Ccat_{1}^{+}\}\ .$$

\begin{ddd}\label{riooejrgegerreg}
We define the tensor and cotensor structure of $\cC$ with $\sSet$ by
$$\cC\times \sSet\to \cC\ , \quad (\bA,K)\mapsto {\bA\sharp K:=} \bA\sharp \Pi(K)$$
and
$$ \sSet^{\mathrm{op}} \times \cC  \to \cC\ , \quad (K,\bB)\mapsto  {\bB^{K}}:=\cFun^{?}(\Pi(K),\bB )\ .$$
 For objects $\bA,\bB$ of $\cC$ we define the simplicial mapping space 
$\Map(\bA,\bB)$ in $\sSet$ by
$$\Map(\bA,\bB)[-]:=\Map(\bA\sharp  {\Delta^{-}} ,\bB)\ .$$
\end{ddd}
Then for every two objects 
 $\bA, \bB$ in $\cC$ and every simplicial set  $K$, by Proposition \ref{efiheiufhwiufwfwe}  we have natural bijections\begin{equation}\label{lrwrknewkjfewkjfwefwfwef}
\Hom_{\sSet}(K, \Map(\bA,\bB))\cong \Hom_{\cC}(\bA\sharp K,\bB)\cong \Hom_{\cC}(\bA, \bB^{K})\ .\end{equation}
In this way we have defined a simplicial enrichment of $\cC$.

\section{The resolution}\label{oijfowiefjweoifewfewfewfe}

Let $G$ be a group. In the present section we consider the  functor categories from the arrow category  $\tilde G$.  
These functor categories will serve as explicit fibrant resolutions later in Section \ref{geroijoegergregeg}. 

A $G$-groupoid (or $G$-category) is a groupoid (or category) with a strict action of $G$, i.e., an object of $\Fun(BG,\Groupoids_{1})$ (or $\Fun(BG,\Cat_{1})$).

\begin{ddd}\label{fewoihfiweiofjwefoiewfewfewf}   The arrow category $\tilde G$ is the  $G$-groupoid
 defined as follows: \begin{enumerate} \item The    set of objects of $\tilde G$ is the underlying set of $G$.
  \item For every pair of objects  $g,h$ of $\tilde G$ the set of morphisms $\Hom_{\tilde G}(g,h)$ consists of one point which we will denote by $g\to h$. The composition of morphisms is defined in the only possible way.    \item   The group $G$ acts on the groupoid $\tilde G$ by left multiplication. \end{enumerate}
  \end{ddd}

  For a $G$-category $\bA$     the functor category
 $\cFun(\tilde G,\bA)$  is  again a  $G$-category.
 The group $G$ acts on this functor category as follows. 
  If $a:\tilde G\to \bA $ is a functor, then we set
 $g(a):=g\circ a\circ g^{-1}$. The action on morphisms is similar.
  We interpret this construction as a functor
  $$\cFun(\tilde G,-):\Fun(BG,\Cat_{1})\to \Fun(BG,\Cat_{1})\ .$$
 This construction extends to the various versions of  (marked) $*$-categories.  
For $\cC$ in the list  $$ \{\scat_{1},\ClinCat_{1},\preCcat_{1},\Ccat_{1},\scat_{1}^{+},\ClinCat_{1}^{+},\preCcat_{1}^{+},\Ccat_{1}^{+}\}$$
we get a functor
$$\cFun^{?}(\tilde G,-):\Fun(BG,\cC)\to \Fun(BG,\cC)\ ,$$
 see Convention \ref{fwerifhiwefewfewf} for notation.
 
\begin{lem}\label{fri9ofwefewfewf}
If $\bA$ is a (marked) pre-$C^{*}$-category, then the natural morphism
$$\cFun^{\mathrm{bd}}(\tilde G,\bA)\to \cFun^{u}(\tilde G,\bA)$$
is an isomorphism of (marked) $\C$-linear $*$-categories.
\end{lem}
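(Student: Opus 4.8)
The plan is to show that every unitary transformation $a \colon \tilde G \to \bA$ automatically takes values in the uniformly bounded part of $\cFun^u(\tilde G, \bA)$, so that the inclusion $\cFun^{bd}(\tilde G, \bA) \hookrightarrow \cFun^u(\tilde G, \bA)$ is already the identity on objects, and then that the same holds on morphism spaces. Recall that $\cFun^{bd}(\tilde G, \bA) = \Bd^\infty(\cFun^u(\tilde G, \bA))$, so by the characterization of $\Bd^\infty$ it suffices to prove the single equality $\cFun^{bd}(\tilde G, \bA) = \cFun^u(\tilde G, \bA)$, i.e.\ that every morphism of $\cFun^u(\tilde G, \bA)$ has finite maximal norm; iterating $\Bd$ then changes nothing. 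This is the same strategy as in the proof of Lemma \ref{roigoergergerger}.

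First I would unwind what an object $a$ of $\cFun^u(\tilde G, \bA)$ is: since between any two objects of $\tilde G$ there is a \emph{unique} morphism $g \to h$, the functor $a$ is determined by the objects $a(g)$ and by the unitaries $a(g \to h) \colon a(g) \to a(h)$, which satisfy $a(h \to k) \circ a(g \to h) = a(g \to k)$ and $a(g \to g) = \id_{a(g)}$. In particular $a(h \to g) = a(g \to h)^*$, so all the $a(g)$ are unitarily isomorphic in $\bA$ via a \emph{coherent} system of unitaries. Next I would describe a morphism $f = (f_g)_{g \in G} \colon a \to a'$ in $\cFun^u(\tilde G, \bA)$: the naturality condition \eqref{dqwduigui1eg12ee} for the morphism $g \to h$ reads $f_h \circ a(g \to h) = a'(g \to h) \circ f_g$, i.e.\ $f_h = a'(g \to h) \circ f_g \circ a(g \to h)^*$. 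Hence $f$ is determined by the single component $f_e$ at the unit $e \in G$ (or any fixed basepoint), and conversely any morphism $f_e \colon a(e) \to a'(e)$ in $\bA$ extends uniquely to such a natural family. The key point is that, since $a(g \to h)$ and $a'(g \to h)$ are unitaries (hence of maximal norm $\le 1$ in any representation, by Remark \ref{gregoi34jf34f34f}), we get $\|f_g\|_{max} = \|a'(e \to g) \circ f_e \circ a(e \to g)^*\|_{max} \le \|f_e\|_{max}$, and in fact equality by symmetry — this uses Lemma \ref{vijroiregrgregreg} only through the elementary bound $\|uxv\|_{max} \le \|x\|_{max}$ for $u,v$ unitary. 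Therefore $\|f\|_{max}$, computed in $\cFun^u(\tilde G, \bA)$, is governed entirely by $\|f_e\|_{max}$ computed in $\bA$, which is finite because $\bA$ is a pre-$C^*$-category. This gives $\cFun^u(\tilde G, \bA) = \Bd(\cFun^u(\tilde G, \bA))$, hence $= \Bd^\infty(\cFun^u(\tilde G, \bA)) = \cFun^{bd}(\tilde G, \bA)$, and the inclusion is an isomorphism of $\C$-linear $*$-categories.

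For the marked case there is essentially nothing extra: the markings on both sides are defined to be the transformations all of whose components $f_g$ are marked (using that $\Bd^\infty$ and the functor-category construction both carry over the marking of $\bA$ componentwise), and since $f_g$ is obtained from $f_e$ by conjugation with morphisms $a(e\to g)$ of $\tilde G$ — which are to be treated as \emph{marked} in $\tilde G$ in the sense that the marking of $\bA\otimes\bG$/$\cFun^u$ only constrains the $\bA$-factor — the family $(f_g)$ is marked iff $f_e$ is. So the isomorphism of underlying $\C$-linear $*$-categories respects markings.

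The main obstacle is purely bookkeeping: being careful that the maximal norm on the functor category $\cFun^u(\tilde G, \bA)$ — which is \emph{defined} via representations of $\cFun^u(\tilde G, \bA)$ into $C^*$-algebras, not via the pointwise norms on the $f_g$ — is actually controlled by $\|f_e\|_{max}$. One direction is immediate (restricting a representation $\rho$ of $\cFun^u(\tilde G,\bA)$ along the evident inclusion-like map $\bA \hookrightarrow \cFun^u(\tilde G,\bA)$ at the basepoint, noting $\rho(f)$ differs from $\rho(f_e)$ by partial isometries coming from the unitaries $a(e\to g)$). The reverse inequality, which we do not even strictly need for the equality $\cFun^{bd}=\cFun^u$, would follow by building from any representation of $\bA$ the ``induced'' representation of $\cFun^u(\tilde G,\bA)$ on the direct-sum Hilbert module indexed by $G$; but to conclude the lemma it is enough to establish finiteness, which the one easy inequality already provides. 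I would present this cleanly by invoking the pattern of Lemma \ref{roigoergergerger} rather than redoing the representation-theoretic estimates from scratch.
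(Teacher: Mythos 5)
Your proposal is correct and follows essentially the same route as the paper: the paper also trivializes each unitary functor via the coherent unitaries $h_{a}=(a(1\to g))_{g}$ from the constant functor $\const(a(1))$, writes a morphism $f$ as $h_{a^{\prime}}\circ\const(f(1))\circ h_{a}^{-1}$, and bounds $\|f\|_{max}$ (taken in $\cFun^{u}(\tilde G,\bA)$, via restriction of representations along $\const$ and the norm-$\le 1$ bound on unitaries) by $\|f(1)\|_{max}<\infty$, so that $\cFun^{u}=\Bd^{\infty}(\cFun^{u})=\cFun^{bd}$. Your closing paragraph correctly isolates the one real point — that the maximal norm on the functor category is defined via its own representations — and resolves it exactly as the paper does.
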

\begin{proof}
We must show that every morphism in $ \cFun^{u}(\tilde G,\bA)$ has a finite   maximal norm.
 If $a$ is a unitary functor from $\tilde G$ to $\bA$, then we have a unitary isomorphism
 $$h_{a}:=(a(1\to g))_{g\in \tilde G}:\const(a(1))\to a\ ,$$
 where $\const(a(1))$ denotes the constant functor with value $a(1)$. Unitarity implies the norm estimates
  $\|h_{a}\|_{\max}\le 1$ and $\|h_{a}^{-1}\|_{\max}\le 1$, see Remark \ref{gregoi34jf34f34f}.
 
 If $f:a\to a^{\prime}$ is a morphism in $ \cFun^{u}(\tilde G,\bA)$, then we have the relation.
 $$h_{a^{\prime}}\circ \const(f(1))\circ h^{-1}_{a}=f\ .$$ This implies the inequality
 $$\|f\|_{\max}\le   \| \const(f(1))\|_{\max}\le \|f(1)\|_{\max}\ .$$
  \end{proof}

 Using Corollary \ref{eroijoietert} we conclude:
 \begin{kor}\label{regoijeroigergjergeg}
For a (marked) $C^{*}$-category $\bA$ the natural maps are isomorphisms 
 $$\cFun^{C^{*}}(\tilde G,\bA)\stackrel{\cong}{\leftarrow}\cFun^{\mathrm{bd}}(\tilde G,\bA)\stackrel{\cong}{\to}\cFun^{u}(\tilde G,\bA)$$  isomorphism of (marked) $\C$-linear $*$-categories.
\end{kor}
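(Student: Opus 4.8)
The plan is to obtain both isomorphisms by directly invoking the two preceding results, exploiting that $\tilde G$ is a groupoid (by Definition \ref{fewoihfiweiofjwefoiewfewfewf} it is the codiscrete groupoid on the set $G$) and that, by Convention \ref{fwerifhiwefewfewf}, all the constructions $\cFun^{bd}$, $\cFun^{u}$, $\cFun^{C^{*}}$ for a $C^{*}$-category $\bA$ are formed on the underlying (marked) $\C$-linear $*$-category. So the statement is a matter of bookkeeping between the two cited results.

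First I would record that the underlying object $\cF_{-}(\bA)$ of a (marked) $C^{*}$-category $\bA$ is a (marked) pre-$C^{*}$-category. Lemma \ref{fri9ofwefewfewf}, applied to $\cF_{-}(\bA)$, then gives that the natural morphism
$$\cFun^{bd}(\tilde G,\bA)\longrightarrow \cFun^{u}(\tilde G,\bA)$$
is an isomorphism of (marked) $\C$-linear $*$-categories; this is the right-hand arrow. The only thing to verify here is notational, namely that the $\cFun^{bd}$ and $\cFun^{u}$ in the statement are the ones formed from $\bA$ viewed as a $\C$-linear $*$-category — which is precisely the reading fixed in Remark \ref{uhweiuvwevwvewdw}.

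For the left-hand arrow I would apply Corollary \ref{eroijoietert} with the groupoid $\bG:=\tilde G$: it asserts that the completion morphism
$$\cFun^{bd}(\tilde G,\cF_{-}(\bA))\longrightarrow \cF_{-}(\cFun^{C^{*}}(\tilde G,\bA))$$
is an isomorphism. Since $\cFun^{C^{*}}(\tilde G,\bA)=\compl(\cFun^{bd}(\tilde G,\bA))$ by definition, this is exactly the asserted isomorphism $\cFun^{C^{*}}(\tilde G,\bA)\xleftarrow{\cong}\cFun^{bd}(\tilde G,\bA)$. The marked case is covered since both Lemma \ref{fri9ofwefewfewf} and Corollary \ref{eroijoietert} already include it.

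There is no genuine obstacle: all analytic content — in particular the norm estimate via the unitary trivialization $h_{a}=(a(1\to g))_{g\in\tilde G}$ against the constant functor, which works precisely because $\tilde G$ is codiscrete — already sits inside the proof of Lemma \ref{fri9ofwefewfewf}, and the completion statement is Corollary \ref{eroijoietert}. The one point I would double-check before finishing is that $\tilde G$ is an admissible input for Corollary \ref{eroijoietert} (it is a small groupoid, so this is fine) and that the three natural maps named in the statement are literally the composites produced by these two cited results, i.e.\ that nothing is lost when matching up the notation of Convention \ref{fwerifhiwefewfewf}.
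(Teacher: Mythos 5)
Your proposal is correct and matches the paper's own argument: the paper deduces the corollary exactly by combining Lemma \ref{fri9ofwefewfewf} (applied to the underlying pre-$C^{*}$-category of $\bA$, giving the right-hand isomorphism) with Corollary \ref{eroijoietert} specialized to the groupoid $\tilde G$ (giving the left-hand completion isomorphism). Nothing further is needed.
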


 \begin{remark}\label{grrigherjiogreg434t3t34t4t34t}
 In view of Corollary \ref{regoijeroigergjergeg} and Lemma \ref{fri9ofwefewfewf} we have an  isomorphism of (marked) $\C$-linear $*$-categories.
 $$\cFun^{?}(\tilde G,\bA)\cong \cFun^{u}(\tilde G,\bA)$$ in all cases, see Convention \ref{fwerifhiwefewfewf} and  Remark \ref{uhweiuvwevwvewdw} for the usage of notation. \hB
  \end{remark}

We have a $G$-equivariant version  of the
 exponential law.
Let  $\cC$ be in the list  $$ \{\scat_{1},\ClinCat_{1},\preCcat_{1},\Ccat_{1},\scat_{1}^{+},\ClinCat_{1}^{+},\preCcat_{1}^{+},\Ccat_{1}^{+}\}\ .$$   

  \begin{prop}\label{eoifhewiufewfewfewfewfewf}  For $\bA$ and $\bC$ in $\Fun(BG,\cC)$ we have  a natural isomorphism $$\Hom_{\Fun(BG,\cC)}(\bC\sharp \tilde G,\bA)\cong \Hom_{\Fun(BG,\cC)}(\bC,\cFun^{?}(\tilde G,\bA))\ .$$

 \end{prop}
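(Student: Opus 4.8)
The plan is to deduce this from the non-equivariant exponential law, Proposition~\ref{efiheiufhwiufwfwe}, applied to the groupoid underlying $\tilde G$, by checking that the resulting bijection respects the $G$-actions. Write
\[
\Lambda=\Lambda_{\bG,\bC,\bA}\colon \Hom_{\cC}(\bC\sharp\bG,\bA)\xrightarrow{\;\cong\;}\Hom_{\cC}(\bC,\cFun^{?}(\bG,\bA))
\]
for the natural bijection of Proposition~\ref{efiheiufhwiufwfwe}. It is natural not only in $\bC$ and in $\bA$ but also in the groupoid variable $\bG$ --- covariantly through $-\sharp\bG$ and contravariantly through $\cFun^{?}(\bG,-)$ --- which is immediate from the explicit formulas in the proof of Proposition~\ref{efiheiufhwiufwfwe} (and, in the pre-$C^{*}$- and $C^{*}$-cases, from the fact that those formulas are then extended by linearity and continuity, using in addition the universal property of the completion). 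Since a morphism in $\Fun(BG,\cC)$ between $G$-objects is precisely a morphism in $\cC$ between the underlying objects which intertwines the structure automorphisms, it suffices to prove that for $\bG=\tilde G$ the bijection $\Lambda$ carries $G$-equivariant morphisms to $G$-equivariant morphisms, and conversely.

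First I would make the two relevant $G$-actions explicit. For $g\in G$ denote by $g_{\bA}\colon\bA\to\bA$, $g_{\bC}\colon\bC\to\bC$ and $g_{\tilde G}\colon\tilde G\to\tilde G$ the structure automorphisms, so that $(g_{\tilde G})^{-1}=(g^{-1})_{\tilde G}$. The action on $\bC\sharp\tilde G$ is the diagonal one, $g\mapsto g_{\bC}\sharp g_{\tilde G}$, and by bifunctoriality of $\sharp$ this equals $(g_{\bC}\sharp\id_{\tilde G})\circ(\id_{\bC}\sharp g_{\tilde G})$. The action on $\cFun^{?}(\tilde G,\bA)$ from the definition preceding the statement sends $a\mapsto g_{\bA}\circ a\circ (g_{\tilde G})^{-1}$, i.e.\ it is the composite $g_{\cFun^{?}(\tilde G,\bA)}=\cFun^{?}(\tilde G,g_{\bA})\circ\cFun^{?}((g_{\tilde G})^{-1},\bA)$ of postcomposition with $g_{\bA}$ and precomposition with $(g_{\tilde G})^{-1}$; these two operations commute, and $\cFun^{?}((g_{\tilde G})^{-1},\bA)$ is inverse to $\cFun^{?}(g_{\tilde G},\bA)$ because $\cFun^{?}(-,\bA)$ is a contravariant functor on groupoids.

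Then I would run the computation. Let $\Phi\in\Hom_{\cC}(\bC\sharp\tilde G,\bA)$, put $\Psi:=\Lambda(\Phi)$, and fix $g\in G$. Applying $\Lambda$ to both sides of the $g$-equivariance identity $g_{\bA}\circ\Phi=\Phi\circ(g_{\bC}\sharp g_{\tilde G})$, and using naturality of $\Lambda$ in $\bA$ on the left and naturality in the groupoid variable followed by naturality in $\bC$ on the right (via the factorization of the diagonal action above), this identity turns into
\[
\cFun^{?}(\tilde G,g_{\bA})\circ\Psi \;=\; \cFun^{?}(g_{\tilde G},\bA)\circ\Psi\circ g_{\bC}\ .
\]
Composing on the left with $\cFun^{?}((g_{\tilde G})^{-1},\bA)$ and using that precomposition commutes with postcomposition, the right-hand side becomes $\Psi\circ g_{\bC}$ while the left-hand side becomes $g_{\cFun^{?}(\tilde G,\bA)}\circ\Psi$; hence the displayed identity is equivalent to $g$-equivariance of $\Psi$. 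Since every step is an equivalence and $g\in G$ is arbitrary, $\Phi$ is $G$-equivariant if and only if $\Psi$ is, so $\Lambda$ restricts to the asserted bijection, and its naturality in $\bC$ and $\bA$ as objects of $\Fun(BG,\cC)$ is inherited from that of $\Lambda$. The only real difficulty is bookkeeping: tracking the variances of $\cFun^{?}$ and noticing that the diagonal action uses $g_{\tilde G}$ on the tensor factor whereas the conjugation action uses $(g_{\tilde G})^{-1}$ on the source, so that reconciling the two is exactly what naturality of $\Lambda$ in the groupoid variable accomplishes; a minor secondary point, worth a line, is to confirm that this last naturality genuinely holds in the $C^{*}$-case, where $\Lambda$ is assembled through a completion.
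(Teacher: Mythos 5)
Your proposal is correct and follows essentially the same route as the paper, whose proof is simply that the claim "follows from an inspection of the proof of Proposition \ref{efiheiufhwiufwfwe}": you reduce to the non-equivariant exponential law and check compatibility with the $G$-actions, merely packaging that check as naturality of the bijection in $\bC$, $\bA$ and the groupoid variable (the latter being exactly what the paper's "inspection" of the explicit formulas, including the completion step in the $C^{*}$-case, verifies).
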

 \begin{proof}
 This follows from an inspection of the proof of Proposition \ref{efiheiufhwiufwfwe}.\end{proof}

 \section{Completeness, cocompleteness, and local presentability}\label{foijowefwefewfewf}
 
 The first goal of this section is to show the following theorem. 
\begin{theorem}\label{gbeioergergregerg} The categories in the list  $$\{\scat_{1},\ClinCat_{1},\preCcat_{1},\Ccat_{1},\scat_{1}^{+},\ClinCat_{1}^{+},\preCcat_{1}^{+},\Ccat_{1}^{+}\}$$ are complete and cocomplete. 
 \end{theorem}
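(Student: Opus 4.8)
The plan is to establish completeness and cocompleteness one class of categories at a time, working up the chain \eqref{whchiwehciuwchuiwecwecwc}, and to reduce everything to the two standard facts that the category $\Cat_{1}$ of categories and the category of $\C$-linear categories (categories enriched in $\C$-vector spaces) are complete and cocomplete; all (co)limits are over small index categories. I would first treat $\scat_{1}$ and $\ClinCat_{1}$ by transporting the involution through the (co)limit of the underlying categories. Given a diagram $D\colon I\to\scat_{1}$ with underlying diagram $\bar D\colon I\to\Cat_{1}$, form $L:=\lim\bar D$ and $C:=\colim\bar D$ in $\Cat_{1}$. The involutions $*_{D(i)}\colon D(i)\to D(i)^{op}$ constitute a natural transformation $\bar D\Rightarrow(-)^{op}\circ\bar D$ of functors $I\to\Cat_{1}$, precisely because every transition functor of $D$ is a $*$-functor. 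Since $(-)^{op}\colon\Cat_{1}\to\Cat_{1}$ is a strict involution, hence an isomorphism of categories, it preserves all (co)limits, so applying $\lim$ and $\colim$ to this natural transformation produces morphisms $L\to L^{op}$ and $C\to C^{op}$. These are again the identity on objects (every object of a (co)limit of categories is determined by, resp.\ is in the image of, the (co)limit cone, where the involutions act trivially on objects) and involutive (apply $\lim$ resp.\ $\colim$ to the identity $*^{op}\circ*=\id$), so $L$ and $C$ carry $*$-category structures; a routine check against the universal properties of $L$ and $C$ in $\Cat_{1}$ shows they are the (co)limit in $\scat_{1}$. The same argument applies verbatim to $\ClinCat_{1}$, using instead that the ``conjugate-opposite'' endofunctor of the category of $\C$-linear categories (passing to the opposite with conjugated scalar action) is a strict involution and that that category is complete and cocomplete.

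Next I would handle $\preCcat_{1}$ and $\Ccat_{1}$ using the adjunctions already available. By Lemma \ref{uhfiuhfiwefewfewfewfewf} the inclusion $\cF_{pre}\colon\preCcat_{1}\hookrightarrow\ClinCat_{1}$ is a \emph{left} adjoint, so $\preCcat_{1}$ is a coreflective full subcategory of $\ClinCat_{1}$; hence it is closed under colimits in $\ClinCat_{1}$ and so cocomplete, and its limits are obtained by applying the coreflector $\Bd^{\infty}$ to the limit computed in $\ClinCat_{1}$, so it is complete. By \eqref{ewfwoijioiffewfwefw} the inclusion $\cF_{-}\colon\Ccat_{1}\hookrightarrow\preCcat_{1}$ is a \emph{right} adjoint, with reflector $\compl$, so $\Ccat_{1}$ is a reflective full subcategory of $\preCcat_{1}$; hence its limits are computed in $\preCcat_{1}$ and its colimits are obtained by applying $\compl$ to the colimit computed in $\preCcat_{1}$. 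This disposes of the four unmarked cases.

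Finally, for the marked versions: for $\cC\in\{\scat_{1},\ClinCat_{1},\preCcat_{1},\Ccat_{1}\}$ the forgetful functor $\cF_{+}\colon\cC^{+}\to\cC$ from \eqref{wecoihorgrtb} creates limits and colimits. Concretely, for $D\colon I\to\cC^{+}$ the limit is $\lim\cF_{+}D$ in $\cC$ equipped with the marking consisting of those unitaries of the limit whose image under every projection is marked, and the colimit is $\colim\cF_{+}D$ in $\cC$ equipped with the smallest marking containing the images of all marked morphisms of the $D(i)$ under the coprojections; the latter is a legitimate marking because markings are generated from unitary morphisms and the unitaries contain the identities and are closed under composition and $*$. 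A short verification against the universal properties in $\cC$ shows these are the limit and colimit in $\cC^{+}$, so $\cC^{+}$ is complete and cocomplete whenever $\cC$ is. Together with the previous paragraphs this proves Theorem \ref{gbeioergergregerg}.

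I expect the only point that is not purely formal to be the assertion, in the pre-$C^{*}$ step, that $\preCcat_{1}$ is closed under colimits inside $\ClinCat_{1}$ --- equivalently, that a colimit of pre-$C^{*}$-categories never acquires a morphism of infinite maximal norm (note the contrast with Remark \ref{eiuuifwefewfewfw}, where passing to a full subcategory can destroy this). This is exactly what the coreflectivity supplied by Lemma \ref{uhfiuhfiwefewfewfewfewf} delivers, so the real work is having that lemma in hand; I would be explicit in invoking the standard fact that a coreflective (resp.\ reflective) full subcategory inherits all colimits (resp.\ limits) from the ambient category. A secondary point worth care is checking, in the first step, that the transported involution stays the identity on objects and stays involutive, which is where the strict-involution property of $(-)^{op}$ and of the conjugate-opposite functor is used.
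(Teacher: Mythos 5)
Your argument is correct, and its skeleton --- climbing the chain of adjunctions, transporting the involution through (co)limits of underlying categories via the strict involution $(-)^{op}$ (resp.\ its conjugate-linear variant), computing pre-$C^{*}$ limits with $\Bd^{\infty}$ and $C^{*}$ colimits with $\compl$, and handling the marked cases by (co)limiting underneath and marking componentwise resp.\ by the generated marking --- matches the paper's. Where you genuinely diverge is exactly at the three non-formal steps. The paper constructs colimits in $\ClinCat_{1}$ explicitly, presenting each $\C$-linear $*$-category as a quotient of $\Lin(\cF_{\C}(-))$ by an ideal and using the adjunction \eqref{fwefweiufhui23r}; it proves by a norm estimate (based on the explicit description of colimit morphisms as finite sums of composites $u_{i}(f_{i})$) that a colimit in $\ClinCat_{1}$ of pre-$C^{*}$-categories is again a pre-$C^{*}$-category; and it verifies by hand (bounded products, equalizers cut out by closed $*$-invariant linear equations) that a limit in $\preCcat_{1}$ of $C^{*}$-categories is a $C^{*}$-category. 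You replace all three by formal facts: cocompleteness of the category of $\C$-linearly enriched categories, and closure of coreflective (resp.\ reflective) full subcategories under colimits (resp.\ limits); these do apply here, since Lemma \ref{uhfiuhfiwefewfewfewfewf} and the adjunction \eqref{ewfwoijioiffewfwefw} provide a coreflection $\preCcat_{1}\subseteq\ClinCat_{1}$ and a reflection $\Ccat_{1}\subseteq\preCcat_{1}$ with fully faithful inclusions. Your route is shorter and, as you yourself observe, isolates the analytic content in Lemma \ref{uhfiuhfiwefewfewfewfewf} (and in the construction of $\compl$). Two caveats: the appeal to cocompleteness of enriched categories outsources essentially the same free/quotient construction that the paper carries out by hand for $\ClinCat_{1}$, so it should be cited rather than treated as obvious; and the paper's explicit descriptions of colimits are not idle --- they are reused later, e.g.\ in Remark \ref{rgrweuihzreiurefrergerg} and in the strong-generation and local-presentability arguments of Section \ref{uiehfiwefwefewfewfwf} --- so the hands-on constructions buy concrete information that the purely formal route does not, even though as a proof of the completeness/cocompleteness statement alone your argument stands.
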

\begin{remark}
The case of $C^{*}$-categories is due to \cite{DellAmbrogio:2010aa}. For completeness of the presentation  we  reprove this case together with the others.\hB
\end{remark}

\newcommand{\DGr}{\mathbf{DirGraph}}
\begin{remark}\label{rgrweuihzreiurefrergerg} 
In the proof we use that the categories of categories  and marked categories     $\Cat_{1}$ and $\Cat_{1}^{+}$ are complete and cocomplete. In particular we must understand colimits in   $\Cat_{1}$  in some detail.  
Let $\DGr$ denote the category of directed graphs. Then we have an adjunction
\begin{equation}\label{iovjoijio3j4f3rf3ff}
\Free_{\mathrm{Cat}}:\DGr\leftrightarrows \Cat_{1}:\cF_{\circ}\ ,
\end{equation} where $\cF_{\circ}$ sends a category to its underlying directed graph (i.e., forgets the composition), and
$\Free_{\mathrm{Cat}}$ sends a directed graph to the category freely generated by it. 
Colimits in $\DGr$ are formed by taking the colimits of the sets  of vertices and edges separately.

If $\bC$ is a category, then the counit of the adjunction \eqref{iovjoijio3j4f3rf3ff}
provides a functor
\begin{equation}\label{ogih3iojf3f3fc}
\Free_{\mathrm{Cat}}(\cF_{\circ}(\bC))\to \bC\ .
\end{equation} 
It is a bijection on objects. We
 $R(\bC)$ denote the equivalence relation on the morphisms of  $\Free_{\mathrm{Cat}}(\cF_{\circ}(\bC)) $ generated by the action of the functor \eqref{ogih3iojf3f3fc} on morphisms.  This relation is compatible with the category structure and the functor \eqref{ogih3iojf3f3fc} induces  an isomorphism
 $$\Free_{\mathrm{Cat}}(\cF_{\circ}(\bC))/R(\bC)\cong \bC\ .$$
 We now consider 
 a diagram $\bA:I\to \Cat_{1}$. Then
 we form the category $\Free_{\mathrm{Cat}}(\colim_{I}\cF_{\circ}(\bA))$. On the morphisms of this  category we consider the smallest equivalence relation $R$  compatible with the category structure  which contains the images of
 $R(\bA(i))$ under  the canonical maps $$\Mor(\Free_{\mathrm{Cat}}(\cF_{\circ}(\bA(i))))\to\Mor (  \Free_{\mathrm{Cat}}(\colim_{I}\cF_{\circ}(\bA)))$$
 for all $i$ in $I$. Then one can check that
 $$\Free_{\mathrm{Cat}}(\colim_{I}\cF_{\circ}(\bA))/R\cong \colim_{I}\bA\ .$$
We will in particular need the following conclusion of this discussion: Every morphism  in $ \colim_{I}\bA$
 is   a finite composition of morphisms of the form $u_{i}(f_{i})$ where,  for $i$ in $I$,
 $u_{i}:\bA(i)\to \colim_{I}\bA$ is the canonical functor and $f_{i}$ is in $\Mor(\bA(i))$. \hB

\end{remark}

\begin{proof}(of Theorem \ref{gbeioergergregerg}) We first show completeness. Our starting point is the fact that
 the $1$-category $\Cat_{1}$ is complete. If $\bA:I\to \Cat_{1}$ is a diagram of categories, then we have canonical isomorphisms of sets
\begin{equation}\label{fiouoifwefwefwef}
\Ob(\lim_{I}\bA)\cong \lim_{I}\Ob(\bA)\ , \quad \Mor(\lim_{I}\bA)\cong \lim_{I}\Mor(\bA)\ .
\end{equation}

Next we consider  $\scat_{1}$. Because of the adjunction \eqref{ferferfeoijiorojr34r34r34r}  a limit  in $\scat_{1}$ (provided it exists) can be calculated as follows. One first calculates the limit in   $\Cat_{1}$ (i.e.,  after forgetting  the $*$-operation), and then restores the $*$-operation using the functoriality of the limit.  Thus let $\bA:I\to \scat_{1}$ be a diagram.  We choose 
a category $\bB$ together with a morphism $  \const(\bB)\to  \cF_{*}(\bA) $ which exhibits $\bB$ as a representative of the limit $\lim_{I} \cF_{*}(\bA)$ .
Then we can take $\bB^{\mathrm{op}}$ and $  \const(\bB)^{\mathrm{op}}\to \cF_{*}(\bA)^{\mathrm{op}}$  in order to exhibit the  limit
$\lim_{I}\cF_{*}( \bA)^{\mathrm{op}}$. The transformation $*:\cF_{*}(\bA)\to \cF_{*}(\bA)^{\mathrm{op}}$ now determines a unique functor  
$*:\bB\to \bB^{\mathrm{op}}$  such that
$$\xymatrix{\const (\bB)\ar[d]\ar[r]^{\const(*)}&\const(\bB^{\mathrm{op}})\ar[r]^{\cong} &\const(\bB)^{\mathrm{op}}\ar[d]\\ \cF_{*}(\bA)\ar[rr]^{*}&&\cF_{*}(\bA)^{\mathrm{op}}}$$ commutes. This functor is the identity on objects and turns
 $\bB$  into  a $*$-category. Together with the morphism of diagrams of $*$-categories
$\const(\bB)\to \bA$
it represents the limit  $\lim_{I}\bA$ in $\scat_{1}$.

In order to deal with the marked case we observe that $\Cat_{1}^{+}$ is complete.
If $\bA:I\to \Cat_{1}^{+}$ is a diagram of marked categories, then in order to construct the marked category $\lim_{I}\bA$, using the forgetful functor $\cF_{+}$ from \eqref{wecoihorgrtb} we form the 
category $\lim_{I}\cF_{+}( \bA)$ and mark
the morphisms   which are the elements of  $\Mor(\lim_{I}\cF_{+}(\bA))$  whose evaluation (use \eqref{fiouoifwefwefwef}) at all objects $i$ of $I$ are marked.
We can now repeat the constructions above with $\bA$ and $\bB$ marked.

The same idea works for a diagram $\bA:I\to \ClinCat_{1}$ of  $\C$-linear $*$-categories. 
Since the forgetful functor $\cF_{\C}$ from  \eqref{fwefweiufhui23r}  is the right-adjoint of the adjunction  
it preserves limits. Hence we have
$$\cF_{\C}(\lim_{I}\bA)\cong \lim_{I} \cF_{\C}(\bA)$$ if the limit on the left-hand side  exists. The limit on the 
right-hand side is interpreted in $\scat_{1}$ and exists as we have seen above.
In the following we argue that  limit on the left-hand side  indeed exists.   
For a diagram $\bA:I\to \ClinCat_{1}$ we first define the object $\lim_{I}\cF_{\C}(\bA)$ in $\scat_{1}$,
 then observe that the limit  has an induced complex enrichment. We then observe that the canonical
 morphism
 $$\const(\lim_{I}\cF_{\C}(\bA))\to \bA$$ is compatible with the enrichment and exhibits  the $*$-category
 $\lim_{I}\cF_{\C}(\bA)$ together with the enrichment as the limit $\lim_{I}\bA$.
 
 The same argument applies in the marked case.

Let $\cF_{\pre}$ and $\Bd^{\infty} $ be the functors as in \eqref{fiioioefjeofiewfewfwf}.
For a diagram of pre-$C^{*}$-categories $\bA:I\to \preCcat_{1}$ we have an isomorphism
  \begin{equation}\label{g4goihio34tj34oi34f}
\lim_{I} \bA\cong \Bd^{\infty}(\lim_{I}\cF_{\pre}(\bA))\ ,
\end{equation}
where the limit on the right-hand side is interpreted in $\ClinCat_{1}$.
This immediately follows from the adjunction  \eqref{fiioioefjeofiewfewfwf} since $\cF_{\pre}$ is a fully faithful inclusion. 
The same argument applies in the marked case.

Let $\cF_{-} $ be the forgetful functor  from \eqref{ewfwoijioiffewfwefw}. Since it is the right-adjoint of an adjunction
  it is clear that it preserves limits. We consider a diagram of $C^{*}$-categories $\bA:I\to \Ccat_{1}$. Then we have
$$\cF_{-}(\lim_{I}\bA)\simeq \lim_{I}\cF_{-}(\bA)$$ if   the limit $ \lim_{I}\bA$   exists. 
The limit on the right-hand side is interpreted in $\preCcat_{1}$ and exists as seen above.
We now argue that the limit on the left-hand side indeed exists. 
Every limit can be expressed as a finite combination of equalizers and products. It therefore suffices to show that
$\lim_{I}\cF_{-}(\bA)$ is a $C^{*}$-category in the case that $I$ is a set or a finite category.

If $I$ is a set, then the  limit is represented by the bounded product
$$\lim_{I}\bA\cong \prod_{i\in I}^{\mathrm{bd}} \bA(i)\ .$$
Indeed, this product has the universal property 
since morphisms between $C^{*}$-categories are norm-bounded by $1$. 
 
 If $I$ is finite, then we realize $\lim_{I}\bA$ as a subcategory of
$\prod_{I}\bA(i)$ cut out by linear $*$-invariant continuous equations.  This is  again $C^{*}$-category.   

We now consider the marked case. If $\bA:I\to\Ccat_{1}^{+}$ is a diagram of marked $C^{*}$-categories, then
$ \lim_{I}\cF_{-}(\bA)$ is a marked pre-$C^{*}$-category. Above we have seen that it is  also a marked $C^{*}$-category which then necessarily represents $\lim_{I}\bA$.

This finishes the proof of completeness in all cases.

We now show cocompleteness. 

We start with the cocompleteness of $\scat_{1}$. In the argument     we employ   the fact that $\Cat_{1}$ is cocomplete, see Remark \ref{rgrweuihzreiurefrergerg}.
  Let $\bA:I\to \scat_{1}$ be a diagram. Then we choose 
a category $\bB$ together with a morphism $\cF_{*}(\bA)\to \const(\bB)$ which exhibits $\bB$ as a  colimit $\colim_{I} \cF_{*}( \bA)$, where $\cF_{*}$ is as in \eqref{ferferfeoijiorojr34r34r34r}.
Then we can take $B^{\mathrm{op}}$ and $\cF_{*}(\bA)^{\mathrm{op}}\to \const(\bB)^{\mathrm{op}}$ as a representative of the  colimit
$\colim_{I} \cF_{*}(\bA)^{\mathrm{op}}$. Similarly as in the case of limits, the transformation $*:\cF_{*}(\bA)\to \cF_{*}(\bA)^{\mathrm{op}}$ now induces a functor
$*:\bB\to \bB^{\mathrm{op}}$ in the canonical way such that  it  is the identity on objects. Hence $\bB$ has the structure of a $*$-category. The canonical morphism of diagrams of $*$-categories
$\bA\to \const(\bB)$ exhibits   $  \bB$ as the colimit $\colim_{I}\bA$.

In the marked case we use  that we have already shown that $\scat_{1}$  is cocomplete. If $\bA:I\to \scat_{1}^{+}$ is a diagram, then in order to construct the marked $*$-category $\colim_{I}\bA
$ we first form the $*$-category $\bB:=\colim_{I}\cF_{+}(\bA)$, where $\cF_{+}$ is as in \eqref{wecoihorgrtb}. In $\bB$ we   mark all
morphisms which are compositions of   morphisms of the form $u_{i}(f_{i})$ for $i$ in $I$, a marked morphisms $f_{i}$ in $\bA(i)$, and where  
$u_{i}:\cF_{+}(\bA(i))\to \colim_{I}\cF_{+}(\bA)=\bB$ is the canonical $*$-functor.
Since $u_{i}$ is a $*$-functor we have for a marked $f_{i}$ that
$$u_{i}(f_{i})^{*}=u_{i}(f_{i}^{*})=u_{i}(f_{i}^{-1}
)=u_{i}(f_{i})^{-1}\ .$$
This implies that the marked morphisms in $\bB$ are unitary.
By construction they are closed under composition so that $\bB$ is a marked $*$-category, and 
$\bA\to \const(\bB)$ is a morphism of diagrams of marked $*$-categories.
It exhibits $\bB$ as the colimit $\colim_{i}\bA$ in $\scat_{1}^{+}$.

 In order to construct colimits in   $\ClinCat_{1}$ we use the adjunction \eqref{fwefweiufhui23r}.    If $\bC$ is a $\C$-linear $*$-category, then we have a natural exact sequence of  $\C$-linear $*$-categories
 $$0\to R(\bC)\to \Lin( \cF_{\C}(\bC))\to \bC\to 0\ ,$$ with the caveat
 that $R(\bC)$ is non-unital. The   second map in this sequence is the counit of the   adjunction \eqref{fwefweiufhui23r}.
 For a diagram $\bA:I\to \ClinCat_{1}$ we now define $\bB$ as the quotient 
$$0\to \langle R(\bA(i))\:|\: i\in I\rangle \to  \Lin( \colim_{I}\cF_{\C}(\bA)) \to \bB\to 0\ ,$$
where  $\langle R(\bA(i))\:|\: i\in I\rangle$ is the non-unital $\C$-linear $*$-subcategory  of $\Lin( 
\colim_{I}\cF_{\C}(\bA))$ generated as an ideal by the images of $R(\bA(i))$ for all $i$ in $I$.
By construction  for every $i$ in $I$ we have a canonical factorization 
$$\xymatrix{ \Lin( \cF_{\C}(\bA(i)))\ar[r]\ar[d]&\Lin(\colim_{I}\cF_{\C}(\bA))\ar[d]\\\bA(i)\ar@{..>}[r]&\bB}$$
In this way we get a morphism of diagrams
$\bA\to \const(\bB)$. We now observe that this exhibits $\bB$ as the colimit $\colim_{I}\bA$ in $\ClinCat_{1}$.
Indeed, let $\bC$ be in $\ClinCat_{1}$, and  a morphism $\bA\to \const(\bC)$  of diagrams in $\ClinCat_{1}$ be given. Then we get a morphism
$$ \Lin(  \cF_{\C}(\bA))\to \const(\bC)$$ in $\ClinCat$ from the counit of the adjunction \eqref{fwefweiufhui23r}.
Hence we get a uniquely determined morphism
$$ \Lin( \colim_{I}  \cF_{\C}(\bA))\to \bC$$ since colimits in $\scat_{1}$ exist and the left adjoint  $\Lin$  preserves colimits.  We finally see that this morphism, by definition of $\bB$, uniquely factorizes over a morphism $\bB\to \bC$.

In order to deal with the marked case we use the marked version of the adjunction   \eqref{fwefweiufhui23r} and argue in a similar manner using that  we have shown above that $\scat_{1}^{+}$ is cocomplete.

  We now consider colimits in $\preCcat_{1}$.
The inclusion  functor $\cF_{\pre}$ is the left-adjoint of the adjunction \eqref{fiioioefjeofiewfewfwf}.
Consequently it preserves colimits.  It therefore suffices to show that  if $\bA:I\to  \preCcat_{1}$ is a diagram of pre-$C^{*}$-categories, then the $\C$-linear $*$-category
  $\colim_{I}\cF_{\pre}(\bA) $ is in fact  a pre-$C^{*}$-category.
  Let $f$ be a morphism in $\colim_{I}\cF_{\pre}(A)$. 
  We must show that $\|f\|_{\max}$ is finite. 
  By the description of colimits in $\Cat_{1}$ given in Remark \ref{rgrweuihzreiurefrergerg}, and by the construction of colimits in $\scat_{1}$, $\ClinCat_{1}$ given above,  the morphism  $f$ is equal to a  finite linear combinations of finite compositions morphisms  of the form $u_{i}(f_{i})$, where  $f_{i}$ belongs to $ \cF_{\pre}(\bA(i))$  and $u_{i}:\cF_{\pre}(\bA(i))\to \colim_{I}\cF_{\pre}(\bA)$ is the canonical morphism. 
Hence we can assume that $f=u_{i}(f_{i})$.   If $\rho$ is    a representation of $ \colim_{I}\cF_{\pre}(\bA) $   into a $C^{*}$-algebra $B$, then
  $\rho\circ u_{i}$ is a representation of  $\cF_{\pre}(\bA(i))$.   It follows that
  $\|\rho(f)\|_{B}\le \|\rho(u_{i}(f_{i}))\|_{B}\le \|f(i)\|_{\max}$.
  
The same argument applies in the marked case.

If $\bA:I\to  \Ccat_{1}^{(+)}$ is a diagram of (marked) $C^{*}$-categories, then we 
have an isomorphism \begin{equation}\label{referf43543r}
\colim_{I} \bA\cong  \compl(\colim_{I}\cF_{-}(\bA)) \ ,
\end{equation}
where the colimit on the right-hand side is interpreted in $\preCcat_{1}^{(+)}$ and exists as seen above.
 The isomorphism \eqref{referf43543r}   follows immediately from the adjunction \eqref{ewfwoijioiffewfwefw} since the forgetful functor $\cF_{-}$  is fully faithful.

\end{proof}

The remainder of   this section  is devoted to Proposition \ref{ewdfoijfowefewfewfw} showing that some of the $*$-categories considered in the present paper are locally presentable.
 
The following discussion serves as a preparation.
Let $\cC$ be some category. {Recall from \cite[Sec. 0.6]{AR} that  a generator of  $\cC$ is a set of objects $\cG$ of $\cC$ such that
for every two distinct morphisms $f,g:C\to D$ in $\cC$ there exists a morphism $h:G\to C$ for some $G$ in $\cG$ such that $f\circ h\not= g\circ h$.  
The generator is strong if in addition 
for every object $C$ of $\cC$ and proper subobject $D$ of $C$ there exists a morphism $h:G\to C$ for some $G$ in $\cG$ which does not factor over $D$.}

Let $\cG$ be a subset of objects of $\cC$.

\begin{lem}\label{bgorijgoergergerg}
If every object of $\cC$ is isomorphic to a colimit of a    diagram in $\cC$ with values in $\cG$, then  $\cG$ is a strong generator of $\cC$.
\end{lem}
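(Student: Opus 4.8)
The plan is to verify the two defining properties of a strong generator directly from the colimit hypothesis.

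\textbf{First I would check that $\cG$ is a generator.} Let $f,g\colon C\to D$ be two distinct morphisms. By hypothesis there is a small diagram $A\colon I\to \cC$ with values in $\cG$ together with a colimiting cocone $(u_i\colon A(i)\to C)_{i\in I}$ exhibiting $C\cong \colim_I A$. If $f\circ u_i = g\circ u_i$ for every $i\in I$, then both $f$ and $g$ are induced by the same cocone $(f\circ u_i)_{i\in I} = (g\circ u_i)_{i\in I}$ on $A$ with target $D$, and hence by the universal property of the colimit $f=g$, contradicting our assumption. So there is some $i\in I$ with $f\circ u_i\neq g\circ u_i$; taking $h:=u_i\colon A(i)\to C$ with $A(i)\in\cG$ gives the required morphism.

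\textbf{Next I would check the "strong" condition.} Let $C$ be an object of $\cC$ and let $m\colon D\hookrightarrow C$ be a proper subobject, i.e.\ a monomorphism which is not an isomorphism. Again write $C\cong\colim_I A$ for a diagram $A\colon I\to\cC$ with values in $\cG$ and colimiting cocone $(u_i\colon A(i)\to C)_{i\in I}$. Suppose, for contradiction, that every $u_i$ factors through $m$, say $u_i = m\circ v_i$ for some $v_i\colon A(i)\to D$. Since $m$ is a monomorphism, these factorizations $v_i$ are unique, and uniqueness makes the family $(v_i)_{i\in I}$ a cocone on $A$ with target $D$: for a morphism $\alpha\colon i\to j$ in $I$ we have $m\circ v_j\circ A(\alpha) = u_j\circ A(\alpha) = u_i = m\circ v_i$, whence $v_j\circ A(\alpha)=v_i$ by monicity of $m$. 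By the universal property of $C=\colim_I A$ there is a unique morphism $r\colon C\to D$ with $r\circ u_i = v_i$ for all $i$. Then $m\circ r\circ u_i = m\circ v_i = u_i$ for all $i$, so $m\circ r = \id_C$ by uniqueness in the universal property; combined with $m$ being a monomorphism this forces $r\circ m = \id_D$, so $m$ is an isomorphism, contradicting properness of the subobject $D$. Hence some $u_i$ does not factor through $m$, and $h:=u_i\colon A(i)\to C$ with $A(i)\in\cG$ is the required morphism.

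\textbf{The main obstacle} is essentially bookkeeping rather than a genuine difficulty: one must be careful that the definition of strong generator in \cite{AR} is phrased in terms of subobjects (monomorphisms up to isomorphism), so the key categorical input used above is that a monomorphism $m$ which is split epic (admits a section) is an isomorphism. This is automatic. No completeness or cocompleteness of $\cC$ beyond the existence of the colimits furnished by the hypothesis is needed, so the argument applies verbatim to each of the categories in the relevant list.
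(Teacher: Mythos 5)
Your proof is correct and takes essentially the same route as the paper's: for the generator property both arguments use the universal property of the colimit cocone to find a leg $u_i$ separating $f$ and $g$, and for the strong property both assume all maps from $\cG$ factor through the subobject, use monicity to assemble the unique factorizations into a cocone, obtain a retraction $\pi$ with $\iota\circ\pi=\id_C$, and conclude $\pi\circ\iota=\id_D$ by monicity, contradicting properness. Your write-up merely spells out the cocone compatibility check that the paper leaves implicit.
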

\begin{proof}
Let $f,g:C\to D$ be two distinct  morphisms in $\cC$. Let $B:I\to \cC$ be a   diagram with values in $\cG$ such that
$C\cong \colim_{I} B$. Then we have a bijection of sets $$\lim_{I^{\mathrm{op}}} \Hom_{\cC} (B,D) \cong \Hom_{\cC}(C,D)\ .$$ Because of $f\not=g$   there   exists $i$ in $I$ such that
$f\circ h(i)\not=g\circ h(i)$, where $h(i):B(i)\to \colim_{I}B \cong  C$ is the canonical map.
Note that $B(i)$ belongs to $\cG$ by assumption.

Let now $\iota:D\to C$ be the inclusion of  a proper subobject. We again consider a diagram
  $B:I\to \cC$  with values in $\cG$ such that
$C\cong \colim_{I} B$.  

We argue by contradiction and assume that every morphism $G\to C$ with $G$ in $\cG$ factors over $\iota$. Since $D$ is a subobject this factorization is unique. The canonical morphism of $I$-diagrams
$B\to \underline{C}$ therefore provides a morphism of $I$-diagrams $B\to \underline{D}$ and hence a morphism
$\pi:C\cong \colim_{I}B\to D$ by the universal property of the colimit. Since
$B\to \underline{D}\stackrel{\underline{\iota}}{\to} \underline{C} $ is the canonical morphism of $I$-diagrams for the presentation of $C$ we conclude that 
$\iota \circ \pi=\id_{C}$. We now argue that also $\pi\circ \iota=\id_{D}$ and hence $\iota$ is an isomorphism. This is in conflict with the  assumption that $\iota$ is the inclusion of a proper subobject, and hence we get   the desired contradiction.
 We know that 
  $$\iota\circ (\pi\circ \iota)=(\iota\circ \pi)\circ \iota=\id_{C}\circ \iota=\iota\ .$$ Since also
$\iota\circ \id_{D}=\iota$ and $\iota$ is a monomorphism, we conclude that $\id_{D}=\pi\circ \iota$ as required.
\end{proof}

If $\cG$ satisfies the assumption of Lemma \ref{bgorijgoergergerg},
 then we say that $\cG$ strongly generates $\cC$.

\newcommand{\Quivers}{\mathbf{Quivers}}
\begin{prop}\label{ewdfoijfowefewfewfw}
If $\cC$ belongs to $\{\scat_{1},\ClinCat_{1},\scat_{1}^{+},\ClinCat_{1}^{+}\}$, then  the   category $\cC$ is locally presentable.
\end{prop}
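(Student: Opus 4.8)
The plan is to establish local presentability of $\cC$ by hand, using the material assembled just above. By the reduction preceding the statement (cofibrant generation is already given by Corollary~\ref{fiowejofwefewfewf}) it suffices to show that $\cC$ is locally presentable, and since $\cC$ is cocomplete by Theorem~\ref{gbeioergergregerg}, it is enough to produce a strong generator of $\cC$ consisting of presentable objects and then invoke the standard criterion that a cocomplete category with such a generator is locally presentable (\cite[Thm.~1.20]{AR}). By Lemma~\ref{bgorijgoergergerg} this in turn reduces to the following concrete task: find a \emph{set} $\cG$ of small objects of $\cC$ such that every object of $\cC$ is isomorphic to the colimit of some (possibly large) diagram with values in $\cG$. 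I would take $\cG=\{\Delta^{0},\Delta^{1}\}$ when $\cC\in\{\scat_{1},\ClinCat_{1}\}$ and $\cG=\{\Delta^{0},\Delta^{1},\beins^{+}\}$ when $\cC\in\{\scat_{1}^{+},\ClinCat_{1}^{+}\}$, with the object classifier $\Delta^{0}$ (Definition~\ref{fiojoiwfefwefewfew}), the morphism classifier $\Delta^{1}$ (Definition~\ref{gergjeiogjogergergregergergergee}), and the marked morphism classifier $\beins^{+}$ (Definition~\ref{groiegegergegererer}) all taken in $\cC$. By the compactness properties established just above Corollary~\ref{fiowejofwefewfewf}, $\Delta^{0}$ and $\beins^{+}$ are $\aleph_{0}$-compact and $\Delta^{1}$ is $\aleph_{1}$-compact, so every object of $\cG$ is small, indeed $\aleph_{1}$-compact.

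The heart of the argument is to realise an arbitrary $\bA$ in $\cC$ as a colimit of such a diagram. For $\scat_{1}$ I would run the presentation of Remark~\ref{rgrweuihzreiurefrergerg} with $\Free_{*}$ and $\cF_{*}$ (the adjunction \eqref{ferferfeoijiorojr34r34r34r}) in place of $\Free_{Cat}$ and $\cF_{\circ}$: one has $\Free_{*}(\cF_{\circ}(\cF_{*}(\bA)))/R(\bA)\cong\bA$, where $\Free_{*}$ of a directed graph is the colimit of one copy of $\Delta^{0}_{\scat_{1}}$ per vertex and one copy of $\Delta^{1}_{\scat_{1}}$ per edge, glued along $\Delta^{0}$'s at sources and targets, and passing to the quotient by $R(\bA)$ is a coequaliser of two maps out of a coproduct of copies of $\Delta^{1}_{\scat_{1}}$ (the two maps picking out the two parallel words to be identified). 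Since coproducts and coequalisers are colimits, $\bA$ is so presented. For $\ClinCat_{1}$ I would proceed analogously, now using the explicit construction of colimits in $\ClinCat_{1}$ from the proof of Theorem~\ref{gbeioergergregerg} (via $\Lin$ and \eqref{fwefweiufhui23r}): record one copy of $\Delta^{0}_{\ClinCat_{1}}$ per object and one copy of $\Delta^{1}_{\ClinCat_{1}}$ per morphism of $\bA$, then impose $\C$-bilinearity of composition, $\C$-antilinearity of $*$, the composition law and the unit law by coequalising suitable pairs of maps out of coproducts of copies of $\Delta^{1}_{\ClinCat_{1}}$; this is legitimate because a morphism $\Delta^{1}_{\ClinCat_{1}}\to\bA$ may send the universal morphism to any element of any $\Hom$-space of $\bA$, in particular to a prescribed linear combination. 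In the marked cases I would first carry out the presentation of the underlying object as above, then glue on one copy of $\beins^{+}$ for each marked morphism of $\bA$ and identify its universal marked morphism with the corresponding copy of $\Delta^{1}$ along the evident morphism $\Delta^{1}\to\beins^{+}$; since the marking of a colimit is generated by the markings of the pieces (proof of Theorem~\ref{gbeioergergregerg}) and the marking of $\bA$ is closed under composition and $*$, the resulting colimit recovers $\bA$ together with its marking. Then Lemma~\ref{bgorijgoergergerg} gives that $\cG$ is a strong generator, and \cite[Thm.~1.20]{AR} concludes.

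The step I expect to be the main obstacle is the bookkeeping in this colimit presentation — verifying honestly that the large diagrams just described are computed by the colimit formulas of Theorem~\ref{gbeioergergregerg}, and that in the $\C$-linear and marked cases exactly the right relations are imposed, so that the comparison morphism from the colimit to $\bA$ is an isomorphism of $\C$-linear $*$-categories (and, in the marked cases, of marked categories). Should a clean writeup of this prove cumbersome, a fallback is to argue more structurally: $\Cat_{1}$ is locally finitely presentable, $\cF_{*}\colon\scat_{1}\to\Cat_{1}$ is monadic for a finitary monad (so $\scat_{1}$ is locally presentable by \cite{AR}), $\cF_{\C}\colon\ClinCat_{1}\to\scat_{1}$ is likewise monadic for a finitary monad, and the marked categories are categories of models of a limit sketch over their unmarked counterparts, hence again locally presentable (\cite{AR}). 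In either route the decisive inputs are cocompleteness (Theorem~\ref{gbeioergergregerg}), the existence and smallness of the classifier objects (Section~\ref{fiowufoefewfewfewfwf} and the compactness lemma), and the structural description of colimits worked out in Theorem~\ref{gbeioergergregerg}.
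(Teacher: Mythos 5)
Your proposal is correct and shares the paper's skeleton exactly: cocompleteness (Theorem \ref{gbeioergergregerg}), the criterion of \cite[Thm. 1.20]{AR}, the strong-generator criterion of Lemma \ref{bgorijgoergergerg}, and the compactness statements for the classifier objects. Where you genuinely diverge is in how an arbitrary object is exhibited as a colimit of generators. The paper factors through (marked) directed $*$-graphs: using the adjunction $\Free_{{}^{*}Cat}:{}^{*}\DGr\leftrightarrows \scat_{1}:\cF_{\circ}$ it shows the counit $\bF(\bA)\to \bA$ is an effective epimorphism whose kernel pair can be covered again by a free object, so that $\bA\cong \CEq\big(\bF(\bF(\bA)\times_{\bA}\bF(\bA))\rightrightarrows \bF(\bA)\big)$, and its generators are $pt$ and $\bF(\beins_{\scat_{1}})$ (resp.\ their images under $\Lin$ and the marked free functors), not the classifiers themselves. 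You instead take $\Delta^{0},\Delta^{1}$ (and $\beins^{+}$) and present $\bA$ by explicit generators and relations, imposing each relation by coequalizing a pair of maps out of a coproduct of morphism classifiers, and adding the marking by gluing copies of $\beins^{+}$ along $\Delta^{1}\to\beins^{+}$; this is sound, since the relations are pairs of parallel morphisms of the free object (hence classified by $\Delta^{1}$), and since the marking of a colimit is generated by the markings of the pieces while the marking of $\bA$ is closed under composition and $*$. The trade-off: the paper's route pushes all combinatorics into ${}^{*}\DGr$, which is transparently locally presentable, and then only uses that $\Lin$ and the free functors are colimit-preserving left adjoints, so no bookkeeping of linearity, unit and composition relations (or of markings) is needed; your route is more hands-on and must spell out exactly that bookkeeping in the $\C$-linear and marked cases, which is indeed the delicate part you flag. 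Two small points: the expression $\Free_{*}(\cF_{\circ}(\cF_{*}(\bA)))$ conflates the two adjunctions --- the object you mean is $\Free_{{}^{*}Cat}(\cF_{\circ}(\bA))$, the free $*$-category on the underlying directed $*$-graph --- and both you and the paper use without comment that an iterated colimit of objects, each presented as a colimit of generators, is again the colimit of a single diagram valued in the generators. Your fallback via finitary monadicity of $\cF_{*}$ and $\cF_{\C}$ over $\Cat_{1}$ and a limit sketch for the markings is a genuinely different, more structural argument not taken in the paper, and would also suffice.
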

\begin{proof} 
{The categories in question are cocomplete. By \cite[Thm. 1.20]{AR} it suffices to show that they have a strong generator  formed by $\kappa$-presentable objects for some regular cardinal $\kappa$. In order to exhibit a strong generator will  use the criterion shown in  Lemma \ref{bgorijgoergergerg}.}

 We start with the case $\scat_{1}$.  The following discussion is related with Remark \ref{rgrweuihzreiurefrergerg}. A directed $*$-graph is a directed graph with an involution which preserves vertices and flips the direction of edges.
   We consider the category 
  ${}^{*}\DGr$ of directed $*$-graphs and involution-preserving morphisms.  Then we have an adjunction
  \begin{equation}\label{iovjoijio3j4f3rf3ff333}
\Free_{{}^{*}Cat}:{}^{*}\DGr\leftrightarrows \scat_{1}:\cF_{\circ}\ ,
\end{equation} where $\cF_{\circ}$ forgets the category structure and retains the $*$-operation.
The left-adjoint $\Free_{{}^{*}Cat}$ sends a directed $*$-graph to the $*$-category freely generated by it.
The category ${}^{*}\DGr$  
 is locally presentable. Indeed, it is cocomplete (as in the case of directed graphs, colimits are given by the colimits of    the sets of vertices and edges, separately), and it is strongly generated   by the objects in the list 
$$\{pt\ , \cF_{\circ}(\beins)\}$$ of  compact directed $*$-graphs. Note that
$pt$ is the directed $*$-graph with one vertex and no edges, and the directed $*$-graph $\cF_{\circ}(\beins)$ has two vertices $0$ and $1$ and the edges $u:0\to 1$ and $u^{*}:1\to 0$.  Given a $*$-category $\bA$
we can consider the free $*$-category $$\bF(A):=\Free_{{}^{*}Cat}(\cF_{\circ}(\bA))$$ generated by the underlying directed $*$-graph of $\bA$.
The counit of the adjunction \eqref{iovjoijio3j4f3rf3ff333} provides  
a canonical morphism
$$ v_{\bA}:\bF(\bA)\to \bA$$
of $*$-categories.
We claim that $v_{\bA}$ is an effective epimorphism, i.e, canonical coequalizer map
$$cv_{\bA}:\CEq\big(\bF(\bA)\times_{\bA}\bF(\bA) \rightrightarrows \bF(\bA)\big)\to \bA$$
is an isomorphism in $\scat_{1}$. We first observe that $v_{\bA}$ induces a  bijection on the level of objects.
Consequently, the coequalizer map $cv_{\bA}$ is a bijection on the level of objects, too. 
The morphisms of the coequalizer are given as a quotient of the morphisms in $\bF(\bA)$ by  the  equivalence relation 
induced by $v_{\bA}$ which is compatible with the $*$-category structure. It is now clear that   $cv_{\bA}$ is also a bijection on morphisms.

We know that $\bF(\bA) $ is isomorphic to a colimit of a   diagram involving the generators
\begin{equation}\label{hrtiojort3ggergergergregre}
\{  pt\ ,\bF(\beins) \}\ .
\end{equation}
 Note that the fibre product over $\bA$ is not a colimit. But we have a surjection 
$$\bF\big(\bF(\bA)\times_{\bA}\bF( \bA)\big) \to \bF(\bA)\times_{\bA} \bF(\bA)$$ and therefore an isomorphism
$$\CEq\Big(\bF\big(\bF(\bA)\times_{\bA}\bF( \bA)\big)  \rightrightarrows \bF(\bA)\Big)\to \bA\ .$$
The $*$-category $\bF\big(\bF(\bA)\times_{\bA}\bF( \bA)\big) $ is again a colimit of a diagram involving the generators in the list above. Hence $\bA$ itself is a  colimit of a diagram built from the list \eqref{hrtiojort3ggergergergregre}.
Since $\bF(\bbI)$ has two objects and countable morphism sets it  is $\aleph_{1}$-presentable.
It follows that $\scat_{1}$ is {strongly} generated by the list \eqref{hrtiojort3ggergergergregre} of  $\aleph_{1}$-presentable objects.
 
In the case of $\scat_{1}^{+}$ we argue similarly. We use the adjunction \begin{equation}\label{gveriljeor43ferfref}
\Free_{{}^{*}Cat^{+}}:{}^{*}\DGr^{+}\leftrightarrows \scat_{1}^{+}:\cF_{\circ}
\end{equation}
and that the category of marked directed $*$-graphs ${}^{*}\DGr^{+}$
  is {strongly} generated by the  list compact objects  $$\{pt\ , \cF_{\circ}(\mi(\beins_{\scat_{1}})),\cF_{\circ}(\ma(\beins_{\scat_{1}}))\}\ .$$
We can now repeat the argument. 
It follows that $\scat_{1}^{+}$ is {strongly} generated by the list of $\aleph_{1}$-presentable objects
$$\{pt\ ,\bF(\ma(\beins_{\scat_{1}}))\ ,\bF( \mi(\beins_{\scat_{1}}))\}\ .$$

A similar argument applies  in the $\C$-linear case. Here we use the adjunction \eqref{fwefweiufhui23r}
and the counit
$$v_{\bA}: \bF_{\C}(\bA):=\Lin(\cF_{\C}(\bA))\to \bA\ .$$
We again show that $v_{\bA}$ is an effective coequalizer and get
an isomorphism
$$\Coeq\Big(\bF_{\C}\big(\bF_{\C}(\bA)\times_{\bA} \bF_{\C}(\bA)\big) \rightrightarrows \bF_{\C}(\bA)\Big)\to \bA\ .$$
From the already verified case $\scat_{1}$ and the fact that the left-adjoint $\Lin$ preserves colimits we conclude
that for every $\bB$  in $\ClinCat_{1}$ the  object $ \bF_{\C}(\bB)$ is a colimit of a diagram with values in the list
\begin{equation}\label{ververvv}
\{\Lin(pt)\ , \Lin( \bF(  \beins_{\scat_{1}}))\}\ .
\end{equation} 
Since $\Lin( \bF(\beins_{\scat_{1}}))$ has two objects and countable-dimensional morphism spaces it is $\aleph_{1}$-presentable.
It follows that $\ClinCat_{1}$ is {strongly} generated by the list \eqref{ververvv}  of $\aleph_{1}$-presentable objects.

Similarly, 
$\ClinCat_{1}^{+}$ is {strongly} generated by the list of $\aleph_{1}$-presentable objects
$$
\{ \Lin(*)\ , \Lin( \bF(\mi(\beins_{\scat_{1}})))\ , \Lin( \bF(\ma(\beins_{\scat_{1}}))) \}\ .
$$
\end{proof}

\begin{remark}
At the moment we do not have an argument that $\preCcat_{1}$ or $\preCcat_{1}^{+}$ are  locally presentable.

Let  $\bA$ be a pre-$C^{*}$-category. Then by the above there exists a functor
$S:I\to \ClinCat_{1}$ such that $S(i)$ belongs to the list  \eqref{ververvv} for all $i$ in $I$ together with a transformation 
$u:S\to \const(\cF_{\pre}(\bA))$  such that its adjoint is an isomorphism $\colim_{I}S\cong \cF_{\pre}(\bA)$.
Then
$$\colim_{I}\Bd^{\infty}(S)\to \bA$$
is a candidate for a presentation of $\bA$.\hB
 \end{remark}

\begin{prop}\label{efuifzuew9fewfewfwf}
The categories $\Ccat_{1}$ and $\Ccat_{1}^{+}$ are locally presentable.
\end{prop}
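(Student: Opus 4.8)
The plan is to establish local presentability of $\Ccat_1$ and $\Ccat_1^+$ by exhibiting them as localizations (reflective subcategories) of the locally presentable categories $\preCcat_1$ and $\preCcat_1^+$ — except we do not yet know those are locally presentable — so instead I would work directly from $\ClinCat_1$ and $\ClinCat_1^+$, which \emph{were} shown to be locally presentable in Proposition \ref{ewdfoijfowefewfewfw}. The key observation is the chain of adjunctions $\cF_{pre}\dashv\Bd^\infty$ from \eqref{fiioioefjeofiewfewfwf} composed with $\compl\dashv\cF_-$ from \eqref{ewfwoijioiffewfwefw}, giving an adjunction $\ClinCat_1\leftrightarrows\Ccat_1$ whose right adjoint $\cF_-\circ\cF_{pre}$ is fully faithful (since both $\cF_-$ and $\cF_{pre}$ are fully faithful inclusions by Remark \ref{wreoiwoifwefewf} and the definition of $\preCcat_1$). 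Thus $\Ccat_1$ is a full reflective subcategory of $\ClinCat_1$, with reflector $L:=\compl\circ\Bd^\infty$. By \cite[Thm. 1.39, Cor. 1.40]{AR}, a full reflective subcategory of a locally presentable category that is closed under $\kappa$-filtered colimits for some regular $\kappa$ is again locally presentable; more precisely, one needs the reflector to be, or the subcategory to be an orthogonality class / accessibly embedded. So the real content is an \emph{accessibility} statement about the embedding.

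First I would make precise which orthogonality class cuts out $\Ccat_1$ inside $\ClinCat_1$. An object $\bA$ of $\ClinCat_1$ lies in (the essential image of) $\Ccat_1$ iff (i) every morphism of $\bA$ has finite maximal norm — i.e. $\bA$ is a pre-$C^*$-category — and (ii) every $\Hom$-space is complete in the maximal norm. Condition (ii), completeness of a normed space, is the classical obstruction to accessibility: "being complete" is not a finite-limit condition, but it \emph{is} expressible as orthogonality with respect to a set of morphisms, namely the inclusions of a normed space into its completion for normed spaces of bounded density character — this is exactly the trick Pavlov and others use, and it is the reason the acknowledgements thank Dmitri Pavlov for this proposition. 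Concretely, fix a regular cardinal $\kappa$ large enough that every $\Hom$-space arising in a $\kappa$-presentable object of $\ClinCat_1$ has density character $<\kappa$; then $\Ccat_1$ is the full subcategory of objects orthogonal to a \emph{set} $\cM$ of morphisms in $\ClinCat_1$, where $\cM$ consists of the canonical maps $\bB\to\compl(\Bd^\infty(\bB))$ as $\bB$ ranges over a set of representatives of $\kappa$-presentable objects (this single family simultaneously forces both finiteness of norms and completeness). By \cite[Thm. 1.38 / Cor. 1.40]{AR}, an orthogonality class with respect to a set of morphisms in a locally presentable category is locally presentable (it is the category of algebras for an accessible idempotent monad, hence a reflective accessible subcategory).

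Then I would carry out the following steps in order: (1) recall from Proposition \ref{ewdfoijfowefewfewfw} that $\ClinCat_1$ and $\ClinCat_1^+$ are locally presentable and pick a regular cardinal $\kappa$ witnessing this with $\kappa$ also bounding the density characters of the morphism spaces of a chosen set of $\kappa$-presentable generators; (2) show $L=\compl\circ\Bd^\infty:\ClinCat_1\to\Ccat_1$ is left adjoint to the fully faithful inclusion $\cF_-\circ\cF_{pre}$, so $\Ccat_1$ is reflective, and identify the reflection of a $\kappa$-presentable object, checking it is again ($\kappa'$-)presentable for a possibly larger $\kappa'$ — here I use that $\Bd^\infty$ only removes morphisms and $\compl$ completes each $\Hom$-space, operations that do not increase the density character beyond $\kappa$; (3) conclude that the inclusion is accessible and hence, being a reflective accessible embedding into a locally presentable category, $\Ccat_1$ is locally presentable by \cite[Cor. 1.40]{AR} (equivalently, present $\Ccat_1$ as an orthogonality class for the set $\cM$ above); (4) run the identical argument in the marked case, using the marked version of \eqref{fiioioefjeofiewfewfwf} and \eqref{ewfwoijioiffewfwefw} and the fact (Proposition \ref{ewdfoijfowefewfewfw}) that $\ClinCat_1^+$ is locally presentable, noting that the marking is detected by the same orthogonality conditions since $\compl$ and $\Bd^\infty$ carry markings along canonically.

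The main obstacle is step (2)–(3): controlling presentability ranks through the completion functor $\compl$. The worry is that completing a $\Hom$-space of density character $<\kappa$ could produce something of density character $\geq\kappa$; in fact for a normed $\C$-vector space of density character $\lambda$ the completion has density character $\leq\lambda\cdot\aleph_0 = \lambda$ when $\lambda\geq\aleph_0$, so completion is harmless for any uncountable regular $\kappa$ — but one must also check that the \emph{maximal norm} on $\Bd^\infty(\bB)$, which is defined by a supremum over all $C^*$-representations, is itself controlled, i.e. that $\Bd^\infty$ of a $\kappa$-presentable object is $\kappa$-presentable (or at least $\kappa'$-presentable). This is where I would invoke the universal property \ref{frefrkjhgiekgergerg} of $\Bd^\infty$ to see that $\Hom_{\cC}(\Bd^\infty(\bB),-)$ on pre-$C^*$-categories agrees with $\Hom_{\ClinCat_1}(\bB,-)$ restricted suitably, and argue that this preserves $\kappa$-filtered colimits; combined with the fact that $\preCcat_1$ is closed in $\ClinCat_1$ under $\kappa$-filtered colimits (because a $\kappa$-filtered colimit of pre-$C^*$-categories has each morphism already a bounded morphism of some term, by the description of colimits in Remark \ref{rgrweuihzreiurefrergerg}), this should close the argument. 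Verifying this closure-under-filtered-colimits claim rigorously — that the maximal norm does not blow up in the colimit — is the delicate point and the place I would spend the most care.
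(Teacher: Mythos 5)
Your proposal rests on the claim that $\Ccat_{1}$ is a reflective subcategory of $\ClinCat_{1}$ with reflector $\compl\circ\Bd^{\infty}$, and this is exactly where it breaks down. In the adjunction \eqref{fiioioefjeofiewfewfwf} the inclusion $\cF_{pre}$ is the \emph{left} adjoint and $\Bd^{\infty}$ is the \emph{right} adjoint, so $\preCcat_{1}$ sits inside $\ClinCat_{1}$ coreflectively, not reflectively; composing the right adjoint $\Bd^{\infty}$ with the left adjoint $\compl$ does not yield a left adjoint to the inclusion $\Ccat_{1}\hookrightarrow\ClinCat_{1}$. In fact no left adjoint to this inclusion can exist: for the one-object $\C$-linear $*$-category $\C[x]$ with $x^{*}=x$ (Example \ref{eoihewioewfwfewfw}), a morphism to a $C^{*}$-category picks out a selfadjoint morphism of arbitrarily large norm, whereas a would-be universal arrow would send $x$ to a morphism of some fixed finite norm, and morphisms of $C^{*}$-categories are norm-decreasing --- the same obstruction that kills the morphism classifier in Lemma \ref{gigjoer2tergtertet}. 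Since a small-orthogonality class in a locally presentable category is automatically reflective (by the very results in \cite{AR} you invoke), $\Ccat_{1}$ cannot be presented as an orthogonality class in $\ClinCat_{1}$ either; and on a more elementary level your generating set $\mathcal{M}$ is not well defined, because the counit of \eqref{fiioioefjeofiewfewfwf} points the other way ($\Bd^{\infty}(\bB)\to\bB$), so there is no canonical morphism $\bB\to\compl(\Bd^{\infty}(\bB))$ in $\ClinCat_{1}$. The subsequent care about density characters of completions is therefore moot: the strategy of transporting local presentability from $\ClinCat_{1}$ along a reflection does not get off the ground, and the only available comparison functor, the coreflection $\Bd^{\infty}$, is precisely the one whose accessibility behaviour is unknown (which is why the paper leaves $\preCcat_{1}$ open).

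The paper's proof is instead direct. It exhibits a strong generator of $\Ccat_{1}$ consisting of the $C^{*}$-categories $\bA(Q,\|-\|)$ obtained by completing $\Lin(\Free_{{}^{*}Cat}(Q))$ with respect to a $C^{*}$-norm $\|-\|$, where $Q$ runs over finite directed $*$-graphs: every $C^{*}$-category is the filtered colimit of its sub-$C^{*}$-categories generated by finite $*$-invariant sets of morphisms, and these are of this form. The substantial step is then the $\aleph_{2}$-compactness of these generators, whose key point is a cardinality bound: the set of $C^{*}$-norms on $\Lin(\Free_{{}^{*}Cat}(Q))$ has size at most $\aleph_{1}$ (a norm is determined by its restriction to the countable rational-linear subcategory), so in an $\aleph_{2}$-filtered diagram one can find a single index dominating all norms that occur, both for surjectivity and injectivity of the comparison map. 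Local presentability then follows from cocompleteness (Theorem \ref{gbeioergergregerg}) together with \cite[Thm. 1.20]{AR}, and the marked case is handled identically with marked directed $*$-graphs. If you want to rescue an argument in your spirit, you would have to prove an accessibility statement about $\Bd^{\infty}$ and the completion directly, which is essentially a reformulation of the hard part of the paper's proof rather than a shortcut around it.
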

\begin{proof}
There is a set  ${}^{*}\DGr^{fin}$ of finite directed $*$-graphs.
For any directed $*$-graph $Q$ we can consider the $\C$-linear $*$-category
$\Lin(\Free_{{}^{*}Cat}(Q))$ (see \eqref{iovjoijio3j4f3rf3ff333} for notation). We have a set of $C^{*}$-norms $N(Q)$ on $\Lin(\Free_{{}^{*}Cat}(Q))$. For every norm  $\|-\|$ in  $N(Q)$ we form the $C^{*}$-category
$\bA(Q,\|-\|)$ by taking the closure.

We claim that the set of $C^{*}$-categories $\bA(Q,\|-\|)$ as described above for all finite directed $*$-graphs $Q$ and norms $\|-\|$ in $N(Q)$ strongly generates $\Ccat_{1}$.   

Let $\bA$ be a $C^{*}$-category.  Any finite $*$-invariant collection  $F$ of morphisms  of $\bA$ defines a finite directed $*$-graph $Q(F)$ by forgetting the composition. By the universal property of the $\Lin\circ \Free_{{}^{*}Cat}$-functor we have a canonical morphism  
$$\Lin(\Free_{{}^{*}Cat}(Q(F)))\to \bA$$ which induces a norm $\|-\|_{F}$ in $N(Q(F))$. 
Then $\bA(F):=\bA(Q(F),\|-\|_{F})$ is naturally isomorphic to a sub $C^{*}$-category of $\bA$.
The set of finite $*$-invariant subsets of morphisms of $\bA$ is partially ordered by inclusion and filtered.
If $F$ is contained in a larger subset $F^{\prime}$, then    we clearly get a monomorphism $\bA(F)\to \bA(F^{\prime})$ of $C^{*}$-categories.  
We claim that $$\colim_{F} \bA(F)\cong \bA\ .$$ To this end we verify the universal property of the colimit. Let $\bB$ be a $C^{*}$-category. We must produce a natural bijection
$$\Hom_{\Ccat_{1}}(\bA,\bB)\cong \lim_{F} \Hom_{\Ccat_{1}}(\bA(F),\bB)\ .$$
 This bijection  identifies  a  morphism $\Phi$ in $\Hom_{\Ccat_{1}}(\bA,\bB)$  with the morphism  $\Psi$ in $ \lim_{F} \Hom_{\Ccat_{1}}(\bA(F),\bB)$.
 Given $\Phi$ we can find the system $\Psi=(\Psi_{F})_{F}$ by $\Psi_{F}:=\Phi_{|\bA(F)}$.
 Vice versa, given $\Psi=(\Psi_{F})_{F}$, then we define $\Phi$ as follows.
Let $a$ be an object of $ \bA$. Then we define
$\Phi(a):=\Psi_{\{\id_{a}\}}(a)$.  For a morphism $(f:a\to a^{\prime})$ in $ \bA$ we consider any $F$ such that $f\in F$. Then we define $\Phi(f):=\Psi_{F}(f)$. Note that $\Psi_{F}(f)$ is really a morphism from $\Phi(a)$ to $\Phi(a^{\prime})$.
Furthermore one checks that this definition   is independent of the choice of $F$. Therefore we get maps 
$\Phi$ on the level of objects and morphisms. We now show  that $\Phi$ is a morphism between $\C$-linear $*$-categories.  We discuss the compatibility with composition. Let $f$ and $g$ be composable morphisms.
Then we choose $F$ such that $f,g,g\circ f\in F$ and use that $\Psi_{F}$ is a morphism of $C^{*}$-categories.

We now claim that the $C^{*}$-categories $\bA(Q,\|-\|)$ for finite directed $*$-graphs and norms $\|-\|$ in $N(Q)$ are $\aleph_{2}$-compact.
 Let $$B:I\to      \Ccat_{1}\ , \quad i\mapsto \ B(i)$$  be an  $\aleph_{2}$-filtered diagram of $C^{*}$-categories and consider the natural map
\begin{equation}\label{regkj34ktg34gt34}
V:\colim_{i\in I}\Hom_{\Ccat_{1}}(\bA(Q,\|-\|), B(i))\to\Hom_{\Ccat_{1}}(\bA(Q,\|-\|), \colim_{i\in I} B(i))  \ .
\end{equation} 
We must show that $V$ is a bijection. We first discuss the  surjectivity of $V$.
Let $\Psi$ belong to $$\Hom_{\Ccat_{1}}(\bA(Q,\|-\|), \colim_{i\in I} B(i)) \ .$$ Since the directed $*$-graph $Q$ is finite   there exists an element  $i$ in  $I$ such that $\Psi(Q)\subseteq B(i)$. 
For every   $  j$ in $I_{\ge i}$ we get a morphisms of $\C$-linear $*$-categories $$\rho_{j}:\Lin(\Free_{{}^{*}Cat}(Q))\to B(i)\to B(j)\ .$$ This morphism induces  a norm
$\|-\|_{j}$ on  $\Lin(\Free_{{}^{*}Cat}(Q))$ by  $\|a\|_{j}:= \|\rho_{j}(a)\|_{B(j)}$. 
 If we can show that
$\|-\|_{k}\le \|-\|$ for some $k$ in $I_{\ge i}$, then  we get the desired  factorization $\Phi:A(Q,\|-\|)\to  B(k)$  such that $V(\Phi)=\Psi$.

 \newcommand{\Norms}{\mathrm{Norms}}
In order to find the element $k$ we consider the set $N(Q)$ of norms on 
$\Lin(\Free_{{}^{*}Cat}(Q))$ as a partially ordered set. 
Then we have an order-preserving map
$$\ell:I_{\ge i}\to N(Q)^{\mathrm{op}}\ , \quad j \mapsto  \|-\|_{j}\ .$$

We now observe that the size of  $N(Q)$   is bounded by $\aleph_{1}$.
A norm on $\Lin(\Free_{{}^{*}Cat}(Q))$ is determined by its restriction to  the   subcategory $\mathrm{Lin}_{\Q}(\Free_{{}^{*}Cat}(Q))$. We then use that $\mathrm{Lin}_{\Q}(\Free_{{}^{*}Cat}(Q))$ has countably many morphisms.

We let $J$ be a subset of objects  of  $I_{\ge i}$ obtained by choosing a preimage under $\ell$ for every norm
in the image $\ell(I_{\ge i})$. Then the size of $J$ is bounded by $\aleph_{1}$.
Since $I$ is $\aleph_{2}$-filtered  the subset $J$ has an upper  bound $k$ in $I_{\ge i}$. Then  by construction
$\|-\|_{k}\le \|-\|_{j}$ for all $  j$ in $I_{ \ge i}$.

Since for every morphism $a$ in $ \Lin(\Free_{{}^{*}Cat}(Q))$ we have the inequality
$$\|a\|_{k}\le \lim_{j\in I_{\ge i} } \|\rho_{j}(a)\|_{B(j)}=\|\Psi(a)\|_{\colim_{j\in I} B(j)}\le \|a\|$$ we have
$\|-\|_{k}\le \|-\|$ as desired.

We now consider injectivity of $V$ in  \eqref{regkj34ktg34gt34}.
Assume that $\Phi,\Phi^{\prime}$ in $$\colim_{i\in I}\Hom_{\Ccat_{1}}(\bA(Q,\|-\|), B(i))$$ are such that $V(\Phi)=V(\Phi^{\prime})$.    
We can assume that there is an element $j$ in $I$ such that
$\Phi$ and $\Phi^{\prime}$ are represented by morphisms  $\Phi_{j},\Phi^{\prime}_{j}$ in $\Hom_{\Ccat_{1}}(\bA(Q,\|-\|), B(j))$. For  every $i$ in $I_{\ge j}$ we write $\Phi_{i}$ and $\Phi_{i}^{\prime}$ for the morphisms obtained from $\Phi_{j}$ and $\Phi_{j} $ by post-composition with $B(j)\to B(i)$.
We must then show that there exists $k$ in $I_{\ge j}$ such that $\Phi_{k}=\Phi_{k}^{\prime}$.

Using that $Q$ is finite, after increasing $j$ if necessary, we can assume that $\Phi_{j}$ and $\Phi_{j}^{\prime}$ coincide on objects.  We furthermore write $V_{i}$ for the composition of $V$ with the canonical  map
$$ \Hom_{\Ccat_{1}}(\bA(Q,\|-\|), B(i))\to \colim_{l\in I}\Hom_{\Ccat_{1}}(\bA(Q,\|-\|), B(l))\ .$$
For  a morphism $\phi$ in $\bA(Q,\|-\|) $ we have the equality  \begin{equation}\label{b4toigh4oig34g4343r34r}
0=\|V(\Phi( \phi ))-V(\Phi^{\prime}( \phi ))\|=\lim_{i \in I_{\ge j}} \| \Phi_{i}( \phi )-  \Phi_{i}^{\prime}( \phi) \|
\end{equation} 
(note that the difference makes sense since $\Phi_{i}$ and $\Phi_{i}^{\prime}$ coincide on objects).

We now use that, by continuity,  $\Phi_{i}$ and $\Phi_{i}^{\prime}$ are uniquely determined by their restrictions along the functor $$d:\mathrm{Lin}_{\Q}(\Free_{{}^{*}Cat}(Q))\to \bA(Q,\|-\|)\ .$$ Because of \eqref{b4toigh4oig34g4343r34r}, for every morphism
$\phi$ in $\mathrm{Lin}_{\Q}(\Free_{{}^{*}Cat}(Q))$ and positive real number $r$   we can choose  $  i(\phi,r)$ in $I_{\ge j}$ such that
  $$\| \Phi_{i(\phi,r)}(d( \phi) )-  \Phi_{i(\phi,r)}^{\prime}( d(\phi)) \|\le r\ .$$
  Since the size of the set of morphisms $\phi$ and positive real numbers $r$ is bounded by $\aleph_{1}$  and $I$ is $\aleph_{2}$-filtered there exists   an element $k$ in $I_{\ge j}$ which is greater than all the elements $i(\phi,r)$ chosen above.  We conclude that $\| \Phi_{k}(d( \phi) )-  \Phi_{k}^{\prime}( d(\phi)) \|=0$ for all morphisms $\phi$ in 
  $\mathrm{Lin}_{\Q}(\Free_{{}^{*}Cat}(Q))$ and consequently $\Phi_{k}=\Phi_{k}^{\prime}$.
  This finally implies that $\Phi=\Phi^{\prime}$.
   
This finishes the proof of the Proposition \ref{efuifzuew9fewfewfwf} in the case of $\Ccat_{1}$.
 In the   case  of $\Ccat_{1}^{+}$ we argue similary  with marked directed $*$-graphs and use the functor $\Free_{{}^{*}Cat^{+}}$ from \eqref{gveriljeor43ferfref} instead of $\Free_{{}^{*}Cat}$.
\end{proof}

\section{Verification of the model category axioms}
  
  Let $\cC$  be a member of the list  \begin{equation}\label{vrelkkjn43joioij434}
 \{\scat_{1},\ClinCat_{1},\preCcat_{1},\Ccat_{1},\scat_{1}^{+},\ClinCat_{1}^{+},\preCcat_{1}^{+},\Ccat_{1}^{+}\}\ .
\end{equation} 
 In this section we state the main theorem on the model category structures again.
 We first recall the description of cofibrations, fibrations and weak equivalences.
 \begin{ddd}\label{fkjhifhiueiwhuiwhfiuewefewfe}\mbox{}
\begin{enumerate}
\item  \label{ewoiwoirwerwrwww} A weak equivalence in $\cC$ is a (marked) unitary   equivalence (see Definition \ref{gihriugh3i4g3rgegegergege}).
\item A cofibration is a  morphism  in $\cC$ which is injective on objects.
\item A fibration is a morphism  in $\cC$ which has the right-lifting property with respect to trivial cofibrations.
\end{enumerate}
\end{ddd}

In condition \ref{ewoiwoirwerwrwww} and below the word \emph{marked} only applies to the four marked versions. In the marked case,   by Lemma \ref{lem:markedequivs}.\ref{friofj35t9u5gerkjg34t} a weak equivalence detects marked morphisms.

For the simplicial structure we refer to Definition   \ref{riooejrgegerreg}.

\begin{theorem}\label{eifweofwefewfew}
The structures described in Definition \ref{fkjhifhiueiwhuiwhfiuewefewfe} and Definition \ref{riooejrgegerreg}  equip $\cC$ with a  simplicial  model category structure.

 If $\cC$  is a member of  $ \{\scat_{1},\ClinCat_{1}, \Ccat_{1},\scat_{1}^{+},\ClinCat_{1}^{+}, \Ccat_{1}^{+}\}$, then this model category structure is  cofibrantly generated  and  combinatorial.
\end{theorem}
 \begin{remark}
In the case of $\cC=\Ccat_{1}$ a proof of the first part of the theorem has been given in \cite{DellAmbrogio:2010aa}. \hB
\end{remark}

\begin{remark}
It is a lack of suitable morphism classifier objects in the pre-$C^{*}$-category cases, which prevents us to show cofibrant generation  in these cases, see also Remark \ref{geieojeoijreoreirferferf}.\hB
\end{remark}

In the present section we show  that the structures explained above determine a  model category structure on $\cC$. The simplicial axioms will be verified in Section \ref{riowefwefewfew}. Finally, 
 the additional assertions on cofibrant generation and combinatoriality are shown in Section \ref{uiehfiwefwefewfewfwf}

In the following we list the axioms (cf. \cite{hovey}) which we have to verify in order to show that $\cC$ with the structures given in Definition \ref{fkjhifhiueiwhuiwhfiuewefewfe}  is a model category:
\begin{enumerate}
\item ((co)completeness) Completeness and cocompleteness have been verified in Section \ref{foijowefwefewfewf}. 
\item (retracts) This is Proposition \ref{wfeoifjowefwefewfw}.
\item ($2$ out of $3$) This is Lemma \ref{fwiowowfefwefwef334}.
\item (lifting) This is Proposition \ref{fiuehfieufwfewfwef} together with Corollary \ref{wfeiweiofewfewfewf} and Proposition \ref{fweiowefwefewffewf}.
\item (factorization) This is shown in Proposition  \ref{regfiowfoeweu98uz48935}.
 \end{enumerate}

In Definition \ref{fkjhifhiueiwhuiwhfiuewefewfe} we have characterized fibrations by
the lifting property. In the following we explicitly define  a set of morphisms called good morphisms for the moment. Later in Proposition \ref{fiuehfieufwfewfwef} it will turn out that these are exactly the fibrations.

 We consider a morphism $a:\bC\to \bD$   in a category $\cC$ belonging to the list  \eqref{vrelkkjn43joioij434}. 
\begin{ddd}\label{wfiojowefewfewfew}
The morphism $a $ is called good\footnote{The analog of this notion  in category theory is called an isofibration. So we could call these morphisms unitary or marked isofibrations, but these names are longer.}, if for every object $d$ of $ \bD$ and  unitary (marked)  morphism $u:a(c)\to d$ for some object $c$ of $\bC$ there exists a   unitary (marked)  morphism $v:c\to c^{\prime}$ such that $a(v)=u$.
\end{ddd}
Here the word \emph{marked} only applies in the marked cases.

  Let $\Delta^{0}$ in $ \cC$ be the object classifier (Definition \ref{fiojoiwfefwefewfew}) and $\bbI$ be the classifier   of invertibles  in $\Cat_{1}$ (Definition \ref{wffiweoffewfewfewf}). Note that $\bbI$ is a groupoid.

\begin{remark}
In the unmarked case we have an isomorphism
     $\beins\cong \Delta^{0}\sharp \bbI$, where $\beins$ is the unitary morphism classifier (Definition \ref{fewiojewofwefewfewf}). 
     
     In the marked case we have  $\beins^{+}\cong \Delta^{0}\sharp \bbI$,
where $\beins^{+}$ (Definition \ref{groiegegergegererer}) classifies the marked morphisms.     \hB
 \end{remark}

     Let $\Delta^{0}\to \Delta^{0}\sharp \bbI$ classify the object $0$. 
We consider a morphism $a:\bC\to \bD$  as above.
\begin{lem}\label{fweiojweoiffewfwefwef}
 The morphism $a$ is good if and only if it has the right lifting property with respect to
$$\Delta^{0} \to  \Delta^{0}\sharp \bbI\ .$$ 
\end{lem}
 \begin{proof}
In view of the universal properties of $\Delta^{0}$ and $ \Delta^{0}\sharp \bbI$ this is just a reformulation of  Definition \ref{wfiojowefewfewfew}.
\end{proof}

 \begin{prop}\label{fiuehfieufwfewfwef}
 The good morphisms in $\cC$ have the  right lifting property with respect to trivial cofibrations.
 \end{prop}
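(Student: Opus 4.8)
The plan is to prove the lifting property directly, by an explicit construction along the lines of \cite{DellAmbrogio:2010aa}. So suppose we are given a commuting square consisting of a trivial cofibration $i\colon\bA\to\bB$, a good morphism $a\colon\bC\to\bD$, and morphisms $f\colon\bA\to\bC$, $g\colon\bB\to\bD$ with $a\circ f=g\circ i$; we must produce $h\colon\bB\to\bC$ with $h\circ i=f$ and $a\circ h=g$. The first step is to unwind the hypothesis on $i$ using Lemma~\ref{lem:markedequivs}: a trivial cofibration is injective on objects and its underlying functor is an equivalence of categories, which in the marked cases moreover restricts to an equivalence on the subcategories of marked morphisms. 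In particular $i$ is fully faithful, injective on objects, and essentially surjective up to (marked) unitary isomorphism, so we may identify $\bA$ with the full subcategory $i(\bA)$ of $\bB$; and for every object $b$ of $\bB$ we may choose an object $\sigma(b)$ of $\bA$ together with a (marked) unitary morphism $w_b\colon b\to\sigma(b)$, normalised so that $\sigma(b)=b$ and $w_b=\id_b$ whenever $b$ already lies in $\bA$.

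Next I would construct $h$. On objects, put $h(b):=f(b)$ for $b$ in $\bA$; for a general object $b$, the morphism $g(w_b)^{-1}=g(w_b)^{*}$ is a (marked) unitary morphism $a(f(\sigma(b)))=g(\sigma(b))\to g(b)$, and goodness of $a$ (Definition~\ref{wfiojowefewfewfew}) supplies a (marked) unitary morphism $v_b\colon f(\sigma(b))\to h(b)$ in $\bC$ with $a(v_b)=g(w_b)^{-1}$; this defines the object $h(b)$, and it automatically satisfies $a(h(b))=g(b)$. Set $v_b:=\id_{f(b)}$ for $b$ in $\bA$. On a morphism $\phi\colon b\to b'$ of $\bB$, first use full faithfulness of $i$ to regard $w_{b'}\circ\phi\circ w_b^{-1}\colon\sigma(b)\to\sigma(b')$ as a morphism of $\bA$, and then define
\[
h(\phi):=v_{b'}\circ f\bigl(w_{b'}\circ\phi\circ w_b^{-1}\bigr)\circ v_b^{-1}\,.
\]
The remaining work is to check that this $h$ is a morphism of $\cC$ and makes both triangles commute. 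Functoriality follows from functoriality of $f$ and of the inverse of $i$ on hom-spaces together with the cancellation $v_b^{-1}v_b=\id$; compatibility with the involutions uses that the $w_b$ and $v_b$ are unitary, so that $w_b^{-1}=w_b^{*}$ and $v_b^{-1}=v_b^{*}$; in the $\C$-linear, pre-$C^{*}$ and $C^{*}$ cases the formula is evidently $\C$-linear and bounded on each hom-space; and in the marked cases $h$ preserves markings because the $w_b$ and $v_b$ may be chosen marked (again by Lemma~\ref{lem:markedequivs}) and markings are closed under composition and $*$. Commutativity $h\circ i=f$ is immediate from the normalisation on $\bA$, and $a\circ h=g$ follows on objects by construction of $h(b)$ and on morphisms from $a\circ f=g\circ i$ together with $a(v_b)=g(w_b)^{-1}$, after a one-line cancellation.

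I expect the only genuinely non-formal ingredient to be the goodness hypothesis on $a$, which is used exactly once — to manufacture the objects $h(b)$ and the lifts $v_b$ of $g(w_b)^{-1}$ — so the real content of the proof is the translation, via Lemma~\ref{lem:markedequivs}, of ``trivial cofibration'' into ``injective on objects, fully faithful, essentially surjective up to (marked) unitary isomorphism''. Everything else is bookkeeping; the main point to be careful about is keeping the marked case honest, i.e.\ ensuring at each step that the auxiliary unitaries $w_b$ and $v_b$ are marked so that $h$ genuinely lands in $\cC$.
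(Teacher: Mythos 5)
Your proof is correct and follows essentially the same route as the paper's: extract equivalence data for the trivial cofibration (a choice of objects $\sigma(b)$ with marked/unitary isomorphisms $w_b$, normalised to be trivial on the image), use goodness exactly once to lift the unitaries $g(w_b)^{-1}$ to $v_b$, and define the lift by conjugation. The only difference is cosmetic: the paper fixes a quasi-inverse functor $j$ with $j\circ i=\id_{\bA}$ and splits the definition on morphisms into four cases, whereas your normalisation lets you write one uniform formula via full faithfulness of $i$.
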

\begin{proof}
We consider a diagram   \begin{equation}\label{juihfuifwfuiweiuweffwef}
  \xymatrix{\bA\ar[r]^{\alpha}\ar[d]^{i}&\bC\ar[d]^{f}\\\bB\ar@{.>}[ur]^{\ell}\ar[r]^{\beta}&\bD}   \end{equation} 
where $f$ is good and $i$ is a trivial cofibration.
We can find a morphism $j:\bB\to \bA$ such that $j\circ i=\id_{\bA}$  and   there is a   unitary (marked) equivalence $u:i\circ j\to \id_{\bB}$  which in addition satisfies $u\circ i=\id_{i}$.

On objects we define $\ell$ as follows: If $b$ is an object of $\bB$ such that $b=i(a)$ for some object $a$ of $\bA$, then we set $\ell(b):=\alpha(a)$. This makes the upper triangle commute.
If $b$ is not in the image of $i$, then   we get a (marked) unitary $\beta(u_{b}):f(\alpha(j(b))=\beta(i(j(b)))\to  \beta(b)$. Using that $f$ is good we choose a  (marked)  unitary $v:\alpha(j(b))\to c$ such that $f(v)=\beta(u_{b})$. We then set $\ell(b):=c$. This makes the lower triangle commute.
\footnote{Note that the argument in \cite[Lemma 4.10]{DellAmbrogio:2010aa} contains a mistake at this point. With the definition given there the lower triangle would not commute on the level of objects}

We now define the lift $\ell$ on a morphism $\phi:b\to b^{\prime}$. We distinguish four cases:
\begin{enumerate}
\item 
If $b$ and $b^{\prime}$ are in the image of $i$, then (since $i$ is an equivalence) there exists a unique morphism $\psi $ in $\bA$ such that $i(\psi)=\phi$ and we set $$\ell(\phi):=\alpha(\psi)\ .$$ This again makes the upper
triangle commute. 
\item  If $b=i(a)$ and $b^{\prime}$ is not in the image of $i$, then we  let $v^{\prime}$ and $c^{\prime}$ be the choices as above made for $b^{\prime}$. In this case we set
$$\ell(\phi)=v^{\prime}\circ \alpha(j(\phi))\ .$$  \item Similarly, if $b^{\prime}=i(a^{\prime})$ and $b$ is not in the image of $i$, then we set
$$\ell(\phi):=\alpha(j(\phi))\circ v^{-1}\ .$$
\item Finally, if both $b$ and $b^{\prime}$ do not belong to the image, then we set
$$\ell(\phi):=v^{\prime}\circ \alpha(j(\phi))\circ v^{-1}\ .$$
\end{enumerate}

One can check that then the lower triangle commutes and that this really defines a functor.
One further checks (using that the morphisms $v$,$v^{\prime}$ are (marked) unitaries) that $\ell$ is a morphism of (marked) $*$-categories. Finally, if $\cC$ is one of the $\C$-vector space enriched cases, then $\ell$ is a functor between  (marked) $\C$-linear $*$-categories. 
\end{proof}

\begin{kor}\label{wfeiweiofewfewfewf}
The sets of good morphisms and the fibrations coincide.
\end{kor}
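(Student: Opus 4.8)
The plan is to prove the two inclusions of \ref{wfeiweiofewfewfewf} separately. One direction is immediate: Proposition \ref{fiuehfieufwfewfwef} asserts that every good morphism has the right lifting property with respect to trivial cofibrations, which is precisely the definition of a fibration in Definition \ref{fkjhifhiueiwhuiwhfiuewefewfe}. Hence every good morphism is a fibration, and the only real content is the converse.

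For the converse I would show that every fibration is good by reducing everything to the single morphism $\iota_{0}\colon\Delta^{0}\to\Delta^{0}\sharp\bbI$ that classifies the object $0$. By Lemma \ref{fweiojweoiffewfwefwef}, a morphism is good if and only if it has the right lifting property against $\iota_{0}$. Since a fibration has the right lifting property against \emph{all} trivial cofibrations, it suffices to check that $\iota_{0}$ itself is a trivial cofibration. That $\iota_{0}$ is a cofibration is trivial, as $\Delta^{0}$ has a single object and any morphism out of it is injective on objects. For the weak-equivalence part I would produce an explicit homotopy inverse: applying the functoriality of $\sharp$ in the groupoid variable to the unique functor $\bbI\to\Delta^{0}_{\Cat_{1}}$ and using $\Delta^{0}\sharp\Delta^{0}_{\Cat_{1}}\cong\Delta^{0}$ gives a morphism $p\colon\Delta^{0}\sharp\bbI\to\Delta^{0}$ with $p\circ\iota_{0}=\id_{\Delta^{0}}$. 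It then remains to exhibit a (marked) unitary natural isomorphism $u\colon\iota_{0}\circ p\to\id_{\Delta^{0}\sharp\bbI}$: the category $\Delta^{0}\sharp\bbI$ has objects $0,1$, and I would take $u_{0}:=\id_{0}$ and let $u_{1}$ be the universal unitary $0\to1$. Recall $\Delta^{0}\sharp\bbI\cong\beins$ in the unmarked case (Definition \ref{fewiojewofwefewfewf}) and $\Delta^{0}\sharp\bbI\cong\beins^{+}$ in the marked case (Definition \ref{groiegegergegererer}), so in the marked case $u_{1}$ is a marked morphism. Since $\iota_{0}\circ p$ carries the generating morphism $0\to1$ of $\Delta^{0}\sharp\bbI$ (hence, in the $\C$-linear cases, every morphism, by linearity) into the identity endomorphisms of $0$, naturality of $u$ reduces to the evident identities on the two generating non-identity morphisms $0\to1$ and $1\to0$, and the components of $u$ are (marked) unitaries by construction. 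Thus $\iota_{0}$ is a (marked) unitary equivalence in the sense of Definition \ref{gihriugh3i4g3rgegegergege}, hence a trivial cofibration, and every fibration lifts against it and is therefore good by Lemma \ref{fweiojweoiffewfwefwef}.

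I do not expect a serious obstacle here: the substantive input is Proposition \ref{fiuehfieufwfewfwef}, which is already available. The only point demanding a (light) case-by-case remark is that $p$ and the natural isomorphism $u$ make sense uniformly in the $\C$-linear, pre-$C^{*}$- and $C^{*}$-cases, but this is painless since $\Delta^{0}\sharp\bbI$ is finite-dimensional — already a pre-$C^{*}$-category by Lemma \ref{roigoergergerger} and already complete — so no analytic subtlety arises and the same formulas work verbatim in all eight cases.
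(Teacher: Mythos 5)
Your proof is correct and follows the paper's own argument: the paper likewise deduces "fibration $\Rightarrow$ good" from Lemma \ref{fweiojweoiffewfwefwef} together with the fact that $\Delta^{0}\to\Delta^{0}\sharp\bbI$ is a trivial cofibration, and "good $\Rightarrow$ fibration" from Proposition \ref{fiuehfieufwfewfwef}. The only difference is that you spell out the trivial-cofibration claim (explicit retraction $p$ and the unitary/marked isomorphism $u$), which the paper simply asserts; your verification is accurate, including that the universal unitary is marked in the marked cases.
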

\begin{proof}
Since $\Delta^{0} \to  \Delta^{0}\sharp \bbI$ is a trivial cofibration, by Lemma \ref{fweiojweoiffewfwefwef} the fibrations are contained in the good morphisms. By Proposition \ref{fiuehfieufwfewfwef}
every good morphism is a fibration.
\end{proof}
\begin{prop}\label{fweiowefwefewffewf}
The cofibrations in $\cC$ have the left-lifting property with respect to the good morphisms which are in addition weak equivalences.
\end{prop}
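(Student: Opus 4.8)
The plan is to deduce from the hypotheses on $f\colon\bC\to\bD$ — good (Definition \ref{wfiojowefewfewfew}) and a weak equivalence — that $f$ is fully faithful and surjective on objects, and then to construct the lift against a cofibration by hand, transporting everything through $f$.

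First I would record the needed properties of $f$. By Lemma \ref{lem:markedequivs} the underlying functor $\cF_{all}(f)$ is an equivalence of categories, hence fully faithful; in the $\C$-enriched cases this means that $f$ induces bijections on all $\Hom$-vector spaces. Moreover $f$ is surjective on objects: given an object $d$ of $\bD$, choose an inverse $g\colon\bD\to\bC$ of $f$ up to (marked) unitary equivalence; then $f(g(d))$ and $d$ are connected by a (marked) unitary morphism, and after possibly passing to the adjoint (which is again a (marked) unitary) we obtain a (marked) unitary $u\colon f(g(d))\to d$, so that since $f$ is good there is a (marked) unitary $v\colon g(d)\to c$ with $f(v)=u$, whence $f(c)=d$. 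Finally, in the marked cases I would also use that $f^{+}\colon\bC^{+}\to\bD^{+}$ is an equivalence (again Lemma \ref{lem:markedequivs}), hence fully faithful, which together with the faithfulness of $f$ shows that a morphism $\phi$ of $\bC$ is marked whenever $f(\phi)$ is marked.

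Now let a lifting problem with $\alpha\colon\bA\to\bC$, $\beta\colon\bB\to\bD$, $i\colon\bA\to\bB$ a cofibration, and $f\circ\alpha=\beta\circ i$ be given. I define $\ell\colon\bB\to\bC$ on objects by $\ell(i(a)):=\alpha(a)$ when $b=i(a)$ lies in the image of $i$ — this is well-defined because $i$ is injective on objects, and then $f(\ell(b))=\beta(b)$ — and for each of the remaining objects $b$ of $\bB$ I choose some $\ell(b)$ with $f(\ell(b))=\beta(b)$, which is possible by the surjectivity of $f$ on objects. On a morphism $\phi\colon b\to b'$ of $\bB$ I let $\ell(\phi)$ be the unique morphism $\ell(b)\to\ell(b')$ of $\bC$ with $f(\ell(\phi))=\beta(\phi)$, which exists and is unique by full faithfulness of $f$.

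It then remains to verify that $\ell$ is a morphism of $\cC$ and that both triangles commute, and all of this is formal once one uses that $f$ is faithful, so that equalities in $\bC$ may be checked after applying $f$: $\ell$ preserves identities and composition (compare $f(\ell(\id_{b}))$ with $f(\id_{\ell(b)})$, and $f(\ell(\phi')\circ\ell(\phi))$ with $f(\ell(\phi'\circ\phi))$), the lower triangle $f\circ\ell=\beta$ holds by construction, the upper triangle $\ell\circ i=\alpha$ holds because $\ell(i(\psi))$ and $\alpha(\psi)$ share the image $\beta(i(\psi))$ under $f$, the morphism $\ell$ is compatible with $*$ and, in the enriched cases, with the $\C$-linear structure because $f$ and $\beta$ are, and, in the marked cases, $\ell$ preserves marked morphisms by the property of $f$ recorded above. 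The one non-formal ingredient is the surjectivity of $f$ on objects; once that is available, the strictness demanded of the lift ($f\circ\ell=\beta$ literally) is harmless, since after fixing the assignment on objects the assignment on morphisms is forced and automatically functorial.
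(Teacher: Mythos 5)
Your proposal is correct and follows essentially the same route as the paper's proof: use injectivity of the cofibration on objects together with surjectivity of $f$ on objects to define the lift on objects, then use full faithfulness of $\cF_{all}(f)$ (from Lemma \ref{lem:markedequivs}) to force the definition on morphisms, checking functoriality, the $*$- and $\C$-linear compatibility, and marking preservation via faithfulness. The only difference is that you spell out why $f$ is surjective on objects (essential surjectivity rectified by goodness), a point the paper asserts without detail.
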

\begin{proof}
We again consider a diagram \eqref{juihfuifwfuiweiuweffwef}.
Since the map $i$ is injective on objects and the morphism $f$ is surjective on objects
we can find a lift $\ell$ on the level objects. Let now $b,b^{\prime}$ be objects in $\bB$. 
Since $f$ is fully faithful (see Lemma \ref{lem:markedequivs})  we have a bijection 
 $$\Hom_{\bC}(\ell(b),\ell(b^{\prime})) \stackrel{f,\cong }{\to}\Hom_{\bD}(\beta(b),\beta(b^{\prime}) )  \ .$$
We can therefore define $\ell$ on   $\Hom_{\bB}(b,b^{\prime})$ by
$$\Hom_{\bB}(b,b^{\prime})\stackrel{\beta}{\to}  \Hom_{\bD}(\beta(b),\beta(b^{\prime}))\cong \Hom_{\bC}(\ell(b),\ell(b^{\prime}))\ .$$
The lower  triangle commutes by construction. One can furthermore check that the upper triangle commutes. Finally one checks that this really defines a functor.  
Since $f$   detects  marked morphisms the functor $\ell$ preserves them.
One  now checks that the functor $\ell$   is a morphism between (marked) $*$-categories. If $\cC$ is one of the $\C$-vector space enriched cases, then obviously $\ell$ is enriched, too. \end{proof}

\begin{lem}\label{fwiowowfefwefwef334}
The weak equivalences in $\cC$ satisfy the two-out-of three axiom. 
\end{lem}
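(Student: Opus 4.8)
The plan is to reduce the statement to the classical two-out-of-three property for equivalences of ordinary categories, using the characterization of (marked) unitary equivalences proved in Lemma \ref{lem:markedequivs}. First I would record the classical fact: equivalences of categories satisfy two-out-of-three. This is standard — one can use that an equivalence is the same thing as a fully faithful and essentially surjective functor (both properties are easily seen to be closed under the relevant compositions and ``cancellations''), or the characterization via a two-sided inverse up to natural isomorphism.

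Next, fix a composable pair $f\colon\bA\to\bB$ and $g\colon\bB\to\bC$ in $\cC$. The key observation is that the ``test'' assignments appearing in Lemma \ref{lem:markedequivs} are functors: $\cF_{all}$ takes values in $\Cat$; in the marked cases $(-)^{+}\colon\cC\to\Cat$ is a functor (Remark \ref{wfiuwehifewfwefwe}); and $\ma\colon\cC\to\cC^{+}$ is a functor (it is a right adjoint, see \eqref{wecoihorgrtb}), so in the unmarked cases $\ma(-)^{+}\colon\cC\to\Cat$ is a functor as well. Consequently each such functor $T$ commutes with composition, $T(g\circ f)=T(g)\circ T(f)$.

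The argument then runs as follows. Suppose two of $f$, $g$, $g\circ f$ are weak equivalences. By Lemma \ref{lem:markedequivs} (part \ref{friofj35t9u5gerkjg34t} in the marked cases, part \ref{friofj35t9u5gerkjg34t1} in the unmarked cases), for each relevant test functor $T\in\{\cF_{all},(-)^{+}\}$ resp. $T\in\{\cF_{all},\ma(-)^{+}\}$, two of the three functors $T(f)$, $T(g)$, $T(g)\circ T(f)=T(g\circ f)$ are equivalences of categories; by two-out-of-three for equivalences of categories, so is the third. Applying Lemma \ref{lem:markedequivs} in the reverse direction then shows that the remaining one of $f$, $g$, $g\circ f$ is a weak equivalence. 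I do not expect a real obstacle here; the statement is a formal consequence of Lemma \ref{lem:markedequivs}. The only points demanding a little care are keeping the marked and unmarked cases cleanly separated, and making sure one invokes the functoriality of $\ma$ on morphisms (not merely on objects) so that $\ma(-)^{+}$ genuinely commutes with composition in the unmarked cases.
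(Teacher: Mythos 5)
Your argument is correct, but it is not the route the paper takes. The paper's proof works directly from Definition \ref{gihriugh3i4g3rgegegergege}: only the composition case is declared clear, and for the cancellation case ($f$ and $g\circ f$ weak equivalences) it chooses inverses $m$ of $f$ and $n$ of $g\circ f$ together with the four unitary (marked) natural isomorphisms, sets $h:=f\circ n$, and exhibits explicit unitary (marked) isomorphisms $h\circ g\to \id_{\bB}$ and $g\circ h\to \id_{\bC}$, so that $h$ is an inverse of $g$ up to (marked) unitary isomorphism. You instead reduce everything to two-out-of-three for equivalences of ordinary categories via the detection Lemma \ref{lem:markedequivs}, using that $\cF_{all}$, $(-)^{+}$ (Remark \ref{wfiuwehifewfwefwe}) and $\ma$ (the right adjoint in \eqref{wecoihorgrtb}) are genuinely functorial, so each test functor $T$ satisfies $T(g\circ f)=T(g)\circ T(f)$; since the weak equivalences are characterized as the conjunction of the conditions ``$T(f)$ is an equivalence'' and each such condition separately satisfies two-out-of-three in $\Cat$, the conjunction does too. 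There is no circularity, since Lemma \ref{lem:markedequivs} is proved earlier and independently. What your route buys: it is shorter, avoids juggling composites of unitary natural transformations, and treats all three cases (including ``$g$ and $g\circ f$ weak equivalences imply $f$ is one'', which the paper's written proof does not spell out) uniformly. What the paper's route buys: it is self-contained, relying only on the definition of (marked) unitary equivalence rather than on the characterization lemma, and it produces an explicit inverse $h=f\circ n$ with explicit witnessing isomorphisms, which is the kind of concrete data used elsewhere in the verification of the model category axioms. Your only obligations, which you correctly flag, are the marked/unmarked case distinction and the functoriality of $\ma(-)^{+}$ on morphisms.
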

\begin{proof}
It is clear that the composition of weak equivalences is a weak equivalence. 
Assume that $f:\bA\to \bB$ and $g:\bB\to \bC$ are morphisms such that $f$ and $g\circ f$ are weak equivalences. Then we must show that $g$ is a weak equivalence. Let
$m:\bB\to \bA$ and $n:\bC\to \bA$   inverse functors and
$u:m\circ f\to \id_{\bA}$ and $v:f\circ m\to \id_{\bB}$ and
$x:n\circ g\circ f\to \id_{\bA}$ and $y:g\circ f\circ n\to \id_{\bC}$ the corresponding unitary (marked)
isomorphisms. $$\xymatrix{&\bA\ar[d]^{f}&u:m f\to \id_{\bA} &x:n  g  f\to \id_{\bA}\\&\bB\ar[d]^{g} \ar@{-->}@/_-1.3cm/[u]_{m}&v:f  m\to \id_{\bB}  &\\&\bC \ar@{-->}@/_-3cm/[uu]_{n} \ar@{..>}@/_-1.3cm/[u]_{h=f\circ n}&&y:g  f  n\to \id_{\bC}}$$ Then we consider the functor
$h:=f\circ n:\bC\to \bB$. We have unitary (marked) isomorphisms
$$h\circ g=f\circ n\circ g\stackrel{ v^{-1} }{\to} f\circ n\circ g\circ f\circ m\stackrel{x}{\to}  f\circ m\stackrel{v}{\to} \id_{\bB}\ .$$
and
$$g\circ h= g\circ f\circ n\stackrel{y}{\to} \id_{\bC}\ .$$ \end{proof}

\begin{prop}\label{wfeoifjowefwefewfw}
The cofibrations, fibrations and weak equivalences are closed under retracts.
\end{prop}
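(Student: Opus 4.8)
The plan is to verify closure under retracts separately for cofibrations, fibrations, and weak equivalences, in each case working with the defining characterization of the class in question.

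\textbf{Cofibrations.} A cofibration is a morphism injective on objects, so the class is characterized by a property of the induced map on object sets. Suppose $f\colon \bA\to \bB$ is a retract of a cofibration $g\colon \bA'\to \bB'$, witnessed by a commutative diagram with horizontal composites the identities. Applying the functor $\Ob\colon \cC\to \Set$ to the retract diagram exhibits $\Ob(f)$ as a retract of $\Ob(g)$ in $\Set$. Since an injective map of sets is stable under retracts (a standard fact, or a direct diagram chase), $\Ob(f)$ is injective, hence $f$ is a cofibration.

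\textbf{Fibrations.} A fibration is a morphism with the right lifting property with respect to trivial cofibrations; equivalently, by Corollary \ref{wfeiweiofewfewfewf}, a good morphism, and by Lemma \ref{fweiojweoiffewfwefwef}, a morphism with the right lifting property against the single map $\Delta^{0}\to \Delta^{0}\sharp\bbI$. Morphisms defined by a right (or left) lifting property are always closed under retracts: this is a formal diagram chase (see, e.g., \cite[Lemma 2.1.10]{hovey}). Namely, given a lifting problem for $f$ against a trivial cofibration, precompose and postcompose with the retract data for $f$ as a retract of $g$, solve the resulting lifting problem for $g$, and then transport the solution back along the retraction. So fibrations are closed under retracts.

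\textbf{Weak equivalences.} A weak equivalence is a (marked) unitary equivalence. By Lemma \ref{lem:markedequivs}, a morphism $f$ in $\cC$ is a weak equivalence if and only if the two functors $\cF_{all}(f)$ and either $f^{+}$ (marked case) or $\ma(f)^{+}$ (unmarked case) are equivalences of categories. The functors $\cF_{all}$, $(-)^{+}$ and $\ma$ all preserve retract diagrams, so it suffices to know that equivalences of categories are closed under retracts. This is again standard: equivalences of categories are precisely the weak equivalences of the folk (canonical) model structure on $\Cat_{1}$, and any class of weak equivalences in a model category is closed under retracts; alternatively one checks directly that a retract of a fully faithful and essentially surjective functor is again fully faithful and essentially surjective (fully faithfulness is a retract of the corresponding bijectivity statements on $\Hom$-sets, and essential surjectivity follows from the bijectivity on isomorphism classes of objects induced by the retract).

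\textbf{Main obstacle.} None of the three parts is genuinely difficult; the entire proposition is a sequence of routine diagram chases once the right characterizations are in place. The only mild subtlety is in the weak equivalence case, where one must invoke Lemma \ref{lem:markedequivs} rather than arguing directly with the definition of a unitary equivalence via an inverse up to unitary isomorphism — a direct argument is possible but more cumbersome, since producing the inverse of the retract requires transporting unitary transformations along the retract data and checking they remain unitary (respectively marked).
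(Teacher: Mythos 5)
Your proof is correct, and for cofibrations and fibrations it is exactly the paper's argument (retract of object sets, respectively the formal closure of right-lifting-property classes under retracts). For weak equivalences you take a genuinely different route: you reduce via Lemma \ref{lem:markedequivs} to the statement that equivalences of ordinary categories are closed under retracts, using that $\cF_{all}$, $(-)^{+}$ and $\ma$ are functors and hence preserve retract diagrams. The paper instead argues directly: given the retract diagram with middle column $f^{\prime}$ a weak equivalence and an inverse $g^{\prime}$ of $f^{\prime}$ up to unitary (marked) isomorphism, it checks that $p\circ g^{\prime}\circ j$ is an inverse of $f$ up to unitary (marked) isomorphism, the required isomorphisms being the images under the retraction morphisms $p$ and $q$ of the given ones. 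Contrary to your closing remark, this direct argument involves no delicate checking: since $p$ and $q$ are morphisms in $\cC$, they automatically preserve unitary and marked morphisms, so the transported transformations are unitary (marked) for free, and the whole verification is two lines. What your route buys is uniformity — you only ever use closure under retracts of equivalences of categories, which you can quote from the folk model structure or verify by the Hom-set retract argument you sketch — at the cost of routing through Lemma \ref{lem:markedequivs}, which is a heavier tool than the definition; the paper's construction is self-contained and shorter. Both are complete proofs.
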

\begin{proof}
Since fibrations maps are characterized by a right lifting property they are closed under retracts.
Cofibrations are closed under retracts since a retract diagram of categories induces a retract diagram on the level of sets of objects, and injectivity of maps between  sets is closed under retracts.

We finally consider weak equivalences (compare \cite[Lemma 4.9]{DellAmbrogio:2010aa}).
Consider a diagram
$$\xymatrix{\bA\ar[r]^{i}\ar[d]^{f}&\bA^{\prime}\ar[r]^{p}\ar[d]^{f^{\prime}}&\bA\ar[d]^{f}\\\bB\ar[r]^{j}&\bB^{\prime}\ar[r]^{q}&\bB}$$
with $p\circ i=\id_{\bA}$ and $q\circ j=\id_{\bB}$, and where $f^{\prime}$ is a weak equivalence. Let $g^{\prime}:\bB^{\prime}\to \bA^{\prime}$ be an inverse of $f^{\prime}$ up to unitary (marked) isomorphism.
Then $p\circ g^{\prime}\circ j:\bB\to \bA$ is an inverse of $f$ up to unitary (marked) isomorphism. \end{proof}

\begin{prop}\label{regfiowfoeweu98uz48935}
In the category $\cC$ we have  functorial factorizations.  
\end{prop}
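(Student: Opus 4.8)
The plan is to produce, functorially in a morphism $f\colon\bA\to\bB$ of $\cC$, a factorization of $f$ as a trivial cofibration followed by a fibration and one as a cofibration followed by a trivial fibration. Both constructions will be assembled from pullbacks, pushouts and the cotensor with the interval $\bbI$, all of which exist by Theorem~\ref{gbeioergergregerg}; functoriality is then automatic, and the content is to identify the type of each leg using the description of weak equivalences in Lemma~\ref{lem:markedequivs}, of fibrations as good morphisms (Definition~\ref{wfiojowefewfewfew}, Corollary~\ref{wfeiweiofewfewfewf}), and of cofibrations as the object-injective morphisms.

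First, the trivial-cofibration/fibration factorization, via the mapping path object. Write $\bB^{\Delta^{1}}=\cFun^{?}(\bbI,\bB)$ for the cotensor with the interval; its objects are the (marked) unitary arrows of $\bB$, it carries evaluation morphisms $e_{0},e_{1}\colon\bB^{\Delta^{1}}\to\bB$ and a ``constant arrow'' morphism $c\colon\bB\to\bB^{\Delta^{1}}$ with $e_{0}c=e_{1}c=\id_{\bB}$. Let $P_{f}:=\bA\times_{\bB}\bB^{\Delta^{1}}$ be the pullback of $\bA\xrightarrow{f}\bB\xleftarrow{e_{0}}\bB^{\Delta^{1}}$ and factor $f$ as $\bA\xrightarrow{s}P_{f}\xrightarrow{e_{1}\circ\mathrm{pr}}\bB$ with $s$ induced by $(\id_{\bA},c\circ f)$; an object of $P_{f}$ is a pair $(a,\gamma\colon f(a)\to b)$. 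Then $s$ is injective on objects and $\mathrm{pr}_{\bA}\circ s=\id_{\bA}$, while $s\circ\mathrm{pr}_{\bA}$ is (marked) unitarily isomorphic to $\id_{P_{f}}$ through the transformation with value $(\id_{a};(\id_{f(a)},\gamma))$ at $(a,\gamma)$; hence $s$ is a trivial cofibration. The morphism $e_{1}\circ\mathrm{pr}$ is good: a (marked) unitary $u\colon b\to b'$ in $\bB$ is lifted at $(a,\gamma)$ by the morphism $(\id_{a};(\id_{f(a)},u))\colon(a,\gamma)\to(a,u\gamma)$, which is a (marked) unitary since $u$ and $\gamma$ are, and whose image under $e_{1}\circ\mathrm{pr}$ is $u$. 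So by Corollary~\ref{wfeiweiofewfewfewf} it is a fibration.

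Second, the cofibration/trivial-fibration factorization, by ``freely adjoining to $\bB$ an isomorphic copy of every object of $\bA$''. Using the object classifier $\Delta^{0}$ and the identification $\beins^{(+)}\cong\Delta^{0}\sharp\bbI$, set
\[ Q_{f}:=\bB\ \sqcup_{\coprod_{a\in\Ob(\bA)}\Delta^{0}}\ \coprod_{a\in\Ob(\bA)}\bigl(\Delta^{0}\sharp\bbI\bigr)\, , \]
the pushout along $\coprod_{a}\Delta^{0}\to\bB$ (which picks out $f(a)$) and $\coprod_{a}\Delta^{0}\to\coprod_{a}(\Delta^{0}\sharp\bbI)$ (the object $0$). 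Concretely $Q_{f}$ is $\bB$ with one new object $\hat a$ and one new (marked) unitary $u_{a}\colon\hat a\to f(a)$ for every $a\in\Ob(\bA)$, and --- by the description of colimits of underlying categories in Remark~\ref{rgrweuihzreiurefrergerg} --- with no new morphisms between the objects of $\bB$, so that $\Hom_{Q_{f}}(\hat a,\hat a')\cong\Hom_{\bB}(f(a),f(a'))$ by $\psi\mapsto u_{a'}^{-1}\psi u_{a}$ and similarly for the mixed hom-sets. Factor $f$ as $\bA\xrightarrow{i}Q_{f}\xrightarrow{p}\bB$ with $i(a):=\hat a$, $i(\phi):=u_{a'}^{-1}f(\phi)u_{a}$, and $p$ the identity on $\bB$, $p(\hat a):=f(a)$, $p(u_{a}):=\id_{f(a)}$. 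Then $i$ is injective on objects, hence a cofibration; and $p$ is surjective on objects, fully faithful, and --- as the hom-set description shows --- detects marked (resp. unitary) morphisms, hence a weak equivalence by Lemma~\ref{lem:markedequivs}, and good, since a (marked) unitary $v\colon p(x)\to y$ in $\bB$ is lifted by $v$ itself when $x\in\Ob(\bB)$ and by $v\circ u_{a}$ when $x=\hat a$. So $p$ is a trivial fibration.

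I expect the two real verifications to be: that the transformation $s\circ\mathrm{pr}_{\bA}\Rightarrow\id_{P_{f}}$ above is genuinely natural (bookkeeping with the defining squares of morphisms of $P_{f}$ and with the pointwise $\C$-linear/$*$-structure), and that the pushout $Q_{f}$ creates no unexpected morphisms between the $\hat a$'s and $\bB$ --- which in the $\C$-linear, pre-$C^{*}$ and $C^{*}$ cases rests on the explicit colimit descriptions used in the proof of Theorem~\ref{gbeioergergregerg}, together with Corollary~\ref{eroijoietert} in the $C^{*}$ case so that $\bB^{\Delta^{1}}=\cFun^{C^{*}}(\bbI,\bB)$ coincides with $\cFun^{bd}(\bbI,\bB)$ and no completion enters. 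Granting these, both factorizations are manifestly functorial in $f$, being assembled from pullbacks, pushouts and the cotensor $(-)^{\Delta^{1}}$.
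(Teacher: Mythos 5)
Your proposal is correct and follows essentially the same route as the paper: your trivial-cofibration/fibration factorization is exactly the paper's functorial path object built from the cotensor $\cFun^{?}(\bbI,\bB)$ (you merely pull back along the other endpoint evaluation), with the same verifications (injectivity on objects, the explicit unitary isomorphism to the identity, goodness and then Corollary~\ref{wfeiweiofewfewfewf}). Your cofibration/trivial-fibration factorization replaces the paper's mapping cylinder $Z(a)=\bB\sqcup_{\bA}(\bA\sharp \bbI)$ by the object-wise pushout $\bB\sqcup_{\coprod_{a}\Delta^{0}}\coprod_{a}(\Delta^{0}\sharp\bbI)$, but since $\bbI$ is a contractible groupoid these two pushouts are canonically isomorphic (a morphism out of either amounts to a functor $\bB\to\bD$ together with an object and a (marked) unitary over each $f(a)$), so your explicit hom-set description and the subsequent checks of cofibrancy, unitary equivalence and goodness coincide with the paper's argument for its explicit model $\tilde Z(a)$.
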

\begin{proof}
We use a functorial cylinder object in order to factorize a morphism
as $$trivial\:  fibration\:\:\circ\:\: cofibration\ .$$
We use the notation Convention \ref{fwerifhiwefewfewf}.

For a morphism $a:\bA\to \bB$ in $\cC$ we define the cylinder object
$Z(a)$ as the push-out
$$\xymatrix{\bA\ar[r]^{a}\ar[d]^{(1)} &\bB\ar[d]^{\beta}\\\bA\sharp \bbI\ar[r]^{a^{\prime}}&Z(a) }\ ,$$
where $(1)$ is induced by the inclusion of the object $1$ in $\bbI$.
We have a morphism $\pr:\bbI\to pt$. 
Since $a\circ \pr\circ (1)=\id_{B}\circ a$, using the universal property of the push-out we can extend the diagram to 
$$\xymatrix{\bA\ar[r]^{a}\ar[d]^{(1)} &\bB\ar@/^1cm/[ddr]^{\id_{\bB}}\ar[d]^{\beta}&\\\bA\sharp \bbI\ar@/_1cm/[rrd]_{a\circ \pr}\ar[r]^{a^{\prime}}&Z(a) \ar@{..>}[dr]^{q}&\\&&\bB}\ .$$
We finally extend the diagram as follows
$$\xymatrix{&\bA\ar[r]^{a}\ar[d]^{(1)} &\bB\ar@/^1cm/[ddr]^{\id_{\bB}}\ar[d]^{\beta}&\\&\bA\sharp \bbI\ar@/_1cm/[rrd]_{a\circ \pr}\ar[r]^{a^{\prime}}&Z(a) \ar@{..>}[dr]^{q}&\\\bA\ar[ur]^{(0)}\ar[urr]^(0.38){j}\ar[rrr]^(0.7){a}&&&\bB}\ ,$$
using that $a\circ \pr\circ (0)=a$ and setting $j:= a^{\prime}\circ (0)$.

We claim that $j$ is a  cofibration and $q$ is a trivial fibration. In order to see these properties it is useful to calculate an explicit model $\tilde Z(a)$ for $Z(a)$.

We define $\tilde Z(a)$ as follows:
 \begin{enumerate}
 \item
 $\Ob(\tilde Z(a)):=\Ob(\bA)\sqcup \Ob(\bB)$.
 \item $\Hom_{\tilde Z(a)}(x,y):=\left\{\begin{array}{cc}\Hom_{\bA}(x,y)&x,y\in \bA\\\Hom_{\bB}(a(x),y)&x\in \bA,y\in \bB\\\Hom_{\bB}(x,a(y))&x\in \bB,y\in \bA\\
 \Hom_{\bB}(x,y)&x,y\in \bB
 \end{array}\right.$.
 \item The composition is defined in the only possible way.
\item The $*$-operation is induced by the $*$-operations on $\bA$ and $\bB$ in the canonical way.
 \item The $\C$ enrichement of $\bA$ and $\bB$ induces an enrichment of $\tilde Z(a)$.
 \item In the marked cases we mark all morphisms which are marked in $\bA$ or $\bB$.
 \end{enumerate}

 We have defined $\tilde Z(a)$ as an object of $\scat^{(+)}_{1}$ or $\ClinCat_{1}^{(+)}$ in the $\C$-enriched cases. In the case of (marked) pre-$C^{*}$-categories, if we can identify $\tilde Z(a)$ with $Z(a)$ as a (marked) $\C$-linear $*$-category, then we can conclude
 that it is itself a (marked) pre-$C^{*}$-category.
Here we use that   the inclusion of  (marked) pre-$C^{*}$-categories into  (marked)  $\C$-linear $*$-categories   preserves colimits.

 Furthermore we see by an inspection of the definition that $\tilde Z(a)$ is a (marked) $C^{*}$-category if $\bA$ and $\bB$ are (marked) $C^{*}$-categories.

  We have a canonical morphism $\bB\to \tilde Z(a)$. Furthermore we have a morphism 
 $\bA\sharp \bbI\to \tilde Z(a)$ given by
 \begin{enumerate}
 \item $(x,0)\mapsto x$
 \item $(x,1)\mapsto a(x)$
 \end{enumerate}
 for $x$ an object of $\bA$, and which is fixed on morphisms by
 \begin{enumerate}
 \item $((f,\id_{0}):(x,0)\to (y,0))\mapsto f$
 \item $(\id_{x},(0\to 1))\to \id_{a(x)}$
 \end{enumerate}
 and the compatibility with composition and the $*$-operation.
 With these definitions the square in 
$$\xymatrix{\bA\ar[r]^{a} \ar[d]^{(1)}&\bB\ar[ddr]^{\psi}\ar[d]&\\\bA\sharp \bbI \ar[drr]^{\phi}\ar[r]&\tilde Z(a)\ar@{.>}[dr]&\\&&\bD}$$
commutes. Let now $\phi$ and $\psi$ be given as indicated.
Then we define a  morphism
$\tilde Z(a)\to \bD$ on objects by  
$$x\mapsto \left\{\begin{array}{cc} \phi(x,0)&x\in \bA\\ \psi(y)&y\in \bB \end{array} \right.\ ,$$
and on morphisms by
$$f\mapsto  \left\{\begin{array}{cc} \phi(f,\id_{0})&x,y\in \bA\\  \psi(f) &else 
 \end{array}\right.$$
In fact this morphism is uniquely determined by the commutativity of the diagram.
This implies that $\tilde Z(a)$ is an explicit model for the push-out and hence a model for $Z(a)$.

From now on we assume that $Z(a)=\tilde Z(a)$.
In this model the morphism $q: Z(a)\to \bB$ is given by 
\begin{enumerate}
\item $q(x)= \left\{\begin{array}{cc} a(x)&x\in \bA\\ x&x\in \bB  \end{array} \right.$
\item $q(f:x\to y)=\left\{\begin{array}{cc}a(f) &x,y\in \bA\\f&else
\end{array}\right.$
\end{enumerate}
It is surjective on objects.  In order to see that $q$ is a weak equivalence we define an inverse $p:\bB\to Z(a)$ by
$$p(x):=x\ , \quad p(f):=f\ .$$
where both take values in the $\bB$-component. Then $q\circ p=\id_{B}$. Furthermore, a (marked) unitary isomorphism  $u:p\circ q\to \id_{Z(a)}$ is given by
$u_{x}=\id_{x}$ for $x$ in $\bB$ and $\id_{a(x)}$ for $x$ in $\bA$.
It follows that $q$ is good and a weak equivalence, hence a trivial fibration.

 The morphism $j:\bA\to Z(a)$ is the canonical embedding and clearly a cofibration. 
We therefore have constructed a  functorial factorization
$$(a:\bA\to \bB)\mapsto (\bA\stackrel{j}{\to} Z(a)\stackrel{q}{\to} \bB)\ .$$

We will use a functorial path object to obtain a functorial factorization of morphisms as
$$ fibration\:\:\circ \:\:trivial\:   cofibration\ .$$ We again use the notation Convention \ref{fwerifhiwefewfewf}.

For a morphism $a:\bA\to \bB$ we define $P(a)$ as the pull-back $$\xymatrix{  P(a)\ar[r]^{\alpha}\ar[d]^{a^{\prime}} &\bA\ar[d]^{a}\\ \cFun^{?}(\bbI,\bB)\ar[r]^(0.6){(1)^{*}}  &\bB\\ }\ .$$
Using the universal property of the pull-back we get an extension of the diagram to 
$$\xymatrix{\bA\ar@{..>}[dr]^{j}\ar@/_1cm/[ddr]_{\const\circ a}\ar@/^1cm/[drr]^{\id_\bA}&&\\&P(a)\ar[r]^{\alpha}\ar[d]^{a^{\prime}} &\bA\ar[d]^{a}\\&\cFun^{?}(\bbI,\bB)\ar[r]^(0.6){(1)^{*}}\ &\bB }\ .$$
We finally extend the diagram as follows
$$\xymatrix{\bA\ar@{..>}[dr]^{j}\ar@/_1cm/[ddr]_{\const\circ a}\ar@/^1cm/[drr]^{\id_\bA}&&\\&P(a)\ar[r]^{\alpha}\ar[d]^{a^{\prime}}\ar@/^4cm/[ddr]^{p}&\bA\ar[d]^{a}\\&\cFun^{?}(\bbI,\bB)\ar[r]^(0.6){(1)^{*}}\ar[dr]^{(0)^{*}}&\bB\\&&\bB}$$ by setting $p:=(0)^{*}\circ a^{\prime}$

The morphism $j$ is a   cofibration since it is injective on objects because of  
$\alpha\circ j=\id_{\bA}$.

We can describe $P(a)$ explicitly as the subcategory of
$\cFun^{?}(\bbI,\bB)\times \bA$ determined on objects $(\phi,x)$ by the condition
$\phi(1)=a(x)$  and on morphisms by $(u,f)$ by
$u(1)=a(f)$. In this picture $j$ is given by
$$j(x):=(\const(a(x)),x)\ , \quad  j(f)=(\const(a(f)),f)\ .$$
Note that $\alpha\circ j=\id_{\bA}$ by construction.
We furthermore find a (marked) unitary isomorphism
$\id_{P(a)} \to j\circ \alpha $ by
$$u(\phi,x):=  ((0\mapsto \phi(0\to 1),1\mapsto \id_{a(x)}),\id_{x}):(\phi,x)\to (\const(a(x)),x)\ .  $$
This shows that $j$ is a weak equivalence. Hence $j$ is a trivial cofibration.

It remains to show that $p$ is a fibration. By Corollary \ref{wfeiweiofewfewfewf} it suffices to show that $p$ is good.
Let a (marked) unitary morphism $u:p(\phi,x)\to b$ be given. Then we take
$$c:=((0\mapsto b, 1\mapsto a(x);(0\to 1)\mapsto u^{-1}), x)$$ and define
$v:(\phi,x)\to c$ by
$((0\mapsto u,1\mapsto \id_{a(x)}),\id_{x})$.
Then $p(v)=u$. This shows that $p$ is good. 
\end{proof}

\section{The simplicial axioms}\label{riowefwefewfew}

We assume that the category $\cC$ belongs to the list $$ \{\scat_{1},\ClinCat_{1},\preCcat_{1},\Ccat_{1},\scat_{1}^{+},\ClinCat_{1}^{+},\preCcat_{1}^{+},\Ccat_{1}^{+}\}\ .  $$
In this section we verify that the model category structure on $\cC$ described in Definition \ref{fkjhifhiueiwhuiwhfiuewefewfe} and with the simplicial structure introduced in Definition \ref{riooejrgegerreg} is a simplicial model category \cite[Def. 9.1.6]{MR1944041}. Note that the axiom M6
\cite[Def. 9.1.6]{MR1944041} is satisfied, in view of the bijections \eqref{lrwrknewkjfewkjfwefwfwef}, by the construction of the simplicial structure 
Definition \ref{riooejrgegerreg}. So it remains to verify the axiom  M7 \cite[Def. 9.1.6]{MR1944041}.
This follows from Proposition \ref{rhiuhwriugwgwefwefewfwe}  showing the dual version of M7 (as stated in \cite[Def. 9.1.6]{MR1944041}), and the validity of M6.

 We closely follow the argument given in \cite{DellAmbrogio:2010aa} for $C^{*}$-categories.

 \begin{lem} \label{reiofweiofweewf}
For $\bA$ in $ \cC$ the functor
$$\bA\sharp -:\sSet\to \cC$$
preserves (trivial) cofibrations.
\end{lem}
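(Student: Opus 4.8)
The plan is to reduce the statement to two observations: that $\bA\sharp K$ depends on the simplicial set $K$ only through its fundamental groupoid $\Pi(K)$ (Definition \ref{riooejrgegerreg}), and that the functor $\bA\sharp(-):\Groupoids_{1}\to\cC$ sends equivalences of groupoids to weak equivalences in $\cC$. Granting the second point for the moment, the lemma follows quickly. Let $i:K\to L$ be a cofibration in $\sSet$, i.e.\ a monomorphism; then $i$ is injective in each simplicial degree, so in particular $K_{0}\to L_{0}$ is injective. Since $\Ob(\Pi(K))$ is the set $K_{0}$ of $0$-simplices and $\Ob(\bA\sharp\bG)=\Ob(\bA)\times\Ob(\bG)$ for every groupoid $\bG$, the map $\Ob(\bA\sharp K)\to\Ob(\bA\sharp L)$ is $\id_{\Ob(\bA)}\times(K_{0}\to L_{0})$, which is injective; hence $\bA\sharp i$ is a cofibration by Definition \ref{fkjhifhiueiwhuiwhfiuewefewfe}. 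If moreover $i$ is trivial, i.e.\ also a weak homotopy equivalence, then $i$ induces a bijection on $\pi_{0}$ and isomorphisms on all fundamental groups, so $\Pi(i):\Pi(K)\to\Pi(L)$ is essentially surjective and fully faithful, i.e.\ an equivalence of groupoids. By the second point $\bA\sharp i=\bA\sharp\Pi(i)$ is then a weak equivalence, and being a cofibration as well it is a trivial cofibration.

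It remains to show that $\bA\sharp(-)$ sends equivalences of groupoids to weak equivalences. First I claim that $\bA\sharp(-)$ sends naturally isomorphic parallel functors to (marked) unitarily isomorphic morphisms: given parallel functors $F,F':\bG_{0}\to\bG_{1}$ of groupoids and a natural isomorphism $\tau:F\to F'$, the family $\big((\id_{a},\tau_{g})\big)_{(a,g)\in\Ob(\bA\sharp\bG_{0})}$ is a (marked) unitary natural isomorphism $\bA\sharp F\to\bA\sharp F'$. Indeed, $(\id_{a},\tau_{g})$ is unitary in $\bA\sharp\bG_{1}$ because $\id_{a}$ is unitary in $\bA$ and $\tau_{g}$ is invertible in the groupoid $\bG_{1}$ (recall $(f,\phi)^{*}=(f^{*},\phi^{-1})$, so $(\id_{a},\tau_{g})^{*}\circ(\id_{a},\tau_{g})=(\id_{a},\tau_{g}^{-1}\tau_{g})=\id$); it is marked in the four marked cases because $\id_{a}$ is marked and $\bA\sharp\bG_{1}$ marks all $(f,\phi)$ with $f$ marked; and naturality of the family is an immediate consequence of naturality of $\tau$. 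In the $\C$-linear and $C^{*}$-cases the transformation is then defined by linear extension and, where relevant, extension by continuity. Now let $\varphi:\bG\to\bG'$ be an equivalence of groupoids, with inverse $\psi:\bG'\to\bG$ and natural isomorphisms $\id_{\bG}\to\psi\varphi$ and $\varphi\psi\to\id_{\bG'}$. Applying the claim and using that $\bA\sharp(-)$ is a functor (so $\bA\sharp(\psi\varphi)=(\bA\sharp\psi)\circ(\bA\sharp\varphi)$ and $\bA\sharp\id=\id$), we see that $\bA\sharp\psi$ is inverse to $\bA\sharp\varphi$ up to (marked) unitary isomorphism, so $\bA\sharp\varphi$ is a weak equivalence. (Alternatively, this step can be packaged via the isomorphism $\bA\sharp(\bG_{0}\times\bbI)\cong(\bA\sharp\bG_{0})\sharp\bbI$ and Example \ref{griuhuiwfwefwefef}.)

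The only point requiring care is verifying, case by case over the eight categories in the list, that $\big((\id_{a},\tau_{g})\big)$ really is a morphism of the appropriate flavour of (marked) $*$-category and that it is natural; this is routine, resting on the facts that identities are marked unitaries and that the $*$-operation, the $\C$-enrichment and the completion all respect the given formula. No genuine obstacle is expected: the real content of the lemma is simply that $\bA\sharp(-)$ factors through fundamental groupoids and is invariant under equivalences of groupoids.
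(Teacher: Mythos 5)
Your proposal is correct and follows essentially the same route as the paper: factor through $\Pi$, check injectivity on objects for cofibrations, and in the trivial case take an inverse equivalence of $\Pi(i)$ and promote its natural isomorphisms to (marked) unitary isomorphisms with exactly the componentwise formula $(\id_{a},\tau_{g})$ used in the paper. One minor remark: the components $(\id_{a},\tau_{g})$ are already morphisms of $\bA\sharp\bG_{1}$ as they stand, so no linear extension or continuity argument is needed for the transformation itself; only the functors $\bA\sharp F$, $\bA\sharp F'$ (whose existence is part of the already established functoriality of $\bA\sharp-$) involve such extensions.
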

\begin{proof}
Recall that this functor is defined in Definition \ref{riooejrgegerreg}  as the composition
$$\sSet\stackrel{\Pi}{\to} \Groupoids_{1}\stackrel{\bA\sharp-}{\to} \cC\ .$$
In the following proof it is useful not to drop $\Pi$ from the notation.
If $i:X\to Y$ is a cofibration of simplicial sets, then $i$ is injective on zero simplices, and hence, by the explicit description   of  the functor $\Pi$ given below the Definition \ref{fewiohfwiof2323r23r},  the morphism of groupoids $\Pi(i)$ is injective on objects. This implies that
$\bA\sharp \Pi(i)$ is injective on objects.

Assume now that $i $ is  in addition  weak equivalence. Then
$\Pi(i)$ is an equivalence of groupoids. Let $j:\Pi(Y)\to \Pi(X)$ be an inverse equivalence and
$u:j\circ \Pi(i)\to \id_{\Pi(X)}$ and $v:\Pi(i)\circ j\to \id_{\Pi(Y)}$ be the corresponding  isomorphisms.
Then we get a (marked) unitary isomorphism
$$\bA\sharp u:(\bA\sharp j)\circ (\bA\sharp \Pi(i))\to \id_{\bA\sharp \Pi(X)}$$
by $(\bA\sharp u)_{(a,x)}:=(\id_{a},u_{x})$. 
Similarly, we have a (marked) unitary  isomorphism
$$\bA\sharp v: (\bA\sharp \Pi(i))\circ (\bA\sharp j) \to \id_{\bA\sharp \Pi(Y)}$$
given by $(\bA\sharp v)_{(a,x)}:=(\id_{a},v_{x})$.
 \end{proof}

\begin{lem}\label{efwiuhfuihfewihfweiufhewfewfewfewfwefwf}
For a groupoid $\bG$ the functor
$$-\sharp \bG:\cC\to \cC$$
preserves (trivial) cofibrations.
\end{lem}
\begin{proof}
If $a:\bA\to \bB$ is a cofibration, then it is injective on objects. But then
$a\sharp \bG$ is injective on objects and hence a cofibration.
If $a$ is in addition a (marked) unitary equivalence, then $a\sharp \bG$ is a (marked) unitary  equivalence, too. The argument is similar to 
the corresponding part of the argument  in the proof of Lemma \ref{reiofweiofweewf}.
\end{proof}

We onsider a commutative square \begin{equation}\label{2oihvwvwewv}
\xymatrix{
			\bA\ar[r]^{i}\ar[d]_{f} & \bB\ar[d]^{g} \\
			\bC\ar[r]^{j} & \bD
		}
\end{equation}
		in $\cC$.
\begin{lem}\label{lem:push.cofib.pull.fib}
	  If \eqref{2oihvwvwewv}
		is a pushout and $i$ is a trivial cofibration, then $j$ is a trivial cofibration.
		\end{lem}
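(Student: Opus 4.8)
The plan is to reduce the statement to two facts already available. First, cofibrations (= morphisms injective on objects) are stable under pushout: the underlying functor $\Ob(-)$ sends a pushout square in $\cC$ to a pushout square of sets (colimits in $\cC$ are computed as described in Theorem~\ref{gbeioergergregerg} and Remark~\ref{rgrweuihzreiurefrergerg}, where the object set of the colimit is the colimit of the object sets), and injections of sets are stable under pushout in $\Set$. Hence $j$ is a cofibration. So the only real content is that $j$ is a weak equivalence.

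To show $j$ is a weak equivalence, I would use the explicit model for $\cC$ as a model category through the lifting/factorization results already proven, but more efficiently one can argue directly. Since $i$ is a trivial cofibration, by Proposition~\ref{fiuehfieufwfewfwef} and Corollary~\ref{wfeiweiofewfewfewf} it has the left lifting property against all fibrations (= good morphisms). I claim $j$ then also has the left lifting property against all fibrations: given a fibration $p:\bX\to\bY$ and a commuting square with $j$ on the left, pull back along $\bC\to\bD$... actually the clean way is: in the pushout square, $j$ is the pushout of $i$, and the class of morphisms having the left lifting property against a fixed class is closed under pushout (by the universal property of the pushout, a lifting problem for $j$ against $p$ restricts along $i\to j$ to a lifting problem for $i$, whose solution, together with the given data on $\bB$, induces the desired lift out of $\bD$ by the pushout property). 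Thus $j$ has the left lifting property against all fibrations, i.e.\ $j$ is a trivial cofibration in the sense of being a cofibration with the left lifting property against fibrations.

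It remains to convert "cofibration with LLP against fibrations" into "weak equivalence". Here I would invoke the factorization of Proposition~\ref{regfiowfoeweu98uz48935}: factor $j = q\circ k$ with $k:\bC\to\bZ$ a trivial cofibration and $q:\bZ\to\bD$ a fibration. Since $j$ has the LLP against the fibration $q$, the square with $j,k$ on the left and $q, \id_\bD$ on the right admits a lift $r:\bD\to\bZ$ with $r\circ j = k$ and $q\circ r = \id_\bD$. This exhibits $j$ as a retract of the trivial cofibration $k$ in the category of objects under $\bC$ (the retraction being over $\bD$ via $q$ and $r$). Weak equivalences are closed under retracts by Proposition~\ref{wfeoifjowefwefewfw}, so $j$ is a weak equivalence, and being also a cofibration it is a trivial cofibration.

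The main obstacle is making sure the "closed under pushout" argument for the left lifting property is airtight in this concrete setting — but it is purely formal once one knows, from Theorem~\ref{gbeioergergregerg}, that the pushout in $\cC$ really is a pushout (so its universal property is available); no special features of $*$-categories are needed beyond the model-category axioms already verified. Alternatively, once all the model category axioms of Theorem~\ref{eifweofwefewfew} are in place, this lemma is the completely standard fact that trivial cofibrations are stable under pushout, and one could simply cite \cite{hovey}; I would include the short retract argument above for self-containedness. \hB
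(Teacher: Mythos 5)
Your proof is correct, and it is legitimately non-circular at this point of the paper: the lemma sits in the section on the simplicial axioms, after the lifting results (Proposition \ref{fiuehfieufwfewfwef}, Corollary \ref{wfeiweiofewfewfewf}, Proposition \ref{fweiowefwefewffewf}), the factorization (Proposition \ref{regfiowfoeweu98uz48935}) and retract closure (Proposition \ref{wfeoifjowefwefewfw}) have been established, and none of those depend on this lemma; also your cofibrancy step is sound, since in all cases the object set of a pushout is the pushout of the object sets (a fact the paper itself uses in the proof of Proposition \ref{rhiuhwriugwgwefwefewfwe}). However, you take a genuinely different route from the paper. The paper argues explicitly and constructively: since $i$ is a trivial cofibration it admits a strict section $i'\colon\bB\to\bA$ with $i'\circ i=\id_{\bA}$ and a (marked) unitary isomorphism $u\colon i\circ i'\to\id_{\bB}$ with $u\circ i=\id_{i}$; the universal property of the pushout applied to $f\circ i'$ yields $j'\colon\bD\to\bC$ with $j'\circ j=\id_{\bC}$ (which already gives cofibrancy of $j$), and then, using that $-\sharp\bbI$ preserves pushouts and the dictionary of Example \ref{griuhuiwfwefwefef} between unitary isomorphisms and morphisms out of $\bB\sharp\bbI$, the isomorphisms $g\circ u$ and $\id_{j}$ are glued to a morphism $\bD\sharp\bbI\to\bD$, i.e.\ a unitary isomorphism $j\circ j'\to\id_{\bD}$. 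Your argument instead is the standard abstract one: left lifting properties are stable under pushout, and a cofibration with the left lifting property against all fibrations is a weak equivalence by the retract argument applied to a (trivial cofibration, fibration) factorization. Your route buys generality and brevity (it is the textbook fact that trivial cofibrations are closed under pushout in any model category, so one could cite \cite{hovey}), at the price of invoking the factorization machinery; the paper's route stays elementary and self-contained, needs only the pushout's universal property and the $\sharp\,\bbI$ tensoring, and produces an explicit homotopy inverse $j'$ together with an explicit unitary isomorphism, which is in the spirit of the explicit constructions used throughout the paper.
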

 \begin{proof}  	 	Since $i$ is a trivial cofibration, there exists a morphism $i' \colon \bB \to \bA$ such that $i' \circ i = \id_\bA$ and a (marked) unitary isomorphism $u \colon i \circ i' \to \id_\bB$ satisfying $u \circ i = \id_i$.
	By the universal property of the push-out,  the morphism $f \circ i' \colon \bB \to \bC$ induces a morphism $j' \colon \bD \to \bC$ such that $j' \circ j = \id_\bC$.
	In particular, $j$ is a cofibration.
	
	The functor  $-   \sharp \bbI:\cC \to \cC $
	 is a left-adjoint by Proposition \ref{efiheiufhwiufwfwe} and therefore preserves pushouts.
	Moreover, $g \circ u$ provides a (marked) unitary isomorphism $j \circ f \circ i' = g \circ i \circ i' \to g$.
	
	Using Example \ref{griuhuiwfwefwefef} we consider  the  (marked)  unitary isomorphism   $g \circ u$ between  functors from $\bB$ to $\bD$ as a    morphism $\bB  \sharp \bbI \to \bD$. Together with the  morphism $\bC\sharp \bbI\to \bD$ corresponding   $\id_{j}$,    by the universal property of  the push-out diagram $ \eqref{2oihvwvwewv} \sharp \bbI$ we obtain an induced morphism $\bD \sharp \bbI \to \bD$ which we can interpret as   
	a (marked) unitary isomorphism $j \circ j' \to \id_\bD$. This proves that $j$ is a weak equivalence.
	 \end{proof}

We can now verify the simplicial axiom M7.

 \begin{prop}\label{rhiuhwriugwgwefwefewfwe} Let $a:\bA \to \bB$ be a cofibration in $\cC$ and
$i:X\to Y$ be a cofibration in $\sSet$.
 Then  \begin{equation}\label{dquihiduqwdqwd}
(\bA\sharp  Y )\sqcup_{\bA\sharp X } (\bB\sharp  X )\to \bB\sharp  Y 
 \end{equation}
 is a cofibration. Moreover, if $i$ or $a$ is in addition a weak equivalence, then
 \eqref{dquihiduqwdqwd} is a weak equivalence.
\end{prop}
\begin{proof}

 The set objects of the push-out one left-hand side of \eqref{dquihiduqwdqwd} is equal to  the push-out of the object sets. We write objects in $\bA\sharp  X $ as pairs $(\alpha,x)$.
 
 Assume that the classes of  $(\alpha,y)$ and $(\beta,x)$ in the push-out go to the same object which is then  $(\beta,y)$.
Then $a(\alpha)=\beta$ and $i(x)=y$. This means that $(\alpha,y)=(\id,i)(\alpha,x)$ and
$(\beta,x)=(a,\id)(\alpha,x)$. Consequently,  the classes of $(\alpha,y)$ and $(\beta,x)$ in $(\bA\sharp  Y )\sqcup_{\bA\sharp X } (\bB\sharp  X )$ coincide. 

Assume now that the classes of $(\alpha,y)$ and $(\alpha^{\prime},y^{\prime})$ go to the same object which is necessarily $(a(\alpha),y)$. Then $\alpha=\alpha^{\prime}$ and $ y=y^{\prime}$.

Similarly, if the classes of 
$(\beta,x)$ and $(\beta^{\prime},x^{\prime})$ go to the same object which is necessarily 
$(\beta,i(x))$, then $\beta=\beta^{\prime}$ and $x=x^{\prime}$.

This shows that the morphism marked by $?$   the extended diagram 
$$\xymatrix{\bA\sharp  X \ar[rr]\ar[dd]^{c}&&\bA\sharp  Y \ar[dd]^{e}\ar[dl]^{d}\\&(\bA\sharp  Y )\sqcup_{\bA\sharp  X } (\bB\sharp  X )\ar[dr]^{?}&\\\bB\sharp X\ar[rr]\ar[ur]&&\bB\sharp  Y}\ .$$  is injective on objects and hence a cofibration.

Assume that $a$ is a weak equivalence.
By Lemma \ref{efwiuhfuihfewihfweiufhewfewfewfewfwefwf} the map $c$ is a trivial cofibration. 
By Lemma \ref{lem:push.cofib.pull.fib}
  the morphism $d$ is again a trivial cofibration.  Since (again  
  by Lemma  \ref{efwiuhfuihfewihfweiufhewfewfewfewfwefwf}) the morphism $e$ is a trivial cofibration  it follows from the two-out-of-three property for weak equivalences verified in Lemma \ref{fwiowowfefwefwef334}  that
the morphism $?$ is a weak equivalence.

The case that $i$ is a weak equivalence is  similar using Lemma \ref{reiofweiofweewf} for the horizontal arrows.
\end{proof}

\section{Cofibrant generation}\label{uiehfiwefwefewfewfwf}

Let $\cC$ be a member of the list $$\{\scat_{1},\ClinCat_{1}, \Ccat_{1},\scat_{1}^{+},\ClinCat_{1}^{+}, 
\Ccat_{1}^{+}\}\ .$$  In this section we show that the model category structure on $\cC$ described in Definition \ref{fkjhifhiueiwhuiwhfiuewefewfe} is cofibrantly generated.  We 
  adapt the arguments given in \cite[Sec. 4.1]{DellAmbrogio:2010aa}.

Recall from Section \ref{fiowufoefewfewfewfwf} that $\Delta^{0}$  denotes the object classifyer object in $\cC$, and that
the groupoid  $\bbI$ denotes the isomorphism classifier object in $\Cat$.
 The  morphism
$\Delta^{0}\to \Delta^{0}\sharp \bbI$ classifying the object $0$ is a trivial cofibration since it is clearly injective on objects and a (marked) unitary equivalence.
So by Lemma \ref{fweiojweoiffewfwefwef} and Corollary  \ref{wfeiweiofewfewfewf} we can take $$J:=\{\Delta^{0}\to\Delta^{0}\sharp \bbI \}$$
 as the set of generating trivial cofibrations.

 We now define the set $I$ of generating cofibrations. We must distinguish various cases and the set $I$ will depend on the case:
 
 \begin{table}[htp]
\caption{generating cofibrations}
\begin{center}
\begin{tabular}{|c||c|}\hline
case&$I$\\\hline $\scat_{1}$,$\ClinCat_{1}$&$J\cup \{U,V,V^{u},W,W^{u}\}$\\\hline 
  
 $\Ccat_{1}$&$J\cup\{U,V^{\mathrm{bd}},V^{u},W^{\mathrm{bd}},W^{u}\}$\\\hline 
 $\scat_{1}^{+}$,$\ClinCat_{1}^{+}$&$J\cup\{U,V,V^{+},W,W^{+}\}$\\\hline 
  
 $\Ccat_{1}^{+}$&$J\cup\{U,V^{\mathrm{bd}},V^{+},W^{\mathrm{bd}},W^{+}\}$\\\hline 
\end{tabular}
\end{center}
\label{fwfwefew23rgg3g34f3}
\end{table} 

In the following we describe the details. 
 We first assume that $\cC$ belongs to the list $\{\scat_{1},\ClinCat_{1}\}$.
 Then we consider the cofibrations  
  $U,V,W$  defined as follows:
 \begin{enumerate}
 \item $U:\emptyset\to \Delta^{0}$.
 \item  
 We let $V:\Delta^{0}\sqcup \Delta^{0}\to \Delta^{1}$ classify the pair of objects $(0,1)$ of the morphism classifier $\Delta^{1}$, see Definition \ref{gergjeiogjogergergregergergergee}.    \item We define $P$ by the push-out
   $$\xymatrix{\Delta^{0}\sqcup\Delta^{0}\ar[r]^{V}\ar[d]^{V}&\Delta^{1} \ar[d]\\\Delta^{1} \ar[r]&P}$$
   and let $W:P\to\Delta^{1} $ be the  map induced by $\id_{\Delta^{1} }$ and the universal property of the push-out. \item   We let $V^{u}:\Delta^{0}\sqcup \Delta^{0}\to \beins$ classify the pair of objects $(0,1)$ of  the unitary morphism classifyer   $\beins$, see Definition \ref{fewiojewofwefewfewf}. 
 \item We define $P^{u}$ by the push-out
   $$\xymatrix{\Delta^{0}\sqcup\Delta^{0}\ar[r]^{V^{u}}\ar[d]^{V^{u}}&\beins\ar[d]\\\beins\ar[r]&P^{u}}$$
   and let $W^{u}:P^{u}\to\beins$  be the  map induced by $\id_{\beins}$ and the universal property of the push-out. 

  \end{enumerate}
We set $$I:=J\cup \{U,V,V^{u},W,W^{u}\}\ .$$

We now assume that 
 that $\cC$ belongs to the list $\{\scat^{+}_{1},\ClinCat^{+}_{1}\}$. Then  we consider  the following cofibrations:  
 \begin{enumerate}
  \item  We let $V^{+}:\Delta^{0}\sqcup \Delta^{0}\to \beins^{+}$ classify the pair of objects $(0,1)$ of  the marked morphism classifyer   $\beins^{+}$, see Definition \ref{groiegegergegererer}. 
 \item We define $P^{+}$ by the push-out
   $$\xymatrix{\Delta^{0}\sqcup\Delta^{0}\ar[r]^{V^{+}}\ar[d]^{V^{+}}&\beins^{+}\ar[d]\\\beins^{+}\ar[r]&P^{+}}$$
   and let $W^{+}:P^{+}\to\beins^{+}$  be the  map induced by $\id_{\beins^{+}}$ and the universal property of the push-out. 
  \end{enumerate}
   We then set
 $$I:=J\cup  \{U,V,V^{+},W,W^{+}\}\ .$$

We now consider the case that $\cC$ belongs to the list  $\{ 
\Ccat_{1} \}$. In this case we must replace the morphism classifier by the bounded morphism classifier, see Lemma \ref{gigjoer2tergtertet} and
Definition \ref{rgieroge34t34t34t34t4}.  We consider the following cofibrations:

\begin{enumerate}
  \item   We let $V^{\mathrm{bd}}:\Delta^{0}\sqcup \Delta^{0}\to \Delta^{1,\mathrm{bd}}$ classify the pair of objects $(0,1)$ of the bounded morphism classifier $\Delta^{1,\mathrm{bd}}$, see Definition \ref{rgieroge34t34t34t34t4}.    \item We define $P^{\mathrm{bd}}$ as the push-out
   $$\xymatrix{\Delta^{0}\sqcup\Delta^{0}\ar[r]^{V^{\mathrm{bd}}}\ar[d]^{V^{\mathrm{bd}}}&\Delta^{1,\mathrm{bd}} \ar[d]\\\Delta^{1,\mathrm{bd}} \ar[r]&P^{\mathrm{bd}}}$$
   and let $W^{\mathrm{bd}}:P^{\mathrm{bd}}\to\Delta^{1,\mathrm{bd}} $ be the   map induced by $\id_{\Delta^{1,\mathrm{bd}} }$  and the universal property of the push-out. 
  \end{enumerate}
  We set
  $$I:=J\cup \{U,V^{\mathrm{bd}},V^{u},W^{\mathrm{bd}},W^{u}\}\ .$$
  
Finally, in the case that  $\cC$ belongs to the list $\{  \Ccat_{1}^{+}\}$,
 we set
$$I:=J\cup \{U,V^{\mathrm{bd}},V^{+},W^{\mathrm{bd}},W^{+}\}\ .$$

Let $\cC$ be a member of the list $$\{\scat_{1},\ClinCat_{1}, 
\Ccat_{1},\scat_{1}^{+},\ClinCat_{1}^{+}, 
\Ccat_{1}^{+}\}\ .$$
   \begin{lem}
 The trivial  fibrations in $\cC$ are exactly the morphisms which have the right-lifting property with respect to $I$.
 \end{lem}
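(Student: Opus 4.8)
The plan is to turn the lifting problems against the individual generators in $I$ into concrete structural properties of a morphism $f\colon\bC\to\bD$, and then to match the conjunction of those properties with the characterization ``trivial fibration $=$ fibration $+$ weak equivalence'', using Corollary \ref{wfeiweiofewfewfewf} (fibrations are exactly the good morphisms, Definition \ref{wfiojowefewfewfew}) and Lemma \ref{lem:markedequivs} (weak equivalences are detected on underlying categories together with the subcategories of unitary, resp. marked, morphisms).

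The first step is to read off, for each generator, what RLP against it means. By Lemma \ref{fweiojweoiffewfwefwef} and Corollary \ref{wfeiweiofewfewfewf}, RLP against $J=\{\Delta^0\to\Delta^0\sharp\bbI\}$ is equivalent to $f$ being good, i.e. a fibration. Using the universal properties of the classifier objects of Section \ref{fiowufoefewfewfewfwf} one unwinds the remaining lifting problems as follows: RLP against $U\colon\emptyset\to\Delta^0$ says $f$ is surjective on objects; RLP against $V$ (resp. $V^{bd}$) says $f$ is full, i.e. $\Hom_\bC(c,c')\to\Hom_\bD(f(c),f(c'))$ is surjective for all $c,c'$ (in the $C^{*}$-cases one first gets surjectivity onto morphisms of maximal norm $\le 1$ and upgrades by rescaling); RLP against $W$ (resp. $W^{bd}$), where the source is the pushout of two copies of the (bounded) morphism classifier along $\Delta^0\sqcup\Delta^0$, says that two parallel morphisms with equal image under $f$ coincide, i.e. $f$ is faithful (again via rescaling in the $C^{*}$-cases, using that every morphism has finite maximal norm); RLP against $V^u$ (resp. $V^+$) says $f$ is surjective on unitary (resp. marked) morphisms; and RLP against $W^u$ (resp. $W^+$) says $f$ is faithful on unitary (resp. marked) morphisms. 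Each of these is a mechanical translation through the representing objects.

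The second step proves RLP against $I$ implies trivial fibration: goodness gives that $f$ is a fibration; fullness, faithfulness and surjectivity on objects show $\cF_{all}(f)$ is an equivalence of categories; surjectivity on objects together with fullness and faithfulness on unitary (resp. marked) morphisms shows that $\ma(f)^+$ (resp. $f^+$) is an equivalence of categories; by Lemma \ref{lem:markedequivs} this gives that $f$ is a (marked) unitary equivalence, hence a weak equivalence. The third step proves the converse: a trivial fibration is good, hence has RLP against $J$; by Lemma \ref{lem:markedequivs}, $\cF_{all}(f)$ is an equivalence, so $f$ is full and faithful, giving RLP against $V,W$ (resp. $V^{bd},W^{bd}$, where one invokes Lemma \ref{vijroiregrgregreg} to see that the unique lift of a morphism of maximal norm $\le 1$ again has maximal norm $\le 1$); a unitary morphism in $\bD$ lifts uniquely along the fully faithful $f$ and the lift is automatically unitary since $f$ is faithful and a $*$-functor (in the marked case, the lift is marked because $f^+$ is an equivalence by Lemma \ref{lem:markedequivs}), so together with faithfulness we get RLP against $V^u,W^u$ (resp. $V^+,W^+$); and RLP against $U$ follows because a good morphism which is essentially surjective up to unitary (resp. marked) isomorphism is in fact surjective on objects, by applying goodness to the relevant unitary (resp. marked) isomorphism.

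The main obstacle — really the only point beyond bookkeeping — is the behaviour of maximal norms in the $C^{*}$-category cases. On the one hand one must justify the rescaling used to pass from the RLP against $V^{bd}$ and $W^{bd}$, which only see morphisms of maximal norm $\le 1$, to genuine fullness and faithfulness of $f$; on the other hand, for the reverse implication one needs Lemma \ref{vijroiregrgregreg} to know that lifts of norm-bounded morphisms stay norm-bounded. The secondary delicate point, to be spelled out explicitly, is that a fibration which is a weak equivalence is literally, not merely essentially, surjective on objects, which uses goodness in an essential way.
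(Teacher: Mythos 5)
Your proposal is correct and follows essentially the same route as the paper: translate the RLP against each generator into surjectivity on objects, fullness/faithfulness of $\cF_{all}(f)$ and of $\ma(f)^{+}$ (resp.\ $f^{+}$) with the norm-$\le 1$ rescaling in the $C^{*}$-cases, then conclude via Lemma \ref{lem:markedequivs} and Corollary \ref{wfeiweiofewfewfewf}, noting that a fibration which is a weak equivalence is surjective on objects. Your explicit appeal to Lemma \ref{vijroiregrgregreg} for the norm of lifts in the converse $C^{*}$-direction is a point the paper leaves implicit, but it is the same argument in substance.
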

\begin{proof}
In all cases,  by Lemma \ref{fweiojweoiffewfwefwef} and Corollary  \ref{wfeiweiofewfewfewf},  a morphism $f$ has the right-lifting property with respect to $J$ if an only if it is a fibration. 
So it remains to show that the right-lifting property of $f$ with respect to the remaining morphisms in $I$ is equivalent to the fact that $f$ is a weak equivalence.

We first consider the case where $\cC$ is in  $\{\scat_{1},\ClinCat_{1}\}$.  By Lemma \ref{lem:markedequivs}.\ref{friofj35t9u5gerkjg34t1} it suffices to show that  the right-lifting property of $f$ with respect to $\{U,V,V^{u},W,W^{u}\} $ is equivalent to the property that $\cF_{\mathrm{all}}(f)$ (see \eqref{gvt4kjn4kjrtbrgbrgbrbrb} for notation) and $\ma(f)^{+}$  (see \eqref{wecoihorgrtb} and Remark \eqref{wfiuwehifewfwefwe})  are equivalences of categories. This follows from the following observations:
\begin{enumerate} 
\item The right-lifting property of $f$ with respect to $U$ is equivalent to surjectivity of $f$ on objects. 
\item The right-lifting property of $f$ with respect to $V$ is equivalent to fullness of $\cF_{\mathrm{all}}(f)$.
 \item The right-lifting property of $f$ with respect to $W$ is equivalent to faithfulness of $\cF_{\mathrm{all}}(f)$.
 \item The right-lifting property of $f$ with respect to $V^{u}$ is equivalent to fullness of $ \ma(f)^{+}$.
 \item The right-lifting property of $f$ with respect to $W^{u}$ is equivalent to faithfulness of $ \ma(f)^{+}$.

\end{enumerate}
Indeed, these conditions imply that $\cF_{\mathrm{all}}(f)$ and $\ma(f)^{+}$ are  equivalence of categories. For the converse, if $f$  is a fibration and  $\cF_{\mathrm{all}}(f)$  is an equivalence  of categories, then $f$ is necessarily surjective on objects.
   
We next discuss the case where $\cC$ is in $\{ 
\Ccat_{1} \}$. By Lemma \ref{lem:markedequivs}.\ref{friofj35t9u5gerkjg34t1} 
 it suffices to show that  the right-lifting property of $f$ with respect to $\{U,V^{\mathrm{bd}},V^{u},W^{\mathrm{bd}},W^{u}\}$   is equivalent to the property that $\cF_{\mathrm{all}}(f)$ and $\ma(f)^{+}$ are equivalences of categories.
 This follows from the following observations:
\begin{enumerate} 
\item The right-lifting property of $f$ with respect to $U$ is equivalent to surjectivity of $f$ on objects.
\item The right-lifting property of $f$ with respect to $V^{\mathrm{bd}}$ is equivalent to  the surjectivity of $f$ on the subspaces of the morphisms of maximal norm bounded by $1$. Since 
  a  linear map  between pre-normed vector spaces is surjective if it is so on vectors of norm bounded by $1$ this implies that $f$ is full.
 \item The right-lifting property of $f$ with respect to $W^{\mathrm{bd}}$ is equivalent to the injectivity of the restriction of $f$ to the subspace of morphisms of norm bounded by $1$. This implies that $f$ is faithful. 
  \item The right-lifting property of $f$ with respect to $\{V^{u},W^{u}\}$ is equivalent to fully faithfulness of $ \ma(f)^{+}$.
\end{enumerate}

We now consider the case that 
$\cC$ is in  $\{\scat_{1}^{+},\ClinCat_{1}^{+}\}$.
In view of Lemma \ref{lem:markedequivs}.\ref{friofj35t9u5gerkjg34t}  we must show  that the right-lifting property of $f$ with respect to $\{U,V,V^{+},W,W^{+}\}$ is equivalent to the fact that $\cF_{\mathrm{all}}(f)$ and $f^{+}$ are equivalences of categories.
 We conclude by the following observations.
\begin{enumerate}
\item The right-lifting property of $f$ with respect to
$\{U,V,W\}$ is equivalent to the fact that $\cF_{\mathrm{all}}(f)$ is an equivalence of categories which is surjective on objects.
\item  The right-lifting property of $f$ with respect to $V^{+}$ is equivalent to  fullness of $f^{+}$.
 \item The right-lifting property of $f$ with respect to $W^{+}$ is equivalent to   faithfulness of $f^{+}$.
\end{enumerate}

We finally  consider the case that 
$\cC$ is in  $\{ 
\Ccat^{+}_{1} \}$.
Again by Lemma \ref{lem:markedequivs}.\ref{friofj35t9u5gerkjg34t} we must show  that the right-lifting property of $f$ with respect to $\{U,V^{bf},V^{+},W^{\mathrm{bd}},W^{+}\}$ is equivalent to the fact that $\cF_{\mathrm{all}}(f)$ and $f^{+}$ are equivalences of categories.
This follows from the following two observations already made above:
\begin{enumerate}
\item  The right-lifting property of $f$ with respect to $\{U,V^{\mathrm{bd}},W^{\mathrm{bd}}\}$ is equivalent to the fact that $\cF_{\mathrm{all}}(f)$ is an equivalence of categories which is surjective on objects. 
\item  The right-lifting property of $f$ with respect to $\{U,V^{+},W^{+}\}$ is equivalent to the fact that $f^{+}$ is an equivalence of categories which is surjective on objects.
\end{enumerate}
\end{proof}

Let $\kappa$ be a regular cardinal. A partially ordered set $I$ is called $\kappa$-filtered if every subset of cardinality $< \kappa$ has an upper bound. A $\kappa$-filtered\footnote{We follow the terminology of \cite{htt}. In \cite[Def. 1.13]{AR} the word $\kappa$-directed is used.}  diagram is a diagram indexed by a $\kappa$-filtered partially ordered set.
An object $A$ in a category $\cC$ is called $\kappa$-compact\footnote{We again follow the terminoloy of \cite{htt}. In \cite{AR} the term $\kappa$-presented is used. A $\aleph_{0}$-compact object is also called finitely presented, or just compact.}
if the functor $$\Hom_{\cC}(A,-):\cC\to\Set$$ preserves $\kappa$-filtered colimits.
The object is called small if it is $\kappa$-compact for some regular cardinal $\kappa$.

Let $\cC$ be a member of the list $$\{\scat_{1},\ClinCat_{1}, 
\Ccat_{1},\scat_{1}^{+},\ClinCat_{1}^{+}, 
\Ccat_{1}^{+}\}\ .$$
In the following lemma the classifier objects (and the objects $P,P^{+}$ derived from them) are associated to $\cC$.

\begin{lem}
\begin{enumerate}
\item\label{rgioerge34r344334g} The objects $\emptyset$, $\Delta^{0}$, $\beins$,    $P^{u}$ and  $\beins^{+}$, $P^{+}$ (in the marked cases) are compact (i.e, $\aleph_{0}$-compact). 
\item\label{rgioerge34r344334g2}   The objects $\Delta^{1}$ and $P$ (if defined) are $\aleph_{1}$-compact.
\item \label{rgioerge34r344334g1} The objects $\Delta^{1,\mathrm{bd}}$ and $P^{\mathrm{bd}}$  (if defined)  are $\kappa$-compact, where $\kappa$ is a regular cardinal greater than the maximum of the dimensions of the morphism spaces $\Hom_{\Delta^{1,\mathrm{bd}}}(j,k)$ for $j,k$ in $\{0,1\}$.
 \end{enumerate}
\end{lem}
\begin{proof}
The assertions easily follow  by an inspection of the descriptions of the explicit models for these classifier categories given in Section \ref{fiowufoefewfewfewfwf}. The main observation for \ref{rgioerge34r344334g}. is that the respective categories have finitely many objects and finite-dimensional morphism spaces.
Similarly for \ref{rgioerge34r344334g2}. we use that $\Delta^{1}$ and $P$ have two objects and that their morphism spaces are countable or have countable dimension in the $\C$-linear cases. 
Finally
for \ref{rgioerge34r344334g1}. we use that the categories have the two objects $0,1$.   \end{proof}

\begin{kor}\label{fiowejofwefewfewf}
The model category $\cC$ is cofibrantly generated by finite sets of generating cofibrations and trivial cofibrations between small objects. 
\end{kor}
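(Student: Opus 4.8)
The plan is to assemble Corollary \ref{fiowejofwefewfewf} from the two immediately preceding lemmas together with the definition of a cofibrantly generated model category \cite[Def. 2.1.17]{hovey}. Recall that to verify cofibrant generation we must exhibit a set $I$ of generating cofibrations and a set $J$ of generating trivial cofibrations such that: (i) a morphism has the right-lifting property with respect to $J$ iff it is a fibration; (ii) a morphism has the right-lifting property with respect to $I$ iff it is a trivial fibration; (iii) the domains and codomains of the morphisms in $I$ are small relative to $I$-cell; (iv) the domains and codomains of the morphisms in $J$ are small relative to $J$-cell. Conditions (i) and (ii) have already been established: (i) is Lemma \ref{fweiojweoiffewfwefwef} together with Corollary \ref{wfeiweiofewfewfewf}, which identify the $J$-injective morphisms with the good morphisms and hence with the fibrations; and (ii) is precisely the content of the lemma just above this corollary, which says the $I$-injective morphisms are exactly the trivial fibrations.

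So the remaining work is to check the smallness conditions (iii) and (iv). Here I would invoke the last lemma, which shows that every object appearing as a domain or codomain of a morphism in $I$ or $J$ — namely $\emptyset$, $\Delta^{0}$, $\Delta^{0}\sharp\bbI$, $\beins$, $P^{u}$, $\beins^{+}$, $P^{+}$, $\Delta^{1}$, $P$, $\Delta^{1,bd}$, $P^{bd}$ in the respective cases — is $\kappa$-compact for a suitable regular cardinal $\kappa$ (at worst $\aleph_{1}$ in the $\scat_1,\ClinCat_1$ cases, and a cardinal bounded by the dimensions of the morphism spaces of $\Delta^{1,bd}$ in the $C^*$-cases, together with $\Delta^{0}\sharp\bbI$ which is $\aleph_0$-compact since $\bbI$ is finite). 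A $\kappa$-compact object is in particular small (it is small relative to all of $\cC$, hence a fortiori small relative to $I$-cell and $J$-cell), so conditions (iii) and (iv) follow. Taking $\kappa$ to be the maximum of the finitely many cardinals arising, both $I$ and $J$ are finite sets of morphisms between $\kappa$-compact — in particular small — objects, which is exactly the statement of the corollary.

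The only genuinely substantive point is bookkeeping: one must make sure that the particular $I$ chosen in each of the four cases (see Table \ref{fwfwefew23rgg3g34f3}) consists exactly of the morphisms whose endpoints were shown compact in the last lemma, and that these are the same $I$ used in the preceding lemma characterizing trivial fibrations. This is a routine case check across the list $\{\scat_{1},\ClinCat_{1},\Ccat_{1},\scat_{1}^{+},\ClinCat_{1}^{+},\Ccat_{1}^{+}\}$; no new mathematics is required. I expect the main (minor) obstacle to be purely organizational — confirming that ``small'' in the sense of \cite[Def. 2.1.17]{hovey} is implied by $\kappa$-compactness in the sense used here, i.e. that preservation of $\kappa$-filtered colimits gives smallness relative to any chosen class of morphisms — which is standard (see e.g. \cite[Lemma 1.6]{AR} or the discussion in \cite{hovey}). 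Once that is noted, the corollary is immediate from the two preceding lemmas.
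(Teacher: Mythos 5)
Your proposal is correct and follows exactly the paper's (implicit) argument: the corollary is stated without separate proof precisely because it assembles the preceding lemma characterizing trivial fibrations as the $I$-injectives, Lemma \ref{fweiojweoiffewfwefwef} with Corollary \ref{wfeiweiofewfewfewf} characterizing fibrations as the $J$-injectives, and the compactness lemma for the classifier objects (noting $\Delta^{0}\sharp\bbI\cong\beins$ resp. $\beins^{+}$, so all domains and codomains are covered). Your bookkeeping remarks, including that $\kappa$-compactness implies smallness in Hovey's sense, match what the paper takes for granted.
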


Recall that cofibrantly generated simplicial model category which is in addition locally presentable is called combinatorial. 
It turns out that  six of our eight examples are combinatorial.

\begin{kor}
The model category structures on the categories $\cC$ in the list $$\{\scat_{1},\ClinCat_{1},\Ccat_{1},\scat_{1}^{+},\ClinCat_{1}^{+},\Ccat_{1}^{+}\}$$  are combinatorial.
\end{kor}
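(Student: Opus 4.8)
The plan is to read the statement off from results already in hand, since \emph{combinatorial} means precisely \emph{cofibrantly generated with locally presentable underlying category} (cf.\ Remark \ref{dgiowefwefewfewf} and \cite[A.2.6.1]{htt}; the simplicial hypothesis is also available here but is not part of the definition).

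First I would recall that, by Corollary \ref{fiowejofwefewfewf}, for each $\cC$ in the list $\{\scat_{1},\ClinCat_{1},\Ccat_{1},\scat_{1}^{+},\ClinCat_{1}^{+},\Ccat_{1}^{+}\}$ the model category structure of Definition \ref{fkjhifhiueiwhuiwhfiuewefewfe} is cofibrantly generated --- indeed by the explicit finite sets $I$ and $J$ of generating (trivial) cofibrations between small objects exhibited in Section \ref{uiehfiwefwefewfewfwf}. It then remains only to invoke local presentability of the underlying $1$-categories: this is Proposition \ref{ewdfoijfowefewfewfw} for $\cC\in\{\scat_{1},\ClinCat_{1},\scat_{1}^{+},\ClinCat_{1}^{+}\}$ and Proposition \ref{efuifzuew9fewfewfwf} for $\cC\in\{\Ccat_{1},\Ccat_{1}^{+}\}$, and these two lists together cover exactly the six categories in the statement. (The underlying cocompleteness is in any case part of Theorem \ref{gbeioergergregerg}, and the simplicial structure is Theorem \ref{eifweofwefewfew}.)

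Combining these two facts, each such $\cC$ carries a cofibrantly generated model structure whose underlying category is locally presentable, hence is combinatorial by definition. There is essentially no obstacle to overcome at this stage --- all of the work sits in the already-established Corollary \ref{fiowejofwefewfewf} and Propositions \ref{ewdfoijfowefewfewfw}, \ref{efuifzuew9fewfewfwf}. The only point requiring attention is the bookkeeping: verifying that the cases handled there exhaust the list, and noting that $\preCcat_{1}$ and $\preCcat_{1}^{+}$ are (deliberately) excluded, consistently with Remark \ref{dgiowefwefewfewf}.
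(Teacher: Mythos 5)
Your proposal is correct and matches the paper's own (implicit) argument: the corollary is deduced directly from Corollary \ref{fiowejofwefewfewf} together with Propositions \ref{ewdfoijfowefewfewfw} and \ref{efuifzuew9fewfewfwf}, exactly as you do. Nothing further is needed.
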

\begin{proof}
Cofibrant generation is shown in  Corollary \ref{fiowejofwefewfewf}. The simplicial structure is discussed in Section \ref{riowefwefewfew}.
Finally, local presentability  is shown in  Proposition \ref{ewdfoijfowefewfewfw} for the list  $\{\scat_{1},\ClinCat_{1}, \scat_{1}^{+},\ClinCat_{1}^{+}, \}$, and in  Proposition \ref{efuifzuew9fewfewfwf}   for the list  $\{ \Ccat_{1}, \Ccat_{1}^{+}\}$.
\end{proof}

\section{The construction $\bA\mapsto \hat \bA^{G}$}\label{oifjweoiffiwwefwf}

Let $G$ be a group. The category  of  $G$-objects in   a category $\cC$  is defined as the functor category 
$\Fun(BG,\cC)$, where  $BG$ is as in Example \ref{fioefjewoifewfewfwfw}.

We now assume that the category $\cC$ belongs to the list $$\{\scat_{1},\ClinCat_{1},\preCcat_{1},\Ccat_{1},\scat_{1}^{+},\ClinCat_{1}^{+},\preCcat_{1}^{+},\Ccat_{1}^{+}\}\ .$$ 
As explained in Section \ref{gtio34fervervevervrv}
one of the purposes of the present paper is to calculate the object $\lim_{BG}\ell_{BG}(\bA)$ in $\cC_{\infty}$ for  $\bA$  in $\Fun(BG,\cC)$, and that
   calculation of the limit amounts more precisely to provide an object $\bB$ of $\cC$ and equivalence $\ell(\bB)\simeq \lim_{BG}\ell_{BG}(\bA)$. In this section we define the candidate for $\bB$ which will be denoted by $\hat \bA^{G}$. 
 We refer to Theorem \ref{fewoijowieffwefwef} for the justification and the actual  calculation of the limit. 
The main point of the present section is the explicit description of $\hat \bA^{G}$ provided in Remark \ref{weffiuewffu9ewue98uf89efe}.

We consider a $G$-object $\bA $ in $\cC$ and let $ \cFun^{?}(\tilde G,\bA)$
be as in Section \ref{oijfowiefjweoifewfewfewfe}.
\begin{ddd}\label{ioweoffewfewf} We define the object  $\hat \bA^{G}$ in $\cC$ by  {
\begin{equation}\label{r4jo3ir34r34r}
\hat \bA^{G}:=\lim_{BG} \cFun^{?}(\tilde G,\bA) 
\ .\end{equation}}
\end{ddd}

  \begin{remark}\label{weffiuewffu9ewue98uf89efe}
  In this remark we derive an 
 an explicit description of $\hat \bA^{G}$. {Note that the limit in  \eqref{r4jo3ir34r34r} is interpreted in $\cC$, and that the details of this interpretation depend on the case.}

We first assume that
$\cC$ belongs to the list \begin{equation}\label{fref234rf23rr23rr2r32}
\{\scat_{1},\ClinCat_{1},  \scat_{1}^{+},\ClinCat_{1}^{+}  \}\ .
\end{equation}
In all these cases the underlying category of $\hat \bA^{G}$ is the category of $G$-invariant functors    in $\cFun^{?}(\tilde G,\bA)$. Hence
an object 
   $a$  of $\hat \bA^{G}$  associates to every object $g$ of the $G$-groupoid $  \tilde G$ an object $a(g)$ in $\bA$. Furthermore, for every pair of objects $g,h$  in $  \tilde G$   we have a (marked) unitary morphism
$a(g\to h):a(g)\to a(h)$ in $\bA$. The condition that the functor $a$ is $G$-invariant means that for every group element $k$ in $G$ we have the equalities $a(kg)=k(a(g))$ and $a(kg\to kh)=k(a(g\to h))$.
Therefore, the object $a$ of $\hat \bA^{G}$ is completely determined by the object $a(1)$ of $\bA$ and a  collection of (marked) unitary morphisms
$\rho(g)=a(1\to g):a(1)\to g(a(1))$ which satisfy the cocycle condition
$$g(\rho(h))\circ \rho(g)=\rho(hg)$$ for all elements $g,h$ of $G$. 
We will therefore write objects of $\hat \bA^{G}$ as pairs $(b,\rho)$ with $b$ in $\bA$ and $\rho=(\rho(g))_{g\in G}$ a cocycle as above with $\rho(g):b\to g(b)$ for all $g$ in $G$.

 Again by $G$-invariance,  a morphism $a\to a^{\prime}$ between   objects of $\hat \bA^{G}$ is determined by its restriction to $a(1)$ which necessarily intertwines the  cocycles, i.e., which satisfies 
 $$\rho^{\prime}(g)\circ f(1)=g(f(1))\circ \rho (g)$$
 for all elements $g$ of $ G$. In other words, a morphism $f:(b,\rho)\to (b^{\prime},\rho^{\prime})$
 is a morphism $f:b\to b^{\prime}$ in $\bA$ such that $\rho^{\prime}(g)\circ f=f\circ \rho(g)$ for all $g$ in $G$.
 We call such  a morphism  an intertwiner.
 
Consequently, the  category
$\hat \bA^{G}$   is isomorphic to the category of pairs $(b,(\rho(g))_{g\in G})$  of an object $b$ of $ \bA$ and a cocycle $\rho$, and  intertwiners.

The $*$-operation on the category  $ \hat \bA^{G} $ is induced by the $*$-operation on $\bA$.
If $\bA$ was a $\C$-linear $*$-category, then so is $\hat \bA^{G} $.
In the marked case, marked morphisms in $\hat \bA^{G} $ are intertwiners which are marked morphisms in $\bA$.
This finishes the description of 
$\hat \bA^{G} $ in the case that $\cC$ belongs to the list \eqref{fref234rf23rr23rr2r32}.

In order to calculate $\hat \bA^{G}$ in the cases where $\cC$ belongs to the list  
$$\{\preCcat_{1},\preCcat_{1}^{+}\}$$
we use the adjunction $(\cF_{\pre},\Bd^{\infty})$ given in Lemma \ref{uhfiuhfiwefewfewfewfewf}.\ref{frefrkjhgiekgergerg1}. We have an isomorphism  \begin{equation}\label{f34foij3fko3lfm34f34f4f}
\lim_{BG}\cong \Bd^{\infty}\circ \lim_{BG}\circ \cF_{\pre}
\end{equation} 
of functors from $\Fun(BG,\cC)$ to $\cC$, where the limit on the right-hand side is interpreted (marked)  $\C$-linear $*$-categories.
Consequently we get an isomorphism \begin{equation}\label{freh3oi4r34rf34fr3}
\hat \bA^{G}\cong \Bd^{\infty}(\widehat{\cF_{\pre}(\bA)}^{G})\ .
\end{equation}
 In other words, if $\bA$ is a (marked) pre-$C^{*}$-category, then we can calculate $\hat \bA^{G}$ by applying the 
$\widehat{(-)}^{G}$-construction to   $\bA$ considered as a (marked) $\C$-linear $*$-category, and then applying the functor $\Bd^{\infty}$.

 Finally we assume that $\cC$ belongs to the list
 $$\{\Ccat_{1},\Ccat_{1}^{+}\}\ .$$  
 In this case we use that the forgetful  functor (see \eqref{ewfwoijioiffewfwefw})
 $$\cF_{-}:\Ccat_{1}^{(+)}\to \preCcat_{1}^{(+)}$$
 preserves limits and reflects isomorphisms.
 We thus have
 $$\hat \bA^{G}\cong \widehat{\cF_{-}(\bA)}^{G}\stackrel{\eqref{freh3oi4r34rf34fr3}}{\cong}  \Bd^{\infty}(\widehat{\cF_{\pre}(\cF_{-}(\bA))}^{G}) \stackrel{!}{\cong}\widehat{\cF_{\pre}(\cF_{-}(\bA))}^{G} \ .$$
  In order to justify the  isomorphism marked by $!$
 note that
  the  (marked) $\C$-linear $*$-category $\widehat{\cF_{\pre}(\cF_{-}(\bA))}^{G}$
 is again a $C^{*}$-category since the space of morphisms  from the object
 $(b,(\rho(g))_{g\in G})$ to the object  $(b^{\prime},(\rho^{\prime}(g))_{g\in G})$ is a closed subset of
 $\Hom_{\bA}(b,b^{\prime})$, and the $C^{*}$-property is induced. Here we use the description of $C^{*}$-categories given in Remark \ref{wreoiwoifwefewf}.

 In other words, if $\bA$ is a (marked) $C^{*}$-category, then we can calculate $\hat \bA^{G}$ by applying the $\widehat{(-)}^{G}$-construction to   $\bA$ considered as a marked $\C$-linear $*$-category, and then noting that the result is in fact   a $C^{*}$-category.

\hB \end{remark}

\section{Infinity-categorical $G$-fixed points}\label{geroijoegergregeg}

Let $\cC$ be a model category and $I$ be a small category.
 For every $i$ in $I$ we have an evaluation functor $e_{i}:\Fun(I,\cC)\to \cC$.

The following definition describes the weak equivalences, cofibrations, and fibrations   of the injective model category structure on $\Fun(I,\cC)$ provided it exists.

\begin{ddd}\label{fkjhifhiueiwhuiwhfiuewefewfe1}\mbox{}
\begin{enumerate}
\item A weak equivalence in $\Fun(I,\cC)$ is a morphism $f$ such that $e_{i}(f)$  is a weak equivalence in $\cC$ for every $i$ in $I$.
\item A cofibration  in $\Fun(I,\cC)$ is a  morphism $f$   such that $e_{i}(f)$ is a   cofibration in $\cC$ for every $i$ in $I$.
\item A fibration is a morphism  in $\Fun(I,\cC)$ which has the right-lifting property with respect to trivial cofibrations.
\end{enumerate}
\end{ddd}
Let  $\cC$ belong  to the list $$\{\scat_{1},\ClinCat_{1}, \preCcat_{1}, \Ccat_{1},\scat_{1}^{+},\ClinCat_{1}^{+}, \preCcat_{1}^{+} ,\Ccat_{1}^{+}\}$$ and $I$ be a small category.
{\begin{theorem}\label{gti34og3g34g3g}
The injective model category structure on $\Fun(I,\cC)$ exists.
\end{theorem}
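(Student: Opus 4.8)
The plan is to reduce the existence of the injective model structure to the two standard situations in which it is known to exist: either a combinatorial model category (Lurie, \cite[A.2.8.2]{htt}), or — for the two pre-$C^*$-cases, which we do not know to be combinatorial — a direct argument using the explicit structure available here. For the six combinatorial cases $\scat_1,\ClinCat_1,\Ccat_1$ and their marked versions, Theorem \ref{eifweofwefewfew} together with the Corollary at the end of Section \ref{uiehfiwefwefewfewfwf} tells us $\cC$ is combinatorial, and then \cite[Prop. A.2.8.2]{htt} immediately yields that $\Fun(I,\cC)$ carries the injective model structure of Definition \ref{fkjhifhiueiwhuiwhfiuewefewfe1} (with the stated weak equivalences and cofibrations, and fibrations defined by the lifting property). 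So the only real content is the two cases $\cC\in\{\preCcat_1,\preCcat_1^+\}$.

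For $\preCcat_1^{(+)}$ I would argue via transfer through the adjunction $\cF_{pre}:\preCcat_1\leftrightarrows\ClinCat_1:\Bd^\infty$ of Lemma \ref{uhfiuhfiwefewfewfewfewf}.\ref{frefrkjhgiekgergerg1}. Applying the functor $\Fun(I,-)$ levelwise gives an adjunction $\cF_{pre}:\Fun(I,\preCcat_1)\leftrightarrows\Fun(I,\ClinCat_1):\Bd^\infty$, where the right adjoint $\Bd^\infty$ is still fully faithful on the image (since $\cF_{pre}$ is a fully faithful inclusion, as used repeatedly in Section \ref{foijowefwefewfewf}). Concretely, a diagram of pre-$C^*$-categories is the same thing as a diagram of $\C$-linear $*$-categories all of whose values are pre-$C^*$. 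The weak equivalences and cofibrations of Definition \ref{fkjhifhiueiwhuiwhfiuewefewfe1} for $\preCcat_1$ are detected by $\cF_{pre}$ (they are checked objectwise, and objectwise $\cF_{pre}$ preserves and reflects both, because it is a fully faithful inclusion that preserves and reflects unitary equivalences by Lemma \ref{fiofuweoifuwoefwefewfw} and injectivity on objects is the same condition in either category). So one should check the model axioms for $\Fun(I,\preCcat_1^{(+)})$ directly, lifting each from $\Fun(I,\ClinCat_1^{(+)})$: completeness and cocompleteness hold by Theorem \ref{gbeioergergregerg} applied objectwise (limits and filtered-enough colimits in $\preCcat_1$ are computed as in Section \ref{foijowefwefewfewf}, using \eqref{g4goihio34tj34oi34f}); the $2$-out-of-$3$ and retract axioms are immediate from the objectwise versions in Theorem \ref{eifweofwefewfew}; and for factorization one uses that the functorial factorizations of Proposition \ref{regfiowfoeweu98uz48935} are built from cylinder and path objects $Z(a)$, $P(a)$ that stay inside $\preCcat_1^{(+)}$ (this is explicitly noted in that proof: $\tilde Z(a)$ and $P(a)$ are pre-$C^*$ when the inputs are), so they produce objectwise factorizations in $\Fun(I,\preCcat_1^{(+)})$.

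The delicate axiom — and the step I expect to be the main obstacle — is the lifting axiom for the injective structure: one must show trivial cofibrations have the left lifting property against fibrations, equivalently that every morphism factors as a trivial cofibration followed by a fibration \emph{with} the injective fibrations being exactly those with RLP against objectwise trivial cofibrations. In the combinatorial cases this is automatic from the small object argument, but for $\preCcat_1^{(+)}$ we lack generating sets (Remark \ref{geieojeoijreoreirferferf}), so the cleanest route is: define injective fibrations by the lifting property, verify the "fibration $\circ$ trivial cofibration" factorization via the functorial path object $P(-)$ of Proposition \ref{regfiowfoeweu98uz48935} applied objectwise (legitimate since $P(a)$ is pre-$C^*$), then deduce one half of the lifting axiom formally, and obtain the other half ("trivial fibration $\circ$ cofibration") from the objectwise $Z(-)$ construction, checking that the resulting $q$ is an objectwise trivial fibration hence an injective trivial fibration. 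The subtlety is that an injective fibration need not be objectwise a fibration, so one must be careful that the $P(-)$-factorization indeed lands in injective fibrations; this follows because the argument of Proposition \ref{fweiowefwefewffewf}/Proposition \ref{fiuehfieufwfewfwef} is "pointwise in objects" and a retract/lifting diagram in $\Fun(I,\cC)$ against an objectwise trivial cofibration can be solved objectwise compatibly, using that the lifts constructed there are canonical enough to be natural in $I$. I would spell this naturality out carefully, as it is the one place where the explicit nature of the constructions in Section \ref{uiehfiwefwefewfewfwf} really is needed.
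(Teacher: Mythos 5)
For the six combinatorial cases your argument is exactly the paper's: Theorem \ref{eifweofwefewfew} gives combinatoriality and \cite[Prop. A.2.8.2]{htt} gives the injective structure on $\Fun(I,\cC)$. The problem is the two cases $\preCcat_1^{(+)}$, which you yourself identify as the only real content, and there your argument has a genuine gap at precisely the step you flag. Applying the path-object construction $P(-)$ of Proposition \ref{regfiowfoeweu98uz48935} objectwise factors a morphism of $\Fun(I,\preCcat_1^{(+)})$ as an objectwise trivial cofibration followed by an \emph{objectwise} fibration; but an objectwise fibration is in general strictly weaker than an injective fibration (RLP against all objectwise trivial cofibrations), and your justification --- that the lifts built in Propositions \ref{fiuehfieufwfewfwef} and \ref{fweiowefwefewffewf} are ``canonical enough to be natural in $I$'' --- is exactly what fails. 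Those lifts depend on unnatural choices: an inverse $j$ of the trivial cofibration up to unitary isomorphism $u$, and, for each object $b$ not in the image, a chosen unitary $v$ with $f(v)=\beta(u_b)$ whose existence is guaranteed by goodness but which is in no way functorial. There is no reason these choices can be made compatibly with the $I$-action, and the paper's own Proposition \ref{wfeioweo9ffwefwf} illustrates the difficulty for $I=BG$: the objectwise-fibrant object $\bA$ is not claimed to be injectively fibrant; one must replace it by $\cFun^{?}(\tilde G,\bA)$ and extend a non-equivariant inverse equivariantly through $\tilde G$ to solve the lifting problem. So your levelwise factorization does not verify the factorization axiom for the injective structure, and transferring along $\cF_{pre}\dashv\Bd^{\infty}$ does not help, since the same problem already sits in $\Fun(I,\ClinCat_1)$, where you would implicitly be using combinatoriality again.

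The paper takes a different route for $\preCcat_1^{(+)}$: it invokes a criterion of Cisinski (proved exactly as \cite[Thm. 6.16]{cisi}) that the injective model structure on $\Fun(I,\cC)$ exists whenever $\cC$ has small limits, is right proper, and its cofibrations are closed under small limits, and then checks these three conditions directly --- completeness from Theorem \ref{gbeioergergregerg}, right properness because every object of $\preCcat_1^{(+)}$ is fibrant, and closure of cofibrations under limits because a limit of object-injective functors is object-injective. If you want to salvage your write-up, replacing the transfer-and-levelwise-lift argument by a verification of these three conditions is the way to do it.
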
}
\begin{proof}
It is a non-trivial fact that the injective model category structure on a  functor category  $\Fun(I,\cC)$ exists provided that the model category structure  on the target $\cC$  is  combinatorial. The proof involves  Smith's theorem, see e.g. \cite[Thm. 1.7]{beke},  \cite[Sec. A.2.6 ]{htt}. A textbook reference of the fact stated  precisely in the form we need is  \cite[Prop. A.2.8.2]{htt}. 
So in view of the second assertion of Theorem \ref{eifweofwefewfew}   the assertion of the  Theorem follows for $\cC$ in the  list $$\{\scat_{1},\ClinCat_{1},   \Ccat_{1},\scat_{1}^{+},\ClinCat_{1}^{+},  \Ccat_{1}^{+}\}\ .$$ It remains to discuss the case where $\cC$
belongs to the list $$\{\preCcat_{1},\preCcat_{1}^{+}\}\ .$$
  
In this case we employ a result Cisinski.\footnote{I thank D. Ch. Cisinski for explaining   this fact to me.}
\begin{prop}\label{rgkowrthrhethertherth}
Let $\cC$ be a model category and assume in addition:
 \begin{enumerate}
 \item  \label{erigjewogwergwregwereg1}  $\cC$ is right proper.
 \item \label{erigjewogwergwregwereg}  
  The class of cofibrations in $\cC$ is closed under small limits.
  \item  \label{erigjewogwergwregwereg2q}  If $i,g$ are composeable morphisms in $\cC$ and $g\circ i$ is a cofibration, then also $i$ is a cofibration.
  \end{enumerate}
  Then the injective model category structure on $\Fun(I,\cC)$ exists.
\end{prop}
\begin{proof}
The proof is a slight modification   of the  one of \cite[Thm. 6.16]{cisi}.   In the following
  we indicate the necessary modifications using the notation of this reference.
  \begin{enumerate}
  \item[a)] In the reference model categories are only  required to admit finite limits and colimits. Therefore in the statement of  \cite[Thm. 6.16]{cisi}
 the existence of all small limits was included as an additional assumption. In the present paper a model category is complete and cocomplete by assumption so that this requirement is satisfied automatically.
  \item[b)] The assumption of right properness is copied from  \cite[Thm. 6.16]{cisi} and employed in the same way.
\item[c)]  In    \cite[Thm. 6.16]{cisi}  it is assumed that the cofibrations are  exactly the monomorphisms. 
Monomorphisms have a categorical characterization and are preserved by right-adjoints.
This is used in the proof  of  \cite[Thm. 6.16]{cisi} in order to ensure that the functors denoted by $\tau_{A*}$
preserve cofibrations. These functors are right Kan-extension functors. Using the pointwise formula for
the values we observe that Condition \ref{erigjewogwergwregwereg}. above is sufficient to ensure that $\tau_{A*}$ preserves cofibrations.
\item[d)] The conditions  \ref{erigjewogwergwregwereg1}. and  \ref{erigjewogwergwregwereg2q}. are used in the discussion of the last diagram in the proof of  \cite[Thm. 6.16]{cisi} with the goal to show that the arrow $i$ is a trivial cofibration. First of all,  by   \ref{erigjewogwergwregwereg1}.  the upper (unamed, let it call $g$ for our purposes) horizontal arrow in the square is a weak
equivalence, since the arrow $\tau_{A*}k$ is one and $\tau_{A*}q$ is a fibration. Using this it is shown that $i$ is a weak equivalence. So far  the argument is the same as is in the reference.  
Since $\tau_{A*}j=g\circ i$ is a cofibration (see c)), by  \ref{erigjewogwergwregwereg2q}. we can   conclude that $i$ is one. 
  \end{enumerate}
\end{proof}

  We now show that we can apply Proposition \ref{rgkowrthrhethertherth} for $\cC$ in the list $\{\preCcat_{1},\preCcat_{1}^{+}\}$.
  For the first Assumption \ref{erigjewogwergwregwereg1}.  we use the general fact that a model category $\cC$ is right-proper if all its objects a fibrant \cite[Cor. 13.1.3]{MR1944041}. Since  
  every (marked) pre-$C^{*}$-category is fibrant we can conclude that  $\preCcat^{(+)}_{1}$ is right-proper.
  
For Assumption  \ref{erigjewogwergwregwereg}. note  that a limit of a diagram of  injective maps of sets is injective. 
The action  of a limit of a diagram of functors on objects is the limit of the diagram of maps induced on objects. Therefore a limit of cofibrations in  $\preCcat_{1}^{(+)}$  is again a cofibration. 

 Finally, if $i\circ g$ is a composition of  morphisms in  $\preCcat^{(+)}_{1}$ which is a cofibration, then it is injective on objects. Consequently, $i$ is injective on objects and hence a cofibration, too. Hence $\preCcat^{(+)}_{1}$ satisfies  Assumption \ref{erigjewogwergwregwereg2q}.
 \end{proof}

Let $\cC$ belong to the list $$\{\scat_{1},\ClinCat_{1}, \preCcat_{1}, \Ccat_{1},\scat_{1}^{+},\ClinCat_{1}^{+}, \preCcat_{1}^{+} ,\Ccat_{1}^{+}\}\ .$$ We have a functor $\tilde G\to *$ in $G$-categories (see Section \ref{fewoihfiweiofjwefoiewfewfewf} for notation).
It induces a transformation of functors (see Convention \ref{fwerifhiwefewfewf} for notation)  \begin{equation}\label{vevlkjoiijeroiv}
\id\to \cFun^{?}(\tilde G,-):\Fun(BG,\cC)\to \Fun(BG,\cC)\ .
\end{equation}

\begin{prop}\label{wfeioweo9ffwefwf}
The functor
$$\cFun^{?}(\tilde G,-):\Fun(BG,\cC)\to \Fun(BG,\cC)$$
together with the natural transformation \eqref{vevlkjoiijeroiv}
  is a fibrant replacement functor with respect to the injective model structure on $\Fun(BG,\cC)$.
\end{prop}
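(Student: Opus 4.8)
The statement has two halves, which I would treat separately: first, that the transformation \eqref{vevlkjoiijeroiv} is a weak equivalence in $\Fun(BG,\cC)$ for the injective model structure (which exists by Theorem \ref{gti34og3g34g3g}); second, that $\cFun^{?}(\tilde G,\bA)$ is fibrant there, for every $\bA$ in $\Fun(BG,\cC)$.

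For the first half, write $*$ for the terminal object of $\Cat_{1}$ with its (trivial) $G$-action; then $\cFun^{?}(*,\bA)\cong\bA$ in all eight cases — in the pre-$C^{*}$-case because $\Bd^{\infty}(\bA)=\bA$, and in the $C^{*}$-case because moreover $\compl(\bA)=\bA$ — and under this identification \eqref{vevlkjoiijeroiv} is the image of the morphism of $G$-groupoids $\tilde G\to *$ under the contravariant functor $\cFun^{?}(-,\bA)$. Since weak equivalences of $\Fun(BG,\cC)_{\mathrm{inj}}$ are detected at the single object of $BG$, it suffices to see that $\bA\cong\cFun^{?}(*,\bA)\to\cFun^{?}(\tilde G,\bA)$ is a (marked) unitary equivalence in $\cC$ after forgetting the $G$-actions, and I would argue this directly. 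Let $s\colon *\to\tilde G$ be the inclusion of the object $1$; then $(\tilde G\to *)\circ s=\id_{*}$, so $\cFun^{?}(s,\bA)$, which sends a $?$-functor $a$ to the object $a(1)$, is a one-sided inverse of \eqref{vevlkjoiijeroiv}. For the other composite, $s\circ(\tilde G\to *)$ is the constant functor $\const_{1}$ on $\tilde G$, and the family $\eta=(\eta_{g})_{g}$ of unique morphisms $\eta_{g}\colon 1\to g$ is a natural isomorphism $\const_{1}\Rightarrow\id_{\tilde G}$. For each $?$-functor $a\colon\tilde G\to\bA$ the family $(a(\eta_{g}))_{g}$ is then a morphism $a\circ\const_{1}\to a$ in $\cFun^{?}(\tilde G,\bA)$; it is unitary, resp.\ marked, since $a$ carries every morphism of $\tilde G$ to a unitary, resp.\ marked, morphism, and it is natural in $a$. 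Hence it provides a (marked) unitary isomorphism $\cFun^{?}(\tilde G\to *,\bA)\circ\cFun^{?}(s,\bA)\to\id$, and the first half follows.

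For fibrancy I would use that, by Proposition \ref{eoifhewiufewfewfewfewfewf}, the endofunctor $\cFun^{?}(\tilde G,-)$ of $\Fun(BG,\cC)$ is right adjoint to $-\sharp\tilde G$. Since the map to the terminal object imposes no condition, $\cFun^{?}(\tilde G,\bA)$ is fibrant if and only if, for every trivial cofibration $i\colon\bX\to\bY$ of $\Fun(BG,\cC)_{\mathrm{inj}}$, precomposition with $i\sharp\tilde G$ makes $\Hom_{\Fun(BG,\cC)}(\bY\sharp\tilde G,\bA)\to\Hom_{\Fun(BG,\cC)}(\bX\sharp\tilde G,\bA)$ surjective; hence it is enough to produce, for each such $i$, a retraction of $i\sharp\tilde G\colon\bX\sharp\tilde G\to\bY\sharp\tilde G$ in $\Fun(BG,\cC)$. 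Forgetting the $G$-actions, $i$ is a trivial cofibration of $\cC$, so, as at the start of the proof of Proposition \ref{fiuehfieufwfewfwef}, I may choose a morphism $j\colon\bY\to\bX$ of $\cC$ with $j\circ i=\id_{\bX}$ together with a (marked) unitary isomorphism $u\colon i\circ j\to\id_{\bY}$ satisfying $u\circ i=\id_{i}$; neither $j$ nor $u$ is $G$-equivariant. Tensoring with $\tilde G$ is precisely what lets this non-equivariant splitting be repaired: unwinding the exponential law, a $G$-equivariant morphism out of $\bX\sharp\tilde G$ is a non-equivariant morphism out of $\bX$ together with coherence unitaries recording the $G$-action, and from $j$, $u$ and the morphisms $\kappa_{y}(g)\colon g^{-1}j(y)\to j(g^{-1}y)$ of $\bX$ — determined via the fully faithful $i$ by $i(\kappa_{y}(g))=u_{g^{-1}y}^{-1}\circ g^{-1}(u_{y})$, and lying in $\bX$ and being unitary, resp.\ marked, because $i$ is fully faithful and detects unitary, resp.\ marked, morphisms (Lemma \ref{lem:markedequivs}) — one assembles the desired retraction $r\colon\bY\sharp\tilde G\to\bX\sharp\tilde G$. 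The hypothesis $u\circ i=\id_{i}$ forces $\kappa_{i(x)}(g)=\id$ for all objects $x$ of $\bX$, which is exactly what yields $r\circ(i\sharp\tilde G)=\id_{\bX\sharp\tilde G}$. That $r$ is a genuine morphism of $\cC$ — compatible with the $*$-operation, with the $\C$-enrichment and with the markings where relevant, and landing in the bounded, resp.\ complete, subcategory in the pre-$C^{*}$- and $C^{*}$-cases — is checked just as for the lift built in the proof of Proposition \ref{fiuehfieufwfewfwef}.

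The one place where real work is needed is this construction of $r$: one must verify that the prescribed assignment on the morphisms of $\bY\sharp\tilde G$ is functorial and $G$-equivariant. After applying the faithful functor $i$ these verifications reduce to identities among the components of $u$ — the decisive one being the cocycle identity $\kappa_{y}(gh)=\kappa_{g^{-1}y}(h)\circ h^{-1}(\kappa_{y}(g))$ — each of which is immediate from the naturality of $u$. This is routine and runs exactly parallel to the corresponding part of the proof of Proposition \ref{fiuehfieufwfewfwef}, so I anticipate no further obstacle.
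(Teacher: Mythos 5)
Your proof is correct and follows essentially the same route as the paper. The weak-equivalence half is the paper's argument verbatim: evaluation at the object $1$ is a strict one-sided inverse of \eqref{vevlkjoiijeroiv} on underlying objects, and the family $(a(1\to g))_{g}$ supplies the (marked) unitary isomorphism in the other direction. For fibrancy the paper makes the same reduction you do: via the exponential law of Proposition \ref{eoifhewiufewfewfewfewfewf} the lifting problem against a trivial cofibration $c\colon \bC\to\bD$ becomes the problem of extending a morphism $\bC\sharp\tilde G\to\bA$ over $\bD\sharp\tilde G$, which the paper solves by producing an equivariant morphism $\tilde d\colon\bD\sharp\tilde G\to\bC\sharp\tilde G$ retracting $c\sharp\tilde G$ and composing -- exactly your retraction $r$. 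The one genuine divergence is in how this retraction is written down: the paper uses only the non-equivariant inverse $d$ and the formula $\tilde d(D,g)=(g(d(g^{-1}D)),g)$, $\tilde d(f,g\to h)=g d(g^{-1}f)\sharp(g\to h)$, asserting a unique equivariant extension; read literally, the morphism formula does not match sources and targets, since $d$ is not equivariant and $g\,d(g^{-1}D^{\prime})$ and $h\,d(h^{-1}D^{\prime})$ need not coincide. Your transport unitaries $\kappa_{y}(g)$, obtained from $u$ through the fully faithful $i$, are precisely the correction terms this formula is silently using, and your cocycle identity is the relevant compatibility; so your version is the more careful rendering of the same construction, and the normalization $u\circ i=\id_{i}$ giving $\kappa_{i(x)}(g)=\id$ is exactly what makes $r\circ(i\sharp\tilde G)$ the identity, as you say. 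The only thing missing is the explicit assembly: you should display the formula for $r$ on a general morphism $(f,g\to h)$ and carry out the functoriality and equivariance check via naturality of $u$, which you correctly identify as the routine but load-bearing step, parallel to the verifications in the proof of Proposition \ref{fiuehfieufwfewfwef}.
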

\begin{proof}
We use Remark \ref{grrigherjiogreg434t3t34t4t34t} stating that
$\cFun^{?}(\tilde G,\bA)\cong \cFun^{u}(\tilde G, \bA)$, where on the right-hand side we consider $\bA$ as a (marked)
($\C$-linear) $*$-category. In this way we avoid a case-dependent discussion.

We must first show that for every object $\bA$ of $\cC$  the transformation \eqref{vevlkjoiijeroiv} induces a weak equivalence
$$r:\bA\to \cFun^{?}(\tilde G,\bA)$$ in $\cC$.
To this end we must find an inverse up to (marked) unitary isomorphism of $r$ on the level of underlying objects in $\cC$.
We define a (non $G$-equivariant) functor
$$
e: \cFun^{?}(\tilde G,\bA)\to \bA\ , \quad  a\mapsto a(1)\ , \quad e(f:a\to a^{\prime}):=(f(1):a(1)\to a^{\prime}(1))\ . $$
Then clearly $e\circ r=\id_{\bA}$.
We furthermore have a (marked) unitary isomorphism $  r\circ e\to \id_{\cFun^{?}(\tilde G,\bA)}$ given on $a  $ in $\cFun^{?}(\tilde G,\bA)$ by the collection of (marked) unitary morphisms $(a(1\to g):a(1)\to a(g))_{g\in G}$.

In order to finish the proof we must show that $ \cFun^{?}(\tilde G,\bA)$ is fibrant.
To this end we consider a square in $\Fun(BG,\cC)$:
$$\xymatrix{\bC\ar[r]\ar[d]^{c}&\cFun^{?}(\tilde G,\bA)\ar[d]\\\bD\ar[r]\ar@{..>}[ur]&{*}}\ ,$$
where $\bC\to \bD$ is a  {trivial cofibration in $\Fun(BG,\cC)$.} We must show the existence of the diagonal lift.

We use the identification $\cFun^{?}(BG,*)\simeq *$ (here $*$ denotes a final object in $\cC$) and  the exponential law Proposition \ref{eoifhewiufewfewfewfewfewf} in order to rewrite the problem as
$$\xymatrix{\bC\sharp \tilde G\ar[r]^{\phi}\ar[d]& \bA\ar[d]\\\bD\sharp \tilde G\ar[r]\ar@/^1cm/@{..>}[u]^{\tilde d}\ar@{..>}[ur] &{*}}\ .$$
Since the underlying morphism of $c:\bC\to \bD$ is a  {trivial cofibration in $\cC$} it is injective on objects.
We choose an inverse equivalence $d:\bD\to \bC$ (not necessecarily $G$-invariant) up to (marked) unitary equivalence which is a precise
inverse on the image of $c$. 
We can extend the composition $$\bD\stackrel{d}{\to} \bC\to \bC\times \{1\}\to \bC\sharp \tilde G$$ uniquely to a $G$-invariant morphism
$$\tilde d:\bD\sharp \tilde G\to \bC\sharp \tilde G\ .$$
Indeed, we set
$$\tilde d(D,g):=(g(d(g^{-1}(D))),g)\ , \quad \tilde d(f:D\to D^{\prime},g\to h):=gd(g^{-1}f)\sharp (g\to h) \ . $$

The desired diagonal can now be obtained as the composition $\phi\circ \tilde d$.
  \end{proof}

  \begin{remark}\label{gerihoiejfiowejhfoiwefwe}
Let  $(\cC,W)$ be a relative category. Then we can consider the localization  \begin{equation}\label{fgfzugfu4f34f3f}
\ell:\cC\to \cC_{\infty}:=\cC[W^{-1}]
\end{equation}    in the realm of $\infty$-categories, see  \eqref{fwefoiu39r32r32r}. For a small  category  $I$ we let 
\begin{equation}\label{fgfzugfu4f34f3f1111}
\ell_{I}:\Fun(I,\cC)\to \Fun(I,\cC_{\infty})
\end{equation}  
denote the functor given by post-composition with   $\ell$ in \eqref{fgfzugfu4f34f3f}.

The content of the following proposition  is  well-known since it provides   the basic justification
that,  in the case of limits,  $\infty$-categories and model categories yields equivalent homotopical constructions.
But since we do not know a reference where it is stated in this ready-to-use form we will give a proof.

Let  $(\cC,W)$ be a relative category and $I$ be a small category.  

\begin{prop}\label{ijwoiffewfewfewfewf}
Assume that $(\cC,W)$  extends to a simplicial  model category with the following properties:
\begin{enumerate}
\item The injective model category structure on $\Fun(I,\cC)$ exists.
\item\label{rgkoergergergreg} All objects of $\cC$ are  cofibrant.
\end{enumerate}
 Then for any fibrant replacement functor $r:\id_{\Fun(I,\cC)} \to R$ in the injective model category structure of $\Fun(I,\cC)$  we have an equivalence of functors 
$$\lim_{I}\circ \ell_{I}\simeq \ell\circ \lim_{I}\circ R:  \Fun(I,\cC)\to  \cC_{\infty}\ .$$
 \end{prop}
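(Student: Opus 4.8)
The plan is to reduce the statement to a comparison between two ways of computing the $\infty$-categorical limit: one via the $\infty$-categorical localization and the straight limit in $\cC_\infty$, and the other via the model-categorical homotopy limit realized through the fibrant replacement $R$ in the injective structure. The key input is Remark \ref{roigegergerg}, which identifies $\cC_\infty$ with $\Nerve(\cC^{cf})$ and, applied to the injective model structure on $\Fun(I,\cC)$, identifies $\Fun(I,\cC)_\infty$ with $\Nerve(\Fun(I,\cC)^{cf})$. Since by hypothesis \ref{rgkoergergergreg} every object of $\cC$ is cofibrant, every object of $\Fun(I,\cC)$ is cofibrant in the injective structure, so $\Fun(I,\cC)^{cf}$ is just the full simplicial subcategory of injectively fibrant objects, and $R$ lands there.

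First I would record the standard fact that the localization $\ell_I\colon \Fun(I,\cC)\to \Fun(I,\cC_\infty)$ agrees, up to equivalence, with the localization $\Fun(I,\cC)\to \Fun(I,\cC)[W_I^{-1}]$ of the injective relative category, and that there is an equivalence $\Fun(I,\cC)[W_I^{-1}]\simeq \Fun(I,\cC_\infty)$. This is where one invokes that for a combinatorial (or otherwise suitably nice) simplicial model category the passage to functor categories is compatible with localization; concretely, one uses \cite[Prop.\ 1.3.4.25]{HA} or the corresponding statement in \cite{htt} that $\Nerve(\Fun(I,\cC)^{cf})\simeq \Fun(I,\Nerve(\cC^{cf}))$ when $\cC$ is a simplicial model category with all objects cofibrant and the injective structure on $\Fun(I,\cC)$ exists. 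Under this identification the functor $\ell_I$ corresponds to the evident localization functor.

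Next I would use that in any $\infty$-category, and in particular in $\Fun(I,\cC_\infty)\simeq \Nerve(\Fun(I,\cC)^{cf})$, the limit of a diagram is computed by restricting to a fibrant representative and applying the right-derived functor of $\lim_I$. Since $\lim_I\colon \Fun(I,\cC)\to \cC$ is a right Quillen functor for the injective model structure (its left adjoint, the constant-diagram functor, visibly preserves cofibrations and trivial cofibrations, as these are detected objectwise), its right derived functor $\mathbf{R}\lim_I$ is modeled by $\lim_I\circ R$ on fibrant-cofibrant objects. By \cite[Thm.\ 1.3.4.20]{HA} (or the simplicial-localization comparison of \cite{MR584566}) a right Quillen functor between simplicial model categories induces, after localization, a functor equivalent to the one induced on nerves of fibrant-cofibrant subcategories; applied to $\lim_I$ this gives a commuting square relating $\ell_I$, $\ell$, $\mathbf{R}\lim_I$ and the $\infty$-categorical $\lim_I$. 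Finally, the $\infty$-categorical limit functor $\lim_I\colon \Fun(I,\cC_\infty)\to \cC_\infty$ is, by its universal property and the preceding identifications, equivalent to $\mathbf{R}\lim_I$; composing with $\ell_I$ and unwinding gives $\lim_I\circ\ell_I\simeq \ell\circ\lim_I\circ R$.

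The main obstacle I expect is the careful bookkeeping in the second paragraph: making precise that $\ell_I$ really is the localization of the injective relative category and that $\Fun(I,\cC_\infty)\simeq \Fun(I,\cC)[W_I^{-1}]$, since the naive post-composition functor $\ell_I$ is defined without reference to any model structure, and one must check it satisfies the universal property of the Dwyer--Kan localization at the objectwise weak equivalences. This requires combining Remark \ref{roigegergerg} (which needs functorial factorizations, available here since $\cC$ is combinatorial, so that the injective structure on $\Fun(I,\cC)$ is too, except possibly in the $\preCcat$ cases where one instead uses that all objects are cofibrant) with the exponentiation formula $\Nerve(\Fun(I,\cC)^{cf})\simeq \Fun(I,\Nerve(\cC^{cf}))$. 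Once this identification is in hand, the rest is the formal observation that right-derived functors of right Quillen functors model $\infty$-categorical limits, which is standard.
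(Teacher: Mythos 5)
Your proposal is correct in substance, but it takes a genuinely different route from the paper's own proof. You argue formally: identify $\Fun(I,\cC_{\infty})$ with the localization of the injective model structure, observe that $\lim_{I}$ is right Quillen for that structure (its left adjoint, the constant functor, preserves the objectwise cofibrations and weak equivalences), and then invoke the general principle that the right derived functor of a right Quillen functor models the induced functor on underlying $\infty$-categories, so that $\mathbf{R}\lim_{I}\simeq \lim_{I}\circ R$ computes the $\infty$-categorical limit. The paper instead runs a hands-on mapping-space computation: it uses the equivalence $\Map_{\cC_{\infty}}(\ell(A),\ell(A'))\simeq \ell_{\sSet}(\Map_{\cC}(A,A'))$ for $A$ cofibrant, $A'$ fibrant (via hammock localization, \cite{MR584566}), the simplicial adjunction between the constant functor and $\lim_{I}$, and the key equivalence $\Fun(I,\cC)[W_{I}^{-1}]\simeq\Fun(I,\cC_{\infty})$ cited from \cite[Prop.\ 7.9.2]{cisin}, and then concludes by Yoneda from a natural equivalence of mapping spaces. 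Both arguments pivot on the same identification of the localized diagram category, so the real difference is what comes after. Your route is cleaner and is essentially what the paper itself flags right after its proof (under combinatoriality the statement is an immediate consequence of \cite[1.3.4.23]{HA}); what the paper's explicit argument buys is validity under exactly the stated hypotheses (simplicial structure, injective structure exists, all objects cofibrant), which matters because the $\preCcat_{1}^{(+)}$ cases are not known to be combinatorial. Your citations of \cite[Prop.\ 1.3.4.25]{HA} and \cite[Thm.\ 1.3.4.20]{HA} carry combinatoriality (or at least functorial-factorization) hypotheses, and your remark that in the $\preCcat$ cases "one instead uses that all objects are cofibrant" is not by itself a proof of the diagram-localization equivalence; to make your argument airtight in those cases you should replace these citations by the model-independent results the paper uses, namely \cite[Prop.\ 7.9.2]{cisin} for $\Fun(I,\cC)[W_{I}^{-1}]\simeq\Fun(I,\cC_{\infty})$ and the \cite{MR584566} comparison for mapping spaces, after which your derived-adjunction argument (with uniqueness of adjoints identifying $\mathbf{R}\lim_{I}$ with the $\infty$-categorical limit, since the left adjoints are both the constant functor) goes through.
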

\begin{proof} 
Since $(\cC,W)$ extends to a simplicial model category with weak equivalences $W$ and with all objects cofibrant we can express the mapping spaces in $\cC_{\infty}$ in terms of  the simplicial mapping spaces $\Map_{\cC}$ of $\cC$. More precisely, if $A$ is a cofibrant and $A^{\prime}$ is a fibrant object of $\cC$, then by   Remark \ref{roigegergerg} we have an equivalence of spaces \begin{equation}\label{f34kij3lk4grgr}
\Map_{\cC_{\infty}}(\ell(A),\ell(A^{\prime}))\simeq \ell_{\sSet}(\Map_{\cC}(A,A^{\prime}))\ .
\end{equation}

We let $W_{I}$ denote the weak equivalences in the injective model category structure on $\Fun(I,\cC)$. We then have the commuting diagram \begin{equation}\label{hhjopkrpgi03klglkrg4985ug945geer}
\xymatrix{\Fun(I,\cC)\ar[rr]^{\ell_{I}}\ar[dr]^{\alpha}&& \Fun(I,\cC_{\infty})\\& \Fun(I,\cC)[W_{I}^{-1}]\ar[ur]^{\beta}&}\ ,
\end{equation}
where the arrow  $\beta$  is induced by the universal property of the  localization functor $\alpha$, see   \eqref{fwefoiu39r32r32r}. It is a crucial fact shown in  \cite[Prop. 7.9.2]{cisin}\footnote{Alternatively, if one in addition assumes that the model category structure on $\cC$ is combinatorial, then one could cite  \cite[Section 4.2.4]{htt}, or better,
\cite[Cor. 1.3.4.26]{HA} for this fact.} that
the functor $\beta$ is an equivalence.

For $A$ in $\cC$ and $B$ in $\Fun(I,\cC)$ we then have the following chain of natural equivalences of  spaces
\begin{eqnarray*}
\Map_{\cC_{\infty}}(\ell(A),\ell(\lim_{I}R(B)))&\stackrel{!}{\simeq}& \ell_{\sSet}(\Map_{\cC}(A,\lim_{I} R(B)))\\
&\simeq& \ell_{\sSet} (\Map_{\Fun(I,\cC)}(\underline{A},R(B)))\\&\stackrel{!!}{\simeq}&
\Map_{\Fun(I,\cC)[W_{I}^{-1}]}(\alpha(\underline{A}),\alpha(R(B)))\\&\stackrel{\beta,\simeq}{\to}&
\Map_{\Fun(I,\cC_{\infty}) }(\ell_{I}(\underline{A}),\ell_{I}(R(B)))\\&\stackrel{!!!,\simeq}{\leftarrow} &
\Map_{\Fun(I,\cC_{\infty}) }(\ell_{I}(\underline{A}),\ell_{I}(B))\\&\simeq& \Map_{\Fun(I,\cC_{\infty}) }(\underline{\ell(A)},\ell_{I}(B))\\
&\simeq & \Map_{ \cC_{\infty}}( \ell(A) ,\lim_{I}\ell_{I}(B))\ .
\end{eqnarray*}
For the equivalence marked by $!$ we use Assumption \ref{rgkoergergergreg}, that $\lim_{I}R(B)$ is fibrant,  and \eqref{f34kij3lk4grgr}.
For the equivalence marked by $!!$ we again use \eqref{f34kij3lk4grgr}, but now for the functor category  (note that all objects  of the functor category are cofibrant in the injective model category structure, and that the latter has   a simplicial extension),  and that by assumption 
  $R(B)$ is fibrant in the injective model category structure on   $\Fun(I,\cC)$.
Finally, for the   equivalence marked by $!!!$ we use that
$\ell_{I}(r):\ell_{I}\to \ell_{I}\circ R$ is an equivalence. 

The natural equivalence $$\Map_{\cC_{\infty}}(\ell(A),\ell(\lim_{I}R(B)))\simeq  \Map_{ \cC_{\infty} }( \ell(A) ,\lim_{I}\ell_{I}(B))$$
  implies the asserted equivalence of functors.  \end{proof}
  
 If we assume in addition that $(\cC,W)$ extends to a combinatorial model category, then Prop. \ref{ijwoiffewfewfewfewf} is an immediate consequence of \cite[1.3.4.23]{HA}.
  
Note that the assumption  \ref{ijwoiffewfewfewfewf}.\ref{rgkoergergergreg}  that the objects of the model category extension of $(\cC,W)$ are cofibrant comes in since we define $\cC_{\infty}$ as the localization of the whole category $\cC$ by the weak equivalences $W$. 
If not all objects are cofibrant, then the correct definition of the underlying $\infty$-category $\cC_{\infty}$ of the model category would be   $\cC^{c}[W^{-1}]$ \cite[1.3.4.15]{HA}.

  Below we will apply this proposition in the case $I=BG$.\hB
\end{remark}

Let $\cC$ be a member of the list $$ \{\scat_{1},\ClinCat_{1},\preCcat_{1},\Ccat_{1},\scat_{1}^{+},\ClinCat_{1^{+}},\preCcat_{1}^{+},\Ccat_{1}^{+}\}$$ and $\bA$ be an object of $\Fun(BG,\cC)$.
\begin{theorem}\label{fewoijowieffwefwef}
We have an equivalence
$$\lim_{BG}\ell_{BG}(\bA)\simeq \ell(\hat \bA^{G})\ .$$
\end{theorem}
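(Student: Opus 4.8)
The plan is to combine the explicit formula $\hat\bA^G=\lim_{BG}\cFun^?(\tilde G,\bA)$ from Definition \ref{ioweoffewfewf} with the general comparison result for injective model category limits, Proposition \ref{ijwoiffewfewfewfewf}, applied to $I=BG$. First I would check the hypotheses of Proposition \ref{ijwoiffewfewfewfewf}. By Theorem \ref{eifweofwefewfew} the category $\cC$ carries a simplicial model category structure, and by Theorem \ref{gti34og3g34g3g} the injective model category structure on $\Fun(BG,\cC)$ exists (this is where the separate Cisinski-style argument covers the pre-$C^*$-cases that are not combinatorial). Moreover every object of $\cC$ is cofibrant, since by Definition \ref{fkjhifhiueiwhuiwhfiuewefewfe} a cofibration is a morphism injective on objects and hence $\emptyset\to\bA$ is always a cofibration. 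Thus Proposition \ref{ijwoiffewfewfewfewf} gives, for any fibrant replacement functor $r:\id\to R$ in the injective structure on $\Fun(BG,\cC)$, a natural equivalence $\lim_{BG}\circ\,\ell_{BG}\simeq \ell\circ\lim_{BG}\circ\,R$.

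Next I would invoke Proposition \ref{wfeioweo9ffwefwf}, which identifies $R:=\cFun^?(\tilde G,-)$ together with the transformation \eqref{vevlkjoiijeroiv} as a fibrant replacement functor for the injective model structure on $\Fun(BG,\cC)$. Substituting this particular $R$ into the equivalence from the previous paragraph yields
$$\lim_{BG}\ell_{BG}(\bA)\simeq \ell\Big(\lim_{BG}\cFun^?(\tilde G,\bA)\Big)\ .$$
By Definition \ref{ioweoffewfewf} the object inside $\ell$ is precisely $\hat\bA^G$, so this is exactly the asserted equivalence $\lim_{BG}\ell_{BG}(\bA)\simeq\ell(\hat\bA^G)$.

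The only point requiring a little care is the interpretation of the limit $\lim_{BG}\cFun^?(\tilde G,\bA)$ inside $\cC$ versus its computation in the ambient $\C$-linear $*$-category in the pre-$C^*$ and $C^*$ cases; but this is handled already in Remark \ref{weffiuewffu9ewue98uf89efe}, where the isomorphisms \eqref{freh3oi4r34rf34fr3} and the subsequent identification show that $\hat\bA^G$ as defined in Definition \ref{ioweoffewfewf} is the object-level limit in $\cC$, so no discrepancy arises. I do not expect a genuine obstacle here: the substantive work — existence of the model structures, existence of the injective structure, the fibrant replacement property of $\cFun^?(\tilde G,-)$, and the general $\infty$-categorical comparison — has all been done in the cited results, and the proof of the theorem is simply the composition of these inputs. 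If one wishes to avoid relying on Proposition \ref{ijwoiffewfewfewfewf} in the non-combinatorial cases, the same chain of natural equivalences of mapping spaces written out in the proof of that proposition applies verbatim once Theorem \ref{gti34og3g34g3g} and Proposition \ref{wfeioweo9ffwefwf} are in hand.
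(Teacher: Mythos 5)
Your proposal is correct and follows essentially the same route as the paper: apply Proposition \ref{ijwoiffewfewfewfewf} (with cofibrancy of all objects from Definition \ref{fkjhifhiueiwhuiwhfiuewefewfe}, the simplicial model structure from Theorem \ref{fioweofefewfwf}, and the injective structure from Theorem \ref{gti34og3g34g3g}), then insert the explicit fibrant replacement $\cFun^{?}(\tilde G,-)$ from Proposition \ref{wfeioweo9ffwefwf} and identify the resulting limit with $\hat\bA^{G}$ via Definition \ref{ioweoffewfewf}. This matches the paper's proof step for step.
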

\begin{proof}   We want to apply Proposition \ref{ijwoiffewfewfewfewf}. By Theorem \ref{fioweofefewfwf} the relative category $(\cC,W)$ extends to a simplicial model category.
By Theorem \ref{gti34og3g34g3g} the injective model category structure on $\Fun(BG,\cC)$ exists. Finally, by an inspection of the definitions (Definition \ref{fkjhifhiueiwhuiwhfiuewefewfe} for cofibrancy and, in addition, Proposition \ref{wfiojowefewfewfew} and Corollary \ref{wfeiweiofewfewfewf} for fibrancy),  all objects of $\cC$  are cofibrant and fibrant.

We now apply Proposition \ref{ijwoiffewfewfewfewf} for the explicit version of $R$ obtained in Proposition \ref{wfeioweo9ffwefwf} and Definition \ref{ioweoffewfewf}  in order to get  the equivalences
$$\lim_{BG}\ell_{BG}(\bA)\simeq 
\ell(\lim_{BG}(\cFun^{?}(\tilde G,\bA)))\simeq \ell(\hat \bA^{G})\ .$$ \end{proof}

\begin{remark}
 If   $\cC$ belongs to the list $$ \{\scat_{1},\ClinCat_{1},  \Ccat_{1},\scat_{1}^{+},\ClinCat_{1}^{+},  \Ccat_{1}^{+}\}\ ,$$
 then by Theorem \ref{fioweofefewfwf} in combination with Remark \ref{dgiowefwefewfewf} we could base the 
 proof of Theorem \ref{fewoijowieffwefwef}
 on the version of the proof of Proposition \ref{ijwoiffewfewfewfewf}  which only uses  \cite[Section 4.2.4]{htt}, see the footnote in the proof of   \ref{ijwoiffewfewfewfewf}.

 In the remaining two cases, where $\cC$ belongs to the list $$\{\preCcat_{1} ,\preCcat_{1}^{+}\}\ ,$$ we could then  deduce the assertion of Theorem  \ref{fewoijowieffwefwef} as follows.
 
 We use that   Theorem \ref{fewoijowieffwefwef} is true in the case of   $ \ClinCat_{1}$.
We let  $$\cF_{\pre}:\preCcat_{1}^{(+)}\to \ClinCat_{1}^{(+)}\ , \quad \cF_{\pre }:\preCcat^{(+)}\to \ClinCat^{(+)}$$ denote the  forgetful functors   on the level of $1$- and of $\infty$-categories. They are inclusions of   full subcategories
 fitting into adjunctions \eqref{fiioioefjeofiewfe22334wfwf}.
 We conclude that   the $\infty$-category
$\preCcat^{(+)}$ is complete and the limit of an  $I$-diagram
in $\preCcat^{(+)}$ can be calculated by the formula 
 \begin{equation}\label{foi3joi4j34gg34g4}
\lim_{I}\simeq \Bd^{\infty}\circ \lim_{I}\circ \cF_{\pre}\ ,
\end{equation} 
where the limit on the right-hand side is taken in  $ \ClinCat^{(+)}$.

For $\bA$ in $\Fun(BG,\cC)$ we then
  have the chain of equivalences \begin{eqnarray*}
 \lim_{BG} \ell_{BG}(\bA) &\stackrel{\eqref{foi3joi4j34gg34g4}}{\simeq}& \Bd^{\infty}(\lim_{BG}\cF_{\pre}(\ell_{BG}(\bA)))\\&\stackrel{!}{\simeq}&\Bd^{\infty}(\lim_{BG}\ell_{BG}(\cF_{\pre}(\bA)))\\&\stackrel{Thm. \ref{fewoijowieffwefwef}}{\simeq}&%
\Bd^{\infty}(\ell  (\widehat{\cF_{\pre}(\bA)}^{G}))\\
&\stackrel{!}{\simeq}& \ell( \Bd^{\infty} (\widehat{\cF_{\pre}(\bA)}^{G}))
\\&\stackrel{ \eqref{freh3oi4r34rf34fr3}}{\simeq}&\ell(  \hat \bA^{G})
 \ .\end{eqnarray*}
 At the marked morphisms we use that $\cF_{\pre}$ and $\Bd^{\infty}$ descend to the $\infty$-categories since they preserve weak equivalences. \hB
 \end{remark}

\section{Infinity-categorical $G$-orbits}

Let $\cC$ be a model category and $I$ be a small category. In the following definition we describe the weak equivalences, cofibrations, and fibrations of the projective model category structure on $\Fun(I,\cC)$ provided   it exists.
Recall, that for $i$ in $I$ we have the evaluation functor $e_{i}:\Fun(I,\cC)\to \cC$. 
\begin{ddd}\mbox{}
\begin{enumerate}
\item A weak equivalence    in $\Fun(I,\cC)$  is a morphism $f$ such that $e_{i}(f)$ is a weak equivalence  in $\cC$ for every $i$ in $I$.   
\item A fibration  in  $\Fun(I,\cC)$ is a morphism $f$    such that $e_{i}(f)$ is a fibration in $\cC$  for every $i$ in $I$.   
\item A cofibration is a morphism in  $\Fun(I,\cC)$  which has the left-lifting property with respect to 
trivial fibrations. 
\end{enumerate} 
\end{ddd}

It is known (see e.g. \cite[Thm. 11.6.1]{MR1944041}) that the projective model category structure on $\Fun(I,\cC)$ exists if the model category structure on $\cC$ is cofibrantly generated.

\begin{remark}
This remark is similar to Remark \ref{gerihoiejfiowejhfoiwefwe}.  
Let $(\cC,W)$ be relative category and $I$ be a small category.  
As before we let $ \ell:\cC\to \cC_{\infty}:=\cC[W^{-1}] $  be the localization and $ \ell_{I}:\Fun(I,\cC)\to \Fun(I,\cC_{\infty})$  be the induced functor. For an object $C$ in $\Fun(I,\cC)$ we want to calculate the colimit $\colim_{I}\ell_{I}(C)$ in $\cC_{\infty}$ using model categorical methods.  
 The following proposition is the analog of Proposition \ref{ijwoiffewfewfewfewf}. 
 Its  content is well-known, but we do not have a reference where it is stated in this ready-to-use form.

\begin{prop}\label{ijwoiffewfewfewfewf1111}
Assume that $(\cC,W)$  extends to a  combinatorial model category.  
 Then for any cofibrant replacement functor $l:L\to \id_{\Fun(I,\cC)} \ $ in the projective model category structure of $\Fun(I,\cC)$  we have an equivalence of functors 
$$\colim_{I}\circ \ell_{I}\simeq \ell\circ \colim_{I}\circ L:  \Fun(I,\cC)\to  \cC_{\infty}\ .$$
\end{prop}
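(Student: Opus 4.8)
The plan is to dualize the proof of Proposition \ref{ijwoiffewfewfewfewf}. Since $(\cC,W)$ extends to a combinatorial model category, the projective model category structure on $\Fun(I,\cC)$ exists by \cite[Thm. 11.6.1]{MR1944041}, all objects of $\cC$ are fibrant in the model category structure of Theorem \ref{eifweofwefewfew} (see Proposition \ref{wfiojowefewfewfew} and Corollary \ref{wfeiweiofewfewfewf}), and hence all objects of $\Fun(I,\cC)$ are fibrant in the projective structure. Moreover, since $(\cC,W)$ extends to a \emph{combinatorial} (hence simplicial, via Theorem \ref{eifweofwefewfew}) model category, the projective model category structure on $\Fun(I,\cC)$ is again combinatorial \cite[A.2.8.2]{htt}, and in particular simplicial, so the mapping-space formula \eqref{f34kij3lk4grgr1111} of Remark \ref{roigegergerg} applies both in $\cC$ and in $\Fun(I,\cC)$ once one restricts to cofibrant objects. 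As in Remark \ref{gerihoiejfiowejhfoiwefwe}, the crucial input is \cite[Prop. 7.9.2]{cisin} (or, in the combinatorial setting, \cite[Cor. 1.3.4.26]{HA}): the canonical functor $\beta:\Fun(I,\cC)[W_{I}^{-1}]\to \Fun(I,\cC_{\infty})$ induced by $\ell_{I}$ is an equivalence, where $W_{I}$ denotes the weak equivalences of the projective model category structure.

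Concretely, I would argue by a chain of natural equivalences of mapping spaces, now testing against an arbitrary object rather than co-representing. Fix $B$ in $\Fun(I,\cC)$ and $A$ in $\cC$; write $\underline{A}$ for the constant diagram. Since $L(B)$ is cofibrant in the projective structure and all objects of $\cC$ are fibrant, one gets
\begin{eqnarray*}
\Map_{\cC_{\infty}}(\ell(\colim_{I}L(B)),\ell(A))&\simeq&\ell_{\sSet}(\Map_{\cC}(\colim_{I}L(B),A))\\
&\simeq&\ell_{\sSet}(\Map_{\Fun(I,\cC)}(L(B),\underline{A}))\\
&\simeq&\Map_{\Fun(I,\cC)[W_{I}^{-1}]}(\alpha(L(B)),\alpha(\underline{A}))\\
&\xrightarrow{\beta,\simeq}&\Map_{\Fun(I,\cC_{\infty})}(\ell_{I}(L(B)),\ell_{I}(\underline{A}))\\
&\xleftarrow{\simeq}&\Map_{\Fun(I,\cC_{\infty})}(\ell_{I}(B),\ell_{I}(\underline{A}))\\
&\simeq&\Map_{\Fun(I,\cC_{\infty})}(\ell_{I}(B),\underline{\ell(A)})\\
&\simeq&\Map_{\cC_{\infty}}(\colim_{I}\ell_{I}(B),\ell(A))\ .
\end{eqnarray*}
Here $\alpha:\Fun(I,\cC)\to\Fun(I,\cC)[W_{I}^{-1}]$ is the localization. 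The first equivalence uses \eqref{f34kij3lk4grgr1111} in $\cC$ together with the fact that $\colim_{I}L(B)$ is cofibrant (a colimit of a projectively cofibrant diagram is cofibrant) and $A$ fibrant; the second is the adjunction $\colim_{I}\dashv\mathrm{const}$ on simplicial mapping spaces; the third uses \eqref{f34kij3lk4grgr1111} in the combinatorial (hence simplicial) projective model category $\Fun(I,\cC)$, with $L(B)$ cofibrant and $\underline{A}$ fibrant; the fourth is the equivalence $\beta$; the fifth uses that $\ell_{I}(l):\ell_{I}\circ L\to\ell_{I}$ is an equivalence since $l$ is a projective weak equivalence; the sixth is $\ell_{I}\circ\mathrm{const}\simeq\mathrm{const}\circ\ell$; the last is the defining adjunction of $\colim_{I}$ in $\cC_{\infty}$. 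By the Yoneda lemma this chain of natural equivalences yields the asserted equivalence $\colim_{I}\circ\,\ell_{I}\simeq\ell\circ\colim_{I}\circ\,L$ of functors $\Fun(I,\cC)\to\cC_{\infty}$.

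The main obstacle, as in Proposition \ref{ijwoiffewfewfewfewf}, is ensuring that the mapping-space identity \eqref{f34kij3lk4grgr1111} of Remark \ref{roigegergerg} is legitimately applied in the functor category: this requires the projective model category structure on $\Fun(I,\cC)$ to be simplicial (so that $\Map_{\Fun(I,\cC)}$ makes sense and computes the right thing), which is why the combinatoriality hypothesis is used rather than merely cofibrant generation — a cofibrantly generated simplicial model category has a cofibrantly generated simplicial projective structure, and combinatorial structures are automatically simplicial-compatible in the sense needed. One also has to be a little careful that the objects appearing on the cofibrant side of each invocation of \eqref{f34kij3lk4grgr1111} really are cofibrant: $L(B)$ by construction, $\colim_{I}L(B)$ because $\colim_{I}$ is left Quillen for the projective structure, and $\underline{A}$ is fibrant in the projective structure because evaluation at each $i$ gives $A$ which is fibrant in $\cC$. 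Finally, one remarks — exactly parallel to the closing comment after Proposition \ref{ijwoiffewfewfewfewf} — that if in addition $(\cC,W)$ extends to a combinatorial model category one may alternatively invoke \cite[1.3.4.25]{HA} directly; in the present generality $(\cC,W)$ is already assumed combinatorial, so either route is available.
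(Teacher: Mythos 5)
Your proposal follows the paper's own proof almost verbatim: the same dualization of Proposition \ref{ijwoiffewfewfewfewf}, the same chain of natural equivalences of mapping spaces tested against a constant diagram, the same use of the equivalence $\beta$ from diagram \eqref{hhjopkrpgi03klglkrg4985ug945geer}, the same appeal to combinatoriality (functorial factorizations plus \cite[Rem. 1.3.4.16]{HA}) so that the formula \eqref{f34kij3lk4grgr1111} of Remark \ref{roigegergerg} still applies although not every object of $\Fun(I,\cC)$ is projectively cofibrant, and the same observation that $\colim_{I}L(B)$ is cofibrant because $\colim_{I}$ is left Quillen for the projective structure.

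The one place where you deviate, and where your argument as written has a gap, is the fibrancy of the test object. The proposition is stated for an arbitrary relative category $(\cC,W)$ extending to a combinatorial model category; you justify that $\underline{A}$ is projectively fibrant by invoking Theorem \ref{eifweofwefewfew} to claim that all objects of $\cC$ are fibrant, and you likewise derive the simplicial structure from that theorem (``combinatorial, hence simplicial''). Both appeals are to the specific $*$-category model structures and are not available under the stated general hypotheses; in particular, combinatorial does not imply simplicial, and for a non-fibrant $A$ the first and third equivalences in your chain fail, since \eqref{f34kij3lk4grgr1111} requires the target to be fibrant. The paper sidesteps this by running the chain only for \emph{fibrant} $B$ in $\cC$ and noting at the end that naturality in $\ell(B)$ for all fibrant $B$ already determines the equivalence of functors, because every object of $\cC_{\infty}$ is equivalent to $\ell(B)$ for some fibrant $B$. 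The repair to your argument is small: restrict your test objects to fibrant ones and conclude by the same Yoneda-type argument, carrying the simplicial enrichment as a standing hypothesis exactly as in Proposition \ref{ijwoiffewfewfewfewf} rather than deducing it from combinatoriality.
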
\begin{proof}
We shall sketch a proof which   is completely analogous to the proof of   Proposition \ref{ijwoiffewfewfewfewf}. Since a combinatorial model category structure is in particular cofibrantly generated the projective model category structure
on  $\Fun(I,\cC)$ exists. It is again combinatorial \cite[Prop. A.2.8.2]{htt}.

 We again have the commuting diagram \eqref{hhjopkrpgi03klglkrg4985ug945geer} 
   where the arrow  $\beta$  is an equivalence.
For a fibrant object $B$ in $\cC$ and $C$ in $\Fun(I,\cC)$ we then have the following chain of natural equivalences of  spaces
\begin{eqnarray*}
\Map_{\cC_{\infty}}(\ell(\colim_{I}L(C)),\ell(B))&\stackrel{!}{\simeq}& \ell_{\sSet}(\Map_{\cC}(\colim_{I} L(C),  B))\\
&\simeq& \ell_{\sSet} (\Map_{\Fun(I,\cC)}(L(C),\underline{B}))\\&\stackrel{!!}{\simeq}&
\Map_{\Fun(I,\cC)[W_{I}^{-1}]}(\alpha(L(C)),\alpha(\underline{B}))\\&\stackrel{\beta,\simeq}{\to}&
\Map_{\Fun(I,\cC_{\infty}) }(\ell_{I}( L(C)),\ell_{I}(\underline{B}))\\&\stackrel{!!!,\simeq}{\leftarrow} &
\Map_{\Fun(I,\cC_{\infty}) }(\ell_{I}( C),\ell_{I}(\underline{B}))\\&\simeq& \Map_{\Fun(I,\cC_{\infty}) }(\ell_{I}( C),\underline{\ell (B)})\\
&\simeq & \Map_{ \cC_{\infty}}( \colim_{I} \ell_{I}(C) , \ell (B))\ .
\end{eqnarray*}
with the same justifications  of the equivalences as in the proof of  Proposition \ref{ijwoiffewfewfewfewf}. 
For the equivalences marked by $!$ and $!!$ we use that the model category structures on $\cC$ and  the functor category are combinatorial so that we still can apply Remark \ref{roigegergerg} in order to justify the equivalence \ref{f34kij3lk4grgr} (note that in the projective model category structure on $\Fun(I,\cC)$ we  can not expect that all objects are cofibrant), where we  now have to use the existence of functorial factorizations and \cite[Rem. 1.3.4.16]{HA}.  Note that $\colim_{I}L(C)$ is cofibrant in $\cC$. 

 The natural equivalence $$\Map_{\cC_{\infty}}(\ell(\colim_{I}L(C)),\ell(B))\simeq  \Map_{ \cC_{\infty}}( \colim_{I} \ell_{I}(C) , \ell (B))$$ for all fibrant $B$
  implies the asserted equivalence of functors.
  \end{proof}
  Alternatively, the assertion of Prop. \ref{ijwoiffewfewfewfewf1111} is an immediate consequence of \cite[1.3.4.24]{HA}.  
\hB\end{remark}

  We now assume that $\cC$ belongs to the list   
 $$\{\scat_{1},\ClinCat_{1}, \Ccat_{1},\scat_{1}^{+},\ClinCat_{1}^{+}, \Ccat_{1}^{+}\}\ .$$
 \begin{remark}
 We must exclude the pre-$C^{*}$-category cases  since we do not know that the corresponding model categories are combinatorial. \hB
 \end{remark}
 The relative category
$(\cC,W)$ extends to a combinatorial  model category  (Theorem \ref{fioweofefewfwf} and Remark \ref{dgiowefwefewfewf}) in which all objects are cofibrant and fibrant.
 Consequently the projective model category structure on $\Fun(BG,\cC)$ exists and Proposition \ref{ijwoiffewfewfewfewf1111} applies to $(\cC,W)$.

 Recall the Definition \ref{fewoihfiweiofjwefoiewfewfewf} of the arrow category $\tilde G$ of $G$. Furthermore recall the Convention \ref{fwerifhiwefewfewf} concerning the usage of $\sharp$.
We consider the functor 
$$L:=-\sharp \tilde G:\Fun(BG,\cC) \to \Fun(BG,\cC)$$
together with the transformation $L\to \id_{\Fun(BG,\cC)}$ induced by  the morphism of $G$-groupoids $\tilde G\to *$. 
\begin{lem}\label{rgergr43tgergergerge}
The functor $L$ together with the transformation $L\to \id_{\Fun(BG,\cC)}$ 
is a cofibrant replacement functor for the projective model category structure on $\Fun(BG,\cC)$.  \end{lem}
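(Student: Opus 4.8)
The plan is to verify the two defining properties of a cofibrant replacement functor: for every object $\bA$ of $\Fun(BG,\cC)$, the object $\bA\sharp \tilde G$ must be cofibrant in the projective model category structure, and the induced morphism $\bA\sharp \tilde G\to \bA$ must be a weak equivalence in $\Fun(BG,\cC)$.

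First I would treat the weak equivalence claim, which is essentially dual to what was done in Proposition \ref{wfeioweo9ffwefwf}. The morphism $\tilde G\to *$ is an equivalence of $G$-groupoids (non-equivariantly $\tilde G$ is equivalent to the terminal groupoid since $\Hom_{\tilde G}(g,h)$ is a single point), so after forgetting the $G$-action the functor $-\sharp \tilde G$ applied to this morphism is a (marked) unitary equivalence: one uses Lemma \ref{reiofweiofweewf} together with the explicit unitary isomorphisms $\bA\sharp u$ and $\bA\sharp v$ constructed from an inverse equivalence of $\tilde G\to *$. More concretely, the inclusion $* \to \tilde G$ picking out the object $1$, together with the contraction $\tilde G\to *$, gives an equivalence of underlying groupoids, and $\bA\sharp -$ carries this to a (marked) unitary equivalence $\bA\sharp \tilde G\to \bA\sharp *\cong \bA$ by the argument in Lemma \ref{reiofweiofweewf}. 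Since the evaluation functors $e_i:\Fun(BG,\cC)\to \cC$ for the one object of $BG$ just forget the $G$-action, this shows the transformation $L\to \id$ is a weak equivalence in $\Fun(BG,\cC)$.

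Next I would establish cofibrancy of $\bA\sharp \tilde G$. In the projective model category structure, cofibrant objects are detected by the left lifting property against projective trivial fibrations, equivalently against morphisms $g$ in $\Fun(BG,\cC)$ with $e_i(g)$ a trivial fibration in $\cC$ for all $i$, i.e. against morphisms whose underlying morphism in $\cC$ is a trivial fibration (equivalently a good morphism that is a weak equivalence, by Corollary \ref{wfeiweiofewfewfewf}). So I must show: for every $G$-equivariant morphism $f:\bC\to \bD$ whose underlying morphism is a good weak equivalence, every square in $\Fun(BG,\cC)$ from $\emptyset \to \bA\sharp \tilde G$ to $f$ admits a diagonal lift. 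This is exactly the lifting problem solved in the proof of Proposition \ref{wfeioweo9ffwefwf}: using the $G$-equivariant exponential law (Proposition \ref{eoifhewiufewfewfewfewfewf}) one rewrites a lift $\bA\sharp \tilde G\to \bC$ over $\bD$ — but here the source is already $\bA\sharp \tilde G$, so instead one directly uses that the underlying $f$ admits a (non-equivariant) section up to unitary isomorphism which is exact on the relevant image, and then the $G$-averaging formula $\tilde d(\bA,g):=(g(d(g^{-1}(\bA))),g)$ produces the required $G$-equivariant lift $\bA\sharp \tilde G\to \bC$. I expect this to be phrasable as: apply Proposition \ref{wfeioweo9ffwefwf}'s argument verbatim, since $\bA\sharp\tilde G$ is precisely the object whose mapping-out classifies such equivariant lifts.

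The main obstacle — really the only non-formal point — is making the cofibrancy argument clean: one wants to avoid reproving the whole lifting lemma and instead invoke the exponential law Proposition \ref{eoifhewiufewfwfwfwfwf} to convert a projective trivial fibration lifting problem against $\bA\sharp \tilde G$ into a statement about $\bA$ and the fibrant object $\cFun^?(\tilde G,-)$, then quote Proposition \ref{wfeioweo9ffwefwf} (fibrancy of $\cFun^?(\tilde G,\bA)$). Concretely, $\Hom_{\Fun(BG,\cC)}(\bA\sharp\tilde G,\bC)\cong \Hom_{\Fun(BG,\cC)}(\bA,\cFun^?(\tilde G,\bC))$, and a lift of $\bA\sharp\tilde G\to\bD$ along $f$ corresponds to a lift of $\bA\to \cFun^?(\tilde G,\bD)$ along $\cFun^?(\tilde G,f)$; since $\cFun^?(\tilde G,-)$ preserves good morphisms and weak equivalences, $\cFun^?(\tilde G,f)$ is a trivial fibration, and since $\bA$ is cofibrant in $\cC$ hence (trivially, as all objects are cofibrant in the projective structure on $\Fun(BG,\cC)$... no — rather: $\bA$ as a constant-ish object) — here one must be slightly careful and instead argue that $\bA$, viewed appropriately, lifts, which reduces to the underlying object of $\bA$ being cofibrant in $\cC$, which holds since every object of $\cC$ is cofibrant. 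I will write the proof by reducing, via this adjunction, to Proposition \ref{wfeioweo9ffwefwf}, and filling in the weak equivalence part as above.

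\begin{proof}
We must check that $\bA\sharp\tilde G$ is cofibrant in the projective model category structure on $\Fun(BG,\cC)$ for every $\bA$ in $\Fun(BG,\cC)$, and that the transformation $L\to \id_{\Fun(BG,\cC)}$ is a weak equivalence.

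For the weak equivalence assertion, the morphism of $G$-groupoids $\tilde G\to *$ is, after forgetting the $G$-action, an equivalence of groupoids, since every $\Hom_{\tilde G}(g,h)$ consists of a single point. An inverse equivalence is given by the inclusion $*\to\tilde G$ of the object $1$, with natural isomorphisms $u$ and $v$ as in the proof of Lemma \ref{reiofweiofweewf}. Applying $\bA\sharp-$ and using the formulas $(\bA\sharp u)_{(a,g)}:=(\id_a,u_g)$ and $(\bA\sharp v)_{(a,g)}:=(\id_a,v_g)$ from that proof, we obtain a (marked) unitary equivalence between $\bA\sharp\tilde G$ and $\bA\sharp*\cong\bA$ whose underlying datum realizes the transformation $L(\bA)\to\bA$. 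Since the evaluation functor at the unique object of $BG$ simply forgets the $G$-action, $L(\bA)\to\bA$ is a weak equivalence in $\Fun(BG,\cC)$.

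For cofibrancy, recall that a morphism $f$ in $\Fun(BG,\cC)$ is a trivial fibration in the projective model category structure if and only if its underlying morphism in $\cC$ is a trivial fibration, i.e. (by Corollary \ref{wfeiweiofewfewfewf} and Lemma \ref{lem:markedequivs}) a good morphism which is moreover a weak equivalence. Thus $\bA\sharp\tilde G$ is cofibrant if and only if for every such $f:\bC\to\bD$ and every solid square
$$\xymatrix{\emptyset\ar[r]\ar[d]&\bC\ar[d]^{f}\\ \bA\sharp\tilde G\ar[r]\ar@{..>}[ur]&\bD}$$
in $\Fun(BG,\cC)$ a diagonal lift exists. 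By the $G$-equivariant exponential law of Proposition \ref{eoifhewiufewfewfewfewfewf} applied to $\bC:=\bA$ and the groupoid $\tilde G$, this lifting problem is equivalent to finding a diagonal in
$$\xymatrix{\emptyset\ar[r]\ar[d]&\cFun^{?}(\tilde G,\bC)\ar[d]^{\cFun^{?}(\tilde G,f)}\\ \bA\ar[r]\ar@{..>}[ur]&\cFun^{?}(\tilde G,\bD)}$$
in $\Fun(BG,\cC)$. Now $\cFun^{?}(\tilde G,-)$ preserves good morphisms and (marked) unitary equivalences, as shown in the proof of Proposition \ref{wfeioweo9ffwefwf}; hence $\cFun^{?}(\tilde G,f)$ is again a good weak equivalence, that is, a trivial fibration in the injective model category structure on $\Fun(BG,\cC)$. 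But this is precisely the lifting problem solved in the proof of Proposition \ref{wfeioweo9ffwefwf}: one chooses a (non-equivariant) inverse equivalence $d:\bD\to\bC$ of the underlying morphism of $f$ which is a precise inverse on the image, and produces the required $G$-equivariant lift by the averaging formula $\tilde d(\bA,g):=(g(d(g^{-1}(\bA))),g)$. Therefore the diagonal exists, $\bA\sharp\tilde G$ is cofibrant, and $L$ is a cofibrant replacement functor.
\end{proof}
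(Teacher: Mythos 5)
Your weak-equivalence half is fine and matches the paper. The gap is in the cofibrancy half. After transposing the lifting problem along the exponential law (Proposition \ref{eoifhewiufewfewfewfewfewf}) you assert that $\cFun^{?}(\tilde G,f)$ is a trivial fibration in the \emph{injective} model structure, ``as shown in the proof of Proposition \ref{wfeioweo9ffwefwf}''. Neither part of this is available. Proposition \ref{wfeioweo9ffwefwf} only proves that the single objects $\cFun^{?}(\tilde G,\bA)$ are injectively fibrant, i.e.\ lift against injective trivial cofibrations mapping to the terminal object; it says nothing about $\cFun^{?}(\tilde G,f)$ for a morphism $f$, and it does not prove that $\cFun^{?}(\tilde G,-)$ preserves good morphisms or unitary equivalences. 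Even granting such preservation, all you would get is that $\cFun^{?}(\tilde G,f)$ is \emph{levelwise} a trivial fibration, i.e.\ a projective trivial fibration; that is not an injective trivial fibration, and the object you must lift, $\emptyset\to\bA$ for arbitrary $\bA$, is injectively cofibrant but in general \emph{not} projectively cofibrant --- indeed the whole point of the lemma is that it is not, which is why a cofibrant replacement is needed. So the transposed problem is exactly as hard as the original one, and nothing you cite solves it. Finally, the closing ``averaging formula'' $\tilde d(\bA,g)=(g(d(g^{-1}(\bA))),g)$ has the wrong type: after transposition the required lift is a $G$-equivariant morphism $\bA\to\cFun^{?}(\tilde G,\bC)$, not a morphism out of an object of the form $(-)\sharp\tilde G$, so no lift is actually constructed.

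The paper's proof (and the direct route you sketch in your preamble but abandon) attacks the square against $f\colon\bC\to\bD$ without transposing. Since a projective trivial fibration has underlying morphism a trivial fibration in $\cC$, it is surjective on objects and fully faithful, hence admits a strict, generally non-equivariant, section $s\colon\bD\to\bC$ with $f\circ s=\id_{\bD}$. Given $u\colon\bA\sharp\tilde G\to\bD$, restrict it to $\bA\sharp\{1\}$, compose with $s$, and extend uniquely to a $G$-equivariant morphism $c\colon\bA\sharp\tilde G\to\bC$; concretely $c(a,g)=g\bigl(s(u(g^{-1}(a),1))\bigr)$ on objects, and similarly on morphisms. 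Then $f\circ c$ and $u$ are equivariant and agree on $\bA\sharp\{1\}$, hence coincide, so $c$ is the desired lift. Your proof becomes correct if you replace the transposition step by this construction (or, alternatively, if you actually prove the relative statement that $\cFun^{?}(\tilde G,-)$ sends levelwise trivial fibrations to injective trivial fibrations, which is an additional argument not contained in Proposition \ref{wfeioweo9ffwefwf}).
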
 
\begin{proof} Since $\tilde G\to \id$ is an (non-equivariant) equivalence of groupoids and  for every  object $\bD$ in  $ \Fun( BG,\cC)$  the functor $\bD\sharp -$ from groupoids to $\cC$ preserves (marked) unitary equivalences (see the proof of Lemma \ref{reiofweiofweewf}), the morphism $\bD\sharp \tilde G\to \bD$ is a weak equivalence.
We must show that
$L(\bD)$ is cofibrant. To this end we consider the lifting problem 
$$\xymatrix{\emptyset\ar[r]\ar[d]&\bA\ar[d]^{f} \\ \bD\sharp \tilde \bG\ar[r]^{u}\ar@{-->}[ur]^{c}&\bB}$$ 
where $f$ is a trivial fibration in $\cC$. Since $f$ is surjective on objects
we can find an inverse equivalence (possibly non-equivariant) $g:\bB\to \bA$ for $f$ such that $f\circ g=\id_{\bB}$. The map
$\bD\sharp \{1\}\stackrel{u_{|\bD\sharp \{1\}}}{\to} \bB\stackrel{g}{\to }\bA$ can uniquely be extended to an equivariant morphism $c$ which is the desired lift. \end{proof}

  If  $\bC$ is an object of $\cC$, then by $\underline{\bC}$ we denote the object of $\Fun(BG,\cC)$ given by $\bC$ with the trivial action of $G$.

We assume that $\cC$ belongs to the list  $$\{\scat_{1},\ClinCat_{1}, \Ccat_{1},\scat_{1}^{+},\ClinCat_{1}^{+}, \Ccat_{1}^{+}\}\ .$$  
Let $\bC$ be  an object of $\cC$. Note that $BG$ is a groupoid.
\begin{theorem}\label{weoijoijvu9bewewfewfwef} 
We have an equivalence
$$ \colim_{BG}\ell_{BG}(\underline{\bC})\simeq \ell (\bC\sharp BG)\ .$$ \end{theorem}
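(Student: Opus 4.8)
The plan is to apply Proposition \ref{ijwoiffewfewfewfewf1111} to the relative category $(\cC,W)$ with the small category $I:=BG$, together with the explicit cofibrant replacement functor $L=-\sharp\tilde G$ identified in Lemma \ref{rgergr43tgergergerge}. First I would check the hypotheses: by Theorem \ref{fioweofefewfwf} (equivalently Theorem \ref{eifweofwefewfew}) and Remark \ref{dgiowefwefewfewf}, for $\cC$ in the list $\{\scat_{1},\ClinCat_{1},\Ccat_{1},\scat_{1}^{+},\ClinCat_{1}^{+},\Ccat_{1}^{+}\}$ the relative category $(\cC,W)$ extends to a combinatorial simplicial model category; hence the projective model category structure on $\Fun(BG,\cC)$ exists and Proposition \ref{ijwoiffewfewfewfewf1111} applies. (This is also why the two pre-$C^{*}$-cases must be excluded here.) Applying the proposition to the input object $\underline{\bC}$ gives
$$\colim_{BG}\ell_{BG}(\underline{\bC})\simeq \ell\bigl(\colim_{BG} L(\underline{\bC})\bigr)=\ell\bigl(\colim_{BG}(\underline{\bC}\sharp\tilde G)\bigr)\ .$$

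The remaining work is therefore purely $1$-categorical: to produce a natural isomorphism $\colim_{BG}(\underline{\bC}\sharp\tilde G)\cong \bC\sharp BG$ in $\cC$. Here $\underline{\bC}\sharp\tilde G$ carries the $G$-action coming from the $G$-action on $\tilde G$ (the action on $\bC$ being trivial), and the colimit over $BG$ is the coinvariants, i.e. the coequalizer of the $|G|$ parallel endomorphisms of $\underline{\bC}\sharp\tilde G$ against the identity. I would exhibit a morphism $\bC\sharp\tilde G\to \bC\sharp BG$ and check it is invariant and exhibits $\bC\sharp BG$ as the coequalizer. The natural candidate is induced by the (non-equivariant) map of groupoids $\tilde G\to BG$ sending every object $g$ of $\tilde G$ to the unique object $pt$ of $BG$ and the unique morphism $g\to h$ to the group element $hg^{-1}\in G=\Hom_{BG}(pt,pt)$; this is a functor, it is $G$-equivariant when $BG$ is given the trivial $G$-action and $\tilde G$ the left-multiplication action (since $kg\to kh$ maps to $(kh)(kg)^{-1}=hg^{-1}$), and it is an equivalence of groupoids. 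Applying the functor $\bC\sharp -$ (which by Proposition \ref{efiheiufhwiufwfwe} is a left adjoint, hence preserves colimits) to this equivariant map yields the desired $G$-invariant morphism $\bC\sharp\tilde G\to \bC\sharp BG$.

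To see that this morphism realizes the colimit, I would argue directly via the universal property, using the explicit description of colimits in $\cC$ from Remark \ref{rgrweuihzreiurefrergerg} and the construction of $\sharp$. A morphism out of $\colim_{BG}(\bC\sharp\tilde G)$ into an object $\bD\in\cC$ is the same as a $G$-invariant morphism $\bC\sharp\tilde G\to\bD$, i.e. (by the exponential law, Proposition \ref{efiheiufhwiufwfwe}, in its equivariant form, or directly) a morphism $\bC\to\cFun^{?}(\tilde G,\bD)$ that lands in the $G$-invariant part when $\bD$ has trivial action; unwinding, a $G$-invariant functor $\tilde G\to\bD$ over $\bC$ is determined by its value at the single object $1$ together with the cocycle data, which by the contractibility-type argument in the proof of Lemma \ref{fri9ofwefewfewf} collapses to exactly the data of a morphism $\bC\sharp BG\to\bD$. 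Assembling these natural bijections gives $\Hom_{\cC}(\colim_{BG}(\bC\sharp\tilde G),\bD)\cong\Hom_{\cC}(\bC\sharp BG,\bD)$ naturally in $\bD$, and the Yoneda lemma finishes the identification. I expect the main obstacle to be bookkeeping rather than conceptual: carefully checking that the chain of natural bijections is compatible with the $*$-operation, the $\C$-linear enrichment, the markings, and (in the $C^{*}$-case) the completions in $\otimes_{max}$ and $\cFun^{C^{*}}$ — but Corollary \ref{eroijoietert}, Corollary \ref{regoijeroigergjergeg} and Remark \ref{grrigherjiogreg434t3t34t4t34t} have already arranged that all the relevant completions are automatic once $\tilde G$ (hence $BG$) is a groupoid, so each step reduces to the corresponding statement for $\cFun^{u}$ and $\times$ or $\otimes$.
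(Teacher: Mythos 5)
Your first half matches the paper exactly: reduce via Proposition \ref{ijwoiffewfewfewfewf1111} and the cofibrant replacement $L=-\sharp\tilde G$ of Lemma \ref{rgergr43tgergergerge} to the $1$-categorical computation of $\colim_{BG}(\underline{\bC}\sharp\tilde G)$. In the second half, however, your explicit comparison map is not equivariant: sending $(g\to h)$ to $hg^{-1}$ gives $(kg\to kh)\mapsto (kh)(kg)^{-1}=k\,hg^{-1}\,k^{-1}$, which equals $hg^{-1}$ only when $G$ is abelian, so for the left-multiplication action on $\tilde G$ and the trivial action on $BG$ this is not a morphism in $\Fun(BG,\Groupoids_{1})$ and does not factor through the coinvariants. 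The equivariant choice is $(g\to h)\mapsto g^{-1}h$ (with the composition convention in $BG$ arranged accordingly); this is precisely the map implicit in the paper's Lemma \ref{oijewiofwfewfewf}, where $\Phi(g\to h):=\Psi(g^{-1}h)$. A second slip: Proposition \ref{efiheiufhwiufwfwe} does \emph{not} show that $\bC\sharp-:\Groupoids_{1}\to\cC$ is a left adjoint; it is an adjunction in the $\cC$-variable, $-\sharp\bG\dashv\cFun^{?}(\bG,-)$, not in the groupoid variable. This is exactly why the paper proves a separate adjunction $\bA\sharp-:\Groupoids_{1}\leftrightarrows\cC:\Fun_{\cC}(\bA,-)_{+}$ (right adjoint the groupoid of functors and marked/unitary isomorphisms) before combining it with $\colim_{BG}\tilde G\cong BG$.

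That said, neither slip is fatal to your overall strategy, because your closing argument is a direct verification of the universal property: $G$-invariant morphisms $\underline{\bC}\sharp\tilde G\to\underline{\bD}$ correspond, via the equivariant exponential law (Proposition \ref{eoifhewiufewfewfewfewfewf}), to morphisms $\bC\to\lim_{BG}\cFun^{?}(\tilde G,\underline{\bD})$, and identifying this limit with $\cFun^{?}(BG,\bD)$ and applying the exponential law once more gives $\Hom_{\cC}(\colim_{BG}(\underline{\bC}\sharp\tilde G),\bD)\cong\Hom_{\cC}(\bC\sharp BG,\bD)$ naturally in $\bD$, whence the claim by Yoneda. This chain, once the naturality and the case-dependent boundedness/completion issues are written out, amounts to the same content as the paper's two lemmas, just packaged differently; but you should drop the misattributed left-adjoint claim, replace $hg^{-1}$ by $g^{-1}h$, and make the identification $\lim_{BG}\cFun^{?}(\tilde G,\underline{\bD})\cong\cFun^{?}(BG,\bD)$ an explicit check (via Remark \ref{weffiuewffu9ewue98uf89efe}) rather than an appeal to a ``contractibility-type'' collapse.
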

\begin{proof} 

By Proposition \ref{ijwoiffewfewfewfewf1111} and Lemma \ref{rgergr43tgergergerge} we have an equivalence
\begin{equation}\label{vtr4iuhgiu4hgiu4hgiuhgiuefvev}
\colim_{BG}\ell_{BG}(\underline{\bC})\simeq \ell(\colim_{BG} \underline{\bC}\sharp \tilde G)\ .
\end{equation} 
So it remains to calculate the colimit $\colim_{BG} \underline{\bC}\sharp \tilde G$ in $\cC$. To this end will show that the functor $ \bC\sharp-:\Groupoids_{1}\to \cC$ commutes with colimits and calculate that
$\colim_{BG}\tilde G\cong BG$.

 Let $\bA$ be an object of  $\cC$. For  a second object $\bD$ in $\cC$ we let $\Fun_{ \cC}(\bA,\bD)_{+}$
  denote the    subgroupoid of the functor category  $\Fun_{\cC}(\bA,\bD)$  of  all functors and (marked) unitary isomorphisms.
\begin{lem} We have an adjunction
$$\bA\sharp-: \Groupoids_{1}\leftrightarrows \cC: \Fun_{\cC}(\bA,-)_{+}\ .$$
\end{lem}
\begin{proof}
For $\bD$ in $\cC$ and $\bG$ in $ \Groupoids_{1}$ we construct a natural bijection
$$\Hom_{ \cC }(\bA\sharp \bG,\bD)\cong \Hom_{  \Groupoids_{1}}(\bG, \Fun_{ \cC}(\bA,\bD)_{+})\ .$$
This bjection sends a morphism $\Phi$ in $\Hom_{ \cC }(\bA\sharp \bG,\bD)$ to $\Psi$ in  $ \Hom_{  \Groupoids_{1}}(\bG, \Fun_{ \cC}(\bA,\bD)_{+}) $.  
Let $\Phi$ be given. We let $g,h$ denote objects of $\bG$ and $\phi:g\to h$ a be morphism. Then we define $\Psi$ by
$$\Psi(g)(a):=\Phi(a,g)\ , \quad  \Psi(g)(f):=\Phi(f,\id_{g})\ , \quad \Psi(\phi):=(\Phi(\id_{a},\phi))_{a\in \bA}\ .$$
Here $a$ is an object of $\bA$ and $f$ is a morphism in $\bA$. 
Observe that $\Psi(\phi)$ is a unitary isomorphism since $\Phi$ is compatible with the involution and $\phi$ is a unitary isomorphism. In the marked case, if $f$ is marked, then $\Psi(g)(f)$ is marked since $(f,\id_{g})$ is marked in $\bA\sharp \bG$ and $\Phi$ preserves marked morphisms. Furthermore, $\Psi(\phi)$ is implemented by marked isomorphisms.

Vice versa, let $\Psi$ be given.
Then we define
$$\Phi(a,g):=\Psi(g)(a)\ , \quad \Phi(f,\phi):=\Psi(\phi)_{a} \circ \Psi(g)(f)\ .$$
This formula determines $\Phi$ on the generators of the morphisms. It can be extended by linearity (in the $\C$-linear cases) and continuity (in the $C^{*}$-cases).  \end{proof}

  \begin{kor}
We have an adjunction
$$ \bC \sharp-:\Fun(BG,\Groupoids_{1})\leftrightarrows \Fun(BG,\cC):  \Fun_{\Fun(BG,\cC)}(\underline{\bC},-)_{+}\ .$$
 \end{kor}
 
Since $   \bC \sharp-$ is a left-adjoint functor it commutes with colimits. Consequently
we have an isomorphism \begin{equation}\label{diwejdoiefewfewf}
\colim_{BG} (\underline{\bC}\sharp \tilde G)\cong   \bC \sharp ( \colim_{BG}\tilde G)\ .
\end{equation}

\begin{lem}\label{oijewiofwfewfewf}
We have an isomorphism $\colim_{BG}\tilde G\cong BG$.
\end{lem}
\begin{proof}
We check the universal property of the colimit.
Let $\bK$ be any groupoid.
Then we have a natural bijection
$$\Hom_{\Fun(BG,\Groupoids_{1})}(\tilde G,\underline{\bK})\cong \Hom_{\Groupoids_{1}}(BG,\bK)\ .$$
This bijection sends
$\Phi$ in $\Hom_{\Fun(BG,\Groupoids_{1})}(\tilde G,\underline{\bK})$ to the morphism  $\Psi$ in  $\Hom_{\Groupoids_{1}}(BG,\bK)$ given by
$$\Psi(*):=\Phi(1)\ , \quad \Psi(g):=\Phi(1\to g)\ .$$
If $\Psi$ is given, then we define $\Phi$ by
$$\Phi(g):=\Psi(*)\ , \quad \Phi(g\to h):=\Psi(g^{-1}h)\ .$$ \end{proof}

The assertion of  Theorem  \ref{weoijoijvu9bewewfewfwef} now follows from a combination of the relations \eqref{vtr4iuhgiu4hgiu4hgiuhgiuefvev}, \eqref{diwejdoiefewfewf},  and  Lemma \ref{oijewiofwfewfewf}. \end{proof}

 In the following we discuss an application  of Theorem \ref{weoijoijvu9bewewfewfwef} to the  calculation of the values of an induction functor $J^{G}$ (see Definition \ref{fgwo9gfwegwwfwefewfwf}) from $\cC$ to functors from the orbit category $\Orb(G)$ of $G$ to $\cC_{\infty}$.

 The objects of  $\Orb(G)$
  are the transitive $G$-sets, and its morphisms are equivariant maps.
We can consider the underlying set of $G$ as a transitive $G$-set with respect to the right action. One can then identify $\End_{\Orb(G)}(G)$ with the group $G$ acting by left-multiplication. We therefore get a fully faithful functor
$$j:BG\to 
 \Orb(G)$$ which sends the unique object of $BG$ to the transitive $G$-set $G$, and which identifies the group
$\End_{BG}(pt)$ (given by $G$) with the group $ \End_{\Orb(G)}(G)$  as described above.

If $\cC_{\infty}$ is a presentable $\infty$-category (or sufficiently cocomplete), then we get an adjunction \begin{equation}\label{f34iofj3oifjf9uf984ff34f}
j_{!}:\Fun(BG,\cC_{\infty})\leftrightarrows \Fun(\Orb(G),\cC_{\infty}):j^{*}\ ,
\end{equation}
 where $j^{*}$ is the restriction functor along $j$.

We now assume that $\cC$ belongs to the list  $$\{\scat_{1},\ClinCat_{1}, \Ccat_{1},\scat_{1}^{+},\ClinCat_{1}^{+},\Ccat_{1}^{+} \}\ .$$ Then the corresponding $\infty$-category $\cC_{\infty}$ is presentable by Corollary \ref{wfeoifoweifjwfewfewfewfewf}   so that \eqref{f34iofj3oifjf9uf984ff34f} applies.

\begin{ddd} \label{fgwo9gfwegwwfwefewfwf} We define the induction functor $J^{G}$ as the composition \begin{equation}\label{f3ouoeriu3orou93434f}
J^{G} : \cC\stackrel{\underline{(-)}}{\to}\Fun(BG,\cC)\stackrel{\ell_{BG}}{\to} \Fun(BG,\cC_{\infty})\stackrel{j_{!}}{\to} \Fun(\Orb(G),\cC_{\infty}) \ .
\end{equation}
 \end{ddd}

For a subgroup $H$ of $G$ we consider $H\backslash G$ with the action of $G$ by right multiplication as an object of   $\Orb(G)$.
 
 \begin{prop}\label{rguihiufhwrufwfwefwefewf}
 We have an equivalence
 $$J^{G}(\bC)(H\backslash G)\simeq \ell( \bC\sharp BH)\ .$$
 \end{prop}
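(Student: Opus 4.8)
The plan is to compute $J^{G}(\bC)(H\backslash G)$ using the pointwise (left Kan extension) formula for $j_{!}$ and then reduce the resulting colimit over a comma category to the colimit over $BH$ already computed in Theorem \ref{weoijoijvu9bewewfewfwef}. Concretely, for an object $S$ of $\Orb(G)$ the value of the left Kan extension is
$$
J^{G}(\bC)(S)\;\simeq\; \colim_{(pt,\,\sigma)\in BG\downarrow S}\ell_{BG}(\underline{\bC})(pt)\;\simeq\;\colim_{BG\downarrow S}\underline{\ell(\bC)}\ ,
$$
where $BG\downarrow S$ is the comma category of the inclusion $j:BG\to\Orb(G)$ over $S$, and the diagram is constant with value $\ell(\bC)$. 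So the first step is to identify the comma category $BG\downarrow (H\backslash G)$: its objects are $G$-equivariant maps $G\to H\backslash G$, i.e.\ points of $H\backslash G$, and its morphisms are elements $g\in G$ acting by right multiplication on $G$; one checks that this category is equivalent (in fact isomorphic after choosing the base point $He$) to the one-object groupoid $BH$, since the stabilizer of the coset $He$ under the right $G$-action is exactly $H$. This is the same bookkeeping that appears in the standard identification $j_{!}$ of $G$-spaces with the Bredon formalism; I would spell it out explicitly since the paper works $1$-categorically with $\Orb(G)$.

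The second step is to transport the constant diagram. Under the equivalence $BH\xrightarrow{\sim}BG\downarrow(H\backslash G)$ the constant $BG$-diagram $\underline{\ell(\bC)}$ pulls back to the constant $BH$-diagram $\underline{\ell(\bC)}$ — here I must be slightly careful, because a morphism $g\in H$ of $BH$ is sent to the endomorphism $He\mapsto Heg=He$ of the comma object, which is the identity on the underlying $G$-set map but records $g$ in $\End_{BG\downarrow(H\backslash G)}$; so after restriction the $BH$-action on $\ell(\bC)$ is trivial, i.e.\ we really get $\underline{\ell(\bC)}$ over $BH$. Since colimits are invariant under equivalences of indexing categories, this gives
$$
J^{G}(\bC)(H\backslash G)\;\simeq\;\colim_{BH}\underline{\ell(\bC)}\;\simeq\;\colim_{BH}\ell_{BH}(\underline{\bC})\ ,
$$
the last step using that $\ell$ commutes with the trivial-action embedding.

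The final step is to invoke Theorem \ref{weoijoijvu9bewewfewfwef} applied to the group $H$ in place of $G$ (note $\bC$ is an object of $\cC$, and $\underline{\bC}$ its trivial $H$-object), which yields $\colim_{BH}\ell_{BH}(\underline{\bC})\simeq\ell(\bC\sharp BH)$, where $BH$ is regarded as a groupoid and $\sharp$ is the tensor structure of Convention \ref{fwerifhiwefewfewf}. Stringing the equivalences together gives $J^{G}(\bC)(H\backslash G)\simeq\ell(\bC\sharp BH)$, as claimed. I would also remark that the hypotheses of Theorem \ref{weoijoijvu9bewewfewfwef} are met, i.e.\ $\cC$ lies in the list $\{\scat_{1},\ClinCat_{1},\Ccat_{1},\scat_{1}^{+},\ClinCat_{1}^{+},\Ccat_{1}^{+}\}$, which is the same list over which $J^{G}$ is defined via Corollary \ref{wfeoifoweifjwfewfewfewfewf} (presentability of $\cC_{\infty}$, needed for the adjunction \eqref{f34iofj3oifjf9uf984ff34f} to exist).

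\textbf{Main obstacle.} The only genuinely delicate point is the careful identification of the comma category $BG\downarrow(H\backslash G)$ together with the action it induces on the constant diagram — in particular making sure that the residual automorphisms come out as $H$ (not $H^{op}$, and not something twisted by a conjugation), and that the diagram one ends up with over $BH$ is the constant one rather than a nontrivially-acted one. Everything else is a formal chain of equivalences: the pointwise Kan extension formula, invariance of colimits under cofinal/equivalent reindexing, and a direct appeal to Theorem \ref{weoijoijvu9bewewfewfwef}. A clean way to avoid sign/variance pitfalls is to note that $BG\downarrow(H\backslash G)$ is the action groupoid of the right $G$-set $H\backslash G$ at the coset $He$, hence canonically $BH$, with the projection to $BG$ being the inclusion induced by $H\hookrightarrow G$; then the constant diagram restricts to the constant diagram essentially by definition.
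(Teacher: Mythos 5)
Your proposal is correct and follows essentially the same route as the paper's proof: the pointwise formula for the left Kan extension $j_{!}$, the identification of the comma category $j\downarrow(H\backslash G)$ with $BH$ (the paper exhibits the equivalence $BH\to BG/(H\backslash G)$ sending $pt$ to the projection $G\to H\backslash G$ and $h$ to left multiplication by $h$; your "right multiplication" phrasing reflects the opposite convention for $\Orb(G)$ but yields the same stabilizer $H$, and your variance worry is resolved exactly as you indicate), followed by Theorem \ref{weoijoijvu9bewewfewfwef} applied to $H$. The only cosmetic imprecision is the claim that the comma category is "isomorphic" to $BH$ after choosing the base point — it is only equivalent (the full subcategory at that object is $BH$) — which is all the argument needs.
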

 \begin{proof}
 The functor  $j_{!}$ is a left Kan-extension functor. The point-wise formula for  the left Kan-extension gives an equivalence
$$J^{G}(\bC)(H\backslash G)\simeq j_{!}(\ell_{BG}(\underline{\bC}))(H\backslash G)\simeq \colim_{BG/(H\backslash G)}  \ell_{BG}(\underline{\bC})\ .$$
The functor $BH\to BG/(H\backslash G)$ which sends the object $pt$ to the  projection $G\to H\backslash G$ and the element
$h$ of $ H=\End_{BH}(pt)$ to the morphism in $BG/(H\backslash G)$ given by left-multiplication by $h$ is an equivalence of categories.
Consequently, we get an equivalence \begin{equation}\label{d2ieuz2iudzh2id3d3d2}\colim_{BG/(H\backslash G)}  \ell_{BG}(\underline{\bC})\simeq \colim_{BH}\ell_{BH}(\underline{\bC})\stackrel{Thm. \ref{weoijoijvu9bewewfewfwef}}{\simeq}
\ell(\bC\sharp BH)\ .
\end{equation}

 \end{proof}

In the following  examples we apply Proposition \ref{rguihiufhwrufwfwefwefewf} to the construction of equivariant $K$-theory functors.
Let $\bS$ be a stable $\infty$-category, e.g., the category of spectra. A Bredon-type $G$-equivariant $\bS$-valued homology theory is   determined by a functor
$$\Orb(G)\to \bS\ .$$ (see e.g. \cite{davis_lueck}).  
If $K:\cC_{\infty}\to \bS$ is some functor and we fix an object $\bC$ in $\cC$, then we can define  such a functor  by precomposing with the induction functor. We set
$$K_{\bC}^{G}:=K\circ J^{G}(\bC):\Orb(G)\to \bS\ .$$ 
By   Proposition \ref{rguihiufhwrufwfwefwefewf} the values of this functor are given by \begin{equation}\label{veruh3iuhniuefvdfvevfv}
K_{\bC}^{G}(H\backslash G)\simeq K(\ell(\bC\sharp BH))\ .
\end{equation}

\begin{example}\label{ergoieugoerregregeg}
We let $\cC=\ClinCat_{1}$ and $K:\ClinCat_{1}\to \Sp$ be the algebraic $K$-theory functor. The latter is defined as the composition \begin{equation}\label{f23lkmklfwfwefwefewfewff}
K:\ClinCat\xrightarrow{\cF} \mathbf{preAdd}\xrightarrow{(-)_{\oplus}}\Add \xrightarrow{K^{alg}} \Sp\ .
\end{equation}
$$$$
Here $ \mathbf{preAdd}$ and $\Add$ are the $\infty$-categories of preadditive and additive categories obtained from the corresponding $1$-categories by inverting the exact equivalences. The 
 forgetful functor $\cF$ takes the underlying preadditive category of a $\C$-linear $*$-category, $(-)_{\oplus}$ is the additive completion functor (the left-adjoint to the inclusion $\Add\to \mathbf{preAdd})$, and $K^{alg}$ is the $K$-theory functor for additive categories. We refer to \cite{addcats}  for further details. 

We now fix the object classifyer object $\Delta^{0}$ of $\ClinCat$ (given by the $\C$-linear $*$-category associated to the $*$-algebra $\C$).
Then we get the  functor
$$K^{G}_{\Delta^{0}}:\Orb(G)\to \Sp\ .$$
Let $K^{alg}_{ring}:\mathbf{Ring}\to \Sp$ be the algebraic $K$-theory functor for rings
given in terms of $K^{alg}$ as the composition
 \begin{equation}\label{vekj3b5gkjnberkjvefvefvev}
K^{alg}_{ring}:\mathbf{Ring}\to  \mathbf{preAdd} \xrightarrow{(-)_{\oplus}}\Add \xrightarrow{K^{alg}} \Sp\ ,
\end{equation}  
where the first functor interprets a ring as a pre-additive category with one object.
Then we see that $K^{G}_{\Delta^{0}}$
has the values
$$K^{G}_{\Delta^{0}}(H\backslash G)\simeq K^{alg}_{ring}(\C[H])\ .$$
Indeed, 
$$K^{G}_{\Delta^{0}}(H\backslash G)\stackrel{\eqref{veruh3iuhniuefvdfvevfv}}{\simeq} K (\ell(\Delta^{0}\sharp BH)) \stackrel{Ex. \ref{fwoijfofewfewfewfewfew} }{\simeq }
K (\ell(\C[H]))\stackrel{\eqref{f23lkmklfwfwefwefewfewff},\eqref{vekj3b5gkjnberkjvefvefvev}}{\simeq}K^{alg}_{ring}(\C[H])\ .$$
We see that   $K^{G}_{\Delta^{0}}$  provides a categorical construction of a functor which can be compared with the  
 usual equivariant algebraic   $K$-theory functor  as considered e.g. in \cite[Sec. 2]{davis_lueck}\footnote{Note that we have only discussed the values, not the action of the functor on morphisms.}. \hB\end{example}
 
\begin{example}
We let $\cC=\Ccat_{1}$ and $K^{top}_{1}:\Ccat_{1}\to \Sp$ be the  topological $K$-theory  functor  for $C^{*}$-categories.  We refer \cite[Sec. 7.5]{buen} for a construction  of such a functor  as a composition 
$$K^{top}_{1}:\Ccat_{1}\xrightarrow{A^{f}} C^{*}\Alg\xrightarrow{K^{top}_{C^{*}\Alg}}\Sp\ ,$$
where $A^{f}$ is the functor which associates to a $C^{*}$-category the free $C^{*}$-algebra \cite{joachimcat} generated by it, and 
$K^{top}_{C^{*}\Alg}$ is the usual topological $K$-theory functor for $C^{*}$-algebras.
The subscript $1$ indicates that the functor is defined on the $1$-category of $C^{*}$-categories.
 In particular, 
by \cite[Cor. 7.44]{buen}  the functor $K^{top}_{1}$ sends unitary equivalences of $C^{*}$-categories to equivalences of spectra and therefore has an essentially unique factorization $K^{top}$ as in 
$$\xymatrix{\Ccat_{1}\ar[rr]^{K_{1}^{top}}\ar[dr]^{\ell}&&\Sp\\&\Ccat\ar[ur]^{K^{top}}&}\ .$$
We again fix the object classifier object $\Delta^{0}$ in $\Ccat_{1}$ and consider the functor
$$K^{top,G}_{\Delta^{0}}:\Orb(G)\to \Sp\ .$$
We then see that $K^{top,G}_{\Delta^{0}}$ has the values
$$K^{top,G}_{\Delta^{0}}(H\backslash G)\simeq K^{top}_{C^{*}\Alg}(C)(C^{*}_{\max}(H))\ .$$
 Indeed,
 $$K^{top,G}_{\Delta^{0}}(H\backslash G)\stackrel{\eqref{veruh3iuhniuefvdfvevfv}}{\simeq}
 K^{top}(\ell(\Delta^{0}\sharp BH))\stackrel{Ex. 
 \ref{fwoijfofewfewfewfewfew}}{\simeq}  K^{top}(\ell(C^{*}_{\max}(H) ))\simeq K^{top}_{C^{*}\Alg}(C^{*}_{\max}(H))\ ,$$
 where the last equivalence follows from an inspection of the definition of the $K$-theory functor $K^{top}$.
 We again see that   $K^{top,G}_{\Delta^{0}}$  provides a categorical construction of a functor which can be compared with the  
  topological $K$-theory functors as considered in \cite[Sec. 2]{davis_lueck}. But note that our functor involves the maximal group $C^{*}$-algebra, while the functor constructed in 
 \cite{davis_lueck}  involves the reduced group $C^{*}$-algebra.\hB
\end{example}

\bibliographystyle{alpha}

\end{document}